\documentclass[11pt,reqno]{amsart}
  \usepackage[T1]{fontenc}
  \usepackage[utf8]{inputenc}
\usepackage{mathtools,amssymb}
  \usepackage{lmodern}
  \usepackage{xcolor}
\usepackage[a4paper,left=27mm,right=27mm,top=25mm,bottom=27mm]{geometry}
  \usepackage{microtype}
\usepackage[numbers,sort&compress]{natbib}
  \usepackage[hidelinks]{hyperref}
  \numberwithin{equation}{section}
  \newtheorem{theorem}{Theorem}[section]
  \newtheorem{lemma}[theorem]{Lemma}
  \newtheorem{proposition}[theorem]{Proposition}
  \newtheorem{corollary}[theorem]{Corollary}
  \theoremstyle{remark}
  
  \allowdisplaybreaks
  \title[Sharp gradient comparison and rigidity]{Sharp gradient comparison and rigidity for semilinear equations on manifolds}
  \author{Linlin Sun}
  \address{School of Mathematics and Computational Science, Xiangtan University, Xiangtan 411105, China}
  \email{sunll@xtu.edu.cn}
  \subjclass[2020]{53C21, 58J05, 35J61}
  \keywords{Gradient comparison, Ricci curvature, geometric rigidity, conformal compactification, semilinear equations}

\begin{document}
  \begin{abstract}
  We prove sharp amplitude and gradient comparisons for positive solutions of a class of semilinear elliptic equations on complete manifolds under a Ricci lower bound. Equality at one point characterizes hyperbolic space and the corresponding radial solution. The local estimates are uniform in the mixing coefficients and number of terms for positive mixtures of subcritical powers. For a class of conformal compactifications defined by a source equation, we obtain sharp boundary scalar-curvature bounds and hyperbolic rigidity. We also construct non-Einstein examples whose normalization persists under small compactly supported metric perturbations.
  \end{abstract}
  \maketitle
  \enlargethispage{3pt}
  
  \section{Introduction}
Let $(M^n,g)$ be a complete and connected Riemannian manifold without boundary, $n\ge2$, with
\begin{equation}\label{eq:ric-lower}
\operatorname{Ric}\ge-(n-1)\kappa g,\qquad \kappa\ge0.
\end{equation}
We study positive solutions of
\begin{equation}\label{eq:main}
-\Delta u=f(u).
\end{equation}
For $\kappa>0$, a radial hyperbolic solution gives sharp bounds for both
$u$ and $|\nabla u|$. Equality at one point determines the complete
manifold and the solution. A second application concerns conformal
compactifications, where the comparison controls the scalar curvature
of a boundary metric selected by a prescribed source equation.

\subsection{Comparison with a hyperbolic profile}
Let $\mathbb H^n_{-\kappa}$ be the complete simply connected space of
sectional curvature $-\kappa$, and set 
\begin{align*}
    p_S(n)=\begin{cases}
        \frac{n+2}{n-2},&n\geq3,\\
        \infty,&n=2.
    \end{cases}
\end{align*}
\begin{theorem}\label{thm:intro-gradient}
  Let $(M^n,g)$ be a complete connected Riemannian manifold without
  boundary, $n\ge2$, satisfying
  $\operatorname{Ric}_g\ge-(n-1)\kappa g$ for some $\kappa>0$.
  Let
  \[
  f(s)=\sum_{i=1}^m a_i s^{p_i},\qquad
  a_i>0,\quad \sum_{i=1}^m a_i=1,\quad 1<p_i<p_S(n).
  \]
  Let $u$ and $U_H$ be positive $C^3$ solutions of
  \[
  \begin{aligned}
  -\Delta_g u&=f(u)&&\text{on }M,\\
  -\Delta_{\mathbb H^n_{-\kappa}}U_H&=f(U_H)
  &&\text{on }\mathbb H^n_{-\kappa}.
  \end{aligned}
  \]
  Assume that $U_H=U_H(r)$ is radial about a pole and has the largest
  central value among all positive entire radial solutions of the
  hyperbolic equation. Write $A=U_H(0)$ and let
  $U_H^{-1}:(0,A]\to[0,\infty)$ be the inverse of its strictly
  decreasing radial profile, which tends to zero at infinity.
  Then $u\le A$ on $M$ and
  \begin{equation}\label{eq:intro-gradient}
  |\nabla u|\le -U_H'(U_H^{-1}(u))
  \quad\text{on }\{u<A\}.
  \end{equation}
  If $u=A$ at one point, or equality in \eqref{eq:intro-gradient}
  holds at one point of $\{u<A\}$, then $(M,g)$ is isometric to
  $\mathbb H^n_{-\kappa}$ and $u(x)=U_H(d_g(o,x))$ for some $o\in M$.
  \end{theorem}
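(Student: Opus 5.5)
The plan is a Modica-type gradient comparison (in the spirit of Cabré, Farina and Valdinoci) with the one-dimensional hyperbolic profile in place of the Euclidean one.

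\textbf{Setup.} The radial profile $U=U_H(r)$ solves
\[
U''+(n-1)\sqrt\kappa\coth(\sqrt\kappa r)\,U'+f(U)=0,\qquad U'(0)=0,\qquad U'<0 .
\]
On $(0,A]$ define $\psi(s)=-U_H'(U_H^{-1}(s))\ge0$. Then $\Psi:=\psi^2$ satisfies a first-order ODE in $s$:
\[
\tfrac12\Psi'(s)=-f(s)+(n-1)\sqrt\kappa\coth\!\big(\sqrt\kappa\,U_H^{-1}(s)\big)\,\psi(s).
\]
We set
\[
P=|\nabla u|^2-\Psi(u)\quad\text{on }\{u<A\},
\]
and aim to show $P\le0$.

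\textbf{Step 1: the bound $u\le A$.} This should come from the maximality of $A$ among entire radial solutions. First, local gradient estimates (Cheng--Yau type, uniform in the mixture since $p_i<p_S(n)$) give $\sup u<\infty$ and $|\nabla u|$ bounded. Suppose $\sup u>A$. Then I would compare with the radial solution whose central value lies in $(A,\sup u)$; by maximality this solution is not entire, so it hits zero at a finite radius $R$. Laplacian comparison for $d(o,\cdot)$ under $\operatorname{Ric}\ge-(n-1)\kappa g$, applied at a near-maximum point $o$ (via Omori--Yau), should then yield a contradiction through a sliding or shooting argument. Equality $u(o)=A$ would force equality in the Laplacian comparison along all geodesics from $o$, which gives the rigidity.

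\textbf{Step 2: the main obstacle, $P\le0$.} Compute $\Delta P$ using the Bochner formula with $\operatorname{Ric}\ge-(n-1)\kappa g$ and the refined Kato inequality. Away from critical points, $|\nabla^2u|^2\ge(\Delta u)^2$-type terms combine with the $\coth$ term to produce an elliptic inequality
\[
\Delta P+\langle b,\nabla P\rangle\ge c\,P
\]
on $\{P>0\}$, for some coefficient $c$ whose sign is controlled. The key difficulty is that the ODE for $\Psi$ carries the $\coth$ factor, i.e.\ $r=U_H^{-1}(u)$, which has no intrinsic meaning on $M$. The intended route is to use the fact that on the model the level sets $\{U=s\}$ are geodesic spheres with mean curvature $(n-1)\sqrt\kappa\coth(\sqrt\kappa r)$. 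The refined Kato decomposition
\[
|\nabla^2u|^2\ge \frac{(\nabla^2u(\nu,\nu))^2+\big(\Delta u-\nabla^2u(\nu,\nu)\big)^2}{n-1}
\]
then needs to match precisely the model's umbilicity. I expect the sign of the coefficient $c$ to rest on monotonicity of $s\mapsto\coth(\sqrt\kappa\,U_H^{-1}(s))$ together with the convexity-type property $f(s)/s$ increasing, which holds for mixtures of powers $p_i>1$.

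\textbf{Step 3: maximum principle and rigidity.} Boundedness of $|\nabla u|$ plus Omori--Yau rule out a positive supremum of $P$ approached at infinity. Near $u=A$, the fact that $\Psi(s)\to0$ as $s\to A$ must be handled using $u\le A$. If equality $P=0$ holds at an interior point, the strong maximum principle gives $P\equiv0$ on the component. Then all inequalities used are equalities: $\nabla^2u$ has the model form, level sets are umbilic with the model mean curvature, and $\operatorname{Ric}(\nu,\nu)=-(n-1)\kappa$. From this, $r=U_H^{-1}(u)$ is a distance function with $\nabla^2r=\sqrt\kappa\coth(\sqrt\kappa r)(g-dr^2)$. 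Following $-\nabla u$ leads to a point $o$ where $u=A$, and the metric is then the warped product $dr^2+\sinh^2(\sqrt\kappa r)\,g_{S^{n-1}}/\kappa$, i.e.\ $\mathbb H^n_{-\kappa}$.
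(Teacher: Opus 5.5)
There is a genuine gap in Step~2 and in the Omori--Yau part of Step~3. Your opening move, global bounds on $u$ and $|\nabla u|$ from a universal local estimate, matches the paper. The $\coth$ factor is not the real obstruction: differentiating the first-order phase equation eliminates $r$ and gives the closed ODE \eqref{eq:phase-ode}. Use the sharp splitting $|\nabla^2u|^2\ge u_{11}^2+2\sum_\alpha u_{1\alpha}^2+(\Delta u-u_{11})^2/(n-1)$; the version you wrote also divides $u_{11}^2$ by $n-1$, which is not sharp on the model for $n\ge3$. Writing $u_{11}=\tfrac12\Psi_H'(u)+\langle\nabla P,\nabla u\rangle/(2|\nabla u|^2)$, you get at regular points
\[
\Delta P+\langle b,\nabla P\rangle\ge-\bigl(2f'(u)+\Psi_H''(u)+2(n-1)\kappa\bigr)P,\qquad |b|=O(|\nabla u|^{-1}).
\]
The zeroth-order coefficient has the wrong sign. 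For $f(s)=as^{1+2/(n-1)}$, where $\Psi_H(s)=(n-1)^2\kappa s^2-\frac{a(n-1)}{n}s^{2+2/(n-1)}$, it equals $-2n(n-1)\kappa$ identically. For every mixture it is negative near $u=A$, by the center values in Lemma~\ref{lem:phase-center}. So neither a positive maximum of $P$ nor an Omori--Yau sequence gives a contradiction, and no monotonicity of $\coth$ or of $f(s)/s$ can repair this. The drift is moreover singular at critical points of $u$.

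The missing idea is a continuity argument over a family of barriers. There the sign of this coefficient never matters: you only need that a defect $P\le0$ cannot touch zero, which the strong maximum principle gives regardless of sign.

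\begin{itemize}
\item The paper works at model curvature $-k$ with $k>\kappa$, using profiles $U_{k,B}$ with $B>\max(\sup u,A_k)$; these have a finite transverse zero.
\item In the coordinate $v=T(u)$, with $T$ the inverse of $U_{k,B}$ as a function of $\cosh(\sqrt k\,r)$, the defect $P_B=|\nabla v|^2-k(v^2-1)$ satisfies a Bochner identity. It contains the full term $2|\nabla^2v-kvg|^2$ (valid at critical points), a bounded drift, and an extra term $2(n-1)(k-\kappa)|\nabla v|^2$.
\item For large $B$, $P_B\le0$ follows from the Pohozaev-type bound $\inf_{[0,\sup u]}\Psi_{k,B}\to\infty$, which uses subcriticality.
\item The set of good $B$ is closed. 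It is open by the strict-gap lemma ($P_B\le0$ implies $\sup P_B<0$), proved with Omori--Yau using $k>\kappa$ and the boundedness of $v,\nabla v$. That boundedness fails for the threshold profile itself.
\end{itemize}

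The same family replaces your Step~1, which has no workable mechanism: $-\Delta u=f(u)$ with superlinear $f$ has no comparison principle, and a general manifold admits no sliding. If $\sup u>A_k$, the profile with central value $\sup u$ gives $P\le0$ with $P\to0$ along a maximizing sequence, contradicting the strict gap. Then $k\downarrow\kappa$ (using upper semicontinuity of $A_k$) and $B\downarrow A_\kappa$ give $u\le A$ and the sharp bound.

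Likewise, $u(o)=A$ is not an equality case of Laplacian comparison. There $\nabla u=0=\Psi_H(A)$, so it is gradient equality at a critical point. This is handled because the threshold coordinate $v$ is $C^3$ through $u=A$; the strong maximum principle then gives $\nabla^2v=\kappa vg$, and Tashiro's theorem finishes the rigidity.
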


The radius $U_H^{-1}(u(x))$ is determined by matching solution values;
in the rigidity case it equals $d_g(o,x)$. No boundedness or finite
energy is assumed for $u$. The model attains the gradient bound away
from its pole and the amplitude bound at the pole.

At $f(s)=s^{1+2/(n-1)}$, the known hyperbolic profile gives the explicit
form
\[
\begin{gathered}
U_H(r)=[n(n-1)\kappa]^{(n-1)/2}
\operatorname{sech}^{n-1}(\sqrt\kappa r),\\
|\nabla\log u|^2+\frac{n-1}{n}u^{2/(n-1)}
\le(n-1)^2\kappa.
\end{gathered}
\]
The radial solution at curvature $-1$ appears in
\cite{Mancini:2008aa}.
Proposition~\ref{prop:explicit-power} identifies it with the threshold
and obtains this sharp estimate and its equality case on general
complete manifolds satisfying \eqref{eq:ric-lower}.

Modica \cite{Modica:1985aa} proves
$|\nabla v|^2\le2W(v)$ for bounded $C^3$ entire Euclidean solutions of
$\Delta v=W'(v)$, with $W\in C^2(\mathbb R)$ nonnegative on $\mathbb R$.
Andrews and Xiong \cite{Andrews:2019aa} establish two-point estimates
for the inverse of a comparison profile and the resulting gradient
bounds at the same solution height. Their results treat quasilinear
equations on compact manifolds and bounded solutions on complete
manifolds of bounded geometry, under the corresponding barrier
hypotheses. Their compact theory also obtains comparison by
continuation from a family of barriers with uniformly large slopes.
Earlier complete- or compact-manifold gradient bounds under different
curvature and coefficient assumptions are due, for example, to
Ratto--Rigoli and Farina--Valdinoci \cite{RattoRigoli:1995aa,FarinaValdinoci:2011aa};
those results provide context rather than the present source-selected
hyperbolic comparison. Espinar, Gonz\'alez-Ib\'a\~nez and Mar\'in
\cite{Espinar:2026aa} compare bounded-domain
Dirichlet solutions with radial models of the same maximum height,
selected by boundary gradient data under shooting-admissibility and
derivative conditions on the source.

For the sources considered here, Theorem~\ref{thm:intro-gradient}
proves the universal sharp amplitude bound $u\le U_H(0)$ and gradient
comparison with the same maximal-center radial profile on hyperbolic
space. This model depends only on $n$, $\kappa$ and $f$ and applies
to every positive entire solution on a complete manifold under the
stated Ricci lower bound, without a bounded-geometry assumption.
Equality at one point, including attainment of the central height,
determines both the hyperbolic geometry and the radial solution.
Theorem~\ref{thm:gradient-general} treats the larger admissible source
class of Section~\ref{sec:local}, which also contains the sources
arising from conformal compactification below.

\subsection{Local estimates for mixed sources}
The local estimate controls the logarithmic gradient and the source
amplitude using only the radius, the Ricci lower bound and a fixed
subcritical exponent interval. Its constant is uniform over the
coefficients and the number of powers.
On Euclidean domains, Souplet \cite{Souplet:2023aa} obtains universal
amplitude estimates for fixed sources with subcritical regular variation
at zero and infinity, under the corresponding Liouville assumptions.
These hypotheses include each fixed positive finite or continuous mixture
of powers in a compact subcritical interval.
For a single power under a Ricci lower bound, Lu
\cite{Lu:2023aa} proves the logarithmic-gradient and
amplitude estimate throughout the subcritical range.
His general-source results
\cite{Lu:2026ab} cover
positive mixtures whose largest exponent is below $(n+3)/(n-1)$,
and part of the higher subcritical range under additional lower-growth
and inverse-growth conditions. Quasilinear estimates involving one or
two powers are studied in \cite{He:2024aa,He:2025aa}; general semilinear
sources with restricted upper growth are treated in
\cite{ChenHuangRuan:2026}. Classical Euclidean a priori and Liouville
results, and related single-equation manifold estimates, include
Gidas--Spruck, Li, and Li--Zhang \cite{Gidas:1981aa,Li:1991aa,LiZhang:2003aa};
they are cited as background only and do not supply the present
Ricci-lower-bound mixture estimate. Ma--Huang--Luo give another
single-equation comparison under their own sign and curvature
assumptions \cite{MaHuangLuo:2018aa}.

\begin{theorem}\label{thm:intro-mixtures}
  Let $(M^n,g)$ be complete, $n\ge2$, with $\operatorname{Ric}\ge-(n-1)\kappa g$ on $B_{2R}(x_0)$, $\kappa\ge0$. Let $f$ be any finite convex combination of powers $s^p$ with $p\in[p_-,p_+]\Subset(1,p_S(n))$. Every positive $C^3$ solution of $-\Delta u=f(u)$ on $B_{2R}(x_0)$ satisfies
  \[
  \sup_{B_R(x_0)}\left(|\nabla\log u|^2+\frac{f(u)}u\right)
  \le C(n,p_-,p_+)\left(R^{-2}+\kappa\right).
  \]
  \end{theorem}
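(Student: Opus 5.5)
The plan is the Bernstein-type argument with a nonlinear change of variables, in the style of Lu's single-power estimate, arranged so that every constant depends on the source only through $p_\pm$.

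\emph{Substitution and test function.} Set $u=w^{-\beta}$ with $\beta>0$ chosen in terms of $n,p_-,p_+$, or equivalently work with $v=\log u$ and a power weight $u^{-\gamma}$. Then $-\Delta u=f(u)$ becomes
\[
\Delta v+|\nabla v|^2+h(v)=0,\qquad h(v)=\frac{f(u)}{u}=\sum_i a_i e^{(p_i-1)v}.
\]
The quantity to control is $G=|\nabla v|^2+\lambda h(v)$ for a suitable $\lambda=\lambda(n,p_\pm)>0$, possibly replaced by $u^{-\gamma}$-weighted versions $|\nabla u|^2u^{-2}+\lambda f(u)u^{-1}$.

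\emph{Bochner computation.} Apply the Bochner formula with $\operatorname{Ric}\ge-(n-1)\kappa$. Split $\nabla^2 v$ into its traceless part and its trace, and use the equation to eliminate $\Delta v$. The key structural facts about the mixture are pointwise:
\[
h'(v)=\sum_i a_i(p_i-1)e^{(p_i-1)v}\in\bigl[(p_--1)h,(p_+-1)h\bigr],
\]
and similarly $h''\in[(p_--1)^2h,(p_+-1)^2h]$. Every term in $\Delta G$ involving $h$ is therefore comparable to $h$ with constants depending only on $p_\pm$, and the coefficients $a_i$ and the number $m$ never enter.

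The goal is a differential inequality at points where $G$ is large:
\[
\Delta G\ge 2\langle\nabla v,\nabla G\rangle\cdot c_1-C\,|\nabla G|^2/G+c_0G^2-C\kappa G,
\]
where we may take $\langle\cdot\rangle$ terms with any fixed coefficient. Obtaining $c_0>0$ requires the algebraic positivity of a quadratic form in $(|\nabla v|^2,h,\nabla^2v\text{-terms})$. With $h'/h\equiv p-1$ constant this positivity is exactly Lu's subcritical condition $p<p_S(n)$. Here $h'/h$ and $h''/h$ vary pointwise in $[p_--1,p_+-1]$, so the form must be shown positive uniformly on that parameter box.

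\emph{Main obstacle.} The obstacle is choosing $\lambda$ and the power substitution $\beta$ independent of the actual local exponent ratio. The term from $h''$ is $h''|\nabla v|^2\lambda$, and it is controlled via $h''\ge (h')^2/h$ by Cauchy--Schwarz. This inequality holds exactly for mixtures of exponentials, since $h$ is log-convex, and I expect it to be the crucial gain. After that the coefficients are affine or quadratic in $q=h'/h\in[p_--1,p_+-1]$. I would first try Lu's parameters at fixed $q$, note that his admissible $(\lambda,\beta)$ region is open and depends continuously on $q$, and use compactness of $[p_-,p_+]\Subset(1,p_S)$ to pick one pair working for all $q$ simultaneously. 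If no common pair exists, I would allow $\lambda$ to vary with $v$ through $h'/h$, at the cost of extra gradient terms.

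\emph{Localization.} Take a Calabi/Li--Yau cutoff $\phi$ supported in $B_{2R}$ with
\[
|\nabla\phi|^2/\phi\le C/R^2,\qquad \Delta\phi\ge -C(1+\sqrt\kappa R)/R^2,
\]
using Laplacian comparison and Calabi's trick at the cut locus. At a maximum of $\phi G$ we have $\nabla(\phi G)=0$ and $\Delta(\phi G)\le0$. Absorbing the gradient terms by Young's inequality gives
\[
c_0\phi G\le C(R^{-2}+\kappa),
\]
and hence the stated bound on $B_R$. This step is routine once $c_0$ is uniform.
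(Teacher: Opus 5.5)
There is a genuine gap, and it sits at the step you yourself flag as the main obstacle.

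\textbf{The compactness step is not valid.} Openness of the admissible set $\mathcal A(q)$ and its continuous dependence on $q=h'/h$ give, for each $q_0\in[p_--1,p_+-1]$, a pair that works on a neighbourhood of $q_0$. Compactness then reduces this to finitely many pairs. But one pair for all $q$ requires $\bigcap_q\mathcal A(q)\neq\varnothing$, and compactness cannot produce that. It only upgrades positivity to a uniform $c_0>0$ once a common pair is already known.

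\textbf{There is no reason to expect the intersection to be nonempty.} The direct computation with the unweighted quantity $|\nabla\log u|^2+\lambda f(u)/u$ closes, in its direct form, only for $q<4/(n-1)$, i.e.\ $p<(n+3)/(n-1)$. Reaching $p_S(n)$ even for a single power needs a substitution or weight tuned to $p$. In the paper's proof this weight is $Fw\propto u^{(p+3)/2}$, i.e.\ the scale-invariant variable $w\propto u^{-(p-1)/2}$. You give no argument that the admissible windows for $p$ near $1$ and for $p$ near $p_S(n)$ overlap. This is why, as the introduction notes, Lu's general-source theorems need $p_+<(n+3)/(n-1)$ or extra growth conditions, and miss, e.g., $(s^{11/10}+s^{29/10})/2$ in dimension four.

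\textbf{Neither of your remedies closes the gap.} Log-convexity of $h$ is equality for a single power. It only makes the $h''$ term no worse than in the one-power case, while the other coefficients still move with $q$. Your fallback of letting the parameters depend on $v$ through $h'/h$ creates terms containing $(h'/h)'=h''/h-(h'/h)^2$. This is the variance of the exponent distribution, which can be of order $(p_+-p_-)^2$, and you give no mechanism to control it.

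\textbf{What the paper does instead.} It avoids pointwise tuning altogether.
\begin{enumerate}
\item Lemma~\ref{lem:energy} shows that for $E=\frac12|\nabla u|^2+F(u)$ and every $t\ge t_0=\frac{n-2}{2(n-1)}$, $E^t$ satisfies a source-independent inequality. Apart from the Ricci term, its only bad term is $-\frac{t(n-1)}{n}\frac{f^2}{F}E^t$.
\item Proposition~\ref{prop:localize} compensates this term by dividing by a weight $L(u)$ obeying two differential inequalities.
\item The weight is $L=\sum_j\lambda^{j-1}(F_jw_j)^t+\Lambda^ts^{2t}$. Here $f_j$ is the part of the mixture with exponents in the $j$-th of consecutive windows of length at most one, and $w_j(s)=\int_s^\infty F_j(\tau)^{-1/2}\,d\tau$ is its tail.
\item On each window $(F_jw_j)^{2/(p_++3)}$ is concave (Lemma~\ref{lem:short-mixture}; this is where the window length enters). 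Since $2/(p_++3)>t_0$ is equivalent to $p_+<p_S(n)$, a single $t\in(t_0,2/(p_++3))$ serves every component.
\item Concavity passes to the sum through the variance identity in the proof of Lemma~\ref{lem:weight}. This is exactly how the cross terms you could not control acquire a sign.
\item The ordering $f_1/F_1\le\cdots\le f_m/F_m$, together with the geometric factors $\lambda^{j-1}$, makes the logarithmic derivative of the tail part of $L$ exceed $d\,f/F$ (with $d=t(n-1)/n$) by a fixed multiple of $f/F$.
\item Because $m\le\max\{1,\lceil p_+-p_-\rceil\}$ and $C_2=((p_+-1)/(p_--1))^2$, every constant depends only on $n,p_\pm$.
\end{enumerate}
The missing idea in your plan is this weight, which adapts to the local exponent through the tails while concavity keeps the extra derivative terms signed.
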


The two exponent endpoints can be chosen independently throughout
$(1,p_S(n))$. For example, in dimension four the estimate applies to
$f(s)=(s^{11/10}+s^{29/10})/2$, whose exponents lie near opposite ends
of $(1,3)$ and which satisfies neither of Lu's two general-source
theorems cited above. The uniformity also includes continuous mixtures:
Proposition~\ref{prop:local-mixtures} gives the same estimate for
\[
f(s)=\int_{[p_-,p_+]}s^p\,d\nu(p),\qquad
0<\nu([p_-,p_+])<\infty,
\]
with a constant independent of the measure and its total mass.

The upper subcritical restriction is essential:
rescaled Euclidean critical bubbles preclude a solution-independent
amplitude bound at $p=p_S(n)$ when $n\ge3$.
For entire solutions, exhaustion gives
$|\nabla\log u|^2+f(u)/u\le C\kappa$.
The resulting nonexistence for $\kappa=0$ is already covered, for these
finite or continuous mixtures, by He, Sun and Wang
\cite{He:2025ab}; see also
\cite{Catino:2026aa} for related Liouville results.
For $\kappa>0$, this yields finite amplitude and gradient bounds and
supplies the initial control needed for the sharp comparison.

\subsection{Boundary curvature of a compactification}
Let $n\ge3$ and let $\overline M^n$ be compact with boundary and
connected interior $M$. A smooth defining function $\rho$ is positive
in $M$, vanishes on $\partial M$, and has $d\rho\ne0$ there.
A smooth metric $h$ on $\overline M$ gives the complete metric
$g=\rho^{-2}h$ and the boundary metric $\gamma=h|_{T\partial M}$.
For conformally compact Einstein metrics, Graham--Lee construct
fillings with prescribed conformal infinity, while Lee derives a
spectral boundary identity and Qing proves round-sphere rigidity under
Einstein and positive-mass hypotheses \cite{Graham:1991aa,Lee:1995aa,Qing:2003aa}.
Under stronger round-infinity, pole, decay, or curvature assumptions,
related rigidity statements are due to Bonini--Miao--Qing,
Dutta--Javaheri, Shi--Tian, and Li--Qing--Shi
\cite{Bonini:2006aa,DuttaJavaheri:2010aa,ShiTian:2005aa,LiQingShi:2017aa}.
These results concern different asymptotic or Einstein settings.
We select a boundary representative by imposing
$-\Delta_h\rho=q(\rho)$, with $q$ smooth, nonnegative, nondecreasing
and concave on $[0,\infty)$, and positive on $(0,\infty)$.

For Poincar\'e--Einstein fillings, Gursky and Han
\cite{Gursky:2017aa} relate the type I Yamabe
constant of a compactification to its boundary Yamabe constant through
an isoperimetric ratio. Chen, Lai and Wang
\cite{Chen:2019aa} characterize equality in this inequality and prove
the type II and mixed Yamabe inequalities, with hyperbolic space as
the equality case in all three. These results use minimizing Yamabe
compactifications with constant scalar curvature and constant boundary
mean curvature, whose existence is ensured by the dimension or
geometric hypotheses in those works.
Wang and Wang \cite{Wang:2022aa} obtain the type II
inequality without assuming attainment of the critical Yamabe
infimum. Their later result \cite{Wang:2025ab}
allows a Ricci lower bound and nonnegative Yamabe conformal infinity
under an asymptotically Poincar\'e--Einstein condition.
Our comparison applies to the boundary representative selected by the
prescribed source equation, under a Ricci lower bound. It gives a
pointwise upper bound for its scalar curvature, with rigidity from
equality at a single boundary point.

With $k=n-1$, the change $u=\rho^k$ gives
\[
-\Delta_g u=f_q(u),\qquad
f_q(s)=k s^{1+1/k}q(s^{1/k}).
\]
This source belongs to the admissible class
(Lemma~\ref{lem:q-admissible}). Its threshold solution $U_H$ determines
a smooth radial compactification
$h_H=\rho_H^2g_{\mathbb H^n_{-\kappa}}$, with $\rho_H=U_H^{1/k}$,
of the closed hyperbolic ball (Lemma~\ref{lem:model-compactification}).
Denote by $R_{q,\kappa}$ the scalar curvature of its round boundary metric.

\begin{theorem}\label{thm:intro-boundary}
  Let $(\overline M^n,h)$ be a compact smooth Riemannian manifold with
  nonempty boundary and connected interior $M$, $n\ge3$.
  Let $\kappa>0$ and let $q\in C^\infty([0,\infty))$ be nonnegative,
  nondecreasing and concave, with $q(s)>0$ for $s>0$.
  Suppose that a smooth defining function $\rho$ satisfies
  \begin{equation}\label{eq:intro-source}
  -\Delta_h\rho=q(\rho),\qquad g=\rho^{-2}h,
  \end{equation}
  and
  \begin{equation}\label{eq:asymptotic-small}
  \operatorname{Ric}_g\ge-(n-1)\kappa g,\qquad
  R_g+n(n-1)\kappa=o(\rho^2)\quad\text{as }\rho\to0.
  \end{equation}
  Then $\gamma=h|_{T\partial M}$ satisfies
  \[
  R_\gamma\le R_{q,\kappa}\quad\text{on }\partial M.
  \]
  Equality at one boundary point holds if and only if there is an
  isometry
  \[
  \Phi:(\overline M,h)\longrightarrow(\overline{\mathbb H^n},h_H)
  \quad\text{with}\quad \rho_H\circ\Phi=\rho.
  \]
  \end{theorem}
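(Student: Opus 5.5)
We reduce Theorem~\ref{thm:intro-boundary} to the gradient comparison for the source $f_q$ (Theorem~\ref{thm:gradient-general}, whose admissible class contains $f_q$ by Lemma~\ref{lem:q-admissible}). We then read off the boundary scalar curvature from the asymptotics of $|\nabla u|_g$ as $\rho\to0$.

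\textbf{Step 1: the comparison.} Set $k=n-1$ and $u=\rho^k$. The conformal change $g=\rho^{-2}h$ turns $-\Delta_h\rho=q(\rho)$ into $-\Delta_gu=f_q(u)$ on the complete manifold $(M,g)$. Since $\operatorname{Ric}_g\ge-(n-1)\kappa g$, Theorem~\ref{thm:gradient-general} gives $u\le A$ and
\[
|\nabla u|_g\le -U_H'(U_H^{-1}(u)),
\]
with rigidity if equality holds at one point. In terms of $\rho$ this reads
\[
\rho^{k-1}|d\rho|_h\le \Psi(\rho):=\rho^{-1}\bigl(-U_H'(U_H^{-1}(\rho^k))\bigr)/k .
\]
Equivalently, $|d\rho|_h^2\le F(\rho)$, where $F$ is the same quantity computed on the model compactification $(\overline{\mathbb H^n},h_H)$ with $\rho_H$. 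By Lemma~\ref{lem:model-compactification}, $F$ is a smooth function of $\rho$ near $0$ with expansion $F(\rho)=F_0+F_1\rho+F_2\rho^2+o(\rho^2)$.

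\textbf{Step 2: boundary expansion.} Use the formula $R_g=\rho^2\bigl(R_h+2(n-1)\rho^{-1}\Delta_h\rho-n(n-1)\rho^{-2}|d\rho|_h^2\bigr)$ together with $\Delta_h\rho=-q(\rho)$. This gives
\[
R_g+n(n-1)\kappa=\rho^2R_h-2(n-1)\rho q(\rho)+n(n-1)\bigl(\kappa-|d\rho|_h^2\bigr).
\]
The hypothesis that this is $o(\rho^2)$ forces the Taylor expansion of $|d\rho|_h^2$ along the normal direction to agree with the model up to second order except for the $R_h$ term:
\[
|d\rho|_h^2=\kappa-\tfrac{2}{n}\rho q(\rho)+\tfrac{1}{n(n-1)}\rho^2R_h+o(\rho^2).
\]
The same identity holds exactly on the model with $R_{h_H}$. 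Comparing with Step 1 near a boundary point yields $R_h\le R_{h_H}(0)$ on $\partial M$, where the right side is the model value at the boundary.

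We then convert $R_h|_{\partial M}$ into $R_\gamma$ via the Gauss equation: $R_h=R_\gamma+2\operatorname{Ric}_h(\nu,\nu)-H^2+|A|^2$. The second fundamental form and the term $\operatorname{Ric}_h(\nu,\nu)$ are determined by the equation. Indeed, $\nabla^2\rho$ on $\partial M$ is controlled by the expansion of $|d\rho|^2$ (its tangential derivatives vanish to the appropriate order, giving umbilicity with fixed mean curvature). The normal second derivative comes from $\Delta_h\rho=-q(0)$, and the third-order terms from differentiating the PDE. Doing the identical computation on the model shows that the difference $R_\gamma-R_{q,\kappa}$ equals a positive multiple of $R_h-R_{h_H}$ at the boundary. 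This bookkeeping is the \emph{main obstacle}: the Ricci term needs one more normal derivative than the expansion directly gives. I would first try working in the coordinates $(\rho,y)$ with $h=|d\rho|^{-2}d\rho^2+h_\rho$ and taking the trace of the $\rho$-derivative of the Riccati equation for $h_\rho$. The $o(\rho^2)$ condition is precisely what fixes the needed coefficient.

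\textbf{Step 3: rigidity.} Suppose $R_\gamma=R_{q,\kappa}$ at some $y_0\in\partial M$. Then the $\rho^2$ coefficients of $F(\rho)-|d\rho|_h^2$ vanish at $y_0$. Consider the nonnegative function $w=F(\rho)-|d\rho|^2_h$. It satisfies an elliptic inequality from the $P$-function argument behind Theorem~\ref{thm:gradient-general}, after the conformal change to $h$ and division by a suitable power of $\rho$. This $w$ vanishes to order $\rho^2$ at $y_0$ with vanishing normal coefficient. Applying a Hopf boundary lemma to $w/\rho^2$ (or its $g$-analogue) shows that $w$ vanishes at interior points, so equality holds in the gradient comparison somewhere in $M$. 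The rigidity of Theorem~\ref{thm:gradient-general} then gives an isometry $(M,g)\cong\mathbb H^n_{-\kappa}$ with $u=U_H(d(o,\cdot))$, hence $\rho=\rho_H\circ\Phi$. This makes $\Phi$ an isometry $h\to h_H$, and it extends smoothly to the closure. The converse direction is immediate from the model computation.
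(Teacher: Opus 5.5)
Your overall plan matches the paper's. You reduce to Theorem~\ref{thm:gradient-general} via $u=\rho^k$, read off the deficit $W=(P_H(\rho)-|d\rho|_h^2)/\rho^2\ge0$ at $\partial M$, and for rigidity push $W=0$ from a boundary point into the interior. Your intermediate bound $R_h\le\mathcal R_H(0)$ on $\partial M$ is also correct.

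The gap is the conversion from $R_h$ to $R_\gamma$, which you flag as the main obstacle; the route you suggest cannot close it. The source equation and the expansion of $|d\rho|_h^2$ forced by the scalar decay do not give umbilicity. Since $d|d\rho|_h^2=2\nabla^2\rho(\nabla\rho,\cdot)$, tangential derivatives of the expansion only kill the mixed components $\nabla^2\rho(\nu,X)$. Together with $\Delta_h\rho=-q(\rho)$ they fix $\nabla^2\rho(\nu,\nu)=-q(0)/n$ and $H=(n-1)q(0)/(n\sqrt\kappa)$. Nothing in these scalar data constrains the trace-free tangential part of $\nabla^2\rho$. Set $T=(|d\rho|_h^2-\kappa+2I(\rho)/n)/\rho^2$ and $S=\rho^{-2}(R_g+n(n-1)\kappa)$. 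If you compute the Ricci term as proposed (Bochner for $|d\rho|_h^2$, or the traced Riccati equation), you get on $\partial M$
\[
\operatorname{Ric}_h(\nu,\nu)=T+\frac{(n-1)q'(0)}{n}-|\mathrm{II}^\circ|_\gamma^2 ,
\]
and hence, by the Gauss equation,
\[
R_\gamma-S=(n-2)(n+1)T+(n-1)(n-2)\Bigl(\frac{q'(0)}{n}+\frac{q(0)^2}{n^2\kappa}\Bigr)+|\mathrm{II}^\circ|_\gamma^2 .
\]
The last term is nonnegative, which is the wrong sign for an upper bound. So your claim that $R_\gamma-R_{q,\kappa}$ is a positive multiple of $R_h-R_{h_H}$ at the boundary is false unless $\mathrm{II}^\circ=0$, and the $o(\rho^2)$ scalar condition alone cannot force that.

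The missing idea is to use $\operatorname{Ric}_g\ge-(n-1)\kappa g$ as a tensor inequality near $\partial M$, not only as input to the gradient comparison.
\begin{itemize}
\item For $\mathcal R=\operatorname{Ric}_g+(n-1)\kappa g\ge0$, its $h$-eigenvalues are nonnegative, so $|\mathcal R|_h\le\operatorname{tr}_h\mathcal R=S\to0$. Hence $\mathcal R$ vanishes on $\partial M$.
\item The trace-free part of the conformal Ricci formula is $\nabla^2\rho+\frac{q(\rho)}{n}h=\frac{\rho}{n-2}(\mathcal R^\circ-E)$, with $E$ the trace-free Ricci tensor of $h$. It gives $\nabla^2\rho=-q(0)h/n$ on $\partial M$, which is umbilicity.
\item Its $\nu\nu$-component divided by $\rho$ gives $E(\nu,\nu)=-(n-2)T$.
\end{itemize}
Only with these does the Gauss equation give the exact identity $R_{q,\kappa}-R_\gamma=(n-2)(n+1)W$ on $\partial M$. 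This identity is also what turns boundary equality into $W(y_0)=0$ in your Step~3.

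In Step~3 the classical Hopf lemma does not apply as stated. The inequality for $W$ has the singular drift $(n+2)\rho^{-1}\langle d\rho,dW\rangle$ and the singular zero-order term $-2q(\rho)W/\rho$. Lemma~\ref{lem:boundary-minimum} still works for two reasons:
\begin{itemize}
\item The drift has the favorable sign: multiplying by $\rho$ forces the normal derivative to vanish at the zero.
\item The singular zero-order coefficient is only $O(\rho^{-1})$, so a barrier vanishing to first order absorbs it.
\end{itemize}
You would need to check both features for the conformally transformed $P$-inequality you intend to use.
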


The model constant also satisfies
\[
R_{q,\kappa}\le(n-1)(n-2)
\left(\frac{q'(0)}n+\frac{q(0)^2}{n^2\kappa}\right).
\]
Equality here forces $q$ to be affine on $[0,\max\rho_H]$
(Lemma~\ref{lem:auxiliary}). The sharp constant depends on the source
throughout the range of the model, whereas the explicit upper bound
uses only $q(0)$ and $q'(0)$.
For $q(s)=a+bs$, with $a,b\ge0$ and $a+b>0$, the bound is attained.
The model compactifications are the standard spherical caps for $b>0$,
including the hemisphere for $a=0$, and the Euclidean ball for $b=0$,
which occur in the equality cases of \cite{Chen:2019aa}.
Corollary~\ref{cor:affine-model} gives their formulas in the source
parameters.
For $b>0$, the sources $q(s)=bs$ and $q(s)=b\tanh s$ have the same
boundary value and derivative, but
\[
R_{b\tanh,\kappa}<\frac{(n-1)(n-2)b}{n}=R_{bs,\kappa}.
\]
The source associated with $b\tanh s$ is not a positive finite sum of
powers, so this example uses the larger admissible source class.

The decay hypothesis cannot simply be weakened to an order-two scalar
error. Under $R_g+n(n-1)\kappa=O(\rho^2)$,
Theorem~\ref{thm:boundary-general} instead gives
\[
R_\gamma-S|_{\partial M}\le R_{q,\kappa},\qquad
S=\rho^{-2}\bigl(R_g+n(n-1)\kappa\bigr).
\]
The cylindrical example in Section~\ref{sec:boundary} shows that this
correction is necessary.

\subsection{Existence on nonmodel fillings}
For a prescribed complete metric $g$, the normalization requires a
positive solution of $-\Delta_g u=f_q(u)$ for which
$\rho=u^{1/(n-1)}$ is a smooth defining function.
Section~\ref{sec:existence} constructs such solutions for every source
in Theorem~\ref{thm:intro-boundary}, including $q(0)>0$, on two-ended
non-Einstein metrics. It also gives examples on AdS--Schwarzschild
metrics with connected boundary of positive scalar curvature when
$n\ge4$ (Theorem~\ref{thm:schwarzschild}).
Volume-preserving tangential deformations retain the same defining
function for this full source class
(Proposition~\ref{prop:volume-deformations}).

For a Riemannian manifold $(N^k,h_N)$ and an integer $\ell\ge1$,
put $\varepsilon_c=(\ell+1)^{-1}$. For $0<\varepsilon<\varepsilon_c$,
let $g_\varepsilon$ on $M=\mathbb R\times N$ be the smooth double of
\[
\frac{dr^2}{r^2-1+\varepsilon(1-\varepsilon)^\ell r^{-2\ell}}
+r^2h_N,\qquad r>\sqrt{1-\varepsilon},
\]
joined at $r=\sqrt{1-\varepsilon}$.
Its two expanding ends are separated by a neck whose length tends to
infinity as $\varepsilon\uparrow\varepsilon_c$.
The constant $R_{q,1}$ is the model boundary scalar curvature defined
in the preceding subsection, with $\kappa=1$.

\begin{theorem}\label{thm:intro-nonsymmetric}
  Let $n\ge3$, $k=n-1$, and let $\ell\ge k$ be an integer.
  Let $(N^k,h_N)$ be connected, closed and Einstein, with
  $\operatorname{Ric}_{h_N}=-(k-1)h_N$.
  Let $q\in C^\infty([0,\infty))$ be nonnegative, nondecreasing and
  concave, positive on $(0,\infty)$, and admit a smooth odd extension
  across zero.
  There exists $\varepsilon_0\in(0,\varepsilon_c)$ such that the
  following holds for every $\varepsilon\in(\varepsilon_0,\varepsilon_c)$.
  For each compact $K\subset M$, there is $\delta>0$ such that every
  smooth Riemannian metric $g$ on $M$ satisfying
  \[
  g=g_\varepsilon\quad\text{on }M\setminus K,\qquad
  \|g-g_\varepsilon\|_{C^2(K,g_\varepsilon)}<\delta
  \]
  admits a compactification $\overline M\cong[-1,1]\times N$ and
  a smooth defining function $\rho$ such that $h=\rho^2g$ extends to
  a smooth Riemannian metric on $\overline M$ and
  \[
  -\Delta_h\rho=q(\rho)\quad\text{in }M.
  \]
  Moreover,
  \[
  \operatorname{Ric}_g\ge-kg,\qquad
  R_g+n(n-1)=O(\rho^{2\ell+2})=o(\rho^2)
  \quad\text{as }\rho\to0,
  \]
  and $\gamma=h|_{T\partial M}$ satisfies
  \[
  R_\gamma<R_{q,1}\quad\text{at every point of }\partial M.
  \]
  \end{theorem}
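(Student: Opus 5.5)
The plan is to separate the statement into three parts. First, construct a positive solution on the unperturbed warped metric $g_\varepsilon$ by an ODE argument. Second, show that this solution is nondegenerate and persists under compactly supported perturbations by an implicit function argument in weighted spaces. Third, check the curvature conditions and the strict boundary inequality, using the rigidity half of Theorem~\ref{thm:intro-boundary}.

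\emph{Step 1: the symmetric solution.} On $g_\varepsilon$ we will look for $\rho=\rho(r)$ even across the neck. We pass to arclength $t$, with $dt=dr/\sqrt{V(r)}$, where $V(r)=r^2-1+\varepsilon(1-\varepsilon)^\ell r^{-2\ell}$. The equation $-\Delta_h\rho=q(\rho)$ with $h=\rho^2g$ is, by the conformal change $u=\rho^k$, the ODE
\[
-u''-\frac{k r'}{r}u'=f_q(u)
\]
in $t$. We impose $u'(0)=0$ at the neck and require that $\rho$ be asymptotic to a multiple of $r^{-1}$ on each end. At infinity the ends are asymptotically hyperbolic of curvature $-1$ and the neck function $V$ has a simple zero. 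We therefore shoot from $t=0$ with central value $u(0)=a$. Solutions that reach zero at finite $t$, and those that stay bounded below, both form open sets, and the threshold value $a_\varepsilon$ between them gives a positive entire solution decaying like $e^{-kt}$. This is the analogue of the maximal-center profile $U_H$, whose existence the paper establishes for the admissible source class. Smoothness of $\rho=u^{1/k}$ up to $r=\infty$ in the coordinate $1/r$ will use the odd extension of $q$, which makes the expansion of $\rho$ in $1/r$ contain only odd powers. The condition $\ell\ge k$ ensures that the correction $\varepsilon(1-\varepsilon)^\ell r^{-2\ell}$ does not spoil the smoothness to the needed order.

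\emph{Step 2: nondegeneracy and perturbation.} We linearize $u\mapsto -\Delta_g u-f_q(u)$ at $u_\varepsilon$ on weighted H\"older spaces $\rho_\varepsilon^{k}C^{2,\alpha}$, or preferably work with the equation for $\rho$ in the smooth compactified picture. The linearized operator is asymptotically $-\Delta_{\mathbb H}+n-1+\dots$, which is Fredholm of index zero on the appropriate weights (Mazzeo–Lee theory). The main obstacle is injectivity. The $N$-directions decouple into Fourier modes using the eigenfunctions of $h_N$, and the nonconstant modes carry the additional positive term $\lambda_j r^{-2}$. The radial mode requires a Wronskian/shooting argument: a kernel element would be the derivative of the shooting family at the threshold. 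The long-neck regime $\varepsilon\uparrow\varepsilon_c$ is used here. As the neck length tends to infinity, $u_\varepsilon$ on the neck approaches a solution on a cylinder, and we will show that the radial kernel vanishes by comparison with the limiting problem on $\mathbb R\times N$ with its explicit structure. This is what forces $\varepsilon>\varepsilon_0$. Once the operator is invertible, the implicit function theorem gives, for $\|g-g_\varepsilon\|_{C^2(K)}<\delta$, a solution $u_g$ that equals the same asymptotic type off $K$. Since $g=g_\varepsilon$ outside $K$, the ends are unchanged and $\rho=u_g^{1/k}$ is still a smooth defining function, with compactification $[-1,1]\times N$.

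\emph{Step 3: curvature.} Since $g=g_\varepsilon$ near infinity, we compute $\operatorname{Ric}_g$ and $R_g$ from the warped form. The Einstein condition on $N$ together with the explicit $V$ give $R_g+n(n-1)=O(r^{-2\ell-2})=O(\rho^{2\ell+2})$. The bound $\operatorname{Ric}\ge-k$ reduces to sign checks on $V''$ and $(V-r^2)/r^2$, which hold for $0<\varepsilon<\varepsilon_c$. On $K$ the bound is stable under $C^2$-small perturbation only if it holds strictly, so we check strictness for $g_\varepsilon$ on compact sets. If some direction gives equality, we instead require the perturbation to preserve the inequality, and we expect strictness on the neck from the explicit formulas. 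Finally, Theorem~\ref{thm:intro-boundary} gives $R_\gamma\le R_{q,1}$. Equality at any point would force $(\overline M,h)$ to be a hyperbolic ball, which is impossible because $\partial M$ has two components. Hence the inequality is strict at every boundary point.
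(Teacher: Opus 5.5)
Your Step~2 carries the whole argument, and the injectivity it needs is asserted rather than proved. The reasons you offer do not give it. The radial part $L_0$ of $L=-\Delta_{g_\varepsilon}-f_q'(u_\varepsilon)$ is not positive. By the Nehari identity for $u_\varepsilon$ and $sf_q'(s)\ge(1+1/k)f_q(s)$ from \eqref{eq:q-slope}, with $j=a_\varepsilon^k$,
\[
\langle L_0u_\varepsilon,u_\varepsilon\rangle=\int j\,u_\varepsilon\bigl(f_q(u_\varepsilon)-u_\varepsilon f_q'(u_\varepsilon)\bigr)\,dt<0 .
\]
So $L_0+\lambda a_\varepsilon^{-2}$ still has a negative direction for small $\lambda>0$. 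The extra term $\lambda_ja_\varepsilon^{-2}$ therefore excludes kernel on the nonconstant modes only after a quantitative comparison of $f_q'(u_\varepsilon)$ with $\lambda_1(N)a_\varepsilon^{-2}$ as $\varepsilon\uparrow\varepsilon_c$. Nothing in the hypotheses bounds $\lambda_1(N)$ below: for $k=2$, closed hyperbolic surfaces have arbitrarily small $\lambda_1$. No such comparison is given.

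On the radial modes, the even solution $\partial_au_a|_{a=a_\varepsilon}$ and the odd solution of the linearized ODE both tend to constants at infinity. Each is a kernel element exactly when its limit vanishes, and the threshold definition of $a_\varepsilon$ does not prevent this. There is also no ``limiting problem on $\mathbb R\times N$'' to compare with. On the cylinder the radial equation is $-u''=f_q(u)$, and a positive solution on the whole line would be concave, hence constant, which is impossible since $f_q>0$ on $(0,\infty)$. Any usable limit would have to be a rescaled problem, and neither its derivation nor the transfer of nondegeneracy is supplied.

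The paper never needs nondegeneracy. A neck test function gives $c_{g_\varepsilon}\le CL_\varepsilon^{-(2k+1)}\to0$, so $c_{g_\varepsilon}<c_{\mathbb H}$ for $\varepsilon>\varepsilon_0$. This gap survives $C^0$-small changes on $K$ (Proposition~\ref{prop:long-neck-gap}). Proposition~\ref{prop:energy-gap} then produces a ground state for each perturbed metric directly by concentration-compactness: mass escaping along an end would yield a nonzero element of $\mathcal N_{g_{\mathbb H}}$ with energy at most $c_g$. This energy gap, not nondegeneracy, is what $\varepsilon>\varepsilon_0$ is for.

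Even granting invertibility, your claim that $\rho=u_g^{1/k}$ is a smooth defining function does not follow from the ends being unchanged. The metric is unchanged off $K$, but $u_g$ is no longer radial there. The weight $\rho_\varepsilon^k\sim x^k$ (with $x=1/r$) that you propose is exactly the indicial root $k$ of $-\Delta_g$, since $-\Delta_gx^s=s(k-s)x^s+O(x^{s+2})$. It therefore lies outside the Fredholm range $0<\mu<k$. Inside that range, smallness of $u_g-u_\varepsilon$ gives only $|u_g-u_\varepsilon|\le\delta x^\mu$, which by itself does not give positivity against $u_\varepsilon\sim v_0x^k$. You need the sharp bounds $cx^k\le u_g\le Cx^k$ and smoothness of $v=x^{-k}u_g$ at $x=0$.

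That is Lemma~\ref{lem:nonradial-end}, and it is where the paper actually uses the odd extension. With $q(s)=sQ(s^2)$ the equation for $v$ is smooth in $x^2$, so $v(|z|,y)$ solves a uniformly elliptic equation on $\mathbb R^{k+2}\times N$. The radial solution needs no odd extension (Theorem~\ref{thm:two-ended}), so you have placed that hypothesis at the wrong step.

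Likewise, $\ell\ge k$ has nothing to do with smoothness, since $D(x)=1-x^2+c_\varepsilon x^{2\ell+2}$ is an even polynomial for every $\ell$. What $\ell\ge k$ does is make the tangential eigenvalue $(\ell-k+1)c_\varepsilon a_\varepsilon^{-2\ell-2}$ of the Ricci excess in \eqref{eq:two-ended-ricci} strictly positive. That strictness is exactly what your Step~3 needs for $\operatorname{Ric}\ge-kg$ to survive $C^2$-small perturbations on $K$. Your final rigidity argument, that two boundary components exclude the hyperbolic ball, is correct.
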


The defining function can be chosen so that $u=\rho^k\in H^1(M,g)$
is a ground state of $-\Delta_g u=f_q(u)$:
$\mathcal E_g(u)=\inf_{\mathcal N_g}\mathcal E_g$, where
$F_q(s)=\int_0^s f_q(\tau)\,d\tau$ and
\[
\mathcal E_g(v)=\frac12\int_M|dv|_g^2\,dV_g
-\int_M F_q(|v|)\,dV_g.
\]
Here $\mathcal N_g$ is the set of nonzero $v\in H^1(M,g)$ satisfying
\[
\int_M|dv|_g^2\,dV_g=\int_M|v|f_q(|v|)\,dV_g.
\]
This minimization imposes no symmetry on $u$.
The odd-extension assumption is used for smooth boundary regularity;
finite-energy existence and two-sided decay hold for the full source
class. Both $bs$ and $b\tanh s$, $b>0$, satisfy the odd-extension
condition. The boundary metrics are positive smooth conformal multiples
of $h_N$, with factors that need not be constant or equal on the two ends.

Each boundary component is therefore of negative Yamabe type.
Two-ended Einstein metrics with this boundary sign already appear in
Witten--Yau \cite{Witten:1999aa} and are recalled by Cai--Galloway
\cite{Cai:1999aa}. Their connectedness results require a positive or
nonnegative scalar-curvature representative on at least one boundary
component, respectively, together with the relevant curvature and
asymptotic hypotheses.

\section{Universal local estimates}\label{sec:local}
  We formulate the local estimate in terms of the primitive of the source
  and its tail integral. For background on universal estimates and
  Liouville problems, see \cite{Serrin:2002aa,Polacik:2007aa}.

  We call $f$ \emph{admissible} in dimension $n\ge2$ if
  \begin{equation}\label{eq:f-basic}
  f\in C^1([0,\infty))\cap C^2((0,\infty)),\qquad
  f(0)=f'(0)=0,
  \end{equation}
  and it has a finite decomposition $f=\sum_{i=1}^m f_i$ with the
  following properties. Each $f_i$ is positive and belongs to
  $C^2((0,\infty))\cap L^1_{\mathrm{loc}}([0,\infty))$.
   Set
  \[
  F_i(s)=\int_0^s f_i(t)\,dt,\quad
  w_i(s)=\int_s^\infty F_i(t)^{-1/2}\,dt,\quad
  F=\sum_i F_i,\quad w(s)=\int_s^\infty F(t)^{-1/2}\,dt.
  \]
  The component tails are finite for $s>0$, and, for some $C_2\ge1$,
  \begin{equation}\label{eq:component-order}
  \frac{f_1} {F_1}\le\cdots\le\frac{f_m}{F_m},\qquad
  \sum_{i=1}^m(F_iw_i)^2\le C_2(Fw)^2\quad\text{on }(0,\infty).
  \end{equation}
  There is an exponent
  \begin{equation}\label{eq:mu}
  \mu>\frac{n-2}{2(n-1)},
  \end{equation}
  such that $(F_iw_i)^\mu$ is concave on $(0,\infty)$ for every $i$.

  For $f(s)=a s^p$, $a>0$, $p>1$,
  \[
  F(s)=\frac a{p+1}s^{p+1},\qquad
  w(s)=\frac{2\sqrt{(p+1)/a}}{p-1}s^{-(p-1)/2}.
  \]
  Hence $(Fw)^\mu$ is proportional to $s^{\mu(p+3)/2}$, and
  $\mu=2/(p+3)$ verifies concavity. The lower bound in
  \eqref{eq:mu} holds precisely for $1<p<p_S(n)$.
  
  \begin{theorem}\label{thm:local-tail}
  Let $(M^n,g)$ be complete and $\Omega\subset M$ open. Suppose $\operatorname{Ric}\ge-(n-1)\kappa g$ on $\Omega$, with $\kappa\ge0$, and let $f$ be admissible. Every positive $C^3$ solution of $-\Delta u=f(u)$ in $\Omega$ satisfies
  \[
  |\nabla\log w(u)|(x)+w(u(x))^{-1}
  \le C(n,\mu,m,C_2)\bigl(d_\Omega(x)^{-1}+\sqrt\kappa\bigr).
  \]
  \end{theorem}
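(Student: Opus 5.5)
We substitute $v=w(u)$, so that $v>0$ is a decreasing function of $u$ and $v\to\infty$ as $u\to0$. In the pure power case $f=as^p$ this is $v\propto u^{-(p-1)/2}$, the familiar change of variables from the Gidas--Spruck/Lu approach. The target quantity $|\nabla\log v|+v^{-1}$ is then the natural scale-invariant one. We apply a Bernstein-type maximum principle to
\[
G=\frac{|\nabla v|^2+1}{v^2},
\]
or more flexibly to $\phi(v)\bigl(|\nabla v|^2+c\bigr)$ with a weight $\phi$ to be tuned. We multiply by a standard cutoff $\eta$ supported in the ball $B_{d_\Omega(x)}(x)$ and work at an interior maximum of $\eta G$. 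Calabi's trick and the Laplacian comparison $\Delta d\le (n-1)\sqrt\kappa\coth(\sqrt\kappa d)$ handle cutoff singularities and produce the error terms $d^{-2}+\kappa$.

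\emph{Step 1: the equation for $v$.} Write $u=w^{-1}(v)$. Since $w'=-F^{-1/2}$, we have $\nabla u=-F(u)^{1/2}\nabla v$. This gives
\[
\Delta v=\frac{f(u)}{\sqrt{F(u)}}-\frac{f(u)}{2F(u)}\,|\nabla v|^2\sqrt{F(u)}\cdot\frac{1}{\sqrt{F(u)}}\cdot(\dots),
\]
and after simplification
\[
\Delta v=\frac{f}{F^{1/2}}\Bigl(1-\tfrac12|\nabla v|^2\Bigr).
\]
Conveniently this involves only the ratio $f/F$ and the product $Fw$. Indeed $f/F^{1/2}=(f/F)F^{1/2}$, and $vF^{1/2}=Fw\cdot F^{-1/2}$ up to rearrangement, so the nonlinearity can be expressed through $\lambda(v):=v f/(F^{1/2})$, or equivalently through $(Fw)'$.

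\emph{Step 2: Bochner.} Apply Bochner with $\operatorname{Ric}\ge-(n-1)\kappa$ to $|\nabla v|^2$, using the refined Kato inequality $|\nabla^2v|^2\ge\frac1{n-1}(\Delta v-\nabla^2v(\nu,\nu))^2+\dots$ in direction $\nu=\nabla v/|\nabla v|$. The coefficient of the good quadratic term depends on $\frac{n-2}{2(n-1)}$. Concavity of $(Fw)^\mu$, with $\mu>\frac{n-2}{2(n-1)}$, is exactly the inequality needed to make the term coming from $\frac{d}{dv}\lambda$ have a favorable sign, as in the power case where $\mu=2/(p+3)$ reproduces the subcritical condition. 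At the maximum point, $\nabla G=0$ eliminates the Hessian cross terms. The result is a differential inequality of the form
\[
0\ge \eta\,c_0\,v^2G^2-C\bigl(d^{-2}+\kappa\bigr)G,
\]
which yields $G\le C(d^{-2}+\kappa)$ on the half ball, and hence the claim.

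\emph{Step 3: the decomposition.} For general admissible $f$, $Fw$ is not itself of the required form, since concavity is assumed only for each $(F_iw_i)^\mu$. This step is the main obstacle. I would work with the components: the ordering $f_1/F_1\le\dots\le f_m/F_m$ lets one compare each component's nonlinear term, and the condition $\sum(F_iw_i)^2\le C_2(Fw)^2$ lets the sum be controlled by the aggregate. One natural way is to use the auxiliary function $\sum_i(F_iw_i)(u)$ in place of $Fw$ as weight, and run the Bochner argument on $\phi(u)|\nabla u|^2/F(u)$ plus $\phi$-weighted constant terms. Here $\phi$ is chosen so that the concavity of each $(F_iw_i)^\mu$ gives a nonnegative defect, with Cauchy--Schwarz over $i$ costing the factors $m$ and $C_2$. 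This explains the dependence of the constant on $(n,\mu,m,C_2)$.

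A final equivalence $w(u)\simeq$ the chosen weight, again via $C_2$ and the ordering, transfers the bound back to $|\nabla\log w(u)|+w(u)^{-1}$. The case $\kappa=0$ is included since only $d^{-2}+\kappa$ enters.
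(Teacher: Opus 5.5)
\textbf{Step~2 is the whole proof, and as set up it does not close.} First, $\Delta v=\frac{f}{\sqrt F}\bigl(1+\frac12|\nabla v|^2\bigr)$; the sign in your formula is wrong. For $f=as^p$ your quantity is $G=\frac{(p-1)^2}{4}\bigl(|\nabla\log u|^2+\frac a{p+1}u^{p-1}\bigr)$, which is the classical logarithmic-gradient Bernstein function. At a critical point of $G$, Bochner's formula together with the refined Kato inequality bounds $\Delta G$ below by a quadratic form in $|\nabla\log u|^2$ and $u^{p-1}$, and this bound is pointwise sharp. For $\phi=v^{-2}$ and any $c>0$, this form fails to be positive once $p\ge\frac{n+3}{n-1}$, which is below $p_S(n)$. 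This already happens with $\kappa=0$ and no cutoff. So concavity of $(Fw)^\mu$ cannot supply the sign you assert. Your closing inequality $0\ge\eta c_0v^2G^2-C(d^{-2}+\kappa)G$ is also not scale-consistent: it would bound $v^2G$, not $G$.

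The missing idea is the normalization. The paper works with $E=\frac12|\nabla u|^2+F(u)=F(u)\bigl(1+\frac12|\nabla v|^2\bigr)$. For $t\ge t_0$, Lemma~\ref{lem:energy} gives $\Delta E^t\ge-\frac{t(n-1)}{n}\frac{f^2}{F}E^t+tE^{t-1}\operatorname{Ric}(\nabla u,\nabla u)$. It then compares $E$ with $Fw$ rather than $Fw^2$; in your family this is $\phi=1/v$, $c=2$. The weight $(Fw)^t$, with $t_0<t<\mu$, has logarithmic derivative $t(1-\Theta)\frac fF$, where $\Theta=\sqrt F/(fw)$. Lemma~\ref{lem:theta} gives $\Theta\le\frac{1-2\mu}{2(1-\mu)}<\frac1n$ precisely because $\mu>t_0$. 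This is what beats the term $-\frac{t(n-1)}{n}\frac{f^2}{F}$. Concavity of $(Fw)^\mu$ with $\mu>t$ then gives the second-order condition.

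The ratio $E/(Fw)$ scales like $\Lambda=R^{-1}+\sqrt\kappa$, and a bound $E\lesssim\Lambda Fw$ fails where $u\to0$, since it would force $|\nabla\log w(u)|^2\lesssim\Lambda/w(u)$. The paper therefore adds $\Lambda^ts^{2t}$ to the weight. Its strict concavity yields a positive term $\delta\Lambda$ that absorbs the cutoff errors, and Proposition~\ref{prop:localize} gives $E\le C\Lambda(Fw+\Lambda Fw^2)$. Only after this do division by $Fw^2$ and Young's inequality give the stated bound.

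\textbf{Step~3 likewise needs a device your sketch lacks.} With several components, the tail part $S$ of the weight must satisfy $\frac{S'}{S}\ge(d+c)\frac fF$, where $d=\frac{t(n-1)}{n}$ and the margin $c$ is proportional to $\mu-t_0$. Write $a_i=f_i/F_i$, $\nu_i=F_i/F$, and $r_i$ for the share of $S$ carried by the $i$th term. Then $\frac fF=\sum_i\nu_ia_i$, but you only have $\frac{S'}{S}\ge t(1-\theta)\sum_ir_ia_i$ with $\theta=\frac{1-2\mu}{2(1-\mu)}$. So $\sum_ir_ia_i$ must be at least a factor close to $1$ times $\sum_i\nu_ia_i$. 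An untilted weight built from $\sum_iF_iw_i$ loses a fixed factor in the following situation: a component with larger $a_i$ carries most of $F$, while $F_iw_i=F_i^{1/2}/(a_i\Theta_i)$ stays comparable to the others. This is compatible with the $C_2$ condition, and Cauchy--Schwarz over $i$ cannot recover the loss.

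The paper tilts the weight: $L=\sum_i\lambda^{i-1}(F_iw_i)^t+\Lambda^ts^{2t}$, with $\lambda-1$ a multiple of $C_2^{t/2}/(\mu-t_0)$. From $F_iw_i\ge\nu_iFw$ and the $C_2$ condition one gets $\sum_{i\ge j}r_i\ge(1+x)^{-1}\sum_{i\ge j}\nu_i$, where $x=C_2^{t/2}/(\lambda-1)$. Summation by parts against the ordered $a_i$ then gives $\sum_ir_ia_i\ge(1+x)^{-1}\frac fF$. The cost is the factor $\lambda^{m-1}$ in $Fw\le L^{1/t}\le C(n,\mu)^mC_2^{m/2}Fw+C\Lambda Fw^2$ (Lemma~\ref{lem:weight}); this is where the dependence on $m$ actually arises. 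The lower bound $L^{1/t}\ge Fw$, needed to transfer back to $w(u)$, uses $(Fw)^2\le\sum_i(F_iw_i)^2$.
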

  
  \subsection{The tail estimate}
  Let $f>0$ be $C^1$ on $(0,\infty)$, and let $F>0$ satisfy $F'=f$. For a positive solution put
  \[
  E=\frac{1}{2}|\nabla u|^2+F(u),\qquad t_0=\frac{n-2}{2(n-1)}.
  \]

  \begin{lemma}\label{lem:energy}
  For every $t>0$ with $t\ge t_0$,
  \begin{equation}\label{eq:energy-refined}
  \Delta E^t\ge
  -\frac{t f(u)^2 E^t}{\frac n{n-1}F(u)+(t-t_0)|\nabla u|^2}
  +tE^{t-1}\operatorname{Ric}(\nabla u,\nabla u).
  \end{equation}
  In particular,
  \begin{equation}\label{eq:energy-simple}
  \Delta E^t\ge-\frac{t(n-1)}n\frac{f(u)^2}{F(u)}E^t
  +tE^{t-1}\operatorname{Ric}(\nabla u,\nabla u).
  \end{equation}
  \end{lemma}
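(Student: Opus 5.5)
The plan is to compute $\Delta E$ by Bochner's formula and then handle the two factors coming from the chain rule for $E^t$ together. The key tool is a refined Kato-type splitting of the Hessian in a frame adapted to $\nabla u$, which yields a quadratic in one scalar variable. Since $u\in C^3$ and $E\ge F(u)>0$, $E^t$ is $C^2$ and
\[
\Delta E^t=tE^{t-1}\Delta E+t(t-1)E^{t-2}|\nabla E|^2 .
\]

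\textbf{Computing $\Delta E$.} By Bochner's formula,
\[
\tfrac12\Delta|\nabla u|^2=|\nabla^2u|^2+\langle\nabla u,\nabla\Delta u\rangle+\operatorname{Ric}(\nabla u,\nabla u).
\]
Also $\Delta F(u)=f'(u)|\nabla u|^2+f(u)\Delta u$. Using $\Delta u=-f(u)$ and $\nabla\Delta u=-f'(u)\nabla u$, the $f'$ terms cancel, so
\[
\Delta E=|\nabla^2u|^2-f^2+\operatorname{Ric}(\nabla u,\nabla u),\qquad \nabla E=\nabla^2u(\nabla u,\cdot)+f\nabla u .
\]
The Ricci term gives exactly $tE^{t-1}\operatorname{Ric}(\nabla u,\nabla u)$. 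Writing $X=|\nabla u|^2$ and $D=\frac n{n-1}F+(t-t_0)X$, it therefore suffices to prove the pointwise inequality
\[
|\nabla^2u|^2-f^2+(t-1)\frac{|\nabla E|^2}{E}\ge-\frac{f^2E}{D}.
\]

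\textbf{Points with $\nabla u\neq0$.} Take an orthonormal frame with $e_1=\nabla u/|\nabla u|$, and set $a=u_{11}$, $b=(u_{1j})_{j\ge2}$ and $y=a+f$. Since $\sum_{i\ge2}u_{ii}=-f-a=-y$, Cauchy--Schwarz gives
\[
|\nabla^2u|^2\ge a^2+2|b|^2+\frac{y^2}{n-1}.
\]
Moreover $|\nabla E|^2=X(y^2+|b|^2)$. The $|b|^2$ terms contribute $|b|^2\bigl(2+(t-1)X/E\bigr)$. This is $\ge0$ because $X/E\le2$ and $t-1\ge-1$, so these terms can be dropped. What remains is
\[
(y-f)^2+\frac{y^2}{n-1}-f^2+(t-1)\frac{X}{E}y^2 .
\]
The coefficient of $y^2$ is
\[
\frac{n}{n-1}+(t-1)\frac XE=\frac{\frac n{n-1}\bigl(\frac X2+F\bigr)+(t-1)X}{E}=\frac DE .
\]
This uses $\frac n{2(n-1)}-1=-t_0$, and $D>0$ because $t\ge t_0$. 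The expression is then $\frac DE y^2-2fy$, and minimizing over $y$ gives the lower bound $-f^2E/D$.

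\textbf{Critical points and the simple form.} At a point where $\nabla u=0$ we have $\nabla E=0$ and $E=F$. Also $|\nabla^2u|^2\ge(\Delta u)^2/n=f^2/n$, so the left side is $\ge-\frac{n-1}nf^2$. This equals $-f^2E/D$ since there $D=\frac n{n-1}F$. Hence \eqref{eq:energy-refined} holds everywhere.

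For \eqref{eq:energy-simple}, note $D\ge\frac n{n-1}F$ because $t\ge t_0$. This gives $f^2E^t/D\le\frac{n-1}n\frac{f^2}{F}E^t$.

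The only delicate step is the coefficient identity producing $D$, together with checking that the off-diagonal $|b|^2$ terms have a good sign even when $t<1$. The latter is where the bound $X\le2E$ enters.
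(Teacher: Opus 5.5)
Your proof is correct and is essentially the paper's argument. It uses the same Bochner identity $\Delta E=|\nabla^2u|^2-f^2+\operatorname{Ric}(\nabla u,\nabla u)$, the same frame with $e_1=\nabla u/|\nabla u|$, the same completion of the square with coefficient $D/E$ (via $\frac{n}{2(n-1)}-1=-t_0$), and the same critical-point check via $|\nabla^2u|^2\ge f^2/n$.

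The only difference is how the mixed Hessian entries are handled. You keep the full coefficient $2|b|^2$ and check $2+(t-1)X/E\ge0$ using $X\le 2E$. The paper uses a slightly weaker Kato-type identity, with coefficient $\frac n{n-1}$ on the mixed terms, and completes the square in the vector $\nabla E$. There the mixed components are absorbed with the positive coefficient $D/E$, so no separate sign check is needed.
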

  \begin{proof}
  At a regular point choose $e_1=\nabla u/|\nabla u|$ and let
  $e_2,\ldots,e_n$ be tangent to its level hypersurface. Write
  $u_{ij}=\nabla^2u(e_i,e_j)$. Since
  \[
  \sum_{\alpha=2}^n u_{\alpha\alpha}=-f-u_{11},\qquad
  E_i=|\nabla u|(u_{1i}+f\delta_{1i}),
  \]
 we have
  \begin{align}
  &|\nabla u|^2\bigl(|\nabla^2u|^2-f^2\bigr)
  +2f\langle\nabla u,\nabla E\rangle
  -\frac n{n-1}|\nabla E|^2\notag\\
  &\qquad=|\nabla u|^2\left[
  \sum_{\alpha,\beta=2}^n
  \left(u_{\alpha\beta}+\frac{f+u_{11}}{n-1}\delta_{\alpha\beta}\right)^2
  +\frac{n-2}{n-1}\sum_{\alpha=2}^n u_{1\alpha}^2\right]\ge0.
  \label{eq:normal-tangent}
  \end{align}
  Bochner's identity and the chain
  rule give
  \begin{align*}
  \frac12\Delta|\nabla u|^2
  &=|\nabla^2u|^2-f'(u)|\nabla u|^2
  +\operatorname{Ric}(\nabla u,\nabla u),\\
  \Delta F(u)&=f'(u)|\nabla u|^2-f(u)^2.
  \end{align*}
  Adding gives
  \[
  \Delta E=|\nabla^2u|^2-f^2+\operatorname{Ric}(\nabla u,\nabla u).
  \]
  Consequently, at regular points,
  \begin{align*}
  \Delta E^t\ge tE^{t-1}\bigg[
  &\left(\frac{n}{(n-1)|\nabla u|^2}+\frac{t-1}{E}\right)|\nabla E|^2\\
  &-\frac{2f}{|\nabla u|^2}
  \langle\nabla u,\nabla E\rangle
  +\operatorname{Ric}(\nabla u,\nabla u)\bigg].
  \end{align*}
  The coefficient of $|\nabla E|^2$ is
  \[
  \frac{n}{(n-1)|\nabla u|^2}+\frac{t-1}{E}
  =\frac{\frac n{n-1}F+(t-t_0)|\nabla u|^2}
  {E|\nabla u|^2}.
  \]
  Since $t\ge t_0$, the numerator is positive. Completing the square
  in $\nabla E$ gives
  \begin{align*}
  &\left(\frac{n}{(n-1)|\nabla u|^2}+\frac{t-1}{E}\right)|\nabla E|^2
  -\frac{2f}{|\nabla u|^2}\langle\nabla u,\nabla E\rangle\\
  &\qquad\ge-
  \frac{f^2E}{\frac n{n-1}F+(t-t_0)|\nabla u|^2}.
  \end{align*}
  This proves \eqref{eq:energy-refined} at regular points. At a critical
  point, $E=F$ and $\nabla E=0$; the estimate
  $|\nabla^2u|^2\ge f^2/n$ proves the same inequality directly.
  Finally, the denominator in \eqref{eq:energy-refined} is at least
  $nF/(n-1)$, which gives \eqref{eq:energy-simple}.
  \end{proof}
  We balance the negative source term in \eqref{eq:energy-simple} by
  dividing $E^t$ by a function of $u$. Fix $0<t<1/2$ with $t\ge t_0$,
  and write $d=t(n-1)/n$. For a positive $C^2$ scalar function $L$ set
  $G=E^t/L(u)$. To track the first-order term, differentiate
  the identity $E^t=L(u)G$:
  \[
  \Delta E^t=L\Delta G+2L'\langle\nabla u,\nabla G\rangle
  +G\bigl(L''|\nabla u|^2-fL'\bigr).
  \]
  Dividing by $L$ and using \eqref{eq:energy-simple} gives
  \begin{equation}\label{eq:weighted-local}
  \Delta G+2\frac{L'}L\langle\nabla u,\nabla G\rangle
  \ge G\left[-\frac{L''}L|\nabla u|^2
  +f\left(\frac{L'}L-d\frac fF\right)
  +\frac tE\operatorname{Ric}(\nabla u,\nabla u)\right].
  \end{equation}
  \begin{proposition}\label{prop:localize}
  Fix $0<t<1/2$ with $t\ge t_0$, and set $d=t(n-1)/n$.
  On a complete manifold $(M^n,g)$, let $u\in C^3(B_{2R}(p))$ satisfy the hypotheses of Lemma~\ref{lem:energy} and let $\operatorname{Ric}\ge-(n-1)\kappa g$ on this ball. Set $\Lambda=R^{-1}+\sqrt\kappa$. Suppose $L>0$ is $C^2$ on an interval containing the range of $u$ and, for some $\varepsilon,\delta>0$ and $C_0\ge1$,
  \begin{align}
  -L^{1/t}\left(\frac{L''}L+\varepsilon\frac{(L')^2}{L^2}\right)&\ge\delta\Lambda,
  \label{eq:mult-second}\\
  L^{1/t}\frac fF\left(\frac{L'}L-d\frac fF\right)
  &\ge\delta\Lambda-C_0\Lambda^2\frac{L^{1/t}}F.
  \label{eq:mult-zero}
  \end{align}
  Then
  \[
  \sup_{B_R(p)}\frac{E}{L(u)^{1/t}}
  \le C(n,t,\varepsilon)\frac{C_0+1}{\delta}\Lambda.
  \]
  \end{proposition}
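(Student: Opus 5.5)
We plan a standard cutoff (Cheng--Yau type) maximum-principle argument applied to $G=E^t/L(u)$, using the differential inequality \eqref{eq:weighted-local}. The goal is to bound $Q=G^{1/t}=E/L^{1/t}$, so we first rewrite the bracket in \eqref{eq:weighted-local} in terms of $Q$.

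\textbf{Lower bound for the bracket.} Since $E\ge\frac12|\nabla u|^2$ and $\operatorname{Ric}\ge-(n-1)\kappa$, the Ricci term is at least $-2t(n-1)\kappa$. Multiplying the bracket by $L^{1/t}$ and using \eqref{eq:mult-second}, the first term gives
\[
-L^{1/t}\frac{L''}{L}|\nabla u|^2\ge \delta\Lambda|\nabla u|^2+\varepsilon L^{1/t}\frac{(L')^2}{L^2}|\nabla u|^2 .
\]
The middle term equals $F\cdot L^{1/t}\frac fF(\frac{L'}L-d\frac fF)$, which by \eqref{eq:mult-zero} is at least $\delta\Lambda F-C_0\Lambda^2L^{1/t}$. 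Since $|\nabla u|^2+F\ge E$, we expect
\[
\text{bracket}\ge \delta\Lambda Q-(C_0+C)\Lambda^2+\varepsilon\frac{(L')^2}{L^2}|\nabla u|^2 .
\]
The last term is kept in reserve to absorb the drift term $2\frac{L'}{L}\langle\nabla u,\nabla G\rangle$.

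\textbf{Localization.} Take a cutoff $\phi=\psi(d(p,\cdot)/R)$ supported in $B_{2R}$ and equal to $1$ on $B_R$, with $|\nabla\phi|^2\le C\phi/R^2$. By Laplacian comparison (the Ricci bound and the Calabi trick at the cut locus),
\[
\Delta\phi\ge -C(R^{-2}+\sqrt\kappa R^{-1})\ge -C\Lambda^2 .
\]
At a maximum point of $\phi G$ we have $\nabla G=-G\nabla\phi/\phi$ and $\Delta(\phi G)\le0$, so
\[
\phi\Delta G\le G\Bigl(2\frac{|\nabla\phi|^2}{\phi}-\Delta\phi\Bigr)\le CG\Lambda^2 .
\]
The drift term becomes $2\frac{L'}{L}G\langle\nabla u,\nabla\phi\rangle/\phi$. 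By Young's inequality,
\[
2\Bigl|\frac{L'}{L}\Bigr||\nabla u|\frac{|\nabla\phi|}{\phi}\le \varepsilon\frac{(L')^2}{L^2}|\nabla u|^2+\varepsilon^{-1}\frac{|\nabla\phi|^2}{\phi^2}.
\]
The reserved $\varepsilon$ term cancels the first piece. Multiplying through by $\phi^2$ and using $|\nabla\phi|^2\le C\phi R^{-2}$ then yields
\[
\delta\Lambda\,\phi Q\le C(n,t,\varepsilon)(C_0+1)\Lambda^2 ,
\]
after handling the powers of $\phi$ discussed next.

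\textbf{Main obstacle: the power of the cutoff.} The maximized quantity is $\phi G=\phi Q^t$, whereas the bracket is linear in $Q=G^{1/t}$. At the maximum point, set $\phi Q$ against $\phi^{1/t}Q=(\phi G)^{1/t}$. Because $t<1/2$, we have $1/t>2$, so replacing $\phi$ by $\phi^{1/t}$, or more simply taking $\phi=\psi^{m}$ with $m$ large, keeps all error terms of the form $\phi^{\text{power}\ge1}$. This gives
\[
(\phi G)^{1/t}\le C\frac{C_0+1}{\delta}\Lambda
\]
at the maximum, hence on $B_R$ where $\phi=1$.

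We expect the delicate bookkeeping to be the Young's inequality step. One needs exactly the $\varepsilon(L')^2/L^2$ reserve from \eqref{eq:mult-second}, multiplied by $L^{1/t}$, to match the drift term after multiplication by $L^{1/t}$. This works only if the multiplication by $L^{1/t}$ is done consistently: we multiply the whole inequality by $L^{1/t}$, so that the drift term carries the same factor. The dependence of $C$ on $\varepsilon^{-1}$ enters here.
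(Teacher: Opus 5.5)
Your proposal is correct and follows essentially the same route as the paper. You maximize the cutoff times $G$ (with Laplacian comparison and Calabi supports), insert \eqref{eq:weighted-local}, absorb the drift term by Young's inequality using the $\varepsilon(L'/L)^2|\nabla u|^2$ reserve from \eqref{eq:mult-second}, bound the Ricci term by $-2t(n-1)\kappa$, and conclude by maximality.

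The only difference is cutoff bookkeeping. The paper uses a cutoff with merely $|\nabla\eta|\le CR^{-1}$ and multiplies by $\eta^{1/t}$, which is where $1/t>2$ enters. With your cutoff satisfying $|\nabla\phi|^2\le C\phi R^{-2}$, multiplying by $\phi$ already gives $\phi Q\le C(C_0+1)\Lambda/\delta$ at the maximum, and $(\phi G)^{1/t}=\phi^{1/t}Q\le\phi Q$, so your ``main obstacle'' is not really one. You should still take the cutoff supported in a smaller ball such as $B_{3R/2}(p)$, so that the maximum is attained.
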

  \begin{proof}
Use the standard distance cutoff and Calabi support construction
\cite{Cheng:1975aa,Yau:1975aa}: choose $0\le\eta\le1$,
$\eta=1$ on $B_R(p)$, supported in $B_{3R/2}(p)$, such that
\[
|\nabla\eta|\le CR^{-1},\qquad
\Delta\eta\ge-C(n)(R^{-2}+R^{-1}\sqrt\kappa)
\]
in the support sense. The positive function $\eta G$ attains its
maximum on the compact support. All maximum-point calculations below
use Calabi supports if the maximum is on the cut locus.
At an interior positive maximum $x_0$ of $\eta G$, where the cutoff is smooth,
  \[
  \frac{\nabla G}G=-\frac{\nabla\eta}\eta,\qquad
  0\ge\frac{\Delta G}G+\frac{\Delta\eta}\eta
  -2\frac{|\nabla\eta|^2}{\eta^2}.
  \]
  Using \eqref{eq:weighted-local}, this implies
  \begin{align*}
  0\ge{}&-\frac{L''}L|\nabla u|^2
  +f\left(\frac{L'}L-d\frac fF\right)
  +\frac tE\operatorname{Ric}(\nabla u,\nabla u)\\
  &+\frac{\Delta\eta}\eta-2\frac{|\nabla\eta|^2}{\eta^2}
  +2\frac{L'}L\frac{\langle\nabla u,\nabla\eta\rangle}\eta.
  \end{align*}
  The last term is bounded below by
  \[
  -\varepsilon\left(\frac{L'}L\right)^2|\nabla u|^2
  -\varepsilon^{-1}\frac{|\nabla\eta|^2}{\eta^2},
  \]
  and, because $|\nabla u|^2\le2E$, the Ricci term is bounded
  below by $-2t(n-1)\kappa$. Conditions
  \eqref{eq:mult-second}--\eqref{eq:mult-zero} imply
  \begin{align*}
  -\left(\frac{L''}L+\varepsilon\frac{(L')^2}{L^2}\right)|\nabla u|^2
  &\ge\delta\Lambda\frac{|\nabla u|^2}{L^{1/t}},\\
  f\left(\frac{L'}L-d\frac fF\right)
  &\ge\delta\Lambda\frac{F}{L^{1/t}}-C_0\Lambda^2.
  \end{align*}
  Multiply the maximum-point inequality by $\eta^{1/t}$.
  The cutoff errors contain the factors $\eta^{1/t-1}$ and
  $\eta^{1/t-2}$, both at most one since $1/t>2$.
  They are therefore bounded below by
  $-C(n,t,\varepsilon)\Lambda^2$, even if the maximum lies where
  $\eta$ is small. We obtain
  \[
  \delta\Lambda\eta^{1/t}\frac{|\nabla u|^2+F}{L^{1/t}}
  \le C(n,t,\varepsilon)(C_0+1)\Lambda^2.
  \]
  Since $E\le|\nabla u|^2+F$, the left side controls
  $\delta\Lambda(\eta G)^{1/t}(x_0)$.
  Thus $(\eta G)^{1/t}(x_0)\le C(C_0+1)\Lambda/\delta$.
  On $B_R(p)$, where $\eta=1$, maximality proves the assertion.
  
  \end{proof}
  
  We construct the weight $L$ for
  \eqref{eq:mult-second}--\eqref{eq:mult-zero} from the admissible decomposition.

  \begin{lemma}\label{lem:theta}
  Let $f>0$ belong to $C^1((0,\infty))\cap L^1_{\mathrm{loc}}([0,\infty))$, let $F(s)=\int_0^s f$, and suppose $w(s)=\int_s^\infty F^{-1/2}$ is finite. If $(Fw)^\mu$ is concave for $\mu>0$, then $\mu<1/2$ and
  \begin{equation}\label{eq:theta-bound}
  0<\Theta(s):=\frac{\sqrt{F(s)}}{f(s)w(s)}
  \le\frac{1-2\mu}{2(1-\mu)}<\frac{1}{2}.
  \end{equation}
  \end{lemma}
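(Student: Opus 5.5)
The plan is to turn concavity of $\varphi:=(Fw)^\mu$ into a one-sided ODE inequality for the tail $w$, and then to integrate it out to $+\infty$, where the finiteness of $w$ gives both conclusions. Write $P=Fw>0$, so that $\varphi=P^\mu$. Since $w'=-F^{-1/2}$, we have
\[
P'=fw-\sqrt F,\qquad \Theta=\frac{\sqrt F}{fw},\qquad
\frac{\varphi'}{\varphi}=\mu\frac{P'}{P}=\frac{\mu}{\sqrt F\,w}\Bigl(\frac1\Theta-1\Bigr).
\]
Set $y(s)=\varphi'(s)/\varphi(s)$. The target bound $\Theta\le(1-2\mu)/(2(1-\mu))$ is equivalent to
\[
\frac1\Theta-1\ge\frac1{1-2\mu},\qquad\text{that is,}\qquad y\sqrt F\,w\ge\frac{\mu}{1-2\mu}.
\]
This is the inequality I will aim for. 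Positivity of $\Theta$ is immediate from $F,f,w>0$.

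Fix $s>0$. The first step is the tangent-line bound from concavity: for $\tau=t-s\ge0$,
\[
\varphi(t)\le\varphi(s)\bigl(1+y\tau\bigr),\qquad y=y(s).
\]
Hence $P(t)\le P(s)(1+y\tau)^{1/\mu}$. Positivity of $\varphi$ on the whole half-line forces $y\ge0$, since otherwise the tangent line becomes negative. Rewrite $P=Fw=w/(w')^2$ to get
\[
-w'(t)\ge\sqrt{w(t)}\,P(s)^{-1/2}(1+y\tau)^{-1/(2\mu)}.
\]
Equivalently,
\[
-\frac{d}{dt}\sqrt{w(t)}\ge\tfrac12P(s)^{-1/2}(1+y\tau)^{-1/(2\mu)}.
\]

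The second step integrates from $t=s$ to $\infty$. We have $w(t)\to0$, because $w$ is a convergent tail integral. This gives
\[
\sqrt{w(s)}\ge\tfrac12P(s)^{-1/2}\int_0^\infty(1+y\tau)^{-1/(2\mu)}\,d\tau .
\]
The left side is finite, so the integral must converge. This rules out $y=0$, and it rules out $1/(2\mu)\le1$; so $y>0$ and $\mu<1/2$. With these, the integral equals $2\mu/((1-2\mu)y)$, and the inequality becomes
\[
y^2P(s)w(s)\ge\frac{\mu^2}{(1-2\mu)^2}.
\]
Using $Pw=Fw^2$ gives exactly $y\sqrt F\,w\ge\mu/(1-2\mu)$, which is the target. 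The final strict inequality $\frac{1-2\mu}{2(1-\mu)}<\frac12$ is elementary.

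The main obstacle is finding the right quantity to compare. A direct computation of $(P^\mu)''\le0$ involves $f'$ and does not close pointwise. The point is that concavity is only useful globally, through the tangent line, and that the finiteness of $w$ must enter through integration to infinity. Once the bound is written as an inequality for $\sqrt w$, everything else is routine.
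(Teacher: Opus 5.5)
Your proposal is correct and follows the paper's proof essentially step for step. Both arguments combine the concavity tangent-line bound for $(Fw)^\mu$ with the identity $(\sqrt w)'=-\tfrac12(Fw)^{-1/2}$, integrated out to infinity; divergence of the resulting integral rules out $\mu\ge 1/2$ and a vanishing slope, and evaluating it yields $\mu(\Theta^{-1}-1)\ge \mu/(1-2\mu)$.
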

  \begin{proof}
  Put $\Phi=(Fw)^\mu$. A positive concave function on
  $(0,\infty)$ is nondecreasing: a negative derivative at one point,
  together with the tangent-line upper bound, would force the function
  to become negative at a sufficiently large argument. Thus
  $\Phi'\ge0$, and concavity gives
  $\Phi(s+r)\le\Phi(s)+r\Phi'(s)$ for $r\ge0$. Also
  \[
  (\sqrt w)'=-\tfrac{1}{2}\Phi^{-1/(2\mu)}.
  \]
  Since $w(r)\to0$ as $r\to\infty$,
  \[
  \sqrt{w(s)}=\frac{1}{2}\int_s^\infty\Phi(r)^{-1/(2\mu)}\,dr
  \ge\frac{1}{2}\int_0^\infty[\Phi(s)+\Phi'(s)r]^{-1/(2\mu)}\,dr.
  \]
  The last integral would diverge if $\Phi'(s)=0$ or $\mu\ge1/2$. Its value is
  \[
  \frac{\mu}{1-2\mu}\frac{\Phi(s)^{1-1/(2\mu)}}{\Phi'(s)}.
  \]
  Rearranging the integral bound gives
  \[
  \sqrt w\,\Phi'\Phi^{1/(2\mu)-1}
  \ge\frac{\mu}{1-2\mu},\qquad
  \sqrt w\,(\Phi^{1/(2\mu)})'
  \ge\frac1{2(1-2\mu)}.
  \]
  Since $\Phi^{1/(2\mu)}=\sqrt{Fw}$ and
  $w'=-F^{-1/2}$, the last left side equals
  \[
  \frac{fw}{2\sqrt F}-\frac12
  =\frac12(\Theta^{-1}-1).
  \]
  Solving the resulting inequality for $\Theta>0$ proves
  \eqref{eq:theta-bound}.
  \end{proof}
  
  \begin{lemma}\label{lem:growth-tail}
  For an admissible source, put
  \[
  A_\mu=\frac{2(1-\mu)}\mu,\qquad b_\mu=\frac{\mu}{1-2\mu}.
  \]
  Then
  \begin{equation}\label{eq:primitive-growth}
  \frac{s f_i(s)}{F_i(s)}\le A_\mu,\qquad
  \frac{s f(s)}{F(s)}\le A_\mu,\qquad
  \sqrt{F(s)}w(s)\ge b_\mu s.
  \end{equation}
  The last estimate holds for every component as well.
  \end{lemma}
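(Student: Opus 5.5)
The plan is to apply Lemma~\ref{lem:theta}, or more precisely the integral inequality in its proof, to each component $f_i$ separately. We then use one extra consequence of concavity to reach the component estimates, and pass to $f$ by summation and a growth comparison. Each $f_i$ is positive, lies in $C^2((0,\infty))\cap L^1_{\mathrm{loc}}([0,\infty))$, has finite tail $w_i$, and $(F_iw_i)^\mu$ is concave. So Lemma~\ref{lem:theta} applies and gives in particular $\mu<1/2$, so $b_\mu>0$ and $A_\mu>2$.

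Fix $i$ and write $\Phi=(F_iw_i)^\mu$. It is positive, concave and nondecreasing, as in the proof of Lemma~\ref{lem:theta}. The extra ingredient is that $\Phi(0+)\ge0$ together with concavity gives
\[
\Phi(s)\ge s\Phi'(s).
\]
From the proof of Lemma~\ref{lem:theta} we have
\[
\sqrt{w_i}\ge\frac{\mu}{1-2\mu}\frac{\Phi^{1-1/(2\mu)}}{\Phi'}.
\]
Inserting $\Phi'\le\Phi/s$ gives $\sqrt{w_i}\ge b_\mu s\,\Phi^{-1/(2\mu)}=b_\mu s/\sqrt{F_iw_i}$. Multiplying by $\sqrt{w_i}$ yields the component tail bound $\sqrt{F_i}\,w_i\ge b_\mu s$.

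For the ratio $sf_i/F_i$, we use $\Phi^{1/(2\mu)}=\sqrt{F_iw_i}$ and compute
\[
\Phi'=2\mu\,\Phi^{1-1/(2\mu)}\,\frac{f_iw_i-\sqrt{F_i}}{2\sqrt{F_iw_i}}.
\]
Substituting this into $\Phi\ge s\Phi'$ and simplifying gives $F_iw_i\ge \mu s\,(f_iw_i-\sqrt{F_i})$. Dividing by $\mu F_iw_i$,
\[
\frac{sf_i}{F_i}\le\frac1\mu+\frac{s}{\sqrt{F_i}\,w_i}\le\frac1\mu+\frac1{b_\mu}
=\frac{1+(1-2\mu)}{\mu}=A_\mu .
\]

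For the full source, summing $sf_i\le A_\mu F_i$ over $i$ gives $sf\le A_\mu F$. This step is immediate. The only point needing care is the tail bound for $F$ itself, since concavity of $(Fw)^\mu$ is not assumed. The plan is to integrate $(\log F)'\le A_\mu/t$ to get $F(t)\le F(s)(t/s)^{A_\mu}$ for $t\ge s$. Since $A_\mu/2-1=(1-2\mu)/\mu=1/b_\mu>0$,
\[
w(s)\ge F(s)^{-1/2}\int_s^\infty (s/t)^{A_\mu/2}\,dt=\frac{s\,F(s)^{-1/2}}{A_\mu/2-1}=b_\mu s\,F(s)^{-1/2},
\]
which is $\sqrt{F}\,w\ge b_\mu s$. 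The same argument applied to each $F_i$ gives an alternative proof of the component version.
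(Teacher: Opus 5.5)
Your proof is correct and uses the same ingredients as the paper: the tangent-line inequality $\Phi_i(s)\ge s\Phi_i'(s)$ from $\Phi_i(0+)\ge0$, the estimate of Lemma~\ref{lem:theta} (your displayed integral inequality for $\sqrt{w_i}$ is just $\Theta_i\le\theta$ rearranged), and integration of $sf/F\le A_\mu$ to get the tail bound for $F$. The only difference is the order for the components: you derive $\sqrt{F_i}\,w_i\ge b_\mu s$ first and deduce $sf_i/F_i\le A_\mu$ from it, whereas the paper bounds $sf_i/F_i$ directly from $\mu\,(sf_i/F_i)(1-\Theta_i)\le1$ and then integrates, exactly as in your closing remark.
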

  \begin{proof}
  Write $\theta=(1-2\mu)/(2(1-\mu))$,
  $\Theta_i=\sqrt{F_i}/(f_iw_i)$ and $\Phi_i=(F_iw_i)^\mu$.
  Each $\Phi_i$ is positive and concave on $(0,\infty)$, hence
  nondecreasing. Its tangent-line inequality at $s$ gives
  \[
  0\le\Phi_i(0+)\le\Phi_i(s)-s\Phi_i'(s).
  \]
  Therefore
  \[
  \mu\frac{s f_i}{F_i}(1-\Theta_i)
  =\frac{s\Phi_i'}{\Phi_i}\le1,
  \qquad
  \frac{s f_i}{F_i}\le\frac1{\mu(1-\theta)}=A_\mu.
  \]
  Taking the weighted mean with weights $F_i/F$ gives the bound for $f$. Integrating it gives $F(r)/F(s)\le(r/s)^{A_\mu}$ for $r\ge s>0$. Since $A_\mu>2$,
  \[
  \sqrt{F(s)}w(s)
  \ge\int_s^\infty(r/s)^{-A_\mu/2}\,dr
  =\frac{2s}{A_\mu-2}=b_\mu s.
  \]
  The component proof is identical.
  \end{proof}
  
  For a positive $C^2$ function $H$ with $H^\mu$ concave, $0<t<\mu$, and $\varepsilon=(\mu-t)/(2t)$, direct differentiation gives
  \begin{equation}\label{eq:power-concavity}
  -\frac{(H^t)''}{H^t}-\varepsilon\left(\frac{(H^t)'}{H^t}\right)^2
  =-\frac t\mu\frac{(H^\mu)''}{H^\mu}
  +\frac{t(\mu-t)}2\left(\frac{H'}H\right)^2
  \ge\frac{t(\mu-t)}2\left(\frac{H'}H\right)^2.
  \end{equation}
  For every positive decomposition with finite tails,
  \begin{equation}\label{eq:tail-sum}
  (Fw)^2\le\sum_i(F_iw_i)^2.
  \end{equation}
  Indeed, fix $s$ and write $\nu_i=F_i(s)/F(s)$. Jensen's inequality and then Cauchy's inequality give
  \begin{align*}
  \sqrt{F(s)}w(s)
  &=\int_s^\infty\left(\sum_i\nu_i\frac{F_i(r)}{F_i(s)}\right)^{-1/2}\,dr\\
  &\le\sum_i\nu_i\sqrt{F_i(s)}w_i(s)
  \le\left(\frac{\sum_i(F_i(s)w_i(s))^2}{F(s)}\right)^{1/2}.
  \end{align*}
  
  Choose an exponent between $t_0$ and $\mu$ by setting
  \begin{equation}\label{eq:choose-t}
  t=\frac{t_0+\mu}{2},\qquad
  \varepsilon=\frac{\mu-t}{2t},\qquad
  \lambda=1+\frac{n}{(n-1)(\mu-t_0)}C_2^{t/2}.
  \end{equation}
  Then $0<t<\mu<1/2$ and $t>t_0$. In dimension two these choices give $t=\mu/2$ and $\varepsilon=1/2$.
  
  \begin{lemma}\label{lem:weight}
  For $\Lambda>0$, define
  \begin{equation}\label{eq:weight}
  L(s)=\sum_{i=1}^m\lambda^{i-1}(F_iw_i)^t(s)+\Lambda^t s^{2t}.
  \end{equation}
  There are $\delta>0$ and $C_0\ge1$, depending only on $n,\mu$, for which \eqref{eq:mult-second}--\eqref{eq:mult-zero} hold. Moreover
  \begin{equation}\label{eq:weight-size}
  Fw\le L^{1/t}\le C(n,\mu)^m C_2^{m/2}Fw+C(n,\mu)\Lambda Fw^2.
  \end{equation}
  \end{lemma}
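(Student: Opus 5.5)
We verify the two conditions separately for the two kinds of terms in $L$, writing $H_i=F_iw_i$, $L_0=\sum_i\lambda^{i-1}H_i^t$ and $P=\Lambda^ts^{2t}$. The plan is to use the concavity of $H_i^\mu$ and of $s^{2t\cdot\mu/t}$-type powers for \eqref{eq:mult-second}, and the ordering in \eqref{eq:component-order} together with Lemma~\ref{lem:theta} for \eqref{eq:mult-zero}.

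\textbf{Step 1: the second-order condition \eqref{eq:mult-second}.} By \eqref{eq:power-concavity}, each $H_i^t$ satisfies $-(H_i^t)''\ge\varepsilon((H_i^t)')^2/H_i^t+\frac{t(\mu-t)}2H_i^t(H_i'/H_i)^2$. The quadratic functional $\phi\mapsto -\phi''-\varepsilon(\phi')^2/\phi$ is superadditive over positive summands by Cauchy--Schwarz, since $(\sum\phi_j')^2/\sum\phi_j\le\sum(\phi_j')^2/\phi_j$. So it suffices to treat each summand. For $P=\Lambda^ts^{2t}$ with $2t<1$ one computes directly $-P''/P-\varepsilon(P'/P)^2=c(n,\mu)s^{-2}$ with $c>0$. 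We then need the surviving positive terms to dominate $\delta\Lambda L^{-1/t}$, and there are two regimes. Where $P$ dominates $L$, we have $L^{1/t}\approx\Lambda s^2$, so $P\,c\,s^{-2}/L\gtrsim\Lambda/L^{1/t}$ up to the factor $P/L\ge 1/2$. Where $L_0$ dominates, we use $H_i'/H_i=(f_i/F_i)(1-\Theta_i)\ge(1-\theta)f_i/F_i$ from Lemma~\ref{lem:theta}, together with $f_i/F_i\ge 2/(A_\mu s)$ (a lower bound analogous to Lemma~\ref{lem:growth-tail}, using $Fw\gtrsim s\sqrt F$) and $sf_i/F_i\ge1$. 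This converts the term into $s^{-2}$, and hence into $\Lambda$ divided by $L^{1/t}$ only when $\Lambda s^2\gtrsim L_0^{1/t}$. I expect that in the complementary regime one must instead use the constant $C_0$ slack of \eqref{eq:mult-zero}. The honest version is to verify $-L''/L-\varepsilon(L'/L)^2\ge c\,s^{-2}\min(1,\ldots)$ and absorb the rest via the left-hand side bound $L^{1/t}\ge P^{1/t}=\Lambda s^2$.

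\textbf{Step 2: the zero-order condition \eqref{eq:mult-zero}.} We need $L'/L\ge d f/F$ up to errors. For $P$, $P'/P=2t/s$, and $f/F\le A_\mu/s$ by Lemma~\ref{lem:growth-tail}, so $P$ alone falls short. Its contribution is instead handled by the right side: $C_0\Lambda^2L^{1/t}/F$ with $L^{1/t}\ge\Lambda s^2$ gives a margin $\Lambda^3s^2/F$, and combined with $F\le L^{1/t}$-type bounds this controls the region where $P$ dominates. For $L_0$, $(H_i^t)'/H_i^t=t(1-\Theta_i)f_i/F_i\ge t(1-\theta)f_i/F_i$. The key point is the geometric weights $\lambda^{i-1}$ combined with the ordering $f_1/F_1\le\dots\le f_m/F_m$. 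In the weighted average $L_0'/L_0$, the component with the largest ratio $f_m/F_m\ge f/F$ carries enough weight, because $f/F$ is the $F_i/F$-weighted mean of the $f_i/F_i$. A telescoping/Abel summation argument shows $\sum\lambda^{i-1}H_i^t f_i/F_i\ge(\text{const})\,(f/F)\sum\lambda^{i-1}H_i^t$ once $\lambda$ exceeds the threshold in \eqref{eq:choose-t}. The factor $C_2^{t/2}$ there comes from comparing $H_i^t$ with $(Fw)^t$ via \eqref{eq:component-order}. The margin $t(1-\theta)-d=t(\frac1{2(1-\mu)}\cdots)$ uses $t>t_0$, i.e.\ $\mu-t_0>0$, and this margin is what yields $\delta$ after multiplying by $L^{1/t}f/F\gtrsim\Lambda$ in the appropriate regime.

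\textbf{Step 3: the size bound \eqref{eq:weight-size}.} The lower bound follows from \eqref{eq:tail-sum} and $(\sum a_i)^{1/t}\ge(\sum a_i^2)^{1/2}$, since $1/t>2$ and each $a_i=\lambda^{i-1}H_i^t\ge H_i^t$. The upper bound uses $(a+b)^{1/t}\le C(a^{1/t}+b^{1/t})$, $\lambda^{(m-1)/t}\le C^m$, the estimate $\sum H_i\le\sqrt m(\sum H_i^2)^{1/2}\le\sqrt{mC_2}\,Fw$, and $\Lambda s^2\le\Lambda Fw^2/b_\mu^2$ from Lemma~\ref{lem:growth-tail}. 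The main obstacle is Step 2's weighted-average inequality: making the $\lambda^{i-1}$ weights beat the mismatch between the $F_i/F$ weights (which define $f/F$) and the $H_i^t$ weights, with constants depending only on $n,\mu$ (and $C_2$ through $\lambda$). I would first try to prove it by induction on $m$, peeling off the last component.
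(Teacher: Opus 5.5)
Your proposal has a genuine gap. Step~1 does not close as written, and the fallback you propose is not available. Condition \eqref{eq:mult-second} carries no $C_0$ slack: in Proposition~\ref{prop:localize} it multiplies $|\nabla u|^2$, which the slack in \eqref{eq:mult-zero} cannot reach. Also, the lower bounds $sf_i/F_i\ge1$ and $f_i/F_i\ge2/(A_\mu s)$ that you invoke do not follow from admissibility, since Lemma~\ref{lem:growth-tail} bounds $sf/F$ only from above. No regime split is needed. After your superadditivity step, discard the $H_i^t$ contributions, which are nonnegative by \eqref{eq:power-concavity}, and keep only that of $P=\Lambda^ts^{2t}$. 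With $\chi=P/L$ this gives $-L''/L-\varepsilon(L'/L)^2\ge2t(\mu-t)\chi s^{-2}$. Since $s^2=P^{1/t}/\Lambda$,
\[
L^{1/t}\chi s^{-2}=\Lambda\,(L/P)^{1/t-1}\ge\Lambda,
\]
so \eqref{eq:mult-second} holds everywhere with any $\delta\le2t(\mu-t)$.

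The substantive gap is in Step~2, where $\delta\Lambda$ has to be produced. You assert that the positive part $c(1-\chi)L^{1/t}(f/F)^2$ is $\gtrsim\Lambda$ ``in the appropriate regime.'' Via $L^{1/t}\ge\Lambda s^2$ this would need a lower bound on $sf/F$, which admissibility does not provide, and the complementary regime has no argument.

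The missing idea is a product bound combined with AM--GM. Take the weighted Cauchy inequality $(\sum_i\nu_ib_i)(\sum_i\nu_i/b_i)\ge1$ with $\nu_i=F_i/F$ and $b_i=\sqrt{F_i}w_i$. Combined with $f_i/F_i=1/(\Theta_ib_i)$ and $\Theta_i\le\theta$ (Lemma~\ref{lem:theta}), it gives $L_0^{1/t}f/F\ge\sqrt F/\theta$. Hence
\[
\Bigl((1-\chi)L^{1/t}\frac{f^2}{F^2}\Bigr)\cdot\frac{L^{1/t}}{F}\ge\frac{L_0^{2/t}f^2}{F^3}\ge\theta^{-2},
\]
and AM--GM against the slack $C_0\Lambda^2L^{1/t}/F$ yields $\delta\Lambda$ with no case distinction.

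Before that, the negative contribution $-d\chi L^{1/t}f^2/F^2$ of $P$ must be absorbed; your ``margin $\Lambda^3s^2/F$'' does not do this. What works is the bound $\chi/(1-\chi)=P/L_0\le(C\Lambda\sqrt F/f)^t$. It follows from $L_0^{1/t}\ge Fw$ and $s^2/(Fw)\le A_\mu\sqrt F/(b_\mu f)$ (Lemma~\ref{lem:growth-tail}). Follow it by Young's inequality $\chi\le a(1-\chi)+C_a\Lambda^2F/f^2$ with $a=c/(2d)$. Then half of the margin $c$ survives and the remainder is charged to $C_0$.

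Your Abel-summation step is the right mechanism and needs no induction. From $F_iw_i\ge\nu_iFw$ (as $w_i\ge w$) and $(F_iw_i)^t\le C_2^{t/2}(Fw)^t$ you get $\sum_{i\ge j}r_i\ge(\sum_{i\ge j}\nu_i)/(1+x)$ for the normalized weights $r_i$, with $x=C_2^{t/2}/(\lambda-1)$. The choice of $\lambda$ in \eqref{eq:choose-t} then gives $L_0'/L_0-df/F\ge cf/F$ with $c=\tfrac t2\bigl(1-\theta-\tfrac{n-1}{n}\bigr)>0$.

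Step~3 is fine up to two slips. The norm comparison should read $(\sum a_i)^{1/t}\ge(\sum a_i^{2/t})^{1/2}$. Also, $\lambda^{(m-1)/t}$ carries a factor $C_2^{(m-1)/2}$, since $\lambda$ depends on $C_2$; this is where $C_2^{m/2}$ comes from.
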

  \begin{proof}
  Write
  \[
  S=\sum_i\lambda^{i-1}(F_iw_i)^t,\quad
  T=\Lambda^t s^{2t},\quad L=S+T,\quad \chi=T/L,
  \quad \theta=\frac{1-2\mu}{2(1-\mu)}.
  \]
  For positive summands $H_j$ with $H=\sum H_j$,
  \begin{align*}
  -\frac{H''}H-\varepsilon\left(\frac{H'}H\right)^2
  -\sum_j\frac{H_j}H\left[-\frac{H_j''}{H_j}
  -\varepsilon\left(\frac{H_j'}{H_j}\right)^2\right]
  =\frac\varepsilon2\sum_{i,j}\frac{H_iH_j}{H^2}
  \left(\frac{H_i'}{H_i}-\frac{H_j'}{H_j}\right)^2\ge0.
  \end{align*}
  Both $(F_iw_i)^\mu$ and $s^{2\mu}$ are concave. Apply \eqref{eq:power-concavity} to the summands of $L$ and retain the contribution from $T$. It gives
  \[
  -L^{1/t}\left(\frac{L''}L+\varepsilon\frac{(L')^2}{L^2}\right)
  \ge2t(\mu-t)L^{1/t}\frac{\chi}{s^2}
  =2t(\mu-t)\chi^{1-1/t}\Lambda
  \ge2t(\mu-t)\Lambda.
  \]
  This proves \eqref{eq:mult-second}. To establish
  \eqref{eq:mult-zero}, fix $s$ and put
  \[
  \nu_i=F_i/F,\qquad r_i=\lambda^{i-1}(F_iw_i)^t/S.
  \]
  Since $w_i\ge w$, we have $F_iw_i\ge\nu_i Fw$. For $j\ge2$,
  \[
  \sum_{i\ge j}\lambda^{i-1}(F_iw_i)^t
  \ge\lambda^{j-1}(Fw)^t\sum_{i\ge j}\nu_i,
  \qquad
  \sum_{i<j}\lambda^{i-1}(F_iw_i)^t
  \le\frac{\lambda^{j-1}}{\lambda-1}C_2^{t/2}(Fw)^t.
  \]
  It follows that
  \[
  \sum_{i\ge j}r_i\ge
  \frac{\sum_{i\ge j}\nu_i}{1+C_2^{t/2}/(\lambda-1)}.
  \]
  To see this implication explicitly, put
  $A_j=\sum_{i\ge j}\nu_i\le1$ and
  $x=C_2^{t/2}/(\lambda-1)$. The two preceding bounds give
  $\sum_{i\ge j}r_i\ge A_j/(A_j+x)\ge A_j/(1+x)$.
  For $j=1$ the inequality follows from $\sum_i r_i=\sum_i\nu_i=1$.
  Summation by parts with the ordered numbers $a_i=f_i/F_i$ gives
  \[
  \sum_i r_i a_i
  =a_1+\sum_{j=2}^m(a_j-a_{j-1})\sum_{i\ge j}r_i
  \ge\frac{f/F}{1+C_2^{t/2}/(\lambda-1)}.
  \]
  As $(F_iw_i)'/(F_iw_i)=(1-\Theta_i)f_i/F_i$,
  \[
  \frac{S'}{tS}\ge
  \frac{1-\theta}{1+C_2^{t/2}/(\lambda-1)}\frac fF.
  \]
  Put
  \[
  c=\frac t2\left(1-\theta-\frac{n-1}{n}\right)
  =\frac{t(n-1)(\mu-t_0)}{2n(1-\mu)}>0.
  \]
  Since $\lambda-1=t(1-\theta)C_2^{t/2}/c$, the inequality
  $(1+x)^{-1}\ge1-x$ gives
  \begin{equation}\label{eq:weight-positive}
  \frac{S'}S-d\frac fF\ge c\frac fF.
  \end{equation}
  The other summand satisfies $T'/T-df/F\ge-df/F$. Therefore
  \begin{equation}\label{eq:weight-combine}
  L^{1/t}\frac fF\left(\frac{L'}L-d\frac fF\right)
  \ge[c(1-\chi)-d\chi]L^{1/t}\frac{f^2}{F^2}.
  \end{equation}
  By \eqref{eq:tail-sum}, $S^{1/t}\ge(\sum_i(F_iw_i)^2)^{1/2}\ge Fw$. Lemma~\ref{lem:growth-tail} gives
  \[
  \frac{s^2}{Fw}\le\frac{s}{b_\mu\sqrt F}
  \le\frac{A_\mu\sqrt F}{b_\mu f}.
  \]
  To control the possible negative contribution of $T$ in
  \eqref{eq:weight-combine}, we compare its size with $S$:
  \[
  \frac\chi{1-\chi}=\frac TS
  \le\left(\frac{A_\mu\Lambda\sqrt F}{b_\mu f}\right)^t.
  \]
  Absorbing the fixed factor $A_\mu/b_\mu$ into $C$, the preceding
  estimate gives
  \[
  \chi\le C(1-\chi)\left(\frac{\Lambda\sqrt F}{f}\right)^t.
  \]
  Young's inequality, with conjugate exponents $2/(2-t)$ and $2/t$,
  gives, for every $a>0$,
  \[
  C(1-\chi)\left(\frac{\Lambda\sqrt F}{f}\right)^t
  \le a(1-\chi)^{2/(2-t)}
     +C_a\frac{\Lambda^2F}{f^2}
  \le a(1-\chi)+C_a\frac{\Lambda^2F}{f^2}.
  \]
  Here $C_a$ depends only on $n,\mu,a$. Choose $a=c/(2d)$ in \eqref{eq:weight-combine}. Then
  \begin{equation}\label{eq:weight-half}
  L^{1/t}\frac fF\left(\frac{L'}L-d\frac fF\right)
  \ge\frac c2(1-\chi)L^{1/t}\frac{f^2}{F^2}
  -C\Lambda^2\frac{L^{1/t}}F.
  \end{equation}
  For the remaining positive term, we seek a lower bound for its
  product with the error weight $L^{1/t}/F$. Since $0<t<1$,
  $\lambda\ge1$ and $F_i\le F$,
  \[
  S^{1/t}\ge\sum_iF_iw_i
  \ge\sqrt F\sum_i\nu_i\sqrt{F_i}w_i,
  \qquad
  \frac fF=\sum_i\frac{\nu_i}{\Theta_i\sqrt{F_i}w_i}
  \ge\theta^{-1}\sum_i\frac{\nu_i}{\sqrt{F_i}w_i}.
  \]
  The weighted Cauchy inequality
  $(\sum_i\nu_i b_i)(\sum_i\nu_i/b_i)\ge(\sum_i\nu_i)^2=1$,
  applied with $b_i=\sqrt{F_i}w_i$, therefore gives
  $S^{1/t}f/F\ge\sqrt F/\theta$. Since $S=(1-\chi)L$ and $L\ge S$,
  \[
  \left((1-\chi)L^{1/t}\frac{f^2}{F^2}\right)\frac{L^{1/t}}F
  =\frac{SL^{2/t-1}f^2}{F^3}
  \ge\frac{S^{2/t}f^2}{F^3}\ge\theta^{-2}.
  \]
  Substituting this in \eqref{eq:weight-half}, and using
  \[
  \frac{cF}{2\theta^2 L^{1/t}}
  +\frac{\delta^2\theta^2}{2c}\Lambda^2\frac{L^{1/t}}F
  \ge\delta\Lambda,
  \]
  proves \eqref{eq:mult-zero}. Choose $0<\delta\le2t(\mu-t)$ and enlarge $C_0$; both constants depend only on $n,\mu$.
  
  For the upper bound in \eqref{eq:weight-size}, $\lambda\ge2$ and $\lambda\le C(n,\mu)C_2^{t/2}$ imply
  \[
  S^{1/t}\le\sqrt{C_2}\left(\sum_{i=1}^m\lambda^{i-1}\right)^{1/t}Fw
  \le C(n,\mu)^m C_2^{m/2}Fw.
  \]
  Also $T^{1/t}=\Lambda s^2\le b_\mu^{-2}\Lambda Fw^2$. The inequality $(S+T)^{1/t}\le2^{1/t-1}(S^{1/t}+T^{1/t})$ finishes the proof.
  \end{proof}
  
  \begin{proof}[Proof of Theorem~\ref{thm:local-tail}]
  Fix $x\in\Omega$ and $0<R<d_\Omega(x)/2$; then
  $B_{2R}(x)\subset\Omega$. Put $\Lambda=R^{-1}+\sqrt\kappa$. Proposition~\ref{prop:localize} and Lemma~\ref{lem:weight} give $E/L^{1/t}\le C(n,\mu)\Lambda$ on $B_R(x)$. Since
  \[
  E=F\left(1+\tfrac{1}{2}|\nabla w(u)|^2\right),
  \]
  division by $Fw^2$ and \eqref{eq:weight-size} give, for a constant $D=D(n,\mu,m,C_2)$,
  \[
  w(u)^{-2}+\tfrac{1}{2}|\nabla\log w(u)|^2
  \le D\Lambda w(u)^{-1}+D\Lambda^2
  \le\tfrac12w(u)^{-2}+(D^2/2+D)\Lambda^2.
  \]
  Let $R\uparrow d_\Omega(x)/2$, or $R\to\infty$ if $d_\Omega(x)=\infty$.
  \end{proof}
  
  \begin{corollary}\label{cor:global-bounds}
  On a complete manifold with \eqref{eq:ric-lower}, an admissible positive entire solution satisfies
  \[
  |\nabla\log w(u)|+w(u)^{-1}\le C\sqrt\kappa.
  \]
  There is no such solution for $\kappa=0$. For $\kappa>0$, define $B_0$ by $w(B_0)=(C\sqrt\kappa)^{-1}$. Then
  \begin{equation}\label{eq:global-bounds}
  u\le B_0,\qquad
  \|\nabla u\|_\infty
  \le C\bigl(B_0+\sqrt{F(B_0)}w(B_0)\bigr)\sqrt\kappa.
  \end{equation}
  \end{corollary}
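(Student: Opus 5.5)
We apply Theorem~\ref{thm:local-tail} with $\Omega=M$. Since $\operatorname{Ric}\ge-(n-1)\kappa g$ holds on all of $M$ and $M$ is complete, we have $d_\Omega(x)=\infty$ for every $x$. Letting the radius tend to infinity then gives
\[
|\nabla\log w(u)|+w(u)^{-1}\le C\sqrt\kappa,\qquad C=C(n,\mu,m,C_2).
\]
The proof of Theorem~\ref{thm:local-tail} already takes $R\to\infty$ when $d_\Omega(x)=\infty$, so this step is immediate.

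If $\kappa=0$, the bound forces $w(u(x))^{-1}\le0$ at each point. This is impossible, because $w(s)=\int_s^\infty F^{-1/2}$ is finite and positive for $s>0$. Hence no positive entire solution exists.

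For $\kappa>0$, note that $w$ is strictly decreasing, since $w'=-F^{-1/2}<0$. It is continuous and tends to $0$ at infinity. As $s\to0^+$ it tends to $+\infty$: this follows from Lemma~\ref{lem:growth-tail}, which gives $w(s)\ge b_\mu s/\sqrt{F(s)}$, together with $F(s)=o(s^2)$, a consequence of $f(0)=f'(0)=0$. Consequently $w$ maps $(0,\infty)$ onto $(0,\infty)$, and $B_0$ is well defined. The inequality $w(u)\ge (C\sqrt\kappa)^{-1}=w(B_0)$ together with monotonicity gives $u\le B_0$.

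For the gradient, write
\[
|\nabla\log w(u)|=\frac{|\nabla u|}{\sqrt{F(u)}\,w(u)}.
\]
The global bound then gives $|\nabla u|\le C\sqrt\kappa\,\sqrt{F(u)}\,w(u)$. It remains to bound $\psi(s)=\sqrt{F(s)}w(s)$ on $(0,B_0]$ by $C(B_0+\psi(B_0))$. This is the only point requiring care, since $\psi$ need not be monotone a priori. Its derivative is
\[
\psi'=\frac{f w}{2\sqrt F}-1=\frac12\Theta^{-1}-1,
\]
and Lemma~\ref{lem:theta} gives $\Theta<1/2$, hence $\psi'>0$. So $\psi$ is increasing and $\psi(u)\le\psi(B_0)$. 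This yields $\|\nabla u\|_\infty\le C\psi(B_0)\sqrt\kappa$, which implies \eqref{eq:global-bounds}.

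One caveat: Lemma~\ref{lem:theta} was stated for a single source with $(Fw)^\mu$ concave, whereas admissibility only gives concavity for the components $(F_iw_i)^\mu$. If monotonicity of $\psi$ for the full $F$ is not available, I would fall back on the components. By \eqref{eq:tail-sum} and $C_2$, we have $\psi^2\le\sum_i(F_iw_i)$ and $\sum_i(F_iw_i)^2\le C_2(Fw)^2$. Each $\sqrt{F_i}w_i$ is increasing by Lemma~\ref{lem:theta}. Comparing with $s$ via Lemma~\ref{lem:growth-tail} then bounds $\psi(u)$ by $C(B_0+\psi(B_0))$, with the extra term $B_0$ absorbing the comparison constants. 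This is why the stated bound contains the term $B_0$.
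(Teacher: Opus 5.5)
The first three steps are fine: the global bound from Theorem~\ref{thm:local-tail} with $\Omega=M$, nonexistence for $\kappa=0$, and the definition of $B_0$ with $u\le B_0$. Using $\sqrt F\,w\ge b_\mu s$ from Lemma~\ref{lem:growth-tail} together with $F=o(s^2)$ is a valid substitute for the paper's direct integration of $F^{-1/2}\ge A/s$ near zero.

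The gap is in the gradient step. Lemma~\ref{lem:theta} needs $(Fw)^\mu$ to be concave for the \emph{total} primitive. When $m\ge2$, admissibility gives this only for the components, so your monotonicity of $\psi=\sqrt F\,w$ is justified only for $m=1$. It can genuinely fail. For $n=2$ and $f=\tfrac12(s^2+s^{100})$, let $s_*$ be the point where the two primitives agree. One computes
\[
\Theta(s_*)=\frac{\sqrt2}{104\,I},\qquad I=\int_0^\infty e^x(e^{3x}+e^{101x})^{-1/2}\,dx\le\frac2{99},
\]
so $\Theta(s_*)>1/2$ and $\psi'(s_*)<0$.

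Your fallback does not close this. First, \eqref{eq:tail-sum} says $(Fw)^2\le\sum_i(F_iw_i)^2$, not $\psi^2\le\sum_iF_iw_i$. Its proof does give $\psi\le\sum_i\nu_i\psi_i\le\max_i\psi_i(B_0)$, but you would then need $\psi_i(B_0)\lesssim B_0+\psi(B_0)$. Neither tool supplies this upper comparison:
\begin{itemize}
\item the $C_2$ condition yields only $\psi_i\le\sqrt{C_2F/F_i}\,\psi$, with an uncontrolled factor $F/F_i$;
\item Lemma~\ref{lem:growth-tail} gives only the \emph{lower} bound $\sqrt{F_i}\,w_i\ge b_\mu s$.
\end{itemize}

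The missing idea is that no concavity is needed. Since $F'=f>0$, split the tail integral at $B_0$: for $0<s\le B_0$,
\[
\sqrt{F(s)}\,w(s)=\int_s^{B_0}\sqrt{\frac{F(s)}{F(r)}}\,dr+\sqrt{\frac{F(s)}{F(B_0)}}\,\sqrt{F(B_0)}\,w(B_0)\le B_0+\sqrt{F(B_0)}\,w(B_0),
\]
because both ratios are at most one. Combine this with $|\nabla u|=\sqrt{F(u)}\,w(u)\,|\nabla\log w(u)|$, the global bound and $u\le B_0$. This gives \eqref{eq:global-bounds} for every admissible source, and it is exactly where the term $B_0$ in the statement comes from. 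Your monotonicity argument remains a correct, slightly sharper alternative when $m=1$.
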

  \begin{proof}
  The first statement is Theorem~\ref{thm:local-tail} with $\Omega=M$.
  If $\kappa=0$, it forces $w(u)^{-1}=0$, impossible at any point
  where $u>0$ because the tail is finite there. For $\kappa>0$,
  $f(0)=f'(0)=0$ implies $f(s)=o(s)$ and hence $F(s)=o(s^2)$.
  In particular, for every $A>0$, one has $F(s)\le A^{-2}s^2$
  near zero, so integration of $F(s)^{-1/2}\ge A/s$ shows that
  $w(s)\to\infty$ as $s\downarrow0$. Since $w$ decreases strictly
  to zero at infinity, $B_0$ is uniquely defined and the amplitude
  bound follows. Since $F'=f>0$, for $0<s\le B_0$ we have
  \[
  \sqrt{F(s)}w(s)
  =\int_s^{B_0}\sqrt{\frac{F(s)}{F(r)}}\,dr+\sqrt{F(s)}w(B_0)
  \le B_0+\sqrt{F(B_0)}w(B_0).
  \]
  Use $|\nabla u|=\sqrt{F(u)}w(u)|\nabla\log w(u)|$.
  \end{proof}
  
  \subsection{Power mixtures}
  To obtain a bound uniform in the mixing measure, we group exponents
  into intervals of length at most one and apply the next lemma to
  the measure on each interval.

  \begin{lemma}\label{lem:short-mixture}
  Let $\nu$ be a nonzero finite positive measure supported in $[\alpha,\beta]\subset(1,\infty)$, with $\beta-\alpha\le1$. Set
  \[
  f(s)=\int s^p\,d\nu(p),\qquad F(s)=\int\frac{s^{p+1}}{p+1}\,d\nu(p),\qquad
  w(s)=\int_s^\infty F(t)^{-1/2}\,dt.
  \]
  The tail is finite and $(Fw)^{2/(\beta+3)}$ is concave.
  \end{lemma}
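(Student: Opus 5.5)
The plan is to check the finiteness of the tail directly and to turn the concavity of $(Fw)^{\mu}$, $\mu=2/(\beta+3)$, into a pointwise inequality between three dimensionless quantities. Those quantities can then be controlled using only $\alpha\le p\le\beta$ and $\beta-\alpha\le1$.

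\emph{Finiteness.} For $t\ge1$ every $p$ in the support satisfies $p\ge\alpha$, so
\[
F(t)\ge \frac{\nu([\alpha,\beta])}{\beta+1}\,t^{\alpha+1}.
\]
Since $\alpha>1$, the function $F^{-1/2}$ is integrable at infinity and $w(s)<\infty$ for every $s>0$. Differentiation under the integral is justified on compact subsets of $(0,\infty)$, so $F\in C^2$ there.

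\emph{Reduction of concavity.} Put $H=Fw$. From $w'=-F^{-1/2}$ we get
\[
H'=fw-\sqrt F,\qquad H''=f'w-\tfrac32 fF^{-1/2}.
\]
Also $(H^\mu)''=\mu H^{\mu-2}\bigl[HH''-(1-\mu)(H')^2\bigr]$, so concavity is equivalent to $HH''\le(1-\mu)(H')^2$. Divide by $f^2w^2$ and introduce
\[
k=\frac{Ff'}{f^2},\qquad x=\Theta=\frac{\sqrt F}{fw}.
\]
The condition then reads
\[
Q(x):=(1-\mu)(1-x)^2+\tfrac32x-k\ge0 .
\]
For a single power $p=\beta$ one has $k=\beta/(\beta+1)$ and $x=(\beta-1)/(2(\beta+1))$. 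With $\mu=2/(\beta+3)$ the expression $Q$ vanishes identically: the power $s^{\beta}$ is the extremal case. This explains the choice of exponent and shows the estimate has no slack at $p=\beta$, which is why the mixture bounds must be sharp.

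\emph{Bounds on $k$ and $x$.} The ratio $sf/F$ is a weighted mean of $p+1$ with weights proportional to $s^{p+1}/(p+1)\,d\nu$, so it lies in $[\alpha+1,\beta+1]$. Integrating gives
\[
\Bigl(\frac rs\Bigr)^{\alpha+1}\le \frac{F(r)}{F(s)}\le\Bigl(\frac rs\Bigr)^{\beta+1}\quad (r\ge s),
\]
hence $2s/(\beta-1)\le\sqrt F\,w\le 2s/(\alpha-1)$. Since $x=(F/(sf))\cdot s/(\sqrt F w)$, this yields
\[
x\in\Bigl[\frac{\alpha-1}{2(\beta+1)},\ \frac{\beta-1}{2(\alpha+1)}\Bigr].
\]
For $k$, write it in terms of the probability measure $d\sigma_s\propto s^{p}d\nu$, under which $p$ has mean $\bar p$ and variance $V$:
\[
k=\mathbb E_\sigma\Bigl[\tfrac1{p+1}\Bigr]\,\mathbb E_\sigma[p].
\]
Here the convexity of $1/(p+1)$ on an interval of length at most one gives $\mathbb E[1/(p+1)]\le 1/(\bar p+1)+V/(\alpha+1)^3$ with $V\le(\beta-\alpha)^2/4$. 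Crude endpoint bounds of this kind are not enough on their own. The plan is to parametrize both $k$ and $x$ by the same measure and show that $Q\ge0$ reduces to $Q$ evaluated at the single power $\bar p$, which is $\ge0$ because $\mu=2/(\beta+3)\le2/(\bar p+3)$ and the one-power computation is monotone in the exponent, plus a variance correction controlled by $\beta-\alpha\le1$.

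\emph{Main obstacle.} The hardest step is exactly this last comparison: making $Q(x)-k\ge0$ hold uniformly over the measure when $k$ and $x$ are only known through averages and $Q$ vanishes at the extremal power. I would first try to express $x$ more precisely through the Jensen/Cauchy argument of \eqref{eq:tail-sum}, which gives $\sqrt F w\le$ the $F$-weighted mean of the one-power tails. That yields an upper bound for $1/x$ in terms of $\mathbb E[(p-1)^{-1}]$-type averages, which is the direction needed since $Q$ is increasing in $x$ on the relevant range (its vertex is at $1-x=3/(4(1-\mu))$, where $x<0$). Combining this with the variance bound on $k$, the remaining inequality is an explicit one-variable estimate in $\bar p\in[\alpha,\beta]$, and I expect the length condition $\beta-\alpha\le1$ to be exactly what closes it.
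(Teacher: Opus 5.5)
Your reduction is correct. With $k=Ff'/f^2$ and $x=\Theta=\sqrt F/(fw)$, concavity of $(Fw)^\mu$ is exactly $Q(x)=(1-\mu)(1-x)^2+\tfrac32x-k\ge0$, and $s^\beta$ is indeed the extremal case. One small slip: the vertex of $Q$ is at $x=(\beta-5)/(4(\beta+1))$, which is positive for $\beta>5$. $Q$ is still increasing on the range $x\ge(\alpha-1)/(2(\beta+1))$ once $\beta-\alpha\le1$, so only a lower bound on $x$ is needed.

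\textbf{The gap.} The step meant to close the argument fails. The Jensen/Cauchy estimate $\sqrt F\,w\le 2s\,\mathbb E_{\varrho_s}[(p-1)^{-1}]$ only gives $x\ge x_*:=\bigl(2\ell\,\mathbb E_{\varrho_s}[(p-1)^{-1}]\bigr)^{-1}$, with $\ell=sf/F$. This loses first-order information exactly where $Q$ has no slack, so $Q(x_*)\ge0$ is false even when $k$ is computed exactly. A concrete counterexample: take $\alpha=2$, $\beta=3$ (so $\mu=1/3$) and $\nu=3.6\,\delta_3+0.3\,\delta_2$, i.e.\ $F(t)=0.9t^4+0.1t^3$.
\begin{itemize}
\item At $s=1$ you get $k=11.4/15.21\approx0.7495$ and $x_*=1/4.29\approx0.2331$, so $Q(x_*)\approx-0.0078$.
\item The true value is $\Theta(1)=\bigl(78(1-\sqrt{0.9})\bigr)^{-1}\approx0.2498$, for which $Q\approx+4\cdot10^{-4}$.
\end{itemize}
More generally, for $\varrho_s=(1-\epsilon)\delta_\beta+\epsilon\delta_\alpha$ and $d=\beta-\alpha$, a first-order expansion gives
\[
Q(x_*)=\frac{\epsilon d}{(\beta+1)^2}\Bigl[(1-d)-\frac{(\beta-1)(d+2)}{4(\alpha-1)}\Bigr]+O(\epsilon^2).
\]
This is negative for small $\epsilon$ whenever $d>2/5$. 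So no sharper treatment of $k$ and no ``variance correction'' can save this route. The loss is in $x$, which is genuinely nonlocal: it depends on all the measures $\varrho_t$, $t\ge s$, not just on $\varrho_s$.

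\textbf{The missing idea.} You need a lower bound for $\Theta(s)$ that sees the whole tail. The paper gets it from the monotonicity of $k$ in $s$. Let $\ell,V,T$ be the mean, variance and third centered moment of $p+1$ under $\varrho_s$. Then
\[
s\ell'=V,\qquad sV'=T,\qquad k=1-\frac1\ell+\frac V{\ell^2},\qquad sk'=\frac{\ell(V+T)-2V^2}{\ell^3}.
\]
The support inequalities $T+(\ell-\alpha-1)V\ge0$ and $V\le(\ell-\alpha-1)(\beta+1-\ell)$, combined with $\beta-\alpha\le1$, give $sk'\ge0$ and also $k\le\beta/(\beta+1)$. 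Integrating $(\sqrt F/f)'=(\tfrac12-k)F^{-1/2}$ from $s$ to $\infty$ then yields
\[
\Theta(s)=\frac1{w(s)}\int_s^\infty\bigl(k(t)-\tfrac12\bigr)F(t)^{-1/2}\,dt,\qquad k(s)-\tfrac12\le\Theta(s)\le\frac{\beta-1}{2(\beta+1)}.
\]
The paper concludes by writing $Q(x)=\tau+(1-x)\bigl[\tfrac12-\mu-(1-\mu)x\bigr]$, where $\tau=F\Theta'/(f\Theta)=\Theta+\tfrac12-k\ge0$; both terms are nonnegative.

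Within your own framework, the lower bound alone would suffice, since
\[
Q\bigl(k-\tfrac12\bigr)=\bigl(\tfrac32-k\bigr)\Bigl[(1-\mu)\bigl(\tfrac32-k\bigr)-\tfrac12\Bigr]\ge0\qquad\text{for } k\le\frac{\beta}{\beta+1}.
\]
Either way, the monotonicity of $Ff'/f^2$ in $s$ is what your proposal lacks, and it is where $\beta-\alpha\le1$ really enters.
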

  \begin{proof}
  For $s\ge1$, $F(s)\ge\nu([\alpha,\beta])s^{\alpha+1}/(\beta+1)$, so the tail is finite. Introduce the probability measure
  \[
  d\varrho_s(p)=\frac{s^{p+1}}{(p+1)F(s)}\,d\nu(p).
  \]
  Let $\ell,V,T$ be the mean, variance and third centered moment of
  $p+1$ with respect to $\varrho_s$. Differentiating the normalized
  weight with respect to $\log s$ and taking moments gives
  \[
  \ell=sf/F,\quad s\partial_s\varrho_s=(p+1-\ell)\varrho_s,
  \quad s\ell'=V,\quad sV'=T.
  \]
  Writing $Q=Ff'/f^2$, differentiation gives
  \begin{equation}\label{eq:moment-Q}
  Q=1-\frac1\ell+\frac V{\ell^2},\qquad
  sQ'=\frac{\ell(V+T)-2V^2}{\ell^3}.
  \end{equation}
  The support bounds imply
  \[
  T+(\ell-\alpha-1)V
  =\int(p+1-\ell)^2(p-\alpha)\,d\varrho_s\ge0,
  \]
  and
  \[
  (\ell-\alpha-1)(\beta+1-\ell)-V
  =\int(p-\alpha)(\beta-p)\,d\varrho_s\ge0.
  \]
  Consequently
  \begin{align*}
  \ell(V+T)-2V^2
  &\ge[\ell(\alpha+2-\ell)-2(\ell-\alpha-1)(\beta+1-\ell)]V\\
  &=[(\beta+1)(\alpha+1-\beta)
  +(\beta+1-\ell)(\alpha+\beta+1-\ell)]V\ge0.
  \end{align*}
  The last expression is nonnegative because
  $\alpha+1-\beta\ge0$ and
  $\alpha+\beta+1-\ell\ge\alpha>0$. Thus $Q$ is nondecreasing.
  Also $\ell>2$ implies $Q>1/2$. For the upper bound, the support
  length restriction and $\ell\le\beta+1$ give
  \[
  \ell-\alpha-1\le\ell-\beta\le\frac\ell{\beta+1},\qquad
  V\le\frac{\ell(\beta+1-\ell)}{\beta+1}.
  \]
  Substitution in \eqref{eq:moment-Q} gives
  \[
  Q\le1-\frac1\ell+
  \frac{\beta+1-\ell}{(\beta+1)\ell}
  =\frac\beta{\beta+1}.
  \]
  
  For $s\to\infty$, $\sqrt F/f\le s/\sqrt{(\alpha+1)F}\to0$. Integrating
  \[
  (\sqrt F/f)'=(1/2-Q)F^{-1/2}
  \]
  from $s$ to infinity gives
  \[
  \Theta(s)=\frac{\sqrt F}{fw}
  =\frac1{w(s)}\int_s^\infty(Q(t)-1/2)F(t)^{-1/2}\,dt.
  \]
  Hence
  \begin{equation}\label{eq:moment-theta}
  Q(s)-\tfrac{1}{2}\le\Theta(s)\le\frac{\beta-1}{2(\beta+1)},
  \qquad \frac{F\Theta'}{f\Theta}=\Theta+\tfrac{1}{2}-Q\ge0.
  \end{equation}
  To obtain the concavity from these moment bounds, note first that
  \[
  \frac{(Fw)'}{Fw}=(1-\Theta)\frac fF,\qquad
  \left(\frac fF\right)'=(Q-1)\left(\frac fF\right)^2.
  \]
  Consequently, for $0<\mu<1/2$,
  \begin{align*}
  -\frac{((Fw)^\mu)''}{\mu(Fw)^\mu}
  &=\frac fF\Theta'
  +\left(\frac fF\right)^2(1-\Theta)
  \bigl[1-Q-\mu(1-\Theta)\bigr].
  \end{align*}
  Substituting $Q=\Theta+1/2-F\Theta'/(f\Theta)$ yields
  \[
  -\frac{((Fw)^\mu)''}{\mu(Fw)^\mu}
  =\left(\frac fF\right)^2\left\{
  \frac{F\Theta'}{f\Theta}
  +(1-\Theta)\bigl[\tfrac{1}{2}-\mu-(1-\mu)\Theta\bigr]\right\}.
  \]
  Choose $\mu=2/(\beta+3)$; then $(1/2-\mu)/(1-\mu)=(\beta-1)/(2(\beta+1))$. Both terms in braces are nonnegative by \eqref{eq:moment-theta}.
  \end{proof}
  
  \begin{proposition}\label{prop:mixtures}
  Every nonzero finite positive measure supported in $[p_-,p_+]\Subset(1,p_S(n))$ defines an admissible source $f(s)=\int s^p\,d\nu(p)$, with
  \[
  \mu=\frac2{p_++3},\qquad
  C_2=\left(\frac{p_+-1}{p_--1}\right)^2,
  \qquad m\le\max\{1,\lceil p_+-p_-\rceil\}.
  \]
  \end{proposition}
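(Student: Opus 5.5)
Partition $[p_-,p_+]$ into $m\le\max\{1,\lceil p_+-p_-\rceil\}$ consecutive intervals $I_1,\dots,I_m$ of length at most one. We order them increasingly and make them half-open so that they are disjoint. Discard those carrying zero $\nu$-mass, which only decreases $m$. Put $f_i(s)=\int_{I_i}s^p\,d\nu(p)$. Then $f=\sum_i f_i$, and each $f_i$ is positive and $C^2$ on $(0,\infty)$. Since $p\ge p_->1$ and $\nu$ is finite, $f\in C^1([0,\infty))\cap C^2((0,\infty))$ with $f(0)=f'(0)=0$, which gives \eqref{eq:f-basic}. Lemma~\ref{lem:short-mixture}, applied to $\nu|_{I_i}$ with $[\alpha_i,\beta_i]=\overline{I_i}$, gives finite tails $w_i$ and concavity of $(F_iw_i)^{2/(\beta_i+3)}$. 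Since $\beta_i\le p_+$, we have $\mu:=2/(p_++3)\le 2/(\beta_i+3)$. A positive concave function raised to a power in $(0,1]$ is still concave, so $(F_iw_i)^\mu$ is concave. The condition \eqref{eq:mu} holds because $p_+<p_S(n)$, exactly as in the single-power computation preceding Theorem~\ref{thm:local-tail}.

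For the ordering in \eqref{eq:component-order}, we use $f_i/F_i=\ell_i/s$, where $\ell_i$ is the $\varrho_s$-mean of $p+1$ over $I_i$. This mean lies in $[\alpha_i+1,\beta_i+1]$. Because the intervals are ordered and disjoint, $\ell_1\le\cdots\le\ell_m$ pointwise.

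The main step is the tail comparison $\sum_i(F_iw_i)^2\le C_2(Fw)^2$. We compare each quantity with the pure-power value. Lemma~\ref{lem:growth-tail} gives $\sqrt{F_i}w_i\le s/(\sqrt{F_i}\,\cdot)$-type bounds, but a direct route is cleaner. Since $F_i(r)/F_i(s)\ge(r/s)^{\alpha_i+1}$ for $r\ge s$, we get
\[
\sqrt{F_i(s)}w_i(s)\le\frac{2s}{\alpha_i-1}\le\frac{2s}{p_--1}.
\]
In the other direction, $F(r)/F(s)\le(r/s)^{p_++1}$, which gives
\[
\sqrt{F(s)}w(s)\ge\frac{2s}{p_+-1}.
\]
Therefore
\[
F_iw_i\le\sqrt{F_i}\,\frac{2s}{p_--1}
=\sqrt{\frac{F_i}{F}}\cdot\frac{p_+-1}{p_--1}\cdot\frac{2s\sqrt F}{p_+-1}
\le\sqrt{\nu_i}\,\frac{p_+-1}{p_--1}\,Fw,
\]
where $\nu_i=F_i/F$. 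Squaring and summing with $\sum_i\nu_i=1$ yields the claim with $C_2=((p_+-1)/(p_--1))^2\ge1$.

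I expect the only delicate points to be bookkeeping. The first is making the partition disjoint so that the ratios $f_i/F_i$ are genuinely ordered. If an endpoint atom is shared between two intervals, we assign it to one of them, and the means still remain ordered. The second is checking that the number of intervals matches $\max\{1,\lceil p_+-p_-\rceil\}$, including the degenerate case $p_+=p_-$.
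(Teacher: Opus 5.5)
Your proof is correct and follows the paper's argument essentially step for step. It uses the same partition into ordered half-open intervals of length at most one, Lemma~\ref{lem:short-mixture} composed with an increasing concave power to obtain the common exponent $\mu$, and ordering via $\alpha_j+1\le sf_j/F_j\le\beta_j+1\le\alpha_{j+1}+1$. It also uses the same pure-power tail bounds, which give $\sum_j(F_jw_j)^2\le 4s^2F/(p_--1)^2\le C_2(Fw)^2$. Delete the garbled aside about Lemma~\ref{lem:growth-tail}: that lemma gives a lower bound $\sqrt F\,w\ge b_\mu s$, not an upper bound, and your direct argument never uses it.
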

  \begin{proof}
  Differentiation under the integral on positive compact intervals is justified. For $0<s\le1$,
  \[
  f(s)\le\nu([p_-,p_+])s^{p_-},\qquad
  f'(s)\le p_+\nu([p_-,p_+])s^{p_--1},
  \]
  which proves the regularity and zero-value conditions for admissibility. Partition the exponent interval into the indicated number of ordered intervals of length at most one, taking them left-closed and right-open except for the last. Discard those of zero mass, and denote the endpoints of each remaining interval by $\alpha_j,\beta_j$. The corresponding components satisfy
  \[
  \alpha_j+1\le sf_j/F_j\le\beta_j+1
  \le\alpha_{j+1}+1\le sf_{j+1}/F_{j+1},
  \]
  so they are ordered. Lemma~\ref{lem:short-mixture} gives concavity with exponent $2/(\beta_j+3)$; composition with an increasing concave power gives the common exponent $\mu$.
  
  For $t\ge s>0$,
  \[
  (t/s)^{p_-+1}\le F_j(t)/F_j(s)\le(t/s)^{p_++1}.
  \]
  Integration gives
  \begin{equation}\label{eq:power-tail-bounds}
  \frac{2s}{(p_+-1)\sqrt{F_j(s)}}\le w_j(s)
  \le\frac{2s}{(p_--1)\sqrt{F_j(s)}}.
  \end{equation}
  The same inequalities hold for $F,w$. Therefore
  \[
  \sum_j(F_jw_j)^2\le\frac{4s^2F}{(p_--1)^2}
  \le\left(\frac{p_+-1}{p_--1}\right)^2(Fw)^2.
  \]
  
  \end{proof}
  
  \begin{proposition}\label{prop:local-mixtures}
  Let $(M^n,g)$ be complete, $n\ge2$, and $\Omega\subset M$ open, with
  $\operatorname{Ric}\ge-(n-1)\kappa g$ on $\Omega$, $\kappa\ge0$.
  Let $\nu$ be a nonzero finite positive measure supported in
  $[p_-,p_+]\Subset(1,p_S(n))$.
  Every positive $C^3$ solution of
  $-\Delta u=\int_{[p_-,p_+]}u^p\,d\nu(p)$ in $\Omega$ satisfies
  \[
  |\nabla\log u|^2+\int_{[p_-,p_+]}u^{p-1}\,d\nu(p)
  \le C(n,p_-,p_+)\bigl(d_\Omega^{-2}+\kappa\bigr).
  \]
  \end{proposition}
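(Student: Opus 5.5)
Proposition~\ref{prop:mixtures} shows that $f(s)=\int s^p\,d\nu(p)$ is admissible with $\mu=2/(p_++3)$, $C_2=((p_+-1)/(p_--1))^2$ and $m\le\max\{1,\lceil p_+-p_-\rceil\}$. All three depend only on $p_\pm$, and the lower bound \eqref{eq:mu} holds because $p_+<p_S(n)$. So I would apply Theorem~\ref{thm:local-tail} directly. It gives
\[
|\nabla\log w(u)|+w(u)^{-1}\le C(n,p_-,p_+)\bigl(d_\Omega^{-1}+\sqrt\kappa\bigr),
\]
with a constant independent of $\nu$. What remains is to convert the quantities $w(u)$ and $\nabla\log w(u)$ into $u$, $\nabla\log u$ and $f(u)/u$, using comparisons that are uniform in the measure.

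The conversion uses the two-sided tail bounds \eqref{eq:power-tail-bounds}, which hold for $F,w$ themselves:
\[
\frac{2s}{(p_+-1)\sqrt{F(s)}}\le w(s)\le\frac{2s}{(p_--1)\sqrt{F(s)}}.
\]
Thus $w(s)^{-2}$ is comparable to $F(s)/s^2$, with constants depending only on $p_\pm$. Since $\frac{s^{p+1}}{p+1}\le s^{p+1}\le(p_++1)\frac{s^{p+1}}{p+1}$ for $p\in[p_-,p_+]$, we also get $F(s)/s^2\ge f(s)/((p_++1)s)$. Hence $f(u)/u\le C\,w(u)^{-2}\le C(d_\Omega^{-2}+\kappa)$.

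For the gradient, I would write $|\nabla\log w(u)|=\frac{u}{\sqrt{F(u)}w(u)}|\nabla\log u|$. By the upper tail bound, the factor $u/(\sqrt F w)$ is at least $(p_--1)/2$. Therefore
\[
|\nabla\log u|\le\frac{2}{p_--1}|\nabla\log w(u)|.
\]
Squaring and adding the amplitude bound gives the statement.

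The only point needing care is the uniformity: every constant must come from $p_\pm$ alone, not from $\nu([p_-,p_+])$. This holds because the bounds in \eqref{eq:power-tail-bounds} are scale invariant under multiplying $\nu$ by a constant. The number of components $m$ is also bounded in terms of $p_+-p_-$ alone. The $C^3$ regularity and positivity hypotheses pass through unchanged. I expect no genuine obstacle beyond checking that the constant in Theorem~\ref{thm:local-tail} depends on $m$ only through the bound $\max\{1,\lceil p_+-p_-\rceil\}$. It does, since the constant in \eqref{eq:weight-size} grows like $C^mC_2^{m/2}$.
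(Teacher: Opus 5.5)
Your proposal is correct and follows the paper's proof essentially verbatim. Like the paper, you apply Theorem~\ref{thm:local-tail} with the parameters from Proposition~\ref{prop:mixtures}, then convert using the tail bounds \eqref{eq:power-tail-bounds} together with $sf/F\le p_++1$, which yields the same constants $2/(p_--1)$ and $4(p_++1)/(p_--1)^2$. The independence from $\nu$ is actually stronger than scale invariance: the tail bounds follow from $(t/s)^{p_-+1}\le F(t)/F(s)\le(t/s)^{p_++1}$, which holds for every such measure.
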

  \begin{proof}
  We apply Theorem~\ref{thm:local-tail} and Proposition~\ref{prop:mixtures}. The tail bounds and $sf/F\le p_++1$ give
  \[
  |\nabla\log u|
  =\frac{\sqrt{F(u)}w(u)}u|\nabla\log w(u)|
  \le\frac2{p_--1}|\nabla\log w(u)|,
  \qquad
  \frac{f(s)w(s)^2}s\le\frac{4(p_++1)}{(p_--1)^2}.
  \]
  Thus
  \[
  |\nabla\log u|^2+\frac{f(u)}u
  \le\frac{4(p_++1)}{(p_--1)^2}
  \bigl(|\nabla\log w(u)|^2+w(u)^{-2}\bigr),
  \]
  which is the assertion.
  \end{proof}
  
  \begin{proof}[Proof of Theorem~\ref{thm:intro-mixtures}]
  Write $f(s)=\sum_i a_i s^{p_i}$ with $a_i>0$, $\sum_i a_i=1$ and
  $p_i\in[p_-,p_+]$. Apply Proposition~\ref{prop:local-mixtures} with
  $\nu=\sum_i a_i\delta_{p_i}$ and $\Omega=B_{2R}(x_0)$.
  On $B_R(x_0)$ one has $d_\Omega\ge R$, which gives the stated estimate.
  \end{proof}

\section{Sharp comparison and rigidity}\label{sec:comparison}
  \subsection{Radial models}
  Throughout this section $f$ is admissible and $\kappa>0$. We use $k>0$
  for an auxiliary model-curvature parameter. For a model curvature $-k$,
  write
  \[
  \bar g_k=dr^2+S_k(r)^2g_{\mathbb S^{n-1}},\qquad
  S_k(r)=\frac{\sinh(\sqrt k r)}{\sqrt k}.
  \]
  Here $g_{\mathbb S^{n-1}}$ is the metric of the unit round sphere,
  and $r$ is distance from a fixed point, called the pole. The volume
  element and the radial Laplacian are
  \[
  d\mu_{\bar g_k}=S_k(r)^{n-1}\,dr\,d\omega,\qquad
  \Delta_{\bar g_k}U
  =\frac1{S_k^{n-1}}\bigl(S_k^{n-1}U'\bigr)'
  =U''+(n-1)\frac{S_k'}{S_k}U'.
  \]
  Since $S_k''=kS_k$ and $(S_k')^2=1+kS_k^2$, both the radial and
  tangential sectional curvatures are $-k$; in particular,
  $\operatorname{Ric}_{\bar g_k}=-(n-1)k\bar g_k$.
  Let $U_{k,B}$ denote the positive branch of the regular radial solution
  \begin{equation}\label{eq:radial}
  U''+(n-1)\frac{S_k'}{S_k}U'+f(U)=0,
  \qquad U(0)=B>0,\quad U'(0)=0,
  \end{equation}
  and define
  \begin{equation}\label{eq:threshold}
  A_k=\sup\{B>0:U_{k,B}>0\text{ on }[0,\infty)\}.
  \end{equation}
  
  The integral equation
  \begin{equation}\label{eq:radial-integral}
  U(r)=B-\int_0^r S_k(t)^{1-n}\int_0^t S_k(s)^{n-1}f(U(s))\,ds\,dt
  \end{equation}
  constructs the solution at the center. Since $S_k(s)/S_k(t)\le s/t$ for $0\le s\le t$, the right side has Lipschitz constant at most $LR^2/(2n)$ on $C[0,R]$, where $L$ is a Lipschitz constant for $f$ near $B$. For sufficiently small $R$ it preserves a neighborhood of the constant function $B$ and is a contraction. The choices are uniform when $(k,B)$ varies near a fixed positive pair. Moreover,
  \begin{equation}\label{eq:radial-flux}
  U'(r)=-S_k(r)^{1-n}\int_0^r S_k(t)^{n-1}f(U(t))\,dt,
  \end{equation}
  so the solution depends continuously in $C^1[0,R]$ on the parameters, and
  \[
  U'(r)=-\frac{f(B)}n r+O(r^3),\qquad
  U''(r)=-\frac{f(B)}n+O(r^2),\qquad U'''(r)=O(r).
  \]
  In particular, the radial function is $C^3$ at the origin and has Hessian $-f(B)g/n$ there.
  
  On the positive branch $U'<0$ for $r>0$, while
  \[
  \left(\frac{U'^2}2+F(U)\right)'
  =-(n-1)\frac{S_k'}{S_k}U'^2\le0.
  \]
  Thus $0<U\le B$ and $|U'|\le\sqrt{2F(B)}$. A finite endpoint of this branch must be a zero, and \eqref{eq:radial-flux} makes that zero transverse. Extend $f$ by zero on $(-\infty,0)$. This is a $C^1$ extension since $f(0)=f'(0)=0$; after the first zero, $S_k^{n-1}U'$ is constant. The extended solution exists on every finite interval and depends continuously in $C^1$ on $(k,B)$ there.
  
For a single power $f(s)=s^p$, $1<p<p_S(n)$, Mancini and Sandeep
\cite{Mancini:2008aa} prove existence, radial symmetry and uniqueness
up to hyperbolic isometries of positive $H^1$ solutions.
Bonforte, Gazzola, Grillo and V\'azquez \cite{Bonforte:2013aa}
identify the ground state as the radial threshold between positive
entire profiles and profiles with a finite first zero.
For the admissible source class used here, the following lemma proves
existence and attainment of the maximal central value, together with
parameter semicontinuity and the decay needed below.

  \begin{lemma}\label{lem:threshold}
  For every $k>0$, one has $0<A_k<\infty$, and $U_{k,A_k}$ is positive on $[0,\infty)$. Every globally positive radial profile tends to zero, together with its derivative. The threshold profile satisfies
  \begin{equation}\label{eq:fast-threshold}
  -\frac{U_{k,A_k}'}{U_{k,A_k}}\longrightarrow(n-1)\sqrt k,
  \qquad U_{k,A_k}(r)=O(e^{-cr})
  \quad\text{for every }0<c<(n-1)\sqrt k.
  \end{equation}
  Also $\limsup_{k\to k_0}A_k\le A_{k_0}$ for each $k_0>0$.
  \end{lemma}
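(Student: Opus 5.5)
We split the lemma into four parts: $A_k>0$, $A_k<\infty$, attainment and decay, and semicontinuity.

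\emph{Step 1: $A_k>0$.} We would use Corollary~\ref{cor:global-bounds} in reverse only for intuition. The direct argument is a small-data one. For $B$ small, $f(U)\le\epsilon U$ on $[0,B]$ since $f(s)=o(s)$. Comparing \eqref{eq:radial-flux} with the linear equation $\Delta U+\epsilon U=0$ on $\mathbb H^n_{-k}$ then shows the profile stays positive. The key point is that the bottom of the spectrum of $-\Delta_{\bar g_k}$ is $(n-1)^2k/4>\epsilon$, so the radial ground state of $\Delta+\epsilon$ is positive. A Sturm comparison, or a monotonicity of $U/\phi_\epsilon$ with $\phi_\epsilon$ the positive radial eigenfunction, keeps $U>0$ as long as $U\le B$, and $U\le B$ holds by the energy monotonicity.

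\emph{Step 2: $A_k<\infty$.} A globally positive radial profile is an entire positive solution on $\mathbb H^n_{-k}$. Corollary~\ref{cor:global-bounds} gives $u\le B_0(k)$, so every admissible $B$ satisfies $B\le B_0$.

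\emph{Step 3: attainment and decay.} Positivity of $U_{k,A_k}$ follows by openness of the complement. If $U_{k,A_k}$ had a first zero, it would be transverse by \eqref{eq:radial-flux}. By $C^1$ continuous dependence on finite intervals of the $f$-extended solution, nearby profiles with $B<A_k$ would also have a zero. Since the set of admissible $B$ is an interval by Step 1 and a sup argument, we would need to show it is an interval: if $B$ is admissible, so is every $B'<B$. Rather than proving this, we would define $A_k$ as the sup and take $B_j\uparrow A_k$ admissible. Then $U_{k,B_j}\to U_{k,A_k}$ in $C^1$ on compacts, so $U_{k,A_k}\ge0$. A zero of $U_{k,A_k}$ would be a double zero, since $U'=0$ there by nonnegativity, contradicting transversality.

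For decay, $U'<0$ and $U>0$ give a limit $\ell\ge0$. If $\ell>0$, then \eqref{eq:radial-flux} gives $U'\to -f(\ell)/((n-1)\sqrt k)$, forcing $U\to-\infty$; hence $\ell=0$. The same formula with $f(U)\to0$ gives $U'\to0$.

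The rate is the main obstacle. Set $v=-U'/U$; it satisfies the Riccati equation
\[
v'=v^2-(n-1)\frac{S_k'}{S_k}v+\frac{f(U)}{U},
\]
with $f(U)/U\to0$. Its asymptotic roots are $0$ and $(n-1)\sqrt k$, so $v$ tends to one of them. This gives $U=O(e^{-cr})$ for every $c<(n-1)\sqrt k$ in the fast case. To exclude the slow root $v\to0$ at the threshold, we would use a shooting argument. If $U_{k,A_k}$ decayed slowly, like the $e^{-(n-1)\sqrt k r}$-free branch, the linearization in $B$ would show that profiles with $B$ slightly larger remain positive, contradicting maximality. Concretely, the variation $\partial_BU$ solves the linearized equation. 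We would try to show that slow decay makes the solution set open at $A_k$, via the strict sign of a flux/Pohozaev-type quantity $S_k^{n-1}(U'+(n-1)\sqrt k\,U)$ that is preserved under small perturbations. This is the delicate step; the alternative is to invoke the $H^1$ ground state picture of Bonforte et al.\ adapted to admissible $f$.

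\emph{Step 4: semicontinuity.} Suppose $k_j\to k_0$ and $A_{k_j}\to A^*$ along a subsequence. The profiles $U_{k_j,A_{k_j}}$ converge in $C^1$ on compacts to $U_{k_0,A^*}\ge0$, which by transversality is globally positive. Hence $A^*\le A_{k_0}$. Boundedness of $A_{k_j}$ comes from the bound $B_0$ of Corollary~\ref{cor:global-bounds}, which is uniform in $k$ near $k_0$.
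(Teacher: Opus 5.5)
\textbf{Gap: the threshold rate.} Your proposal has a genuine gap where the lemma has its main content, the rate \eqref{eq:fast-threshold}. You leave the exclusion of slow decay as ``the delicate step'' and offer only tentative ideas. You also assert without proof that $v=-U'/U$ converges to one of the two asymptotic roots. A linearization in $B$ controls nearby profiles only on finite intervals, so it cannot show that they stay positive for all $r$.

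What is needed is an open, forward-invariant region of the $(U,-U')$ phase plane whose trajectories never vanish. Put $W=-U'$ and $\lambda=(n-1)\sqrt k$, fix $0<c<\lambda$, and choose $\delta>0$ with $f(s)<c(\lambda-c)s$ on $(0,\delta]$. The region $\{0<U<\delta,\ 0<W<cU\}$ is forward invariant. Where $W=0$ one has $W'=f(U)>0$, and where $W=cU$,
\[
(W-cU)'=f(U)-c\bigl(\lambda\coth(\sqrt k\,r)-c\bigr)U<0,
\]
since $\coth\ge1$. Inside the region $U(r)\ge U(R)e^{-c(r-R)}$, so no zero occurs.

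Suppose the threshold profile entered this open set at some radius $R$. Continuous dependence would then place $U_{k,B}$, for some $B>A_k$ close to $A_k$, inside the set at $R$ while still positive on $[0,R]$. That profile would be globally positive, contradicting the definition of $A_k$. Hence $-U'\ge cU$ wherever $U<\delta$. Together with $U\to0$ this gives $U=O(e^{-cr})$ and $\liminf(-U'/U)\ge c$ for every $c<\lambda$, with no need to classify the Riccati asymptotics.

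You also need the matching upper bound. Take $r_0$ beyond the radius where $\lambda\coth(\sqrt k\,r)\le\lambda+\varepsilon$, and suppose $v(r_0)>\lambda+\varepsilon$. The Riccati equation then gives $v'\ge v(v-\lambda-\varepsilon)>0$, so $v$ blows up at a finite radius, which is impossible for a positive profile. The two bounds together give $-U'/U\to\lambda$.

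\textbf{The other steps.} These are sound. Your Step~1 takes a different route from the paper. The Wronskian identity $\bigl(S_k^{n-1}(U'\phi_\epsilon-U\phi_\epsilon')\bigr)'=S_k^{n-1}U\phi_\epsilon\bigl(\epsilon-f(U)/U\bigr)\ge0$ gives $U\ge B\phi_\epsilon>0$. For this you must justify that the radial solution of $\Delta\phi+\epsilon\phi=0$ is positive when $\epsilon<(n-1)^2k/4$, for instance via the lower bound $(n-1)^2k/4$ for Dirichlet eigenvalues of hyperbolic balls. The paper instead uses the first-contact barrier $-U'<2f(U)/((n-1)\sqrt k)$ under $\max_{[0,B]}|f'|<(n-1)^2k/4$. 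That argument needs no spectral input but rests on the same threshold. Your attainment, decay-to-zero and semicontinuity arguments match the paper's. In Step~4, add the trivial case $A^*=0$, since continuous dependence is uniform only near a positive center value.
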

  \begin{proof}
  Put $W=-U'$. Since $f'$ is continuous at zero and $f'(0)=0$,
  choose $B>0$ so small that
  \[
  \frac{4}{(n-1)^2k}\max_{[0,B]}|f'|<1.
  \]
  Initially $W<\frac{2}{(n-1)\sqrt k}f(U)$. At a first contact
  with this bound,
  \[
  \left(W-\frac{2f(U)}{(n-1)\sqrt k}\right)'
  =f(U)\left[1-2\coth(\sqrt k r)
  +\frac{4f'(U)}{(n-1)^2k}\right]<0,
  \]
  which is impossible at a first contact from below. Therefore
  \[
  \int_{U(r)}^B\frac{ds}{f(s)}
  =\int_0^r\frac{-U'}{f(U)}\,dt
  \le\frac{2r}{(n-1)\sqrt k}.
  \]
  Because $f(s)=o(s)$ at zero, the integral diverges as $U(r)\downarrow0$. This rules out a finite zero and proves $A_k>0$.
  
  For every globally positive profile, Corollary~\ref{cor:global-bounds} at the center gives
  \[
  w(B)^{-1}\le C(n,\mu,m,C_2)\sqrt k.
  \]
  Since $w(B)\to0$ as $B\to\infty$, the set in \eqref{eq:threshold} is bounded. Choose globally positive profiles with center values $B_j\to A_k$. Their extended solutions converge on every finite interval to $U_{k,A_k}$, which is nonnegative. A zero at a positive radius would have zero derivative, and uniqueness for the extended equation would force the solution to vanish identically. Thus $U_{k,A_k}>0$. The same estimate bounds $A_{k_j}$ when $k_j\to k_0>0$. Any positive subsequential limit $B$ gives a globally positive limit profile $U_{k_0,B}$, so $B\le A_{k_0}$; a zero limit already has this property. This proves upper semicontinuity.
  
  A positive profile decreases to some $\ell\ge0$. By l'H\^opital's rule,
  \[
  \frac{\int_0^r S_k(t)^{n-1}\,dt}{S_k(r)^{n-1}}
  \longrightarrow\frac1{(n-1)\sqrt k}.
  \]
  Splitting the flux integral at a large fixed radius and using $f(U)\to f(\ell)$ now gives
  \[
  -U'(r)\longrightarrow\frac{f(\ell)}{(n-1)\sqrt k}.
  \]
  If $\ell>0$, the limiting speed is positive, contradicting positivity on the whole half-line. Hence $\ell=0$ and $U'\to0$.
  
  For the threshold profile, write $\lambda=(n-1)\sqrt k$. Fix $0<c<\lambda$, and choose $\delta>0$ such that $f(s)<c(\lambda-c)s$ for $0<s\le\delta$. In the system
  \[
  U'=-W,\qquad W'=f(U)-(n-1)\sqrt k\coth(\sqrt k r)W,
  \]
  the region $0<U<\delta$, $0<W<cU$ is forward invariant. Indeed, $W'=f(U)>0$ at $W=0$, and
  \[
  (W-cU)'=f(U)-c\bigl((n-1)\sqrt k\coth(\sqrt k r)-c\bigr)U<0\qquad\text{at }W=cU.
  \]
  Inside the region $U(r)\ge U(R)e^{-c(r-R)}$, so a finite zero is impossible. If the threshold profile entered it, continuous dependence would place a profile with center value $B>A_k$ inside the same region after a finite time, with positivity up to that time. This contradicts the definition of $A_k$. Consequently $-U'/U\ge c$ once $U<\delta$, and integration gives the stated exponential bound.
  
  Finally, $y=-U'/U$ satisfies
  $y'=y^2-(n-1)\sqrt k\coth(\sqrt k r)y+f(U)/U$.
  Given $\varepsilon>0$, take $R$ such that
  $(n-1)\sqrt k\coth(\sqrt k r)\le\lambda+\varepsilon$
  on $[R,\infty)$. If $y(r_0)>\lambda+\varepsilon$ for some
  $r_0\ge R$, then $y'\ge y(y-\lambda-\varepsilon)>0$ thereafter, and
  \[
  r-r_0\le\int_{y(r_0)}^{y(r)}\frac{d\tau}{\tau(\tau-\lambda-\varepsilon)}
  \le\frac1{\lambda+\varepsilon}
  \log\frac{y(r_0)}{y(r_0)-\lambda-\varepsilon}.
  \]
  This forces blow-up in finite time, whereas the positive profile makes $y$ finite at every finite radius. The upper bound, together with the lower bound for every $c<\lambda$, proves \eqref{eq:fast-threshold}.
  \end{proof}
  
  Set $U_H=U_{\kappa,A_\kappa}$ and define its phase curve by
  \[
  \Psi_H(s)=[U_H'(U_H^{-1}(s))]^2\quad(0<s<A_\kappa),
  \qquad \Psi_H(A_\kappa)=0.
  \]
  \begin{theorem}\label{thm:gradient-general}
  Let $(M^n,g)$ be complete and connected without boundary, $n\ge2$,
  with $\operatorname{Ric}\ge-(n-1)\kappa g$, $\kappa>0$.
  Suppose $f$ is admissible.
  For the threshold $A_\kappa$ and model profile $U_H$ defined above,
  every positive $C^3$ solution of $-\Delta u=f(u)$ satisfies
  \begin{equation}\label{eq:general-gradient}
  u\le A_\kappa,\qquad |\nabla u|^2\le\Psi_H(u).
  \end{equation}
  Equality in the gradient bound at one point, including a critical point,
  implies that $(M,g)$ is isometric to $\mathbb H^n_{-\kappa}$ and
  $u(x)=U_H(d(o,x))$ for some $o\in M$. The model attains equality everywhere.
  \end{theorem}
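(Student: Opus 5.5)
We will prove Theorem~\ref{thm:gradient-general} by a continuity (sliding) argument in the model curvature, in the spirit of Modica's $P$-function method. The comparison family consists of the profiles $U_{k,A_k}$ for $k>\kappa$, with their phase curves $\Psi_k(s)=[U_{k,A_k}'(U_{k,A_k}^{-1}(s))]^2$ on $(0,A_k]$.

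\textbf{Step 1: the phase curves.} We first establish the properties of $\Psi_k$. Writing $r=U^{-1}(s)$, the radial equation \eqref{eq:radial} gives
\[
\tfrac12\Psi_k'(s)=f(s)+(n-1)\sqrt k\coth(\sqrt k r)\sqrt{\Psi_k(s)} .
\]
We also need:
\begin{itemize}
\item $\Psi_k$ is $C^2$ up to $s=A_k$, with $\Psi_k(A_k)=0$ and $\Psi_k'(A_k)=2f(A_k)/n$, from the expansion $U'=-f(B)r/n+O(r^3)$;
\item $\Psi_k(s)\sim (n-1)^2k\,s^2$ as $s\to0$, by \eqref{eq:fast-threshold}.
\end{itemize}

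\textbf{Step 2: the barrier inequality.} For a solution $u$, set $P=|\nabla u|^2-\Psi_k(u)$ on $\{u<A_k\}$. We will show that $P$ cannot attain a nonnegative interior maximum unless equality holds identically. At a point where $P=0$ and $\nabla P=0$, the relation $\nabla|\nabla u|^2=\Psi_k'(u)\nabla u$ determines $u_{11}=\tfrac12\Psi_k'$. Applying the refined Kato/Bochner splitting \eqref{eq:normal-tangent} with $\operatorname{Ric}\ge-(n-1)\kappa$ gives
\[
\Delta P\ \ge\ \frac{(u_{11}+f)^2}{n-1}-(n-1)\kappa|\nabla u|^2+\dots .
\]
Substituting the ODE shows this quantity is strictly positive when $k>\kappa$. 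The model is exactly the equality case: its tangential Hessian is $\coth$ times $U'$, and $k>\kappa$ supplies the strict margin.

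\textbf{Step 3: localization at infinity and near zero.} No boundedness at infinity is assumed, so we handle the ends using Corollary~\ref{cor:global-bounds}: it gives $u\le B_0$ and $|\nabla\log w(u)|\le C\sqrt\kappa$, hence $|\nabla u|^2\le C u^2$ as $u\to0$ in a form comparable to $\Psi_k(u)\sim (n-1)^2k\,u^2$. We then run the continuity argument on the family $\Psi_{k}$, shrinking $k$ from $+\infty$ down to $\kappa$, possibly after rescaling $s\mapsto\Psi_k$ with large slopes.

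\textbf{Step 4: the main obstacle.} The hardest step is keeping the barrier strict near $u=0$ and at infinity, using the Omori--Yau maximum principle (valid under the Ricci lower bound) applied to $P$ or to $P/u^2$. The first thing we will try is the Omori--Yau principle on $P_\epsilon=|\nabla u|^2-(1+\epsilon)\Psi_k(u)$. For small $u$, $P_\epsilon<0$ provided we know $|\nabla\log u|^2\le(n-1)^2\kappa+o(1)$; this sharp asymptotic is not given by Corollary~\ref{cor:global-bounds}, so we first derive it from a separate Omori--Yau argument on $|\nabla\log u|^2$, using $f(u)/u\to0$. The same sliding also gives $u\le A_k$: if $\sup u>A_k$, the largest-slope barrier would be crossed. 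Letting $k\downarrow\kappa$ and using upper semicontinuity of $A_k$ (Lemma~\ref{lem:threshold}), together with $\Psi_k\to\Psi_H$ locally uniformly (by continuous dependence, and since $A_k\to A_\kappa$ along the barrier, which gives lower semicontinuity here), we obtain \eqref{eq:general-gradient}.

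\textbf{Step 5: rigidity.} If equality holds at one point, the strong maximum principle applied to the elliptic inequality satisfied by $P=|\nabla u|^2-\Psi_H(u)\le0$ forces $P\equiv0$. At a critical point we use instead $u=A_\kappa$ and the Hessian $-f(A)g/n$. Then every inequality in Step 2 is an equality:
\begin{itemize}
\item $\nabla^2u$ is umbilic on the level sets with the model coefficients;
\item $\operatorname{Ric}(\nu,\nu)=-(n-1)\kappa$;
\item $|\nabla u|$ is a function of $u$.
\end{itemize}
Hence $r=U_H^{-1}(u)$ is a distance function, smooth away from a unique maximum point $o$. Its Hessian is $\sqrt\kappa\coth(\sqrt\kappa r)(g-dr^2)$, which yields the warped-product form $g=dr^2+S_\kappa(r)^2g_{\mathbb S^{n-1}}$ and the isometry with $\mathbb H^n_{-\kappa}$. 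Finally, the model attains equality everywhere by the definition of $\Psi_H$.
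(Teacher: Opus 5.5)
\textbf{There is a genuine gap in the global argument, and it comes from your choice of barrier family.}

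Your Step~2 computation is the correct local ingredient, once the signs in Step~1 are fixed. One has $\tfrac12\Psi_k'=(n-1)\sqrt k\coth(\sqrt k r)\sqrt{\Psi_k}-f$ and $\Psi_k'(A_k)=-2f(A_k)/n$. At a point where $P=0$ and $\nabla P=0$, the phase equation \eqref{eq:phase-ode} (with curvature $k$) gives $\tfrac12\Delta P\ge(n-1)(k-\kappa)\Psi_k(u)>0$. The trouble is that you continue through the threshold profiles $U_{k,A_k}$, which fails in three ways.

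\emph{No strict gap.} Since $\Psi_k(0)=0$ and $\sqrt{F(s)}w(s)\to0$ as $s\downarrow0$, Corollary~\ref{cor:global-bounds} gives $|\nabla u|\to0$ along any sequence with $u\to0$. Hence $\sup_M P_k\ge0$ whenever $\inf_M u=0$, as happens for the model itself, and openness in $k$ cannot come from $\sup P_k<0$. Your proposed remedy, $|\nabla\log u|^2\le(n-1)^2\kappa+o(1)$ as $u\to0$, is asserted, not proved. Omori--Yau applied to $|\nabla\log u|^2$ only controls its global supremum, where $f(u)/u$ need not be small. This sharp asymptotic is essentially the theorem at height zero. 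Even the weaker bound $|\nabla u|^2\le Cu^2$ needs $\sqrt{F(s)}w(s)\le Cs$, which Lemma~\ref{lem:growth-tail} does not supply for a general admissible source.

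\emph{No continuity in $k$.} Lemma~\ref{lem:threshold} gives only $\limsup_{k\to k_0}A_k\le A_{k_0}$, so $k\mapsto\Psi_k$ need not be continuous. Your claim that $A_k\to A_\kappa$ ``along the barrier'' assumes what must be shown.

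\emph{No start and no amplitude bound.} You do not establish $P_k\le0$ for large $k$, so the continuation has no starting point. Also, $P_k$ is defined only on $\{u<A_k\}$, so $u\le A_k$ does not follow from a barrier ``being crossed'' without a mechanism.

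\textbf{The missing idea is to continue through the finite-zero profiles $U_{k,B}$, with $k>\kappa$ fixed and $B>\max(\sup u,A_k)$ as the parameter.} These profiles avoid all three problems:
\begin{itemize}
\item They satisfy $\Psi_{k,B}(0)=U_{k,B}'(R_B)^2>0$.
\item Their inverse coordinate $v=T(u)$ ranges over a compact interval, with $\alpha(v)$ and $\alpha'(v)$ bounded. So the Omori--Yau argument of Lemma~\ref{lem:strict-gap}, applied to $P+\varepsilon(v^2-1)$, yields a genuine strict gap $\sup P_B<0$.
\item They depend continuously on $B$.
\item The radial Pohozaev estimate of Lemma~\ref{lem:large-speed} gives $\inf_{[0,\sup u]}\Psi_{k,B}\to\infty$. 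Since $\|\nabla u\|_\infty<\infty$, this starts the continuation.
\end{itemize}
Letting $B\downarrow\sup u$ then contradicts the strict gap, at points where $u\to\sup u$, unless $\sup u\le A_k$. The order of limits matters: first take $k_j\downarrow\kappa$ at fixed $B>A_\kappa$, then $B\downarrow A_\kappa$. This order needs only the upper semicontinuity of $A_k$ that is actually available.

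For the rigidity in Step~5, you also need a differential inequality for the defect on a whole neighborhood, with locally bounded coefficients, including through points where $u=A_\kappa$. The condition $\nabla P=0$ at a single point is not enough. The paper obtains this from the $C^3$ center extension of the threshold coordinate and \eqref{eq:model-bochner}. It then concludes from $\nabla^2v=\kappa vg$ by the concircular-field theorem.
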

  
  \begin{lemma}\label{lem:phase-center}
  The function $\Psi_H$ belongs to $C^2((0,A_\kappa])$ and extends $C^2$ through $A_\kappa$, with
  \begin{equation}\label{eq:phase-center}
  \Psi_H'(A_\kappa)=-\frac{2f(A_\kappa)}n,
  \qquad
  \Psi_H''(A_\kappa)=-\frac{4(n-1)\kappa+6f'(A_\kappa)}{n+2}.
  \end{equation}
  On $(0,A_\kappa)$ it satisfies
  \begin{equation}\label{eq:phase-ode}
  2(n-1)\Psi_H\bigl(\Psi_H''+2f'+2(n-1)\kappa\bigr)
  =(\Psi_H'+2f)(n\Psi_H'+2f).
  \end{equation}
  The center extension is smooth if $f$ is smooth near $A_\kappa$.
  \end{lemma}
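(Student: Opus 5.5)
The plan has three parts: derive the ODE for $\Psi_H$ from the radial equation, get regularity on the open interval from monotonicity of $U_H$, and resolve the center by a power-series expansion. Write $U=U_H$, $\lambda(r)=(n-1)\sqrt\kappa\coth(\sqrt\kappa r)$, and set $s=U(r)$. On $(0,\infty)$ we have $U'<0$, so $r=U^{-1}(s)$ is a $C^3$ function of $s\in(0,A_\kappa)$. Hence $\Psi_H(s)=U'(r)^2$ is $C^2$ there; in fact it is $C^3$ in $r$, because $U$ is $C^3$ when $f\in C^2$ on $(0,\infty)$.

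Differentiating in $s$ with $dr/ds=1/U'$ gives
\[
\Psi_H'(s)=2U''=-2f(s)-2\lambda U',
\]
so $\lambda U'=-(\Psi_H'+2f)/2$, and therefore $\lambda^2\Psi_H=(\Psi_H'+2f)^2/4$. Differentiating once more gives
\[
\Psi_H''=-2f'-2\lambda'-2\lambda U''/U'.
\]
Here $\lambda'=-\sqrt\kappa(n-1)\sqrt\kappa/\sinh^2=-\bigl(\lambda^2-(n-1)^2\kappa\bigr)/(n-1)$. We have $U''=\Psi_H'/2$, and $\lambda/U'=\lambda U'/\Psi_H=-(\Psi_H'+2f)/(2\Psi_H)$. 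Substituting these and multiplying by $2(n-1)\Psi_H$ eliminates $\lambda$ entirely. The result is
\[
2(n-1)\Psi_H\bigl(\Psi_H''+2f'\bigr)=-4(n-1)^2\kappa\Psi_H+(\Psi_H'+2f)^2+(n-1)\Psi_H'(\Psi_H'+2f),
\]
which rearranges to \eqref{eq:phase-ode}, since $(\Psi_H'+2f)\bigl((n-1)\Psi_H'+\Psi_H'+2f\bigr)=(\Psi_H'+2f)(n\Psi_H'+2f)$. This step is pure bookkeeping.

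At the center, $U$ is even in $r$ in the sense of the expansions after \eqref{eq:radial-flux}: $U(r)=A-\frac{f(A)}{2n}r^2+cr^4+o(r^4)$. Here $c$ comes from one more order of \eqref{eq:radial-integral}, using $S_\kappa(t)^{n-1}=t^{n-1}\bigl(1+\frac{(n-1)\kappa}{6}t^2+\dots\bigr)$ and $f(U)=f(A)-f'(A)\frac{f(A)}{2n}t^2+\dots$. This gives
\[
U'(r)=-\frac{f(A)}{n}r+4cr^3+o(r^3),\qquad 4c=\frac{f(A)}{n}\left[\frac{f'(A)}{2(n+2)}+\frac{(n-1)\kappa}{6}\cdot\frac{2}{n+2}\right]
\]
(up to checking the exact arithmetic). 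The key point is that $\Psi_H$ depends on $r^2$ and $U$ depends on $r^2$. Setting $\sigma=r^2$ gives $U=A-a\sigma+b\sigma^2+\dots$ with $a=f(A)/(2n)$, and $\Psi_H=4a^2\sigma+\beta\sigma^2+\dots$. The map $\sigma\mapsto s$ is invertible near $0$ with $\sigma=(A-s)/a+O((A-s)^2)$, so $\Psi_H$ is a $C^2$ function of $A-s$ up to $s=A$. This requires $U$ to be $C^2$ as a function of $\sigma$, which I obtain from the integral representation.

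For the regularity in $\sigma$, I rewrite \eqref{eq:radial-integral} in the variable $\sigma$, changing variables $t=\sqrt\tau$. The kernel $S_\kappa(\sqrt\tau)^2/\tau$ is an analytic function of $\tau$, and the fixed-point map preserves $C^k$ functions of $\sigma$ whenever $f\in C^k$. This yields $C^2$ regularity in $\sigma$ from $f\in C^2$ near $A$ (if $f$ is only $C^1$ at $A$, the equation itself gives $U_\sigma\in C^1$). It also gives smoothness when $f$ is smooth, which is the final claim.

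The values in \eqref{eq:phase-center} are best obtained from the ODE rather than from raw expansions. Write $\Psi_H=\alpha(A-s)+\gamma(A-s)^2/2+o$, with $\alpha=-\Psi_H'(A)$ and $\gamma=\Psi_H''(A)$. Matching leading terms in $\Psi_H'=2U''=-2f-2\lambda U'$ at $r\to0$ gives $\Psi_H'(A)=2U''(0)=-2f(A)/n$, since $\lambda U'\to(n-1)U''(0)$. Next, insert the expansion into \eqref{eq:phase-ode} and compare coefficients of $(A-s)^1$. The left side gives $2(n-1)\alpha\bigl(\gamma+2f'(A)+2(n-1)\kappa\bigr)$. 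On the right, $\Psi_H'+2f=(2f(A)-\alpha)-\bigl(\gamma+2f'(A)\bigr)(A-s)$, and $n\Psi_H'+2f$ vanishes to first order at $A$, with derivative $-(n\gamma+2f'(A))$. Using $\alpha=2f(A)/n$, so that $2f-\alpha=2(n-1)f(A)/n$, the coefficient equation becomes
\[
2(n-1)\bigl(\gamma+2f'+2(n-1)\kappa\bigr)=(n-1)(-n\gamma-2f')\cdot 2/2\cdot\frac{2}{2}\cdots.
\]
Solving it is linear in $\gamma$ and should give $\gamma=-\bigl(4(n-1)\kappa+6f'(A)\bigr)/(n+2)$.

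The main obstacle is the rigorous $C^2$ extension through $A$, specifically the regularity in $\sigma$. If $f$ is only $C^1$ near $A$, I would instead show directly that the difference quotients of $\Psi_H'$ converge, using the first-order relation $\Psi_H'=-2f-2\lambda U'$ and the expansion of $\lambda U'/\!\,(A-s)$.
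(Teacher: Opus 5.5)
Your proposal is correct and follows the paper's route. The paper regularizes the center with $y=\cosh(\sqrt\kappa r)$, an analytic reparametrization of your $\sigma=r^2$, through the same kind of integral equation; it then inverts and derives \eqref{eq:phase-ode} by the same elimination of $S_\kappa'/S_\kappa$, differing only in that it reads off $\Psi_H''(A_\kappa)$ by differentiating $\Psi_H(V(y))=\kappa(y^2-1)V'(y)^2$ at $y=1$ instead of linearizing \eqref{eq:phase-ode} at $A_\kappa$. Your linearization is valid once $C^2$ regularity at $A_\kappa$ is known; after dividing by $2(n-1)f(A_\kappa)/n$, your garbled display should read $2\bigl(\gamma+2f'(A_\kappa)+2(n-1)\kappa\bigr)=-\bigl(n\gamma+2f'(A_\kappa)\bigr)$.

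One correction: since $\Psi_H=4\sigma\,W'(\sigma)^2$ with $W(\sigma)=U_H(\sqrt\sigma)$, a $C^2$ extension needs $W\in C^3$, not $C^2$. Your fixed-point formula for $W'$ delivers this because $f\in C^2$ near $A_\kappa>0$. The same formula also makes sense for small $\sigma<0$, since $\sinh(\sqrt{\kappa\sigma})/\sqrt{\kappa\sigma}$ is entire in $\sigma$, and this gives the extension through $A_\kappa$.
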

  \begin{proof}
  Put $y=\cosh(\sqrt\kappa r)$ and $V(y)=U_H(r)$. Then
  \[
  \kappa(y^2-1)V''+n\kappa yV'+f(V)=0,\qquad V(1)=A_\kappa.
  \]
  The radial flux gives $V'(1+x)=O(1)$ as $x\downarrow0$. An integrating factor followed by $t=xz$ yields
  \begin{equation}\label{eq:center-integral}
  V'(1+x)=-\frac{(2+x)^{-n/2}}\kappa
  \int_0^1 z^{n/2-1}(2+xz)^{n/2-1}f(V(1+xz))\,dz.
  \end{equation}
  More explicitly, the equation for $V$ can be written
  \[
  \bigl((y^2-1)^{n/2}V'(y)\bigr)'
  =-\frac1\kappa(y^2-1)^{n/2-1}f(V(y)).
  \]
  Its integration constant is zero by the boundedness of $V'$ at
  $y=1$. Integrating from $1$ to $1+x$ and then substituting $t=xz$
  cancels the factor $x^{n/2}$ and gives
  \eqref{eq:center-integral}. The resulting formula is meaningful
  for negative $x$ as well, because $2+xz$ stays positive.
  After integration from zero to $x$, this equation is a contraction
  on a small interval $|x|\le\varepsilon$ and gives a $C^1$ extension
  through $y=1$. Since $A_\kappa>0$ and $f\in C^2((0,\infty))$,
  shrink the interval so that $V$ stays in a compact subinterval of
  $(0,\infty)$. Differentiating once gives $V\in C^2$ and twice gives
  $V\in C^3$; the argument iterates if $f$ is smooth. At the center,
  \[
  V'(1)=-\frac{f(A_\kappa)}{n\kappa}<0,\qquad
  (n+2)\kappa V''(1)+(n\kappa+f'(A_\kappa))V'(1)=0.
  \]
  The inverse function theorem and
  \[
  \Psi_H(V(y))=\kappa(y^2-1)V'(y)^2
  \]
  give the extension. Differentiating this identity gives
  \[
  \Psi_H'(A_\kappa)=2\kappa V'(1),\qquad
  \Psi_H''(A_\kappa)=2\kappa\left(1+3\frac{V''(1)}{V'(1)}\right),
  \]
  which proves \eqref{eq:phase-center}.
  
  For $s=U_H(r)$, the radial equation gives
  \[
  \Psi_H'(s)=2\left((n-1)\frac{S_\kappa'(r)}{S_\kappa(r)}
  \sqrt{\Psi_H(s)}-f(s)\right).
  \]
  Hence
  \[
  \frac{S_\kappa'(r)}{S_\kappa(r)}
  =\frac{\Psi_H'(s)+2f(s)}{2(n-1)\sqrt{\Psi_H(s)}}.
  \]
  On the other hand, $(S_\kappa'/S_\kappa)'=\kappa-(S_\kappa'/S_\kappa)^2$
  and $ds/dr=-\sqrt{\Psi_H(s)}$, so
  \[
  \frac{d}{ds}\left(\frac{S_\kappa'(r)}{S_\kappa(r)}\right)
  =\frac{(S_\kappa'(r)/S_\kappa(r))^2-\kappa}
  {\sqrt{\Psi_H(s)}}.
  \]
  Differentiating the preceding expression directly gives
  \[
  \frac{\Psi_H''+2f'}{2(n-1)\sqrt{\Psi_H}}
  -\frac{(\Psi_H'+2f)\Psi_H'}{4(n-1)\Psi_H^{3/2}}
  =\frac{(\Psi_H'+2f)^2}{4(n-1)^2\Psi_H^{3/2}}
  -\frac{\kappa}{\sqrt{\Psi_H}}.
  \]
  Multiplication by $4(n-1)^2\Psi_H^{3/2}$ and collection of terms give
  \eqref{eq:phase-ode}.
  \end{proof}
  
  \subsection{Comparison and rigidity}
  In the model coordinate $t=\cosh(\sqrt k r)$,
  \[
  \nabla_{\bar g_k}^2t=kt\bar g_k,\qquad
  |dt|_{\bar g_k}^2=k(t^2-1),\qquad \Delta_{\bar g_k}t=nkt.
  \]
  \begin{lemma}\label{lem:model-coordinate}
  Suppose $U_{k,B}$ has first zero $R<\infty$. Define
  \[
  V(t)=U_{k,B}\left(\frac{\operatorname{arcosh}t}{\sqrt k}\right),
  \quad 1\le t\le\cosh(\sqrt k R),\qquad T=V^{-1}.
  \]
  Then $V'<0$, $T\in C^3([0,B])$, and $T'$ never vanishes. For a solution $0<u\le B$ of \eqref{eq:main}, put
  \[
  v=T(u),\quad Q=|\nabla v|^2,\quad P=Q-k(v^2-1),\quad
  \alpha=-V''/V',\quad
  \mathcal L=\Delta-2\alpha(v)\langle\nabla v,\nabla\,\cdot\,\rangle.
  \]
  Primes on $V$ and $\alpha$ denote derivatives with respect to $t$;
  $\alpha$ and $\alpha'$ below are evaluated at $t=v$.
  The model satisfies $P=0$.
  Since $T'$ is nonzero, the sign of $P$ is the sign of the desired
  gradient comparison. The operator $\mathcal L$ has the same
  second-order part as $\Delta$; its first-order term cancels the
  derivative of $P$ in Bochner's formula.
  Then
  \begin{align}
  \Delta v&=nkv+\alpha(v)P,\label{eq:model-laplacian}\\
  \mathcal LP&=2|\nabla^2v-kvg|^2
  +2(\operatorname{Ric}+(n-1)\kappa g)(\nabla v,\nabla v)\notag\\
  &\quad+2(n-1)(k-\kappa)Q
  +2\bigl(k\alpha(v)v+\alpha'(v)Q\bigr)P.\label{eq:model-bochner}
  \end{align}
  For the model phase curve $\Psi_{k,B}(s)=[U_{k,B}'(U_{k,B}^{-1}(s))]^2$,
  \begin{equation}\label{eq:phase-defect}
  P=T'(u)^2\bigl(|\nabla u|^2-\Psi_{k,B}(u)\bigr).
  \end{equation}
  Near a fixed finite-zero model $(k_0,B_0)$, the inverse coordinates depend continuously in $C^1$ on every common height interval $[0,a]\subset[0,\min(B,B_0)]$.
  \end{lemma}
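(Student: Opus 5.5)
The lemma is a collection of direct computations; I would prove it by transferring the radial ODE to the coordinate $t=\cosh(\sqrt k r)$ and applying the Bochner formula to $v=T(u)$.

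\emph{Regularity of $V$ and $T$.} In the $t$ variable the radial equation \eqref{eq:radial} becomes
\[
k(t^2-1)V''+nktV'+f(V)=0 .
\]
This is the computation in the proof of Lemma~\ref{lem:phase-center} with $\kappa$ replaced by $k$. Near $t=1$, the integral representation \eqref{eq:center-integral} gives $V\in C^3$ and $V'(1)=-f(B)/(nk)<0$. For $t>1$, $V'<0$ follows from $U'<0$ on the positive branch. At the first zero $t_R=\cosh(\sqrt kR)$, the zero is transverse by \eqref{eq:radial-flux}. $V$ is $C^3$ up to $t_R$: $f$ extended by zero is only $C^1$ at $0$, but $V$ is obtained by integrating the equation, so $V'''$ involves only $f'(V)V'$, which is continuous. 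The inverse function theorem then gives $T\in C^3([0,B])$ with $T'=1/V'(T)\neq0$.

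\emph{Identity \eqref{eq:model-laplacian}.} From $u=V(v)$ and the chain rule,
\[
-f(u)=\Delta u=V'\Delta v+V''Q .
\]
Insert $f(V)=-k(v^2-1)V''-nkvV'$ from the ODE and divide by $V'$. This gives $\Delta v=nkv-(V''/V')\bigl(Q-k(v^2-1)\bigr)=nkv+\alpha P$.

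\emph{Identity \eqref{eq:phase-defect}.} We have $Q=T'(u)^2|\nabla u|^2$. In the model, $|dt|^2=k(t^2-1)$ holds exactly, so $\Psi_{k,B}(s)=k(T(s)^2-1)/T'(s)^2$. Subtracting the two expressions gives \eqref{eq:phase-defect}, and in particular $P=0$ for the model.

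\emph{Bochner identity \eqref{eq:model-bochner}.} This is the main computation. Bochner gives
\[
\tfrac12\Delta Q=|\nabla^2v|^2+\langle\nabla v,\nabla\Delta v\rangle+\operatorname{Ric}(\nabla v,\nabla v),
\]
and by \eqref{eq:model-laplacian},
\[
\nabla\Delta v=nk\nabla v+\alpha'P\nabla v+\alpha\nabla P .
\]
Since $\Delta(v^2)=2Q+2v\Delta v$, we get
\[
\tfrac12\Delta P=\tfrac12\Delta Q-kQ-kv\bigl(nkv+\alpha P\bigr).
\]
Now expand
\[
|\nabla^2v-kvg|^2=|\nabla^2v|^2-2kv\Delta v+nk^2v^2
\]
and substitute $\Delta v$ once more. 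The terms collect as
\[
\tfrac12\Delta P=|\nabla^2v-kvg|^2+\operatorname{Ric}(\nabla v,\nabla v)+(n-1)kQ+\alpha\langle\nabla v,\nabla P\rangle+(\alpha'Q+k\alpha v)P .
\]
Here the $\alpha vP$ terms combine as $2kv\alpha P-kv\alpha P=kv\alpha P$. Moving the gradient term into $\mathcal L$ and writing $(n-1)kQ=(n-1)\kappa Q+(n-1)(k-\kappa)Q$ yields \eqref{eq:model-bochner}. The step I expect to need care is this coefficient bookkeeping; the calculation uses $u\in C^3$, so that $v\in C^3$ and Bochner applies.

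\emph{Continuous dependence.} By the discussion after \eqref{eq:radial-flux}, the extended solutions depend continuously in $C^1$ on $(k,B)$ on finite intervals. The first zero is transverse, so it moves continuously. On a common height interval $[0,a]$, $V'$ stays uniformly bounded away from zero: away from $t=1$ this follows from $C^1$ convergence, and near $t=1$ from the uniform contraction in \eqref{eq:center-integral}. The inverse $T$ and $T'=1/V'(T)$ therefore converge uniformly, which gives $C^1$ continuity of the inverse coordinates.
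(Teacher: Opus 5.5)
Your proposal is correct and follows the paper's proof essentially step for step: $C^3$ regularity from the center integral at $t=1$ and the nonsingular radial ODE at the transverse zero, the chain-rule derivation of \eqref{eq:model-laplacian}, the same Bochner bookkeeping via $|\nabla^2v-kvg|^2=|\nabla^2v|^2-2kv\Delta v+nk^2v^2$, the identity $T'(s)^2\Psi_{k,B}(s)=k(T(s)^2-1)$, and $C^1$ convergence of $V_{k,B}$ with a uniform lower bound on $|V'|$ for the inverses. The paper makes the last step more explicit. It works on a fixed enlarged interval $J$ extending past the fixed model's zero, where $|V_0'|$ has a positive minimum and $V_0$ is negative at the right endpoint. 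It then uses $\|T_{k,B}-T_0\|_{C[0,a]}\le c^{-1}\|V_{k,B}-V_0\|_{C(J)}$, which is exactly what your claim that the inverses ``therefore converge uniformly'' needs.
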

  \begin{proof}
  Equation \eqref{eq:center-integral}, with $\kappa$ replaced by $k$, gives $V\in C^3$ through $t=1$ and
  \[
  V'(1)=-\frac{f(B)}{nk},\qquad
  V''(1)=\frac{f(B)(nk+f'(B))}{n(n+2)k^2}.
  \]
  For $r>0$,
  \[
  V'(\cosh(\sqrt k r))=\frac{U_{k,B}'(r)}{\sqrt k\sinh(\sqrt k r)}<0,
  \]
  including $r=R$ by transversality. At this endpoint the radial equation is nonsingular: writing
  $h_k=(n-1)S_k'/S_k$, it gives
  \[
  U^{(3)}=-h_k'U'-h_kU''-f'(U)U'.
  \]
  The $C^1$ zero extension of $f$ makes this continuous through
  $r=R>0$. The change from $r$ to $t$ has nonzero derivative there,
  so $V\in C^3$ at that endpoint as well. Thus $|V'|$ has a positive minimum on the compact coordinate interval, and its inverse has the asserted regularity through both endpoints.
  
  Using $u=V(v)$ and the model equation,
  \[
  V'\Delta v+V''Q=-f(V)
  =k(v^2-1)V''+nkvV',
  \]
  which is \eqref{eq:model-laplacian}. To compute the defect, first
  differentiate this equation:
  \[
  \nabla\Delta v=nk\nabla v+\alpha'(v)P\nabla v
  +\alpha(v)\nabla P.
  \]
  With $Q=|\nabla v|^2$, Bochner's identity and the chain rule give
  \begin{align*}
  \Delta Q
  &=2|\nabla^2v|^2+2\langle\nabla v,\nabla\Delta v\rangle
    +2\operatorname{Ric}(\nabla v,\nabla v),\\
  \Delta\bigl(k(v^2-1)\bigr)&=2kv\Delta v+2kQ.
  \end{align*}
  Subtraction and cancellation of
  $2\alpha\langle\nabla v,\nabla P\rangle$ yield
  \[
  \mathcal LP=2|\nabla^2v|^2+2\operatorname{Ric}(\nabla v,\nabla v)
  +2(n-1)kQ-2nk^2v^2+2(\alpha'Q-k\alpha v)P.
  \]
  Since
  \[
  2|\nabla^2v-kvg|^2
  =2|\nabla^2v|^2-2nk^2v^2-4k\alpha vP,
  \]
  this is \eqref{eq:model-bochner}. Finally,
  $T'(s)^2\Psi_{k,B}(s)=k(T(s)^2-1)$ proves \eqref{eq:phase-defect}.
  
  For parameter dependence, we extend the fixed model a little beyond its transverse zero. On an enlarged coordinate interval $J=[1,\cosh(\sqrt{k_0}R_0)+\delta]$, choose $\delta>0$ so small that $\min_J|V_0'|>0$ and $V_0$ is negative at the right endpoint. The center integral and ordinary dependence away from the center give $V_{k,B}\to V_0$ in $C^1(J)$. For nearby parameters, the inverse images of the common height interval remain in $J$, and $|V_{k,B}'|\ge c>0$ there. Hence
  \[
  \|T_{k,B}-T_0\|_{C[0,a]}
  \le c^{-1}\|V_{k,B}-V_0\|_{C(J)}\to0.
  \]
  The identity $T'=1/(V'\circ T)$ proves convergence of the derivatives.
  \end{proof}
  We use the Omori--Yau maximum principle in its Laplacian form
\cite{Omori:1967aa,Yau:1975aa}.
\begin{lemma}\label{lem:maximum-sequence}
  Let $M$ be complete and connected without boundary with
  $\operatorname{Ric}\ge-(n-1)\kappa g$, $\kappa\ge0$.
  If $J\in C^2(M)$ is bounded above, there are points $x_j$ such that
  \[
  J(x_j)>\sup_M J-j^{-1},\qquad
  |\nabla J|(x_j)<j^{-1},\qquad \Delta J(x_j)<j^{-1}.
  \]
  \end{lemma}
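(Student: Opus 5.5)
This is the classical Omori--Yau maximum principle for the Laplacian, and I will prove it by the standard Yau perturbation argument. The plan is to perturb $J$ by a small multiple of a proper function whose gradient and Laplacian are controlled by the Ricci lower bound, and to locate the points $x_j$ as maxima of the perturbed functions.

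\emph{The perturbing function.} Fix $o\in M$ and put $\phi(x)=\sqrt{1+d(o,x)^2}$. Then $\phi$ is proper, since closed balls are compact by completeness, and $|\nabla\phi|\le1$. The Laplacian comparison theorem under $\operatorname{Ric}\ge-(n-1)\kappa g$ gives, in the support (Calabi) sense,
\[
\Delta d\le (n-1)\Bigl(\frac1d+\sqrt\kappa\Bigr)
\quad\text{away from } o .
\]
Consequently $\Delta\phi\le C(n,\kappa)$ on all of $M$, again in the barrier sense wherever $d$ fails to be smooth.

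\emph{Locating the points.} Let $\sup J=s$ and fix $j$. Choose $y$ with $J(y)>s-1/(2j)$, and choose $\varepsilon>0$ small. Set $J_\varepsilon=J-\varepsilon\phi$. Because $J$ is bounded above and $\phi\to\infty$ at infinity, $J_\varepsilon\to-\infty$ at infinity, so $J_\varepsilon$ attains its maximum at some point $x$. At $x$ we have
\[
J(x)\ge J_\varepsilon(x)\ge J_\varepsilon(y)=J(y)-\varepsilon\phi(y)>s-j^{-1},
\]
provided $\varepsilon\phi(y)<1/(2j)$. If $\phi$ is smooth at $x$, the first- and second-order conditions give
\[
|\nabla J(x)|=\varepsilon|\nabla\phi(x)|\le\varepsilon,
\qquad
\Delta J(x)\le\varepsilon\Delta\phi(x)\le C\varepsilon .
\]
Taking $\varepsilon<\min\{1/(2j\phi(y)),\,1/j,\,1/(Cj)\}$ yields all three required inequalities with $x_j=x$.

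\emph{The cut-locus case (main obstacle).} If $x$ lies on the cut locus of $o$, or $x=o$, then $\phi$ need not be smooth there. I handle this with Calabi's trick. Replace $d(o,\cdot)$ by the smooth upper barrier $d(o',\cdot)+d(o,o')$, where $o'$ is a point slightly along a minimizing geodesic from $o$ to $x$. This barrier dominates $d(o,\cdot)$ and agrees with it at $x$. Applying the same construction to $\phi$ gives a function $\tilde\phi\ge\phi$, smooth near $x$, with $\tilde\phi(x)=\phi(x)$, $|\nabla\tilde\phi|\le1$ and $\Delta\tilde\phi\le C+\eta$ for arbitrarily small $\eta$. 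Then $J-\varepsilon\tilde\phi\le J_\varepsilon$ near $x$ with equality at $x$, so $x$ is still a local maximum of a $C^2$ function, and the same first- and second-order conditions apply. At $x=o$ itself, $\phi$ is smooth with $\Delta\phi(o)=n$, so no modification is needed. The delicate point is that the comparison constant $C$ must stay uniform under the Calabi modification, which holds because the Laplacian comparison is applied at a point $o'$ whose distance to $x$ is bounded below.
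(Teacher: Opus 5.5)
Your argument is correct. It is the standard Yau perturbation proof of the Omori--Yau maximum principle: you maximize $J-\varepsilon\sqrt{1+d(o,\cdot)^2}$, use Laplacian comparison, and apply Calabi's upper barrier at cut points. The paper gives no proof of its own here and simply cites Omori and Yau for this classical result.
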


  \begin{lemma}\label{lem:strict-gap}
  Suppose $M$ is complete without boundary and $\operatorname{Ric}\ge-(n-1)\kappa g$. In Lemma~\ref{lem:model-coordinate}, assume $k>\kappa$ and $|\nabla u|$ bounded. If $P\le0$ on $M$, then $\sup_M P<0$.
  \end{lemma}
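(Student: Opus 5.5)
Assume, for contradiction, that $\sup_M P=0$. The plan is to apply the Omori--Yau principle (Lemma~\ref{lem:maximum-sequence}) to $J=P$ and pass to the limit in \eqref{eq:model-bochner}. The positive term $2(n-1)(k-\kappa)Q$ will be forced to vanish in the limit. That means $Q\to0$ along the sequence, and then $P\to-k(v^2-1)$ forces $v\to1$, i.e.\ $u\to B$. Finally, the Hessian term contradicts this.

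First, $P$ is $C^2$ and bounded above. Indeed $v=T(u)$ with $T\in C^3([0,B])$ and $0<u\le B$, so $v$ takes values in the compact interval $[1,\cosh(\sqrt kR)]$. Since $|\nabla u|$ is bounded, $Q=T'(u)^2|\nabla u|^2$ is bounded. Also $\alpha,\alpha'$ are bounded on the range of $v$ because $V\in C^3$ and $|V'|$ has a positive minimum. The Omori--Yau lemma then gives points $x_j$ with
\[
P(x_j)\to0,\qquad |\nabla P|(x_j)\to0,\qquad \Delta P(x_j)<j^{-1}.
\]
Since $Q$ is bounded, $\mathcal LP(x_j)\le \Delta P(x_j)+2|\alpha|\,|\nabla v|\,|\nabla P|$, and so $\limsup\mathcal LP(x_j)\le0$.

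In \eqref{eq:model-bochner} the last term $2(k\alpha v+\alpha'Q)P$ tends to $0$, because its coefficient is bounded and $P(x_j)\to0$. The Ricci term is nonnegative by hypothesis, and the Hessian term is nonnegative. Hence
\[
2|\nabla^2v-kvg|^2(x_j)+2(n-1)(k-\kappa)Q(x_j)\to0.
\]
Since $k>\kappa$, this gives $Q(x_j)\to0$. Combined with $P(x_j)\to0$, it gives $k(v^2-1)(x_j)\to0$, so $v(x_j)\to1$ and $u(x_j)\to B$.

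The contradiction will come from the trace of the Hessian term. By \eqref{eq:model-laplacian},
\[
\operatorname{tr}(\nabla^2v-kvg)=\Delta v-nkv=\alpha(v)P\to0,
\]
so the trace carries no information. The argument must instead be first-order. Since $u=V(v)$, we have $|\nabla u|^2=V'(v)^2Q\to0$. Using \eqref{eq:phase-defect} and the expansion of $\Psi_{k,B}$ at the center ($\Psi_{k,B}(s)\approx\tfrac{2f(B)}n(B-s)$, as in Lemma~\ref{lem:phase-center}), the relation $P(x_j)\to0$ alone is consistent with $Q\to0$, so there is no contradiction yet.

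This is the main obstacle, and I would handle it by strengthening the limit information. Evaluate $\nabla^2v\to kvg\to kg$ along $x_j$, so that $\nabla^2u=V'\nabla^2v+V''\nabla v\otimes\nabla v\to V'(1)k\,g=-\frac{f(B)}n g$. Taking the trace gives $\Delta u\to -f(B)$, which is consistent with the equation, so this alone also fails.

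Therefore I would instead refine the limit using the quantitative smallness. Write $P=Q-k(v^2-1)$ and use $|\nabla P|(x_j)\to0$. We have
\[
\nabla P=2\nabla^2v(\nabla v,\cdot)-2kv\nabla v,
\]
so $|\nabla P|\le2|\nabla^2v-kvg|\,|\nabla v|$; this is automatically small and gives nothing new.

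The decisive input should then be global. The near-maximum points have $u\to B$, where $B>u$ is the finite-zero model's center value. Here one must use that $u$ actually attains values arbitrarily close to $B$, which conflicts with the sharper amplitude bound for $u$ from the threshold comparison if $B>A_\kappa$. Alternatively, one can perturb the parameter $k$ slightly: by the $C^1$ continuity in Lemma~\ref{lem:model-coordinate}, a nearby model $(k',B')$ with $B'<B$ still has $P'\le0$, and this would contradict $\sup u=B$. I expect this final exclusion of $u(x_j)\to B$ to be the hard step, and I would attempt it first via this continuity/perturbation argument in $B$.
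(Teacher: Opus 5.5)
\textbf{There is a genuine gap: the case $\sup_M u=B$ is not handled.} Your direct Omori--Yau argument on $J=P$ is correct as far as it goes. It does prove the lemma when $\sup_M u<B$: then $v\ge T(\sup_M u)>1$, so $Q(x_j)\to0$ and $P(x_j)\to0$ would force $v(x_j)\to1$, a contradiction.

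But the lemma is stated for $0<u\le B$, and $\sup_M u=B$ is exactly the case the paper needs. In the comparison proof one takes $B=a=\sup u$, obtains $P_a\le0$ with $P_a(x_j)\to0$ along $u(x_j)\to a$, and uses this lemma to conclude $a\le A_k$. In that regime every term in $\mathcal LP\ge 2(n-1)(k-\kappa)Q+cP$ tends to zero along your sequence. So no argument built on $J=P$ alone can close.

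Neither of your proposed fixes works. Invoking the amplitude bound from the threshold comparison is circular, because that bound is derived from this lemma. Continuity in $B$ gives only $\|P_{B'}-P_B\|_\infty\to0$, which does not preserve $P_{B'}\le0$ when $\sup P_B=0$. Moreover, for $B'<\sup u$ the coordinate $T_{B'}$ is not even defined on the range of $u$.

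\textbf{The missing idea} is to perturb the test function so that the degenerate center regime still produces a strictly positive term. Apply Lemma~\ref{lem:maximum-sequence} to $J_\varepsilon=P+\varepsilon(v^2-1)$ for small $\varepsilon>0$. From \eqref{eq:model-laplacian},
\[
\mathcal L(v^2-1)=2nkv^2+2\alpha vP+(2-4\alpha v)Q .
\]
Since $\sup J_\varepsilon\ge\sup P=0$, the near-maximum points satisfy
\[
-P\le\varepsilon(v^2-1)+j^{-1},\qquad Q\ge(k-\varepsilon)(v^2-1)-j^{-1},
\]
hence $-P\le\frac{\varepsilon}{k-\varepsilon}Q+\frac{k}{k-\varepsilon}j^{-1}$. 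Using the bounded coefficients, this gives
\[
\mathcal LJ_\varepsilon(x_j)\ge\bigl[2(n-1)(k-\kappa)-C\varepsilon\bigr]Q+2nk\varepsilon v^2-Cj^{-1}.
\]
Choose $\varepsilon$ with $C\varepsilon<(n-1)(k-\kappa)$. Because $v\ge1$, the right side then has limit inferior at least $2nk\varepsilon>0$. This contradicts $\limsup_j\mathcal LJ_\varepsilon(x_j)\le0$, which holds because the drift of $\mathcal L$ is bounded and $|\nabla J_\varepsilon|(x_j)\to0$. The term $2nk\varepsilon v^2$, which comes from $\Delta v=nkv+\alpha P$, is what survives when $Q\to0$ and $v\to1$, precisely where your argument stalls.
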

  \begin{proof}
  The functions $v,\nabla v,\alpha(v),\alpha'(v)$ are bounded, and
  $v\ge1$. Indeed, $v$ takes values in the compact coordinate interval
  of a finite-zero profile, $T'$ is bounded there, and
  $\nabla v=T'(u)\nabla u$. These bounds are the reason for using
  finite-zero profiles first: the threshold inverse coordinate is
  unbounded as the height approaches zero.
  From \eqref{eq:model-bochner},
  \[
  \mathcal LP\ge2(n-1)(k-\kappa)Q+cP,
  \qquad c=2(k\alpha v+\alpha'Q).
  \]
  Suppose $\sup P=0$. Put $J_\varepsilon=P+\varepsilon(v^2-1)$, where $0<\varepsilon<\min(1,k/2)$. This perturbation has $\sup J_\varepsilon\ge0$. Lemma~\ref{lem:maximum-sequence} gives points $x_j$ with
  \[
  J_\varepsilon(x_j)>\sup J_\varepsilon-j^{-1},\qquad
  |\nabla J_\varepsilon|(x_j)<j^{-1},\qquad
  \Delta J_\varepsilon(x_j)<j^{-1}.
  \]
  At these points,
  \[
  -P\le\varepsilon(v^2-1)+j^{-1},\qquad
  Q\ge(k-\varepsilon)(v^2-1)-j^{-1},\qquad
  -P\le\frac\varepsilon{k-\varepsilon}Q+\frac{k}{k-\varepsilon}j^{-1}.
  \]
  Also,
  \[
  \mathcal L(v^2-1)=2nkv^2+2\alpha vP+(2-4\alpha v)Q.
  \]
  The coefficients above are bounded independently of $\varepsilon$. Using the last bound for $-P$ gives
  \[
  \mathcal LJ_\varepsilon(x_j)
  \ge\bigl[2(n-1)(k-\kappa)-C\varepsilon\bigr]Q
  +2n\varepsilon kv^2-Cj^{-1}.
  \]
  Choose $\varepsilon$ so that $C\varepsilon<(n-1)(k-\kappa)$. Since $v\ge1$, the right side has positive limit inferior. The drift of $\mathcal L$ is bounded, so the maximum sequence instead gives $\limsup_j\mathcal LJ_\varepsilon(x_j)\le0$, a contradiction.
  \end{proof}
  
  \begin{lemma}\label{lem:log-growth}
  Put $P_\mu=2/\mu-3$. Then $1<P_\mu<p_S(n)$ and
  \[
  \frac{s f_i'(s)}{f_i(s)}\le P_\mu,\qquad
  \frac{s f'(s)}{f(s)}\le P_\mu,\qquad
  \frac{f(s)}s\longrightarrow\infty\quad(s\to\infty).
  \]
  \end{lemma}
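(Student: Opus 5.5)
The plan is to prove the three assertions in order: the exponent range for $P_\mu$, the logarithmic derivative bound for each component $f_i$ and then for $f$, and finally the superlinear growth of $f$. For the range: Lemma~\ref{lem:theta} gives $\mu<1/2$, so $2/\mu>4$ and $P_\mu>1$. The admissibility bound \eqref{eq:mu} gives $2/\mu<4(n-1)/(n-2)$ when $n\ge3$, hence
\[
P_\mu<\frac{4(n-1)-3(n-2)}{n-2}=\frac{n+2}{n-2}.
\]
For $n=2$ there is nothing to check, since $p_S(2)=\infty$.

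For a single component write $\Theta_i=\sqrt{F_i}/(f_iw_i)$ and $Q_i=F_if_i'/f_i^2$. Differentiating $\sqrt{F_i}/f_i$ gives, for any source, $F_i\Theta_i'/(f_i\Theta_i)=\Theta_i+\tfrac12-Q_i$. The computation in the proof of Lemma~\ref{lem:short-mixture} did not use the mixture structure up to the line before the choice of $\mu$. It therefore shows that concavity of $(F_iw_i)^\mu$ is equivalent to
\[
\Theta_i\bigl(\Theta_i+\tfrac12-Q_i\bigr)+(1-\Theta_i)\bigl[1-Q_i-\mu(1-\Theta_i)\bigr]\ge0 .
\]
The coefficient of $Q_i$ here is exactly $-1$, so this reads $Q_i\le\Theta_i^2+\Theta_i/2+(1-\Theta_i)-\mu(1-\Theta_i)^2$. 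From the proof of Lemma~\ref{lem:growth-tail} I also take the tangent-line bound $\mu(sf_i/F_i)(1-\Theta_i)\le1$. Combining the two,
\[
\frac{sf_i'}{f_i}=Q_i\,\frac{sf_i}{F_i}\le\frac{Q_i}{\mu(1-\Theta_i)},
\]
so it suffices to show $Q_i\le(2-3\mu)(1-\Theta_i)$. With $x=1-\Theta_i$ the required inequality becomes
\[
-(1-\mu)x^2+\bigl(\tfrac72-3\mu\bigr)x-\tfrac32\ge0 .
\]
By Lemma~\ref{lem:theta}, $x$ lies in $[1/(2(1-\mu)),1)$. The left side is concave in $x$. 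It vanishes at $x=1/(2(1-\mu))$, which is a direct substitution, and equals $1-2\mu>0$ at $x=1$. Hence it is nonnegative on the whole interval, which gives $sf_i'/f_i\le P_\mu$. For $f$ itself, $sf'/f$ is the average of the $sf_i'/f_i$ with weights $f_i/f$, so the same bound follows.

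For $f(s)/s\to\infty$, I first note that $F$ is increasing and $F^{-1/2}$ is integrable at infinity. Hence $sF(2s)^{-1/2}\le\int_s^{2s}F^{-1/2}\to0$, and so $F(s)/s^2\to\infty$. It remains to pass from $F$ to $f$, that is, to obtain a lower bound $sf/F\ge c>0$ for large $s$, or some other route from $F(s)/s^2\to\infty$ to pointwise control of $f$. This is the step I expect to be the main obstacle. The bounds already derived (Lemma~\ref{lem:growth-tail}, and the one just proved) are upper bounds on the growth of $f$, which point the wrong way. The inequality $fw\ge\sqrt F/\theta$ from Lemma~\ref{lem:theta} also needs an upper bound on $w$, which again needs lower growth.

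I would first try the identity
\[
\bigl(\sqrt F\,w\bigr)'=\frac1{2\Theta}-1\ge\frac1{2\theta}-1>0
\]
together with concavity, hence monotonicity, of $(Fw)^\mu$. This should control $\sqrt F\,w$ from above by $Cs$ for large $s$, which gives $f/F\ge 1/(\theta\sqrt F w)\ge c/s$ and therefore $f(s)/s\ge cF(s)/s^2\to\infty$. Failing that, I would use the elementary bound $F(s)\le s\max_{[0,s]}f$ to get $\limsup f(s)/s=\infty$. I would then upgrade it to a full limit with a lower Harnack-type comparison of $f$ on dyadic intervals, derived from the concavity of each $(F_iw_i)^\mu$ applied to the component $f_m$ with the largest ratio $f_m/F_m$.
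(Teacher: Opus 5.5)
Your treatment of the range of $P_\mu$ and of the logarithmic-derivative bounds is correct and is essentially the paper's argument. The concavity condition you extract, $Q_i\le 1-\mu+(2\mu-\tfrac12)\Theta_i+(1-\mu)\Theta_i^2$, is exactly the paper's. Your endpoint check of the concave quadratic is equivalent to the paper's factorization of the difference as $\frac{\Theta_i+2}{2}\bigl[1-2\mu-2(1-\mu)\Theta_i\bigr]\ge0$. One small point: the step $Q_i\,sf_i/F_i\le Q_i/(\mu(1-\Theta_i))$ needs $f_i'\ge0$. Where $f_i'<0$ the bound is trivial, as the paper notes.

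The gap is the last assertion. You leave it open, and your first route cannot close it. The identity $(\sqrt F\,w)'=\frac1{2\Theta}-1\ge b_\mu$ and the monotonicity of $(Fw)^\mu$ are lower bounds. They reproduce $\sqrt F\,w\ge b_\mu s$ and give no upper bound. Nor does concavity of $(Fw)^\mu$ force $\sqrt F\,w\le Cs$. For instance, take a single component with $F(s)=s^2(\log s)^\alpha$ for large $s$, where $\alpha>2$. Then $w=(\log s)^{1-\alpha/2}/(\alpha/2-1)$, so $(Fw)^\mu$ is a multiple of $s^{2\mu}(\log s)^{\mu(1+\alpha/2)}$. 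This is concave for large $s$ because $2\mu<1$, while $\sqrt F\,w=s\log s/(\alpha/2-1)$.

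Your second route stops at the $\limsup$; the dyadic upgrade is not supplied. The missing step is one line, and you already had both ingredients. You set aside $f_iw_i\ge\sqrt{F_i}/\theta$ (from $\Theta_i\le\theta$) because you thought it required an upper bound on $w_i$. Pairing it instead with the \emph{lower} bound $\sqrt{F_i}\,w_i\ge b_\mu s$ of Lemma~\ref{lem:growth-tail} gives
\[
f_i(s)\ge\frac{\sqrt{F_i(s)}\,w_i(s)}{\theta\,w_i(s)^2}\ge\frac{b_\mu\,s}{\theta\,w_i(s)^2},
\]
and $w_i(s)\to0$ because the tail is finite. Since $f\ge f_1$, this gives $f(s)/s\to\infty$.

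The paper argues the same way for the total source. It uses H\"older's inequality and the $C_2$ condition to get $f\ge\sqrt F/(\theta\sqrt{C_2}\,w)$, and then $f(s)/s\ge b_\mu/(\theta\sqrt{C_2}\,w(s)^2)$. For $m>1$, Lemma~\ref{lem:theta} does not give $fw\ge\sqrt F/\theta$ for the total $f$, since $(Fw)^\mu$ need not be concave. You must therefore work either componentwise or through that H\"older step.
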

  \begin{proof}
  Put $\theta=(1-2\mu)/(2(1-\mu))$. For a component, omit the
  index and write $\Phi=(Fw)^\mu$ and $\Theta=\sqrt F/(fw)$.
  The tangent inequality for $\Phi$ and Lemma~\ref{lem:theta} give
  \[
  \frac{sf}F\le\frac1{\mu(1-\Theta)},\qquad
  0<\Theta\le\theta.
  \]
  Differentiating $Fw$ gives $(Fw)'=fw-\sqrt F$ and
  $(Fw)''=f'w-3f/(2\sqrt F)$. Thus $\Phi''\le0$ implies
  \[
  \frac{Ff'}{f^2}
  \le1-\mu+(2\mu-\tfrac12)\Theta+(1-\mu)\Theta^2
  \le(2-3\mu)(1-\Theta).
  \]
  For the second inequality, the difference between its right side
  and the preceding quadratic is
  \[
  \frac{\Theta+2}{2}
  \bigl[1-2\mu-2(1-\mu)\Theta\bigr]\ge0.
  \]
  At a point where $f'>0$, multiplication by the bound for $sf/F$
  gives $sf'/f\le P_\mu$; if $f'\le0$, this upper bound is immediate.
  Taking the mean over components with weights $f_i/f$ gives the
  same upper bound for the total source. Lemma~\ref{lem:theta}
  gives $\mu<1/2$, so $P_\mu>1$; the lower bound on $\mu$ in
  \eqref{eq:mu} gives $P_\mu<p_S(n)$.

  For the last assertion, restore the component indices and use
  $\Theta_i\le\theta$. H\"older's inequality gives
  \[
  F=\sum_i
  \left(\frac{F_i^{3/2}}{F_iw_i}\right)^{2/3}
  (F_iw_i)^{2/3}
  \le
  \left(\sum_i\frac{F_i^{3/2}}{F_iw_i}\right)^{2/3}
  \left(\sum_i(F_iw_i)^2\right)^{1/3}.
  \]
  Therefore \eqref{eq:component-order} implies
  \[
  f\ge\frac1\theta\sum_i\frac{F_i^{3/2}}{F_iw_i}
  \ge\frac{F^{3/2}}{\theta\bigl(\sum_i(F_iw_i)^2\bigr)^{1/2}}
  \ge\frac{\sqrt F}{\theta\sqrt{C_2}\,w}.
  \]
  Combining this with $\sqrt F\,w\ge b_\mu s$ from
  Lemma~\ref{lem:growth-tail}, we obtain
  \[
  \frac{f(s)}s
  \ge\frac{b_\mu}{\theta\sqrt{C_2}\,w(s)^2}
  \longrightarrow\infty,
  \]
  since $w(s)\to0$ as $s\to\infty$.
  \end{proof}
  
We adapt the radial energy and Pohozaev argument of
\cite{Bonforte:2013aa} to admissible sources, retaining the curvature
correction needed for uniform divergence of the first-zero slope.

  \begin{lemma}\label{lem:large-speed}
  Fix $k>0$ and $a>0$. For every $B>\max\{a,A_k\}$,
  $U_{k,B}$ has a finite first zero $R_B$. As $B\to\infty$,
  \[
  R_B\longrightarrow0,\qquad
  -U_{k,B}'(R_B)\longrightarrow\infty,
  \]
  and
  \begin{equation}\label{eq:large-speed}
  \lim_{B\to\infty}\ \inf_{0\le s\le a}\Psi_{k,B}(s)=\infty.
  \end{equation}
  \end{lemma}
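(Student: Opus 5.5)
The plan is to rescale and pass to the limit problem on Euclidean space, as in Bonforte--Gazzola--Grillo--V\'azquez. The first claim is immediate. For $B>A_k$ the profile $U_{k,B}$ is not globally positive, by the definition \eqref{eq:threshold}. A finite endpoint of the positive branch is a transverse zero, so $R_B<\infty$.

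For the asymptotics, set $\lambda_B>0$ by $\lambda_B^2 f(B)/B=1$; this gives $\lambda_B\to0$ by Lemma~\ref{lem:log-growth}. Define $\phi_B(\rho)=U_{k,B}(\lambda_B\rho)/B$. Then $\phi_B(0)=1$, $\phi_B'(0)=0$, and
\[
\phi_B''+(n-1)\lambda_B\frac{S_k'(\lambda_B\rho)}{S_k(\lambda_B\rho)}\phi_B'+g_B(\phi_B)=0,\qquad g_B(\sigma)=\frac{f(B\sigma)}{f(B)}.
\]
The bound $sf'/f\le P_\mu$ of Lemma~\ref{lem:log-growth} gives $g_B(\sigma)\ge\sigma^{P_\mu}$ for $\sigma\le1$. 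The same bound gives equicontinuity of $g_B$ on $[0,1]$ after passing to a subsequence. The lower growth $sf/F\ge$ const and $f(0)=f'(0)=0$ supply $g_B(\sigma)\le\sigma$ and a uniform lower-exponent bound. Along a subsequence, $g_B\to g$ locally uniformly, with $\sigma^{P_\mu}\le g(\sigma)\le\sigma$ and $g$ subcritical in the sense $\sigma g'(\sigma)\le P_\mu g(\sigma)$. The drift term converges to $(n-1)/\rho$ on compacts.

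The limit problem $\phi''+\frac{n-1}\rho\phi'+g(\phi)=0$, $\phi(0)=1$, therefore has a finite first zero $\rho_*$ with $\phi'(\rho_*)<0$. This is the main obstacle: it needs a Pohozaev/Liouville statement for the subcritical Euclidean problem with the limit nonlinearity $g$. I would prove it with the radial Pohozaev identity. Suppose $\phi>0$ on $[0,\infty)$. Then $\phi$ decreases to $0$ and $\rho^{n-1}\phi'$ stays controlled. The identity
\[
\frac{d}{d\rho}\Bigl[\rho^n\Bigl(\tfrac12\phi'^2+G(\phi)\Bigr)+\tfrac{n-2}2\rho^{n-1}\phi\phi'\Bigr]=\rho^{n-1}\Bigl(nG(\phi)-\tfrac{n-2}2\phi g(\phi)\Bigr)
\]
has positive right side, because $\sigma g/G\le P_\mu+1<2n/(n-2)$. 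The bracket, however, tends to a nonpositive limit along a sequence. This gives a contradiction, and in $n=2$ the argument is the same with $p_S=\infty$. Transversality of $\rho_*$ comes from the flux formula \eqref{eq:radial-flux}.

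By $C^1$ continuous dependence, with uniqueness for the regularized drift near $\rho=0$ handled by the integral form \eqref{eq:radial-integral}, we get $R_B/\lambda_B\to\rho_*$ along every subsequence. Hence $R_B\to0$, and
\[
-U_{k,B}'(R_B)=\frac{B}{\lambda_B}\bigl(-\phi_B'(R_B/\lambda_B)\bigr)\ge c\,\frac{B}{\lambda_B}=c\sqrt{Bf(B)}\to\infty.
\]
For \eqref{eq:large-speed}, use $E=\frac12U'^2+F(U)$. Its derivative is $-(n-1)(S_k'/S_k)U'^2$, which on $[r,R_B]$ is bounded in terms of $\int U'^2/r$. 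This is comparable to the energy times $\log$-free factors after rescaling, since $\int_{\rho}^{\rho_*}\phi_B'^2\rho^{-1}$ converges. A cleaner route is to apply the convergence above on the height interval. The points where $U_{k,B}\le a$ correspond to rescaled heights $\sigma\le a/B\to0$, that is, $\rho$ near $\rho_*$. There $|\phi_B'|\ge c/2$ for large $B$ by $C^1$ convergence. This gives $\Psi_{k,B}(s)\ge (cB/2\lambda_B)^2$ uniformly for $0\le s\le a$, which tends to infinity. Since every subsequence has such a limit, the full limits hold.
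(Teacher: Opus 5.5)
\textbf{The gap is in the compactness step of your blow-up.} You say admissibility gives $g_B(\sigma)\le\sigma$, a lower bound $sf/F\ge c$, and equicontinuity of $g_B$ on $[0,1]$. None of these follows from the hypotheses of Section~\ref{sec:local}.

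\begin{itemize}
\item Lemmas~\ref{lem:growth-tail} and~\ref{lem:log-growth} give only upper bounds, $sf/F\le A_\mu$ and $sf'/f\le P_\mu$. The second controls $g_B'$ from above only.
\item Asking $g_B(\sigma)\le\sigma$ for all large $B$ is essentially monotonicity of $f(s)/s$. Admissibility does not provide this: $f$ itself may decrease, and the proof of Lemma~\ref{lem:log-growth} explicitly allows $f'\le0$.
\end{itemize}

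In fact $\sup_{[0,1]}g_B$ can be unbounded. A single admissible component can be built from any smooth, concave, increasing $\Phi=(Fw)^\mu$ that is linear near $0$ and satisfies $\int^\infty\Phi^{-1/(2\mu)}<\infty$. Set $\sqrt w=\tfrac12\int_s^\infty\Phi^{-1/(2\mu)}$, $F=\Phi^{1/\mu}/w$, and $f=F'=(Fw)'/w+\sqrt F/w>0$. Now let $\Phi'$ drop sharply by factors $M_j\to\infty$ at points $B_j\to\infty$, each followed by a long slow stretch. Then $f$ drops by a factor comparable to $M_j$ at $B_j$, so $g_{B_j}(\sigma)\gtrsim M_j$ for some $\sigma<1$. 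At the same points $sf/F\to0$.

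Without a uniform bound and a modulus of continuity for $g_B$ on $[0,1]$, there is no limit nonlinearity $g$ and no $C^1$ convergence $\phi_B\to\phi$. So nothing supports $R_B/\lambda_B\to\rho_*$ or $-\phi_B'(R_B/\lambda_B)\ge c$. Your argument does close when $f(s)/s$ is nondecreasing, as for power mixtures and $f_q$. But Lemma~\ref{lem:large-speed} is stated for every admissible source and is used that way in Theorem~\ref{thm:gradient-general}.

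\textbf{The Liouville step for the limit problem is also incomplete as written.} The claim that ``the bracket tends to a nonpositive limit along a sequence'' needs decay estimates. If $g$ is nondecreasing you can get them:
\begin{itemize}
\item monotonicity of $\rho^{n-1}|\phi'|$ gives $\rho|\phi'|\le(n-2)\phi$;
\item the flux formula gives $\rho^n g(\phi(\rho))\le n\rho^{n-1}|\phi'(\rho)|$;
\item $g\ge\sigma^p$ gives $\phi\le C\rho^{-2/(p-1)}$.
\end{itemize}
The bracket is then $O(\rho^{n-2}\phi^2)=O(\rho^{\,n-2-4/(p-1)})\to0$, because $p<p_S(n)$. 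For a non-monotone limit, the term $\rho^nG(\phi)$ is not controlled by the bounds you list.

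\textbf{How the paper avoids both issues.} It uses a direct quantitative argument with no limit problem.
\begin{itemize}
\item $R_B\to0$ comes from Theorem~\ref{thm:local-tail} at the center: $w(B)^{-1}\le C(R_B^{-1}+\sqrt k)$.
\item The energy identity, together with the pointwise bound $U(r)\le C\mathcal E^{1/2}r^{-(n-2)/2}$, gives $\mathcal E\ge cR_B^{\,n-2-4/(p-1)}$.
\item The curvature-corrected Pohozaev identity gives $\tfrac12 S(R_B)^nU'(R_B)^2\ge c\,\mathcal E$.
\end{itemize}
Only $f\le C(s+s^p)$ and $sf\le(p+1)F$ enter, so no lower growth or monotonicity of $f$ is needed. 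Then \eqref{eq:large-speed} follows from energy monotonicity.
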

  \begin{proof}
  Finiteness of $R_B$ follows from the definition of $A_k$.
  The radial function $U_{k,B}$ is positive on the hyperbolic ball
  of radius $R_B$. Theorem~\ref{thm:local-tail}, applied at its
  center, gives
  \[
  w(B)^{-1}\le C\bigl(R_B^{-1}+\sqrt k\bigr).
  \]
  Since $w(B)\to0$, it follows that $R_B\le2Cw(B)\to0$ for
  sufficiently large $B$.

  Set $p=P_\mu\in(1,p_S(n))$. Since $p+1=A_\mu$,
  Lemma~\ref{lem:growth-tail} gives
  \[
  sf(s)\le(p+1)F(s)\qquad(s>0).
  \]
  Integrating $F'/F\le(p+1)/s$ on $[1,s]$ gives
  $F(s)\le F(1)s^{p+1}$ for $s\ge1$, and hence
  $f(s)\le(p+1)F(1)s^p$ there. On $[0,1]$, the conditions
  $f\in C^1([0,\infty))$ and $f(0)=0$ give $f(s)\le Cs$.
  Thus
  \[
  f(s)\le C(s+s^p)\qquad(s\ge0).
  \]
  All constants below may depend on $n,k,f$ and its admissibility
  parameters, but not on $B$.

  Write $R=R_B$, $U=U_{k,B}$ and $S=S_k$. For large $B$ we have
  $R\le1$, so $r\le S(r)\le Cr$ and $1\le S'(r)\le C$ on $[0,R]$.
  Multiplication of \eqref{eq:radial} by $S^{n-1}U$ and integration
  give
  \[
  \mathcal E:=\int_0^R S^{n-1}U'^2\,dr
  =\int_0^R S^{n-1}Uf(U)\,dr>0.
  \]
  The boundary terms vanish because $U(R)=0$ and $U'(0)=0$.
  Since $U(r)=-\int_r^R U'(t)\,dt$, Cauchy's inequality yields
  \[
  U(r)\le\mathcal E^{1/2}
  \left(\int_r^R S(t)^{1-n}\,dt\right)^{1/2}
  \le C\mathcal E^{1/2}
  \begin{cases}
  r^{-(n-2)/2},&n\ge3,\\
  \bigl(\log(R/r)\bigr)^{1/2},&n=2.
  \end{cases}
  \]
  Put $\beta=n-(n-2)(p+1)/2>0$. Integrating these bounds gives
  \[
  \int_0^R S^{n-1}U^2\,dr\le CR^2\mathcal E,
  \qquad
  \int_0^R S^{n-1}U^{p+1}\,dr
  \le CR^\beta\mathcal E^{(p+1)/2}.
  \]
  For $n=2$, the second estimate uses $\beta=2$ and
  $\int_0^1 t(\log(1/t))^{(p+1)/2}\,dt<\infty$.
  The growth bound for $f$ and the energy identity now imply
  \[
  \mathcal E\le CR^2\mathcal E
  +CR^\beta\mathcal E^{(p+1)/2}.
  \]
  Absorbing the first term for small $R$ and dividing by
  $\mathcal E>0$, we obtain
  \begin{equation}\label{eq:radial-energy-lower}
  \mathcal E\ge cR^{-2\beta/(p-1)}
  =cR^{\,n-2-4/(p-1)}.
  \end{equation}

  Define the corrected radial Pohozaev quantity
  \[
  \begin{split}
  \mathcal P(r)={}&S^n
  \left(\frac{U'^2}{2}+F(U)-\frac{n-2}{4}kU^2\right)\\
  &+\frac{n-2}{2}S^{n-1}S'UU'.
  \end{split}
  \]
  The radial equation and $S''=kS$ give
  \begin{align*}
  \left[S^n\left(\frac{U'^2}{2}+F(U)\right)\right]'
  &=S^{n-1}S'\left(nF(U)-\frac{n-2}{2}U'^2\right),\\
  \bigl(S^{n-1}S'UU'\bigr)'
  &=S^{n-1}S'\bigl(U'^2-Uf(U)\bigr)+kS^nUU'.
  \end{align*}
  The derivative of the correction $-(n-2)kS^nU^2/4$
  cancels the remaining $UU'$ term. Consequently
  \begin{equation}\label{eq:radial-pohozaev}
  \mathcal P'
  =S^{n-1}S'
  \left(nF(U)-\frac{n-2}{2}Uf(U)
  -\frac{n(n-2)}4kU^2\right).
  \end{equation}
  Since $Uf(U)\le(p+1)F(U)$, the first two terms in parentheses
  are bounded below by $\beta F(U)$. The center expansion gives
  $\mathcal P(0)=0$, while
  $\mathcal P(R)=S(R)^nU'(R)^2/2$. Integrating
  \eqref{eq:radial-pohozaev} and using the energy identity and the
  preceding $L^2$ bound, we find
  \begin{align*}
  \frac12S(R)^nU'(R)^2
  &\ge\beta\int_0^R S^{n-1}F(U)\,dr
       -Ck\int_0^R S^{n-1}U^2\,dr\\
  &\ge\left(\frac{\beta}{p+1}-CkR^2\right)\mathcal E
  \ge\frac{\beta}{2(p+1)}\mathcal E
  \end{align*}
  for sufficiently large $B$. Together with
  \eqref{eq:radial-energy-lower} and $S(R)\le CR$, this gives
  \[
  U'(R)^2\ge cR^{-2-4/(p-1)}\longrightarrow\infty.
  \]
  Since $U'(R)<0$ by \eqref{eq:radial-flux}, the asserted
  divergence of the first-zero slope follows.

  Finally, radial energy monotonicity gives, at the radius where
  $U=s\in[0,a]$,
  \[
  \Psi_{k,B}(s)\ge U'(R)^2-2F(s)
  \ge U'(R)^2-2F(a).
  \]
  Since $F'=f>0$, this proves \eqref{eq:large-speed} uniformly on $[0,a]$.
  \end{proof}
  
  We use the barrier-family continuation framework of Andrews and Xiong
  \cite[Theorem~13]{Andrews:2019aa}, whose result is stated for compact
  manifolds. For the finite-zero profiles, Lemma~\ref{lem:strict-gap}
  supplies the strict gap needed for continuation on a complete manifold
  under the Ricci lower bound. Taking the central-value limit then
  proves the amplitude bound required for comparison with the
  threshold profile.

  \begin{proof}[Proof of the comparison in Theorem~\ref{thm:gradient-general}]
  Corollary~\ref{cor:global-bounds} gives $a=\sup_Mu\in(0,\infty)$ and $D=\|\nabla u\|_\infty<\infty$. Fix $k>\kappa$. For $B$ in the interval $I=(\max(a,A_k),\infty)$, the model has a finite transverse zero, and its inverse coordinate is defined on $[0,a]$. Denote the resulting defect by $P_B$ and set
  \[
  \mathcal B=\{B\in I:P_B\le0\text{ on }M\}.
  \]
  By Lemma~\ref{lem:large-speed}, $\Psi_{k,B}>D^2$ on $[0,a]$ for all sufficiently large $B$, so $\mathcal B\ne\varnothing$. The $C^1$ dependence of the inverse coordinates gives uniform convergence $P_B\to P_{B_0}$ on $M$ as $B\to B_0\in I$; here $u\in[0,a]$ and $|\nabla u|\le D$. Thus $\mathcal B$ is relatively closed. For $B_0\in\mathcal B$, Lemma~\ref{lem:strict-gap} gives $\sup P_{B_0}<0$, and uniform dependence makes $\mathcal B$ relatively open. Therefore $\mathcal B=I$.
  
  If $a>A_k$, the model with center value $a$ has a finite zero.
  Letting $B\downarrow a$ gives $P_a\le0$. At points $x_j$ with
  $u(x_j)\to a$, its inverse coordinate tends to one, and
  \[
  0\ge P_a(x_j)\ge-k\bigl(T_a(u(x_j))^2-1\bigr)\longrightarrow0.
  \]
  This contradicts Lemma~\ref{lem:strict-gap}. Hence
  \[
  a\le A_k,\qquad |\nabla u|^2\le\Psi_{k,B}(u)
  \quad(k>\kappa,\ B>A_k).
  \]
  Let
  $k_j\downarrow\kappa$. Upper semicontinuity gives
  \[
  a\le\liminf_j A_{k_j}\le\limsup_j A_{k_j}\le A_\kappa.
  \]
  For a fixed $B>A_\kappa$, one has $B>A_{k_j}$ for large $j$. Dependence on the curvature parameter then gives $|\nabla u|^2\le\Psi_{\kappa,B}(u)$. As $B\downarrow A_\kappa$, the hitting times and speeds converge at every fixed height $0<s<A_\kappa$, because $U_H'$ is nonzero there. This proves the gradient bound wherever $u<A_\kappa$. At a point with $u=A_\kappa$, the amplitude bound gives $\nabla u=0$, and $\Psi_H(A_\kappa)=0$.
  \end{proof}
  
  On compact Finsler spaces with nonnegative weighted Ricci curvature,
  Andrews and Xiong \cite[Theorem~21]{Andrews:2019aa} prove constancy
  under their structural assumptions when the solution attains a value
  at which the potential is stationary and maximal over the solution
  range.
  The equality case below determines the ambient hyperbolic manifold
  and the radial solution, including attainment of the model's central
  height.

  \begin{proof}[Proof of rigidity in Theorem~\ref{thm:gradient-general}]
  Use the threshold coordinate $T(U_H(r))=\cosh(\sqrt\kappa r)$ and put
  \[
  v=T(u),\qquad P=|\nabla v|^2-\kappa(v^2-1).
  \]
  Let $V=T^{-1}$, with $\alpha$ and $\mathcal L$ defined as in Lemma~\ref{lem:model-coordinate} for this $V$ and $k=\kappa$. The center extension makes $v$ a $C^3$ function even where $u=A_\kappa$. Since $u>0$, all coefficients in \eqref{eq:model-bochner} are locally bounded. Comparison gives $P\le0$, and the equality point gives $P=0$. Put $c=2(\kappa\alpha v+\alpha'|\nabla v|^2)$ and $c_+=\max\{c,0\}$.
Since $P\le0$, \eqref{eq:model-bochner} implies
$(\mathcal L-c_+)P\ge0$. Its coefficients are locally bounded and
its zeroth-order coefficient is nonpositive. The strong maximum
principle \cite[Theorem 3.5]{Gilbarg:2001aa}, followed along the
connected manifold, gives $P\equiv0$. The nonnegative terms in
\eqref{eq:model-bochner} therefore give
\begin{equation}\label{eq:concircular}
\nabla^2v=\kappa vg,\qquad |\nabla v|^2=\kappa(v^2-1).
\end{equation}
The function $v$ is nonconstant because $\Delta v=n\kappa v>0$.
Choose $p$ with $v(p)=\cosh(\sqrt\kappa r_0)>1$ and follow the
complete unit-speed geodesic initially opposite to $\nabla v$.
The two identities in \eqref{eq:concircular} give
\[
v(\gamma(t))=\cosh(\sqrt\kappa(r_0-t)).
\]
Thus $o=\gamma(r_0)$ is a critical point of $v$, with $v(o)=1$.
The classical concircular-field rigidity theorem
\cite{Tashiro:1965aa} now identifies $(M,g)$ with
$\mathbb H^n_{-\kappa}$. Along each geodesic from $o$,
$\nabla^2v=\kappa vg$ and the center data give
$v=\cosh(\sqrt\kappa d(o,\cdot))$, so $u=U_H(d(o,\cdot))$.
  \end{proof}
  
  \begin{proof}[Proof of Theorem~\ref{thm:intro-gradient}]
  Proposition~\ref{prop:mixtures} gives admissibility of $f$.
  Lemma~\ref{lem:threshold} and uniqueness of the radial initial-value
  problem identify the given profile with $U_{\kappa,A_\kappa}$.
  Its flux formula gives $U_H'<0$ for $r>0$, and the lemma gives
  $U_H(r)\to0$; thus the inverse has precisely the stated domain.
  Theorem~\ref{thm:gradient-general} gives $u\le A=A_\kappa$ and
  \eqref{eq:intro-gradient}, together with gradient equality rigidity.
  At a point where $u=A$, the amplitude bound gives
  $\nabla u=0=U_H'(0)$, so the same rigidity applies.
  \end{proof}
  
  The inverse-model distance bound has the two-point form used by
  Andrews and Xiong \cite{Andrews:2019aa}. We give its short derivation
  here to include the threshold endpoint $u=A_\kappa$.
  \begin{corollary}\label{cor:model-distance}
  Under Theorem~\ref{thm:gradient-general}, the function $r_u=U_H^{-1}(u)$, with $r_u=0$ where $u=A_\kappa$, is $1$-Lipschitz. In particular,
  \[
  |U_H^{-1}(u(x))-U_H^{-1}(u(y))|\le d_g(x,y).
  \]
  \end{corollary}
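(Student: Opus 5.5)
The plan is to show that $r_u$ is $1$-Lipschitz by checking it on each piece of $M$ and then integrating along minimizing geodesics. The gradient comparison of Theorem~\ref{thm:gradient-general} does the work on $\{u<A_\kappa\}$, and the top level set needs separate treatment. Write $\rho=U_H^{-1}:(0,A_\kappa]\to[0,\infty)$.

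On $(0,A_\kappa)$ the map $\rho$ is $C^1$, since $U_H'<0$ for $r>0$, and
\[
\rho'(s)=1/U_H'(\rho(s))=-\Psi_H(s)^{-1/2}.
\]
Hence on the open set $\{u<A_\kappa\}$ the function $r_u=\rho\circ u$ is $C^1$. By \eqref{eq:general-gradient},
\[
|\nabla r_u|=|\nabla u|\,\Psi_H(u)^{-1/2}\le1 .
\]

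At the top level set the factor $\Psi_H^{-1/2}$ blows up, so the chain rule fails there. Instead I use continuity of $\rho$ on $(0,A_\kappa]$, with $\rho(A_\kappa)=0$. This makes $r_u$ continuous on all of $M$, since $u>0$ and $u\le A_\kappa$.

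Now fix $x,y$ and a minimizing unit-speed geodesic $\gamma$ from $x$ to $y$, which exists by completeness. Set $\phi=r_u\circ\gamma$.
\begin{itemize}
\item The closed set $Z=\{t:u(\gamma(t))=A_\kappa\}$ has complement a countable union of open intervals.
\item On each such interval $\phi$ is $C^1$ with $|\phi'|\le1$, so $|\phi(b)-\phi(a)|\le|b-a|$ on each closed subinterval.
\item By continuity this extends to the closures, which have endpoints in $Z\cup\{0,d(x,y)\}$.
\item On $Z$ itself $\phi=0$.
\end{itemize}
So $\phi$ is a continuous function that vanishes on $Z$ and is $1$-Lipschitz on each component of the complement of $Z$. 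Such a function is $1$-Lipschitz on the whole interval. Indeed, for $s<t$: if $[s,t]$ meets $Z$, let $a$ be the first and $b$ the last point of $Z$ in $[s,t]$. Then $|\phi(s)|\le a-s$ and $|\phi(t)|\le t-b$, hence
\[
|\phi(t)-\phi(s)|\le\max(|\phi(s)|,|\phi(t)|)\le t-s ,
\]
using $\phi\ge0$. If $[s,t]$ misses $Z$, it lies in a single component and the interval bound applies directly. Therefore $|r_u(x)-r_u(y)|\le d_g(x,y)$.

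The main obstacle is this top level set, where $\rho$ fails to be differentiable and $\nabla u=0$. Two facts handle it: $r_u\ge0$ everywhere, with equality exactly on $Z$, and the Lipschitz bound on the complementary intervals. No regularity of $\Psi_H$ at $A_\kappa$ beyond continuity of $\rho$ is needed. Lemma~\ref{lem:phase-center} could alternatively be used to show that $\rho$ is smooth in $v$-type coordinates, but the elementary argument above avoids it.
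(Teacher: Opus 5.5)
Your proof is correct and follows the paper's route: the comparison $|\nabla u|^2\le\Psi_H(u)$ gives $|\nabla r_u|\le1$ on $\{u<A_\kappa\}$, and you then integrate along curves. The only difference is at the top level set: the paper uses the truncation $\max(r_u,\varepsilon)$ and lets $\varepsilon\downarrow0$, whereas you split the geodesic's parameter interval at the zero set of $r_u$ and use $r_u\ge0$ there, which is equally valid and avoids weak gradients.
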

  \begin{proof}
  On $\{u<A_\kappa\}$, comparison gives $|\nabla r_u|=|\nabla u|/\sqrt{\Psi_H(u)}\le1$. For each $\varepsilon>0$, the continuous function $\max(r_u,\varepsilon)$ is constant near $\{r_u<\varepsilon\}$ and is locally Lipschitz across the level $r_u=\varepsilon$, with weak gradient bounded by one. Integrate along curves and let $\varepsilon\downarrow0$.
  \end{proof}
  
  For the explicit radial profile below at curvature $-1$, see
  \cite[Remark 3.8 and Appendix A]{Mancini:2008aa}; for related
  single-power background, see \cite{Ganguly:2015aa}.
  The proposition proves its gradient bound and rigidity
  under the Ricci lower bound.

  \begin{proposition}\label{prop:explicit-power}
  Let $a,\kappa>0$ and $n\ge2$. For $f(s)=a s^{1+2/(n-1)}$,
  \[
  A_\kappa=\left(\frac{n(n-1)\kappa}{a}\right)^{(n-1)/2},\qquad
  U_H(r)=A_\kappa\operatorname{sech}^{n-1}(\sqrt\kappa r).
  \]
  Every positive entire solution on a complete connected manifold without
  boundary satisfying \eqref{eq:ric-lower} obeys
  \begin{equation}\label{eq:explicit-gradient}
  |\nabla\log u|^2+\frac{a(n-1)}n u^{2/(n-1)}
  \le(n-1)^2\kappa,
  \end{equation}
  with the equality characterization in Theorem~\ref{thm:gradient-general}.
  \end{proposition}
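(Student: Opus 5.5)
The plan is to verify that the stated profile is the threshold solution $U_{\kappa,A_\kappa}$ of Lemma~\ref{lem:threshold}, and then to read off \eqref{eq:explicit-gradient} from Theorem~\ref{thm:gradient-general} by computing $\Psi_H$ explicitly. First, $f(s)=as^{1+2/(n-1)}$ is a single subcritical power: $1<1+2/(n-1)<p_S(n)$ for all $n\ge2$, since $1+2/(n-1)<(n+2)/(n-2)$ when $n\ge3$. Hence it is admissible by Proposition~\ref{prop:mixtures}, and Theorem~\ref{thm:gradient-general} applies.

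\emph{Step 1: the explicit solution.} Set $U=A\,\mathrm{sech}^{n-1}(\sqrt\kappa r)$ and $c=\sqrt\kappa$. Then
\[
U'=-(n-1)cU\tanh(cr),\qquad
U''=(n-1)^2c^2U\tanh^2(cr)-(n-1)c^2U\,\mathrm{sech}^2(cr).
\]
Since $S_\kappa'/S_\kappa=c\coth(cr)$, we get
\[
\Delta U=U''+(n-1)c\coth(cr)U'=(n-1)^2c^2U(\tanh^2-1)-(n-1)c^2U\,\mathrm{sech}^2=-n(n-1)\kappa\,U\,\mathrm{sech}^2(cr).
\]
Because $U^{2/(n-1)}=A^{2/(n-1)}\mathrm{sech}^2$, the equation $-\Delta U=aU^{1+2/(n-1)}$ holds exactly when $aA^{2/(n-1)}=n(n-1)\kappa$, which gives the stated $A_\kappa$.

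\emph{Step 2: identifying the threshold.} This step is the main obstacle. The profile is globally positive, so $A\le A_\kappa$, and we must exclude $A<A_\kappa$. I would use the decay characterization. The profile satisfies $-U'/U=(n-1)c\tanh(cr)\to(n-1)\sqrt\kappa$, so it is a fast-decay solution. I would then argue that for $B<A_\kappa$ the profile decays slowly. Suppose instead it were fast-decaying and $B<A_\kappa$. Apply the comparison of Theorem~\ref{thm:gradient-general} on $\mathbb H^n_{-\kappa}$ to $u=U_{\kappa,B}$, and compare the phase curves $\Psi_{\kappa,B}$ and $\Psi_H$ near $0$. Fast decay forces $\Psi\sim(n-1)^2\kappa s^2$ for both, so an equality-type argument at infinity would be needed; that is delicate. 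A cleaner route is direct: for $u=A\,\mathrm{sech}^{n-1}$ with $A$ as above, the phase curve is
\[
\Psi(s)=(n-1)^2\kappa s^2\tanh^2=(n-1)^2\kappa s^2\Bigl(1-(s/A)^{2/(n-1)}\Bigr).
\]
Now run the continuation argument of the comparison proof with this explicit curve. If some $U_{\kappa,B}$ with $A<B\le A_\kappa$ were positive, Theorem~\ref{thm:gradient-general} applied to $u=U_{\kappa,B}$ would bound its value by $A_\kappa$ but would not contradict. So instead I would use the uniqueness of positive $H^1$ solutions due to Mancini--Sandeep \cite{Mancini:2008aa} together with the identification of the threshold with the ground state in \cite{Bonforte:2013aa}. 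By \eqref{eq:fast-threshold}, the threshold profile is in $H^1$. The explicit profile is also in $H^1$, since $(n-1)\sqrt\kappa>(n-1)\sqrt\kappa/2$. Uniqueness up to isometry, together with radiality about the same pole, then forces $U_{\kappa,A_\kappa}=A\,\mathrm{sech}^{n-1}$. The scaling $u\mapsto a^{(n-1)/2}u$ reduces general $a$ to the case $a=1$, and a dilation of the metric reduces general $\kappa$ to curvature $-1$.

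\emph{Step 3: the inequality.} With $\Psi_H(s)=(n-1)^2\kappa s^2\bigl(1-(s/A_\kappa)^{2/(n-1)}\bigr)$ and $A_\kappa^{-2/(n-1)}=a/(n(n-1)\kappa)$, the bound $|\nabla u|^2\le\Psi_H(u)$ divided by $u^2$ becomes
\[
|\nabla\log u|^2\le(n-1)^2\kappa-\frac{a(n-1)}{n}u^{2/(n-1)}.
\]
This is \eqref{eq:explicit-gradient}. The bound also holds where $u=A_\kappa$, since $\Psi_H(A_\kappa)=0$. Equality in \eqref{eq:explicit-gradient} at a point is equivalent to equality in \eqref{eq:general-gradient}, so the rigidity statement transfers directly from Theorem~\ref{thm:gradient-general}.
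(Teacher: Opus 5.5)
Your proposal is correct and follows essentially the paper's argument: the decay in Lemma~\ref{lem:threshold} puts the threshold profile in $H^1(\mathbb H^n_{-\kappa})$, and Mancini--Sandeep uniqueness of positive $H^1$ solutions then identifies it with the explicit $\operatorname{sech}^{n-1}$ profile, since both have their unique maximum at the pole; the phase curve $\Psi_H(s)=(n-1)^2\kappa s^2-\frac{a(n-1)}{n}s^{2+2/(n-1)}$ then turns Theorem~\ref{thm:gradient-general} into \eqref{eq:explicit-gradient}. The detours in Step~2 (continuation, comparison near $s=0$, the Bonforte et al.\ characterization) are unnecessary, and you should confirm, as the paper does, that the Mancini--Sandeep uniqueness theorem with zero linear coefficient also covers $n=2$.
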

  \begin{proof}
Write $k=n-1$ and $\lambda=k\sqrt\kappa$. Lemma~\ref{lem:threshold}
gives $U_H=O(e^{-cr})$ for every $c<\lambda$ and
$-U_H'/U_H\to\lambda$. Choosing $\lambda/2<c<\lambda$ shows that
$U_H\in H^1(\mathbb H^n_{-\kappa})$.
The displayed $\operatorname{sech}^k$ profile is the known positive
$H^1$ solution at this exponent
\cite[Theorem 1.3, Remark 3.8 and Appendix A]{Mancini:2008aa},
after curvature and amplitude scaling. Their uniqueness theorem at
source parameter $\lambda_{\mathrm{MS}}=0$ applies also when $n=2$.
Since both profiles have their unique maximum at the pole, uniqueness
up to hyperbolic isometries identifies them. Its phase curve is
\[
\Psi_H(s)=k^2\kappa s^2-\frac{ak}{k+1}s^{2+2/k}.
\]
Substitution in \eqref{eq:general-gradient} gives
\eqref{eq:explicit-gradient}, and the equality statement is
Theorem~\ref{thm:gradient-general}.
\end{proof}

\section{Scalar-curvature comparison for compactifications}\label{sec:boundary}
  \subsection{The source equation and boundary identities}
  Let $n\ge3$, $k=n-1$, and $\kappa>0$. Let $(\overline M,h)$ be a compact smooth manifold with nonempty boundary and connected interior $M$. We prescribe a smooth defining function $\rho$ by
  \begin{equation}\label{eq:q-class}
  q\in C^\infty([0,\infty)),\qquad
  q\ge0,\quad q'\ge0,\quad q''\le0,\quad q(s)>0\quad(s>0),
  \qquad -\Delta_h\rho=q(\rho).
  \end{equation}
  Set $g=\rho^{-2}h$ and $\gamma=h|_{T\partial M}$. Derivatives,
  inner products, and tensor norms without a subscript in this section
  are taken with respect to $h$.

  The inverse metrics and volume measures satisfy
  $g^{ij}=\rho^2h^{ij}$ and $dV_g=\rho^{-n}dV_h$. Substituting these
  into the divergence expression for the Laplacian gives
  \begin{align*}
  \Delta_g v
  &=\frac{\rho^n}{\sqrt{\det h}}
    \partial_i\!\left(\rho^{2-n}\sqrt{\det h}\,h^{ij}\partial_jv\right)\\
  &=\rho^2\Delta_hv-(n-2)\rho\langle d\rho,dv\rangle_h.
  \end{align*}
  Taking $u=\rho^k$ cancels the gradient terms:
  \begin{equation}\label{eq:exact-transform}
  -\Delta_h\rho=q(\rho)
  \quad\Longleftrightarrow\quad
  -\Delta_g u=f_q(u),\qquad
  u=\rho^k,\quad f_q(s)=k s^{1+1/k}q(s^{1/k}).
  \end{equation}
  Indeed,
  \begin{align*}
  \Delta_g(\rho^k)
  &=k\rho^{k+1}\Delta_h\rho
    +k\bigl(k-1-(n-2)\bigr)\rho^k|d\rho|_h^2\\
  &=k\rho^{k+1}\Delta_h\rho,
  \end{align*}
  because $k=n-1$. Also
  \[
  |du|_g^2=\rho^2|d(\rho^k)|_h^2
  =k^2\rho^{2k}|d\rho|_h^2.
  \]
  We verify admissibility directly, since $f_q$ need not be a finite
  sum of powers.
  
  \begin{lemma}\label{lem:q-admissible}
  For every $q$ in \eqref{eq:q-class}, $f_q$ is admissible with one component and
  \[
  \mu=\frac{k}{2(k+1)}>\frac{n-2}{2(n-1)}.
  \]
  The function $(Fw)^\mu$ is strictly concave.
  \end{lemma}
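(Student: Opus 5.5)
The plan is to reduce everything to the variable $\rho=s^{1/k}$ and then verify concavity through the identity derived in the proof of Lemma~\ref{lem:short-mixture}. That identity used only the definitions of $F$, $w$, $\Theta=\sqrt F/(fw)$ and $Q=Ff'/f^2$, so it is valid for any positive source.

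\emph{Regularity and the tail.} Substituting $\tau=\sigma^k$ gives
\[
F(s)=k^2G(\rho),\qquad G(\rho)=\int_0^\rho\sigma^{2k}q(\sigma)\,d\sigma,\qquad f=k\rho^{k+1}q(\rho),
\]
and
\[
w(s)=\int_\rho^\infty\sigma^{k-1}G(\sigma)^{-1/2}\,d\sigma .
\]
Since $q(\sigma)\ge q(1)>0$ for $\sigma\ge1$, we have $G\gtrsim\sigma^{2k+1}$, so the integrand is $O(\sigma^{-3/2})$ and the tail is finite. Next, $f_q(s)=ks^{1+1/k}q(s^{1/k})$ with $q$ smooth gives $f_q=O(s^{1+1/k})$ and $f_q'=O(s^{1/k})$. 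Hence $f_q\in C^1([0,\infty))\cap C^2((0,\infty))$ with $f_q(0)=f_q'(0)=0$, and $f_q>0$ on $(0,\infty)$. With one component, \eqref{eq:component-order} is trivial with $C_2=1$. The inequality $\mu=k/(2(k+1))>(k-1)/(2k)$ is just $k^2>k^2-1$.

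\emph{Concavity.} By the identity from Lemma~\ref{lem:short-mixture},
\[
-\frac{((Fw)^\mu)''}{\mu(Fw)^\mu}=\Bigl(\frac fF\Bigr)^2\Bigl\{\frac{F\Theta'}{f\Theta}+(1-\Theta)\bigl[\tfrac12-\mu-(1-\mu)\Theta\bigr]\Bigr\}.
\]
For our $\mu$ the bracket is nonnegative exactly when $\Theta\le 1/(k+2)$. Because $\sqrt F/f\lesssim\rho^{1/2-k}\to0$ at infinity, the representation
\[
\Theta(s)=\frac1{w(s)}\int_s^\infty\bigl(Q-\tfrac12\bigr)F^{-1/2}\,dt
\]
holds. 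Consequently, if $Q$ is nondecreasing, then
\[
\frac{F\Theta'}{f\Theta}=\Theta+\tfrac12-Q\ge0 ,
\]
and $\Theta\le\sup Q-\tfrac12$. So the steps are: (i) compute, in the $\rho$ variable,
\[
Q=\frac{G\bigl((k+1)q+\rho q'\bigr)}{\rho^{2k+1}q^2};
\]
(ii) show $Q$ is nondecreasing in $\rho$; (iii) show $Q<\tfrac12+\tfrac1{k+2}$, with strictness giving strict concavity.

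For orientation, the extreme cases are $q$ constant, which gives $Q=(k+1)/(2k+1)$, and $q$ linear, which gives $Q=1/2+1/(2(k+1))$. Both lie strictly below $1/2+1/(k+2)$, so strict concavity is expected.

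\emph{Main obstacle.} Steps (ii)--(iii) for general concave nondecreasing $q$ are the hard part. The crude bounds $G\le\rho^{2k+1}q/(2k+1)$ and $\rho q'\le q$ (from concavity and $q(0)\ge0$) give $Q\le(k+2)/(2k+1)$. This is sufficient only for $k\ge4$. To handle all $k$, I would first try to treat $Q$ as a function of the two ratios
\[
x=\frac{\rho q'}{q}\in[0,1],\qquad y=\frac{(2k+1)G}{\rho^{2k+1}q}\le1 ,
\]
and use concavity once more. Comparing $q(\sigma)$ with its tangent line at $\rho$ gives $q(\sigma)\le q(\rho)-(\rho-\sigma)q'(\rho)$, hence
\[
y\le1-\frac{x}{2k+2}.
\]
Then $Q\le\frac{(k+1+x)(1-x/(2k+2))}{2k+1}$, and the endpoint $x=1$ reproduces the linear value. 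For $k\le3$, however, this bound is maximized at an interior value of $x$, where it can still exceed $1/2+1/(k+2)$. So the tangent-line bound alone does not close step (iii): it has to be combined with a lower estimate relating $G$ to $q(0)$, or the argument must bypass $\sup Q$ and bound $\Theta$ directly through the $w$-weighted average.

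For the monotonicity in step (ii), I would differentiate $\log Q$ in $\rho$ and use the differential identity $\rho G'=\rho^{2k+1}q$ together with $q''\le0$. The hope is to express $\rho Q'/Q$ as a nonnegative combination of $-\rho^2q''/q$ and of $(1-x)$ times a positive factor. If that sign check fails, I would instead prove $\Theta+\tfrac12-Q\ge0$ directly, by the comparison argument of Lemma~\ref{lem:theta} applied in the $\rho$ variable.
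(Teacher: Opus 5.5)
Your regularity, tail, single-component and $\mu$ checks are fine. The concavity argument, however, has a genuine gap. Step (ii) is false on part of the class, and so is your fallback $\Theta+\frac12-Q\ge0$.

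Take $q=\tanh$, which satisfies the hypotheses and is used in the paper. A direct expansion gives
\[
Q=\frac{k+2}{2(k+1)}-\frac{\rho^2}{6(k+1)}+O(\rho^4),
\]
so $Q$ decreases near $0$. Moreover $Q<\frac{k+2}{2(k+1)}$ for all $\rho>0$, because equality in the tangent-line bound forces $\rho q'=q$. Also $Q\to\frac{k+1}{2k+1}$ at infinity. Hence for small $s$ one has $Q(t)<Q(s)$ for every $t>s$, and your own averaging formula gives $\Theta(s)<Q(s)-\frac12$, that is, $F\Theta'/(f\Theta)<0$. So the two terms in the brace cannot be shown separately nonnegative.

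Your concern about step (iii) is an arithmetic slip. In the bound
\[
Q\le\frac{(k+1+x)(2k+2-x)}{(2k+1)(2k+2)},\qquad 0\le x\le1,
\]
the numerator has derivative $k+1-2x\ge k-1>0$, since its vertex is at $x=(k+1)/2\ge3/2$. There is therefore no interior maximum for any $k\ge2$. The maximum is the linear value $\frac{k+2}{2(k+1)}=1-\mu$, which is below $\frac12+\frac1{k+2}$.

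The missing idea is not to split the brace at all. Substitute the identity $F\Theta'/(f\Theta)=\Theta+\frac12-Q$ into it; the brace becomes
\[
(1-\mu)-Q+\bigl(2\mu-\tfrac12\bigr)\Theta+(1-\mu)\Theta^2 .
\]
Since $k\ge2$ gives $\mu\ge1/3$, the coefficient $2\mu-\frac12$ is positive. Because $\Theta>0$, the $\Theta$-terms are then strictly positive. Strict concavity therefore needs only the pointwise bound $Q\le1-\mu$, which is exactly your tangent-line estimate.

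This is the paper's proof, written there as
\[
\frac{(G^\mu)''}{\mu G^{\mu-2}f_q^2w^2}=Q-(1-\mu)+\bigl(\tfrac12-2\mu\bigr)\theta-(1-\mu)\theta^2<0.
\]
The finer decomposition in Lemma~\ref{lem:short-mixture} is needed there only because $\mu=2/(\beta+3)$ can be small enough to make $2\mu-\frac12$ negative. Here neither monotonicity of $Q$ nor a sharp bound on $\Theta$ is required.
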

  \begin{proof}
  Write $r=s^{1/k}$ and $e=rq'(r)/q(r)$. Since $q(r)-rq'(r)\ge q(0)\ge0$, one has $0\le e\le1$. Differentiation and a change of variable give
  \[
  f_q'(s)=r\bigl((k+1)q(r)+rq'(r)\bigr),\qquad
  F(s)=k^2\int_0^r t^{2k}q(t)\,dt.
  \]
  In particular $f_q(0)=f_q'(0)=0$, with $f_q\in C^1([0,\infty))\cap C^\infty((0,\infty))$. The tangent-line inequality $q(r\tau)/q(r)\le1-e(1-\tau)$ gives
  \begin{align*}
  \frac{Ff_q'}{f_q^2}
  &=(k+1+e)\int_0^1\tau^{2k}\frac{q(r\tau)}{q(r)}\,d\tau\\
  &\le\frac{(k+1+e)(2k+2-e)}{(2k+1)(2k+2)}
  \le\frac{k+2}{2(k+1)}=1-\mu.
  \end{align*}
  The numerator of the last quadratic has derivative $k+1-2e>0$ on $[0,1]$.
  
  For large $s$, monotonicity and positivity of $q$ give
  $f_q(s)\ge c s^{1+1/k}$ and hence $F(s)\ge c_1s^{2+1/k}$.
  Therefore $F(s)^{-1/2}\le C s^{-1-1/(2k)}$ is integrable at
  infinity, which proves that the tail $w$ is finite. Put $G=Fw$
  and $\theta=\sqrt F/(f_qw)>0$. Differentiating $w'=-F^{-1/2}$ gives
  $G'=f_qw-\sqrt F$ and $G''=f_q'w-3f_q/(2\sqrt F)$. Thus
  \[
  \frac{(G^\mu)''}{\mu G^{\mu-2}f_q^2w^2}
  =\frac{Ff_q'}{f_q^2}-(1-\mu)+(\tfrac{1}{2}-2\mu)\theta
  -(1-\mu)\theta^2<0.
  \]
  Here $\mu\ge1/3$, so the terms containing $\theta$ have strictly negative sum. Also $\mu-(n-2)/(2(n-1))=1/(2k(k+1))>0$.
  With one component, the ordering and component-comparison conditions
  in Section~\ref{sec:local} hold with $C_2=1$. The formula for $f_q'$
  and $0\le e\le1$ also give
  \begin{equation}\label{eq:q-slope}
  1+\frac1k\le\frac{s f_q'(s)}{f_q(s)}\le1+\frac2k.
  \end{equation}
  \end{proof}
  
  If $C=\max_{\overline M}|d\rho|_h$, every curve $c$ in $M$ satisfies
  \[
  \left|\frac d{dt}\log\rho(c(t))\right|\le C|\dot c(t)|_g.
  \]
  Every curve approaching the boundary has infinite length, and
  compactness excludes any other escape. Thus $g$ is complete.
  By Lemma~\ref{lem:q-admissible}, Theorem~\ref{thm:gradient-general}
  applies to the positive smooth function $u=\rho^k$ on the connected
  interior whenever $\operatorname{Ric}\ge-k\kappa g$. It gives
  \[
  \rho\le A_\kappa^{1/k},\qquad
  |d\rho|_h^2\le\frac{\Psi_H(\rho^k)}{k^2\rho^{2k}}.
  \]
  Let $U_H$ be the threshold profile for $f_q$ on
  $\mathbb H^n_{-\kappa}$. In geodesic polar coordinates,
  \[
  g_{\mathbb H}=dr^2+S_\kappa(r)^2g_{\mathbb S^k},\qquad
  S_\kappa(r)=\frac{\sinh(\sqrt\kappa r)}{\sqrt\kappa}.
  \]
  The coordinate $x=2e^{-\sqrt\kappa r}$ approaches zero at infinity,
  and a direct substitution gives
  \[
  g_{\mathbb H}=\frac{1}{\kappa x^2}
  \left(dx^2+(1-x^2/4)^2g_{\mathbb S^k}\right).
  \]
  \begin{lemma}\label{lem:model-compactification}
  The function $\rho_H=U_H^{1/k}$ defines a smooth compactification $h_H=\rho_H^2g_{\mathbb H}$ of the hyperbolic ball. In the coordinate $x=2e^{-\sqrt\kappa r}$,
  \[
  U_H=x^k v(x),\qquad
  h_H=\kappa^{-1}v(x)^{2/k}
  \bigl(dx^2+(1-x^2/4)^2g_{\mathbb S^k}\bigr),
  \]
  where $v$ is smooth and $v(0)=v_0>0$. Its boundary is round, with scalar curvature
  \begin{equation}\label{eq:sharp-constant}
  R_{q,\kappa}=k(k-1)\kappa v_0^{-2/k}>0.
  \end{equation}
  \end{lemma}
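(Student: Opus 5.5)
The plan is to pass to the coordinate $x=2e^{-\sqrt\kappa r}$ and read the radial equation for $U_H$ near $x=0$ as a regular-singular ODE with indicial roots $0$ and $k=n-1$. First I would rewrite $-\Delta_{g_{\mathbb H}}U=f_q(U)$ in $x$. With $\partial_r=-\sqrt\kappa\,x\partial_x$ and $(n-1)S_\kappa'/S_\kappa=k\sqrt\kappa\coth(\sqrt\kappa r)=k\sqrt\kappa(1+x^2/4)/(1-x^2/4)$, the equation becomes
\[
\kappa\Bigl(x^2U_{xx}+xU_x-k\tfrac{1+x^2/4}{1-x^2/4}\,xU_x\Bigr)+f_q(U)=0 .
\]
Lemma~\ref{lem:threshold} gives $-U_H'/U_H\to k\sqrt\kappa$ and $U_H=O(e^{-cr})$ for every $c<k\sqrt\kappa$. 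So $U_H=O(x^{k-\epsilon})$, and $f_q(U_H)=kU_H^{1+1/k}q(U_H^{1/k})=O(x^{k+1-\epsilon})$ is strictly higher order. The decaying indicial root is $k$, which suggests setting $U_H=x^kv$ with $v=\rho_H^k x^{-k}$.

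Next I would derive the equation for $v$. Substituting $U=x^kv$ and dividing by $x^k$ gives
\[
\kappa\Bigl(x^2v_{xx}+(1+k)xv_x+O(x^2)(xv_x+kv)\Bigr)+k\,x\,v^{1+1/k}q(xv^{1/k})=0,
\]
since $U^{1/k}=xv^{1/k}$. Writing $\rho_H=x\,v^{1/k}=x\phi$, where $\phi=v^{1/k}$, turns this into a Fuchsian equation of the form $(x\partial_x)(x\partial_x+k)\phi=x\,G(x,\phi,x\phi_x)$ with $G$ smooth. The second indicial root $-k$ of this equation is excluded because $\phi$ is bounded: the threshold decay rate gives $x\phi$ bounded by the sharp asymptotics, and the ratio limit shows $\log\phi$ has bounded derivative in $\log x$.

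To see that $\phi$ tends to a positive limit, I would integrate the equation as $(x^{k}\,x\phi_x)'\cdot$(integrating factor) $=O(x^{k})$. This shows $x\phi_x=O(x)$, so $\phi$ has a limit $\phi_0\ge0$. To rule out $\phi_0=0$, I would show $\phi_0>0$ using $-U_H'/U_H-k\sqrt\kappa=O(e^{-\sqrt\kappa r})$, which is integrable; I would bootstrap this rate from the Riccati equation in Lemma~\ref{lem:threshold} together with $f(U)/U=O(x)$. Given $\phi_0>0$, I would treat the problem as a Briot--Bouquet/Fuchsian problem with analytic-in-$x$ coefficients up to smoothness. Iterating the integral representation then gives a formal expansion whose coefficients are determined recursively, because the resonance $-k<0$ never meets the positive integers. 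Smoothness of $\phi$ at $x=0$ follows by repeatedly differentiating the integral equation, exactly as in the center argument of Lemma~\ref{lem:phase-center}; here smoothness of $q$ on $[0,\infty)$ is what is needed. I expect this boundary regularity, and specifically excluding the $\phi_0=0$ degeneration, to be the main obstacle.

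Finally I would read off the geometry. We have $h_H=\rho_H^2g_{\mathbb H}=x^2\phi^2\cdot\kappa^{-1}x^{-2}\bigl(dx^2+(1-x^2/4)^2g_{\mathbb S^k}\bigr)$, and $\phi^2=v^{2/k}$, so the displayed form of $h_H$ holds. It is smooth up to $x=0$, and near the pole $\rho_H$ is smooth and positive by the center regularity. Also $d\rho_H=\phi_0dx\neq0$ at $x=0$, so $\rho_H$ is a smooth defining function. The boundary metric is $\kappa^{-1}v_0^{2/k}g_{\mathbb S^k}$, a round sphere of radius squared $\kappa^{-1}v_0^{2/k}$, where $v_0=\phi_0^k$. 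Its scalar curvature is therefore $k(k-1)\kappa v_0^{-2/k}$, which is positive since $n\ge3$.
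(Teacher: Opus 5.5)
Your argument is correct and follows the paper's skeleton: substitute $U_H=x^kv$, integrate the $v$-equation with the $x^{-k}$ branch excluded so that the flux at $x=0$ vanishes, and bootstrap $C^j\to C^{j+1}$ from the resulting integral representation.

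The real difference is the step you flag as the main obstacle, $v_0>0$. You obtain it from the Riccati equation for $y=-U_H'/U_H$. Put $\lambda=k\sqrt\kappa$ and $y=\lambda+z$. Then $z'=(a+z)z+b$, where $a+z\ge\lambda/2$ for large $r$ and the forcing $b$ is exponentially small. Integrating backward from infinity, using $z\to0$, gives $z=O(e^{-\gamma r})$ for some $\gamma>0$. Hence $\log(U_He^{\lambda r})$ converges and $v_0\in(0,\infty)$. This needs only $f_q(U_H)/U_H=O(x^{1-\epsilon})$, which Lemma~\ref{lem:threshold} already supplies, not $O(x)$.

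The paper instead uses the flux $L=S_\kappa^k(-U_H')$. It is positive and increasing, and its derivative $S_\kappa^kf_q(U_H)=O(e^{(\lambda-pc)r})$, with $p=1+1/k$, is integrable once $\lambda/p<c<\lambda$. So $L\to L_\infty\in(0,\infty)$, and $U_H=\int_r^\infty L/S_\kappa^k$ gives the asymptotics directly. Monotonicity makes positivity free, boundedness of $v$ comes in the same step, and no backward integration of an unstable equation is needed.

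Two points in your sketch need repair. First, boundedness of $\phi$ does not follow from the ratio limit: a log-derivative in $\log x$ tending to zero still allows $\phi\sim\log(1/x)$. Use $v=O(x^{-\epsilon})$, which follows from the threshold decay, to exclude the $x^{-k}$ branch; the integrated equation then gives boundedness and a limit. Second, the equation for $\phi=v^{1/k}$ contains the term $-(k-1)(x\phi_x)^2/\phi$, so it is not of the form $x\,G(x,\phi,x\phi_x)$ with $G$ smooth. It is simpler to run the bootstrap on $v$, whose principal part is linear, as the paper does, and set $\phi=v^{1/k}$ afterward.
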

  \begin{proof}
  Put $\lambda=k\sqrt\kappa$ and $p=1+1/k$. By Lemma~\ref{lem:threshold}, $U_H=O(e^{-cr})$ for every $c<\lambda$. Choose $\lambda/p<c<\lambda$. Since $f_q(s)\le Cs^p$ near zero, the flux
  \[
  L(r)=S_\kappa(r)^k(-U_H'(r))
  =\int_0^r S_\kappa(t)^k f_q(U_H(t))\,dt
  \]
  has derivative $O(e^{(\lambda-pc)r})$. Thus $L\to L_\infty\in(0,\infty)$ and
  \[
  U_H(r)=\int_r^\infty\frac{L(t)}{S_\kappa(t)^k}\,dt
  \sim\frac{L_\infty(2\sqrt\kappa)^k}{k\sqrt\kappa}e^{-k\sqrt\kappa r}.
  \]
  Hence $v=U_H/x^k$ has a finite positive limit.
  
  With $D=1-x^2/4$, substitution into the radial equation gives
  \[
  (x^{k+1}D^kv')'=x^kD^k
  \left(\frac{k^2x}{2D}v-\frac{k}{\kappa}v^{1+1/k}q(xv^{1/k})\right).
  \]
  The right side is integrable at zero. The limit of $x^{k+1}D^kv'$ there must be zero, since a nonzero limit would make $v$ diverge like $x^{-k}$. Integrating and substituting $t=x\tau$ gives
  \begin{align*}
  v'(x)=D(x)^{-k}\int_0^1\tau^kD(x\tau)^k
  \left[\frac{k^2x\tau}{2D(x\tau)}v(x\tau)
  -\frac{k}{\kappa}v(x\tau)^{1+1/k}q(x\tau v(x\tau)^{1/k})\right]d\tau.
  \end{align*}
  The right side is continuous at zero, so $v\in C^1$. Since $v(0)>0$ and $q$ is smooth, the same formula improves $v\in C^j$ to $v\in C^{j+1}$. Thus $v$ is smooth up to $x=0$, and $\rho_H=xv^{1/k}$ is a smooth defining function. At the center, $U_H$ is smooth, radial, and positive, so $h_H=U_H^{2/k}g_{\mathbb H}$ is smooth there as well. Equation~\eqref{eq:exact-transform} gives $-\Delta_{h_H}\rho_H=q(\rho_H)$.
  \end{proof}
  
  The hypothesis $\operatorname{Ric}\ge-(n-1)\kappa g$ is a
  pointwise inequality of quadratic forms. Its difference from the
  hyperbolic value and its $h$-trace are
  \begin{equation}\label{eq:ric-excess}
  \mathcal R=\operatorname{Ric}_g+(n-1)\kappa g,
  \qquad S=\operatorname{tr}_h\mathcal R
  =\rho^{-2}\bigl(R_g+n(n-1)\kappa\bigr).
  \end{equation}
  In this formula $\mathcal R$ is a covariant two-tensor, while the
  operation of taking its trace depends on the metric:
  \[
  \operatorname{tr}_g\mathcal R
  =\rho^2\operatorname{tr}_h\mathcal R=\rho^2S.
  \]
  Consequently the scalar decay $O(\rho^2)$ means that $S$ remains
  bounded, and the decay $o(\rho^2)$ means that $S$ tends to zero.
  We organize the source terms in the conformal trace by
  \begin{equation}\label{eq:IJ}
  I(s)=\int_0^s q(t)\,dt,\qquad
  J(s)=\frac{2}{ns^2}\bigl(sq(s)-I(s)\bigr).
  \end{equation}
  There is no singularity in $J$ at zero. Indeed,
  \[
  sq(s)-I(s)=\int_0^s t q'(t)\,dt,
  \qquad
  J(s)=\frac2n\int_0^1\tau q'(s\tau)\,d\tau.
  \]
  Hence $J$ is smooth at zero, with
  \[
  J(0)=q'(0)/n,\qquad J'(0)=2q''(0)/(3n).
  \]
  
  \begin{lemma}\label{lem:boundary-jets}
  Suppose
  \begin{equation}\label{eq:asymptotic-big}
  \mathcal R\ge0,\qquad R_g+n(n-1)\kappa=O(\rho^2).
  \end{equation}
  Then $\mathcal R$ and $S$ extend smoothly to $\overline M$, and on the boundary
  \[
  |d\rho|^2=\kappa,\qquad
  \nabla^2\rho=-\frac{q(0)}n h,\qquad
  \mathcal R(\nabla\rho,\cdot)=0.
  \]
  The smooth function
  \begin{equation}\label{eq:T-def}
  T=\frac{R_h-S}{n(n-1)}-J(\rho)
  =\frac{|d\rho|^2-\kappa+2I(\rho)/n}{\rho^2}
  \end{equation}
  has the boundary identity
  \begin{equation}\label{eq:boundary-scalar}
  R_\gamma-S|_{\partial M}
  =(n-2)(n+1)T|_{\partial M}
  +(n-1)(n-2)\left(\frac{q'(0)}n+\frac{q(0)^2}{n^2\kappa}\right).
  \end{equation}
  If the scalar error is $o(\rho^2)$, then $\mathcal R|_{\partial M}=0$.
  \end{lemma}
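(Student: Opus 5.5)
The plan is to work entirely from the conformal change formulas for $g=\rho^{-2}h$ and from the source equation $-\Delta_h\rho=q(\rho)$. The basic formula is
\[
\operatorname{Ric}_g=\operatorname{Ric}_h+(n-2)\rho^{-1}\nabla^2\rho+\bigl(\rho^{-1}\Delta_h\rho-(n-1)\rho^{-2}|d\rho|^2\bigr)h .
\]
Adding $(n-1)\kappa g=(n-1)\kappa\rho^{-2}h$ gives
\[
\mathcal R=\operatorname{Ric}_h+(n-2)\rho^{-1}\nabla^2\rho-\rho^{-1}q(\rho)h-(n-1)\rho^{-2}\bigl(|d\rho|^2-\kappa\bigr)h .
\]
Taking the $h$-trace and using $\Delta_h\rho=-q$ yields
\[
S=R_h-2(n-1)\rho^{-1}q(\rho)-n(n-1)\rho^{-2}\bigl(|d\rho|^2-\kappa\bigr).
\]
This is exactly the second expression for $T$ in \eqref{eq:T-def} after inserting the definition of $J$: one checks that $2(n-1)q/\rho-n(n-1)J+\ldots$ reorganizes into $n(n-1)\bigl(|d\rho|^2-\kappa+2I/n\bigr)/\rho^2$. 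So the equality of the two forms of $T$ is purely algebraic.

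Next come the boundary jets. The hypothesis $R_g+n(n-1)\kappa=O(\rho^2)$ says $S$ is bounded. In the trace formula, $R_h$ and $q/\rho\cdot\rho$ are smooth, so $n(n-1)\rho^{-2}(|d\rho|^2-\kappa)+2(n-1)q/\rho$ must be bounded. This forces $|d\rho|^2=\kappa$ on $\partial M$, since otherwise a $\rho^{-2}$ blowup occurs. It then forces $\partial_\nu(|d\rho|^2)$ on $\partial M$ to cancel the $\rho^{-1}$ term $2(n-1)q(0)/\rho$, which gives the normal derivative $2\nabla^2\rho(\nabla\rho,\nabla\rho)/|d\rho|=-2(n-1)q(0)/(n\ldots)$ appropriately. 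Hence the function $(|d\rho|^2-\kappa+2I(\rho)/n)$ vanishes to second order, and $T$ (hence $S$) extends smoothly.

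To get the full Hessian, I would use $\mathcal R\ge0$. The tensor $\rho\mathcal R$ is smooth up to the boundary, and its value at $\partial M$ is $(n-2)\nabla^2\rho-q(0)h-(n-1)\rho^{-1}(|d\rho|^2-\kappa)h|_{\partial M}$. The last piece equals its limit $-(n-1)\cdot\frac{2}{n}\cdot(-q(0))\ldots$, computed from the first-order vanishing above. Since $\rho\mathcal R\ge0$ while its trace $\rho S\to0$, we get $\rho\mathcal R|_{\partial M}=0$. This yields the trace-free condition $\nabla^2\rho=-\frac{q(0)}{n}h$, which is consistent with $\Delta\rho=-q(0)$. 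Then $\mathcal R$ itself is $\rho^{-1}$ times a smooth tensor vanishing on $\partial M$, hence smooth. The identity $\mathcal R(\nabla\rho,\cdot)=0$ on $\partial M$ should follow from positivity: $\mathcal R\ge0$ and $\operatorname{tr}_h\mathcal R=S$. I would differentiate the $(\nu,\cdot)$ components by contracting the conformal formula with $\nabla\rho$ and using $d(|d\rho|^2)=2\nabla^2\rho(\nabla\rho,\cdot)$, which shows the mixed components vanish to an extra order. If the error is $o(\rho^2)$, then $S|_{\partial M}=0$, and positivity with zero trace gives $\mathcal R|_{\partial M}=0$.

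The last step is the boundary scalar identity, obtained via the Gauss equation $R_h=R_\gamma+2\operatorname{Ric}_h(\nu,\nu)-|\mathrm{II}|^2+H^2$ with $\nu=\nabla\rho/\sqrt\kappa$. The second fundamental form is $\mathrm{II}=\nabla^2\rho|_{T\partial M}/|d\rho|=-\frac{q(0)}{n\sqrt\kappa}\gamma$, which gives $H^2-|\mathrm{II}|^2=(n-1)(n-2)q(0)^2/(n^2\kappa)$. The quantity $\operatorname{Ric}_h(\nu,\nu)$ is obtained from the $(\nu,\nu)$ component of the conformal formula, evaluated at the boundary using $\mathcal R(\nu,\nu)=0$. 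It involves $\rho^{-1}(\nabla^2\rho(\nu,\nu)+q/n)$ and $\rho^{-2}(|d\rho|^2-\kappa)$, whose limits are expressed through $T|_{\partial M}$, $q'(0)$ and $J(0)=q'(0)/n$ by Taylor expanding $|d\rho|^2-\kappa=-2I(\rho)/n+T\rho^2$ along $\nu$. Substituting $R_h=S+n(n-1)(T+J)$ then gives \eqref{eq:boundary-scalar}. I expect the main obstacle to be this second-order normal expansion, specifically the bookkeeping of $\partial_\nu^2$ of $|d\rho|^2$ versus $\nabla^3\rho$ and $\operatorname{Ric}_h(\nabla\rho,\nabla\rho)$ through the Bochner identity $\frac12\Delta|d\rho|^2=|\nabla^2\rho|^2+\langle\nabla\rho,\nabla\Delta\rho\rangle+\operatorname{Ric}_h(\nabla\rho,\nabla\rho)$. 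I would use that Bochner identity at $\partial M$ to obtain the coefficients $(n-2)(n+1)$ and $(n-1)(n-2)q'(0)/n$.
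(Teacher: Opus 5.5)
\textbf{There is a genuine gap at the step $\mathcal R(\nabla\rho,\cdot)=0$ on $\partial M$.} The rest of your argument is sound. Your route to smoothness and to the Hessian is a valid variant of the paper's. You get $|d\rho|^2-\kappa+\frac2nI(\rho)=O(\rho^2)$ directly from the trace formula. You then read off $\nabla^2\rho=-\frac{q(0)}nh$ from the nonnegative, trace-free boundary value $(n-2)\bigl(\nabla^2\rho+\frac{q(0)}nh\bigr)$ of $\rho\mathcal R$, which is the device of Corollary~\ref{cor:decay-geometry}. The paper instead uses $|\mathcal R|_h\le S$ and takes the Hessian from \eqref{eq:A-def}.

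Why the gap step fails:
\begin{itemize}
\item Positivity with bounded trace only bounds $\mathcal R$. A semidefinite tensor with $\mathcal R(\nu,\nu)>0$ is compatible with both.
\item Contracting the conformal formula with $\nabla\rho$, and inserting $\nabla^2\rho(\nabla\rho,\cdot)=\frac12d|d\rho|^2$ with $|d\rho|^2=\kappa-\frac2nI(\rho)+\rho^2T$, gives exactly
\[
\mathcal R(\nabla\rho,\cdot)=\operatorname{Ric}_h(\nabla\rho,\cdot)-\bigl(T+(n-1)J(\rho)\bigr)\,d\rho+\frac{n-2}{2}\,\rho\,dT .
\]
On $\partial M$ this only moves the claim onto an unproved statement about $\operatorname{Ric}_h(\nabla\rho,\cdot)$. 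No extra order of vanishing appears.
\end{itemize}
You need an identity carrying one more derivative. The paper uses the contracted Bianchi identity for $g$. After the conformal change of connection it reads $(n-2)\mathcal R(\nabla\rho,\cdot)=\rho\bigl(\operatorname{div}_h\mathcal R-\frac12dS\bigr)$, and smoothness of $\mathcal R$ and $S$ finishes.

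\textbf{A repair within your own toolkit.} Evaluate the Bochner identity you mention on $\partial M$. There $|\nabla^2\rho|^2=q(0)^2/n$, $\langle d\rho,d\Delta\rho\rangle=-q'(0)\kappa$, and $\frac12\Delta|d\rho|^2=\kappa T-\frac1n\bigl(q'(0)\kappa-q(0)^2\bigr)$. This gives
\[
\operatorname{Ric}_h(\nabla\rho,\nabla\rho)=\kappa\bigl(T+(n-1)q'(0)/n\bigr),
\]
so the display above yields $\mathcal R(\nu,\nu)=0$. Since $\mathcal R\ge0$ up to the boundary, the Cauchy--Schwarz inequality $\mathcal R(\nu,X)^2\le\mathcal R(\nu,\nu)\mathcal R(X,X)$ then gives $\mathcal R(\nu,X)=0$. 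This also supplies what your last step assumes. With $\operatorname{Ric}_h(\nu,\nu)=T+(n-1)J(0)$ and $R_h=S+n(n-1)(T+J(0))$, the Gauss equation yields \eqref{eq:boundary-scalar} directly.

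\textbf{Sign error in the Gauss equation.} The correct contracted Gauss equation is $R_\gamma=R_h-2\operatorname{Ric}_h(\nu,\nu)+H^2-|\mathrm{II}|^2$. Your version reverses the extrinsic terms and would produce the $q(0)^2/(n^2\kappa)$ term with the wrong sign.
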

  \begin{proof}
For $g=\rho^{-2}h$, the standard conformal Ricci formula
(see, for example, \cite{Besse:2008aa}) is
\begin{equation}\label{eq:conformal-ricci}
\operatorname{Ric}_g=\operatorname{Ric}_h
+(n-2)\rho^{-1}\nabla^2\rho
+\bigl(\rho^{-1}\Delta\rho-(n-1)\rho^{-2}|d\rho|^2\bigr)h.
\end{equation}
  Thus $\rho^2\mathcal R$ is smooth. To use scalar decay to control
  the whole tensor, diagonalize $\mathcal R$ in an $h$-orthonormal
  frame in the interior. Its eigenvalues $\lambda_i$ are nonnegative,
  so
  \[
  |\mathcal R|_h=(\textstyle\sum_i\lambda_i^2)^{1/2}
  \le\sum_i\lambda_i=S.
  \]
  The decay assumption bounds $S$. In coordinates $(\rho,y)$, every
  component $F$ of $\rho^2\mathcal R$ is therefore smooth and
  $O(\rho^2)$. Thus $F(0,y)=\partial_\rho F(0,y)=0$, and Taylor's
  formula with integral remainder gives
  \[
  F(\rho,y)=\rho^2\int_0^1(1-t)
  \partial_\rho^2F(t\rho,y)\,dt.
  \]
  This proves the smooth divisibility by $\rho^2$ and hence the smooth
  extension of $\mathcal R$ and $S$. Under little-oh decay, $S\to0$;
  the same eigenvalue inequality gives $\mathcal R|_{\partial M}=0$.
  
  Let $E=\operatorname{Ric}_h-(R_h/n)h$ be the trace-free Ricci
  tensor of $h$. Taking the $h$-trace of
  \eqref{eq:conformal-ricci}, and using $\Delta\rho=-q(\rho)$, gives
  \[
  S=R_h-\frac{2(n-1)q(\rho)}\rho
    -\frac{n(n-1)}{\rho^2}(|d\rho|^2-\kappa).
  \]
  The trace-free part removes every multiple of $h$; since
  $(\nabla^2\rho)^\circ=\nabla^2\rho+(q(\rho)/n)h$, the two
  identities become
  \begin{align}
  \frac{R_h-S}{n(n-1)}
  &=\frac{|d\rho|^2-\kappa}{\rho^2}+\frac{2q(\rho)}{n\rho},\label{eq:conformal-trace}\\
  \mathcal A:=\nabla^2\rho+\frac{q(\rho)}n h
  &=\frac{\rho}{n-2}(\mathcal R^\circ-E).\label{eq:A-def}
  \end{align}
  Here the circle denotes the trace-free part with respect to $h$.
  Multiplying the first equation by $\rho^2$ and taking its boundary
  value gives $|d\rho|^2=\kappa$. The smoothness of the tensors in
  the second equation gives
  $\nabla^2\rho=-q(0)h/n$ there. Subtracting $J(\rho)$ in
  \eqref{eq:conformal-trace} proves \eqref{eq:T-def} and shows that
  $T$ extends smoothly, despite its quotient expression.
  To obtain its normal derivative relation, differentiate
  \[
  |d\rho|^2=\kappa-\frac2n I(\rho)+\rho^2T.
  \]
  Because $d|d\rho|^2=2\nabla^2\rho(\nabla\rho,\cdot)$, this gives
  \begin{equation}\label{eq:A-normal}
  \mathcal A(\nabla\rho,\cdot)=\rho T\,d\rho+\frac{\rho^2}{2}dT.
  \end{equation}
  
  The connection difference for $g=\rho^{-2}h$ is
  \[
  \Gamma(g)^k_{ij}-\Gamma(h)^k_{ij}
  =-\rho^{-1}\bigl(\delta_i^k\rho_j+\delta_j^k\rho_i
  -h_{ij}h^{k\ell}\rho_\ell\bigr).
  \]
  Contracting the covariant derivative of a symmetric covariant tensor
  with $g^{ij}=\rho^2h^{ij}$ gives the first equality below.
  Since $\operatorname{tr}_g\mathcal R=\rho^2S$ and the added
  constant multiple of $g$ is parallel, contracted Bianchi gives the
  last equality:
  
  \[
  \operatorname{div}_g\mathcal R
  =\rho^2\operatorname{div}_h\mathcal R
  -(n-2)\rho\mathcal R(\nabla\rho,\cdot)+\rho S\,d\rho
  =\tfrac12d(\rho^2S).
  \]
  Cancel the terms $\rho S\,d\rho$ and divide by $\rho$:
  \[
  (n-2)\mathcal R(\nabla\rho,\cdot)
  =\rho\left(\operatorname{div}_h\mathcal R-\tfrac12dS\right).
  \]
  Hence $\mathcal R(\nabla\rho,\cdot)=0$ on $\partial M$.
  
  We use the convention
  $\mathrm{II}(X,Y)=h(\nabla_X\nu,Y)$ for tangent vectors and
  $H=\operatorname{tr}_\gamma\mathrm{II}$. Since $\rho$ is positive
  inside, $\nu=-\nabla\rho/\sqrt\kappa$ is the outward unit normal.
  Consequently
  \[
  \mathrm{II}(X,Y)=-\frac{\nabla^2\rho(X,Y)}{\sqrt\kappa}
  =\frac{q(0)}{n\sqrt\kappa}\gamma(X,Y),\qquad
  H=\frac{(n-1)q(0)}{n\sqrt\kappa}.
  \]
  On a boundary collar, extend the outward normal by
  $\nu=-\nabla\rho/|d\rho|$. Evaluating \eqref{eq:A-normal}
  on $\nabla\rho$ and dividing by $\rho|d\rho|^2$ gives
  \[
  \frac{\mathcal A(\nu,\nu)}{\rho}
  =T+\frac{\rho}{2|d\rho|^2}\langle d\rho,dT\rangle
  \longrightarrow T\quad\text{at }\partial M.
  \]
  On the other hand, \eqref{eq:A-def} and
  $\mathcal R(\nu,\nu)=0$ give
  \[
  T=\frac{-S/n-E(\nu,\nu)}{n-2},\qquad
  E(\nu,\nu)=-\frac Sn-(n-2)T.
  \]
  The contracted Gauss equation gives
  \[
  R_\gamma=R_h-2\operatorname{Ric}_h(\nu,\nu)
  +H^2-|\mathrm{II}|_\gamma^2.
  \]
  Inserting $\operatorname{Ric}_h(\nu,\nu)=R_h/n+E(\nu,\nu)$ and
  $H^2-|\mathrm{II}|^2=(n-1)(n-2)q(0)^2/(n^2\kappa)$ yields
  \begin{align*}
  R_\gamma
  &=\frac{n-2}{n}R_h-2E(\nu,\nu)
  +\frac{(n-1)(n-2)q(0)^2}{n^2\kappa}\\
  &=(n-2)(n+1)T+S
  +(n-1)(n-2)\left(\frac{q'(0)}n+\frac{q(0)^2}{n^2\kappa}\right),
  \end{align*}
  as claimed.
  \end{proof}
  \begin{corollary}\label{cor:decay-geometry}
  Under the source equation and $\mathcal R\ge0$, the condition
  $R_g+n(n-1)\kappa=O(\rho^2)$ is equivalent to the boundary normalization
  \begin{equation}\label{eq:boundary-normalization}
  |d\rho|_h^2=\kappa,\qquad
  H=\frac{(n-1)q(0)}{n\sqrt\kappa}\quad\text{on }\partial M,
  \end{equation}
  where $H$ is the trace of the second fundamental form for the outward normal. If only the first equality is imposed, then
  $H\le(n-1)q(0)/(n\sqrt\kappa)$. Equality everywhere forces the boundary to be umbilic and gives \eqref{eq:asymptotic-big}.
  \end{corollary}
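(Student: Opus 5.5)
The forward direction is a rereading of Lemma~\ref{lem:boundary-jets}. Assume $\mathcal R\ge0$ and $R_g+n(n-1)\kappa=O(\rho^2)$. The lemma gives $|d\rho|^2=\kappa$ and $\nabla^2\rho=-\frac{q(0)}n h$ on $\partial M$. With the outward normal $\nu=-\nabla\rho/\sqrt\kappa$ this gives
\[
\mathrm{II}=\frac{q(0)}{n\sqrt\kappa}\gamma,
\]
and tracing over the $n-1$ tangential directions yields $H=(n-1)q(0)/(n\sqrt\kappa)$, which is \eqref{eq:boundary-normalization}.

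For the converse, and for the one-sided statement, I would not presuppose the decay. I would work directly with the traced conformal Ricci formula from the proof of Lemma~\ref{lem:boundary-jets}:
\[
\rho^2 S=\rho^2R_h-2(n-1)\rho\,q(\rho)-n(n-1)\bigl(|d\rho|^2-\kappa\bigr).
\]
This expression is smooth on $\overline M$, and its restriction to $\partial M$ is $-n(n-1)(|d\rho|^2-\kappa)$. So $R_g+n(n-1)\kappa=\rho^2S$ vanishes on the boundary exactly when $|d\rho|^2=\kappa$ there.

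Assuming the first equality, I compute the normal derivative of $\rho^2S$ at the boundary, in the direction $\nabla\rho$. Differentiating gives
\[
\langle\nabla(\rho^2S),\nabla\rho\rangle\big|_{\partial M}
=-2(n-1)q(0)\kappa-2n(n-1)\nabla^2\rho(\nabla\rho,\nabla\rho).
\]
Let $N=\nabla\rho/\sqrt\kappa$. The source equation splits the Laplacian as
\[
-q(0)=\Delta\rho=\nabla^2\rho(N,N)+\operatorname{tr}_\gamma\nabla^2\rho=\nabla^2\rho(N,N)-\sqrt\kappa H,
\]
using $\mathrm{II}=-\nabla^2\rho/\sqrt\kappa$ on tangent vectors. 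Hence
\[
\nabla^2\rho(\nabla\rho,\nabla\rho)=\kappa\bigl(\sqrt\kappa H-q(0)\bigr),
\]
and substituting gives
\[
\partial_N(\rho^2S)\propto (n-1)\,q(0)-n\sqrt\kappa H,
\]
up to a positive factor (a multiple of $\kappa$ times $2(n-1)$ divided by $\sqrt\kappa$). I still need to check this sign carefully.

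The step I expect to be delicate is the role of $\mathcal R\ge0$. Since $\operatorname{tr}_g\mathcal R=\rho^2S$, the hypothesis gives $\rho^2S\ge0$ in $M$. When $\rho^2S$ vanishes on the boundary, its derivative in the inward direction $N$ must therefore be $\ge0$, so $(n-1)q(0)-n\sqrt\kappa H\ge0$. This is the one-sided bound $H\le (n-1)q(0)/(n\sqrt\kappa)$. If $H$ equals this value, the function $\rho^2S$ is smooth, vanishes to first order at $\partial M$, and is therefore $O(\rho^2)$ by Taylor's formula with integral remainder, exactly as in Lemma~\ref{lem:boundary-jets}. This proves the equivalence.

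For umbilicity I would apply the same vanishing-order argument tensorially rather than only to the trace. Under $\mathcal R\ge0$, the pointwise bound $|\mathcal R|_h\le S$ from the proof of Lemma~\ref{lem:boundary-jets} shows that every component of $\rho^2\mathcal R$ is $O(\rho^2)$ once $S$ is bounded. Hence $\rho^2\mathcal R$ and its first normal derivative vanish on $\partial M$. From \eqref{eq:conformal-ricci} the zeroth- and first-order jets give $\nabla^2\rho=-\frac{q(0)}nh$ on $\partial M$, and so $\mathrm{II}$ is a multiple of $\gamma$. Lemma~\ref{lem:boundary-jets} then applies and yields \eqref{eq:asymptotic-big}.
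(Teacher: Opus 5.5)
Your argument is correct, and the sign you flagged checks out: on $\partial M$ one gets $\partial_N(\rho^2S)=2(n-1)\sqrt\kappa\,\bigl[(n-1)q(0)-n\sqrt\kappa H\bigr]$.

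The paper runs the same boundary computation on the tensor $\mathcal B=\rho\mathcal R$ rather than on its trace. Assuming $|d\rho|^2=\kappa$, it identifies the boundary value $\mathcal B_0=(n-2)\nabla^2\rho-\bigl(q(0)+2(n-1)a\bigr)h$, with $a=\nabla^2\rho(\nu,\nu)$. It reads off the mean-curvature bound from $\mathcal B_0(\nu,\nu)\ge0$. In the equality case it concludes $\mathcal B_0=0$, because a nonnegative tensor with zero trace vanishes. That single step gives both the boundedness of $\mathcal R$ (hence the $O(\rho^2)$ decay) and $\nabla^2\rho=-q(0)h/n$ (umbilicity).

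Your scalar quantity is exactly the trace of that boundary tensor: $\partial_N(\rho^2S)=\sqrt\kappa\,\operatorname{tr}_h\mathcal B_0$. So your one-sided derivative argument for the nonnegative function $R_g+n(n-1)\kappa$ is the traced form of the paper's positivity. What your route buys is that only the traced conformal formula is needed for the equivalence and the inequality. Your derivative formula even gives the forward direction without using $\mathcal R\ge0$. The paper's tensorial route gets umbilicity for free, whereas you need a second pass through the eigenvalue bound $|\mathcal R|_h\le S$.

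That second pass is redundant, though. Once you have $R_g+n(n-1)\kappa=O(\rho^2)$, condition \eqref{eq:asymptotic-big} already holds because $\mathcal R\ge0$ is assumed. The forward direction you derived from Lemma~\ref{lem:boundary-jets} then gives $\nabla^2\rho=-q(0)h/n$ directly. Your closing sentence has the logic reversed: \eqref{eq:asymptotic-big} is the lemma's hypothesis, not its conclusion.
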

  \begin{proof}
  One implication follows from Lemma~\ref{lem:boundary-jets}. For the converse suppose $Z=|d\rho|^2=\kappa$ on the boundary and put $a=\nabla^2\rho(\nu,\nu)$ there. The tensor $\mathcal B=\rho\mathcal R$ is smooth up to the boundary, since $Z-\kappa$ is divisible by $\rho$. Its boundary value, by \eqref{eq:conformal-ricci}, is
  \[
  \mathcal B_0=(n-2)\nabla^2\rho-\bigl(q(0)+2(n-1)a\bigr)h.
  \]
  Here $\partial_\rho Z=2a$, using the transverse field $\nabla\rho/Z$. Nonnegativity gives $\mathcal B_0\ge0$, while
  \[
  \mathcal B_0(\nu,\nu)=-na-q(0),\qquad
  \operatorname{tr}_h\mathcal B_0=2(n-1)(-na-q(0)),
  \qquad H=\frac{q(0)+a}{\sqrt\kappa}.
  \]
  This proves the mean-curvature inequality. Equality in \eqref{eq:boundary-normalization} gives $a=-q(0)/n$, hence $\operatorname{tr}_h\mathcal B_0=0$. A nonnegative tensor with zero trace vanishes. Thus $\mathcal B$ is divisible by $\rho$, so $\mathcal R$ is smooth and bounded and its scalar trace with respect to $g$ is $O(\rho^2)$. The formula for $\mathcal B_0$ also gives $\nabla^2\rho=-q(0)h/n$ at the boundary.
  \end{proof}
  
  \begin{lemma}\label{lem:defect-equation}
  In the interior, put $Z=|d\rho|^2$ and let $\mathcal A$ be as in \eqref{eq:A-def}. Then
  \begin{equation}\label{eq:T-bochner}
  \begin{split}
  \Delta T+\frac{n+2}{\rho}\langle d\rho,dT\rangle
  -\frac{2q(\rho)}\rho T
  ={}&\frac2{\rho^2}|\mathcal A|^2
  +\frac2{\rho^2}\mathcal R(\nabla\rho,\nabla\rho)
  -\frac{n-1}{\rho}J'(\rho)Z.
  \end{split}
  \end{equation}
  The identity in the interior does not require a sign or decay assumption on $\mathcal R$.
  \end{lemma}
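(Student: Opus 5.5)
The plan is to work with the identity
\[
|d\rho|^2=Z=\kappa-\tfrac2nI(\rho)+\rho^2T
\]
from \eqref{eq:T-def}. I would take $\Delta$ of both sides and compare with the Bochner formula for $Z$ in the metric $h$. Bochner gives
\[
\tfrac12\Delta Z=|\nabla^2\rho|^2+\langle d\rho,d\Delta\rho\rangle+\operatorname{Ric}_h(\nabla\rho,\nabla\rho).
\]
Since $\Delta\rho=-q(\rho)$, the middle term equals $-q'(\rho)Z$. Hence
\[
\tfrac12\Delta Z=|\nabla^2\rho|^2-q'Z+\operatorname{Ric}_h(\nabla\rho,\nabla\rho).
\]

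For the Hessian term, decompose $\nabla^2\rho=\mathcal A-\frac{q}{n}h$. This gives $|\nabla^2\rho|^2=|\mathcal A|^2+q^2/n$, because $\mathcal A$ is trace-free.

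For the Ricci term, use \eqref{eq:conformal-ricci} to eliminate $\operatorname{Ric}_h$. Written on $\nabla\rho$, that formula gives
\[
\operatorname{Ric}_h(\nabla\rho,\nabla\rho)=\mathcal R(\nabla\rho,\nabla\rho)-(n-1)\kappa\rho^{-2}Z^{\,}\cdot Z-(n-2)\rho^{-1}\nabla^2\rho(\nabla\rho,\nabla\rho)-\bigl(-q\rho^{-1}-(n-1)\rho^{-2}Z\bigr)Z.
\]
Here I used that $(n-1)\kappa g(\nabla\rho,\nabla\rho)=(n-1)\kappa\rho^{-2}Z$. The term $\nabla^2\rho(\nabla\rho,\nabla\rho)=\frac12\langle d\rho,dZ\rangle$ can then be expressed via the identity above, or equivalently via \eqref{eq:A-normal} with $\mathcal A(\nabla\rho,\nabla\rho)=\rho TZ+\frac{\rho^2}{2}\langle d\rho,dT\rangle$.

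On the other side, expand
\[
\Delta(\rho^2T)=\rho^2\Delta T+4\rho\langle d\rho,dT\rangle+2Z\,T-2\rho qT,
\qquad
\Delta I(\rho)=qZ^{\,}\cdot 0+q'Z-q^2 .
\]
More precisely, $\Delta I(\rho)=I''Z+I'\Delta\rho=q'Z-q^2$.

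Equating the two expressions for $\Delta Z$ and dividing by $\rho^2$ should produce \eqref{eq:T-bochner}. The bookkeeping step is to collect all the quantities that are not derivatives of $T$. These are the multiples of $Z/\rho^2$, $qZ/\rho^3$, $\kappa Z/\rho^4$, $q^2$, and $q'Z$. After substituting $Z=\kappa-\frac2nI+\rho^2T$ wherever $Z$ appears linearly with a $\rho^{-2}$ or $\rho^{-4}$ coefficient, these should cancel except for two groups:
\begin{itemize}
\item the terms $\frac{2q}{\rho}T$ and the $T$-terms coming from $(n-2)\rho^{-1}\nabla^2\rho(\nabla\rho,\nabla\rho)$, which combine with $4\rho\langle d\rho,dT\rangle$ into the drift coefficient $(n+2)/\rho$;
\item the $q$, $I$ combinations, which should assemble into $-\frac{n-1}{\rho}J'(\rho)Z$.
\end{itemize}
The second group works because
\[
J'(s)=\frac{2}{ns^3}\bigl(s^2q'(s)-2sq(s)+2I(s)\bigr),
\]
and the surviving coefficient of $Z$ is exactly $\frac{2(n-1)}{n\rho^2}\bigl(q'-2q/\rho+2I/\rho^2\bigr)$, up to the overall factor.

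The main obstacle is this coefficient matching, with the signs of the $\kappa$ terms from $\mathcal R=\operatorname{Ric}_g+(n-1)\kappa g$. I would check it first on the model $\mathbb H^n_{-\kappa}$ with $q\equiv0$ and $h$ the Poincaré ball metric. There $T\equiv0$ forces the right side to vanish identically, which fixes any sign errors. No sign or decay of $\mathcal R$ enters anywhere, since only the pointwise identities \eqref{eq:conformal-ricci}, \eqref{eq:A-def}, and \eqref{eq:T-def} are used, consistent with the last sentence of the statement.
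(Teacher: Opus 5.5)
Your proposal follows the paper's proof: Bochner's formula for $Z$ in $h$, the splitting $|\nabla^2\rho|^2=|\mathcal A|^2+q^2/n$, elimination of $\operatorname{Ric}_h$ through \eqref{eq:conformal-ricci}, and comparison with $\Delta$ applied to $Z=\kappa-\tfrac2nI(\rho)+\rho^2T$. As you predict, the drift $(n+2)/\rho$ arises as $4+(n-2)$, and the surviving $Z$-coefficient is $-\tfrac{2(n-1)}{n\rho^2}\bigl(q'-2q/\rho+2I/\rho^2\bigr)=-\tfrac{n-1}{\rho}J'(\rho)$. Two small corrections. First, in your display for $\operatorname{Ric}_h(\nabla\rho,\nabla\rho)$ the $\kappa$-term must be $-(n-1)\kappa\rho^{-2}Z$, without the extra factor $Z$, as your own remark states; it then pairs with $(n-1)\rho^{-2}Z^2$ into $(n-1)\rho^{-2}(Z-\kappa)Z=(n-1)\bigl(T-\tfrac{2I}{n\rho^2}\bigr)Z$, which together with the Hessian term cancels the $2ZT$ from $\Delta(\rho^2T)$ and supplies the $I$-part of $J'$. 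Second, your proposed model check cannot detect sign errors: on the hyperbolic models with constant $q$ (the half-space with $q\equiv0$, or the ball with $q\equiv n\sqrt\kappa$), every term of the identity vanishes separately.
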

  \begin{proof}
  The scalar Bochner identity is
  \[
  \frac12\Delta|d\rho|^2
  =|\nabla^2\rho|^2+\langle d\rho,d\Delta\rho\rangle
  +\operatorname{Ric}_h(\nabla\rho,\nabla\rho).
  \]
  The source equation gives
  $\langle d\rho,d\Delta\rho\rangle=-q'Z$, while trace-freeness of
  $\mathcal A$ gives $|\nabla^2\rho|^2=|\mathcal A|^2+q^2/n$.
  Solving \eqref{eq:conformal-ricci} for $\operatorname{Ric}_h$ gives
  \[
  \operatorname{Ric}_h
  =\mathcal R-\frac{n-2}{\rho}\nabla^2\rho
  +\left(\frac q\rho+\frac{(n-1)(Z-\kappa)}{\rho^2}\right)h.
  \]
  Finally,
  $\nabla^2\rho(\nabla\rho,\nabla\rho)
  =\langle d\rho,dZ\rangle/2$. Combining these identities gives
  \begin{align*}
  \frac{1}{2}\Delta Z
  ={}&|\mathcal A|^2+\frac{q^2}{n}-q'Z
  +\mathcal R(\nabla\rho,\nabla\rho)
  -\frac{n-2}{2\rho}\langle d\rho,dZ\rangle\\
  &+\left(\frac q\rho+\frac{(n-1)(Z-\kappa)}{\rho^2}\right)Z.
  \end{align*}
  For the other side of the calculation, use
  $Z=\kappa-2I(\rho)/n+\rho^2T$ and the chain and product rules:
  \begin{align*}
  \Delta I(\rho)&=-q^2+q'Z,\\
  \Delta(\rho^2T)&=\rho^2\Delta T
    +4\rho\langle d\rho,dT\rangle+(2Z-2\rho q)T,\\
  \langle d\rho,dZ\rangle
  &=-\frac{2q}{n}Z+2\rho TZ+\rho^2\langle d\rho,dT\rangle.
  \end{align*}
  Hence
  \begin{align*}
  \Delta Z+\frac{n-2}{\rho}\langle d\rho,dZ\rangle
  ={}&\rho^2\Delta T+(n+2)\rho\langle d\rho,dT\rangle
  -2\rho qT+\frac{2q^2}{n}\\
  &+\left(2(n-1)T-\frac{2q'}n-\frac{2(n-2)q}{n\rho}\right)Z.
  \end{align*}
  Double the first Bochner expression and move its first-order term
  to the left. Equating it with the last expression cancels $2q^2/n$
  and $2(n-1)TZ$. The coefficient of the remaining $Z$ term, after
  division by $\rho^2$, is
  \[
  -\frac{2(n-1)}{n\rho^2}
  \left(q'-\frac{2q}\rho+\frac{2I}{\rho^2}\right)
  =-\frac{n-1}\rho J'(\rho),
  \]
  which proves \eqref{eq:T-bochner}.
  \end{proof}
  
  \subsection{Scalar-curvature comparison and rigidity}
  Let $\rho_* = A_\kappa^{1/k}$ for the transformed source, and define
  on $0<s\le\rho_*$
  \begin{equation}\label{eq:model-boundary-functions}
  P_H(s)=\frac{\Psi_H(s^k)}{k^2s^{2k}},\qquad
  T_H(s)=\frac{P_H(s)-\kappa+2I(s)/n}{s^2}.
  \end{equation}
  For the hyperbolic model, Lemma~\ref{lem:boundary-jets} applies with $\mathcal R=0$ and gives
  \[
  P_H(\rho_H)=|d\rho_H|_{h_H}^2,\qquad
  T_H(\rho_H)=\frac{R_{h_H}}{n(n-1)}-J(\rho_H).
  \]
  Both right sides are smooth in the boundary coordinate $x$. Since $\rho_H=xv^{1/k}$ with $v(0)>0$, the inverse function theorem shows that $P_H$ and $T_H$ are smooth at $s=0$. They are smooth at $\rho_*$ by Lemma~\ref{lem:phase-center}, since $f_q$ is smooth near $A_\kappa>0$. The model scalar curvature at height $s$ is
  \[
  \mathcal R_H(s)=n(n-1)(T_H(s)+J(s)).
  \]
  Thus $R_{h_H}=\mathcal R_H(\rho_H)$.
  
  \begin{theorem}\label{thm:boundary-general}
  Suppose \eqref{eq:q-class} and \eqref{eq:asymptotic-big} hold. Then
  \begin{align}
  R_h-S&\le\mathcal R_H(\rho)\quad\text{in }M,\label{eq:interior-scalar}\\
  R_\gamma-S|_{\partial M}&\le R_{q,\kappa}\quad\text{on }\partial M.\label{eq:boundary-corrected}
  \end{align}
  Equality at one point in either comparison implies that $(\overline M,h,\rho)$ is isometric to $(\overline{\mathbb H^n},h_H,\rho_H)$. In particular, $\mathcal R\equiv0$ and $\partial M$ is connected and round. If the scalar error is $o(\rho^2)$, the boundary comparison is $R_\gamma\le R_{q,\kappa}$. The hyperbolic model attains the comparisons for the prescribed source $q$.
  If $q$ is not affine on $[0,\max\rho_H]$, then
  \begin{equation}\label{eq:intro-strict}
  R_{q,\kappa}<(n-1)(n-2)
  \left(\frac{q'(0)}n+\frac{q(0)^2}{n^2\kappa}\right).
  \end{equation}
  \end{theorem}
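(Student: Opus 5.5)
Theorem~\ref{thm:gradient-general}, applied to $u=\rho^k$, supplies the comparison; the identities of Lemma~\ref{lem:boundary-jets} then translate it into the curvature statements. By Lemma~\ref{lem:q-admissible} and completeness of $g$, Theorem~\ref{thm:gradient-general} applies to $u=\rho^k$, which solves $-\Delta_g u=f_q(u)$, under $\operatorname{Ric}_g\ge-k\kappa g$, i.e.\ $\mathcal R\ge0$. It gives $\rho\le\rho_*$ and $|du|_g^2\le\Psi_H(u)$. Since $|du|_g^2=k^2\rho^{2k}|d\rho|^2$, this reads $Z=|d\rho|^2\le P_H(\rho)$ in $M$. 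Subtracting $\kappa-2I(\rho)/n$ and dividing by $\rho^2$ gives, by \eqref{eq:T-def} and \eqref{eq:model-boundary-functions}, $T\le T_H(\rho)$ in $M$. Adding $J(\rho)$ and multiplying by $n(n-1)$ yields \eqref{eq:interior-scalar}, since $R_h-S=n(n-1)(T+J(\rho))$.

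For the boundary inequality, let $\rho\to0$. Both $T$ (Lemma~\ref{lem:boundary-jets}) and $T_H$ are smooth up to $s=0$, so $T|_{\partial M}\le T_H(0)$. Insert this into \eqref{eq:boundary-scalar}. The hyperbolic model satisfies the same identity with $S=0$ and $T=T_H(0)$, which gives
\[
R_{q,\kappa}=(n-2)(n+1)T_H(0)+(n-1)(n-2)\Bigl(\tfrac{q'(0)}n+\tfrac{q(0)^2}{n^2\kappa}\Bigr).
\]
Subtracting, $R_\gamma-S|_{\partial M}-R_{q,\kappa}=(n-2)(n+1)(T-T_H(0))\le0$, which is \eqref{eq:boundary-corrected}. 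Under $o(\rho^2)$ decay, $S|_{\partial M}=0$, so $R_\gamma\le R_{q,\kappa}$. The model attains both comparisons with equality by construction.

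\textbf{Rigidity.} Interior equality at a point means $Z=P_H(\rho)$ there, so the gradient comparison of Theorem~\ref{thm:gradient-general} is attained; this gives $(M,g)\cong\mathbb H^n_{-\kappa}$ and $u=U_H(d(o,\cdot))$, hence $\rho=\rho_H\circ\Phi$ and $h=\rho^2g$ pulls back to $h_H$. The isometry extends to the closures, since both are the closures of the same compactification in the boundary coordinate $x$. This forces $\mathcal R\equiv0$ and a connected round boundary. Boundary equality at $y\in\partial M$ means $T(y)=T_H(0)$, and this case is the main obstacle, because the strong maximum principle used in Theorem~\ref{thm:gradient-general} is interior.

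To handle it, I would apply a Hopf boundary-point argument to $D=T-T_H(\rho)\le0$. The model satisfies \eqref{eq:T-bochner} with $\mathcal R=0$ and $|\mathcal A|=0$. Subtracting and using $Z\le P_H(\rho)$, $D$ satisfies an inequality of the form $\Delta D+\frac{n+2}{\rho}\langle d\rho,dD\rangle+c\,D\ge0$, with a singular first-order coefficient pointing inward. I would rewrite this in terms of $v=T(u)$ and the operator $\mathcal L$ of Lemma~\ref{lem:model-coordinate}, or equivalently use the coordinate $x$. In that form the operator is a Fuchsian ($x^2\partial_x^2+(n+2)x\partial_x$-type) degenerate operator. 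For such operators a nonpositive supersolution vanishing at a boundary point either vanishes identically or satisfies $D\le -c\rho^{\,\epsilon}$ nearby, by a barrier comparison with $-\rho^{\epsilon}$ for small $\epsilon>0$ on a small half-ball. The second alternative contradicts continuity of $D$ at $y$ with $D(y)=0$, so $D$ vanishes near $y$. Hence interior equality holds, and we reduce to the previous case.

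\textbf{Strict inequality.} For \eqref{eq:intro-strict} I would use the model's own equation \eqref{eq:T-bochner} with $\mathcal R=0$. Along the radial model the right side is $2|\mathcal A|^2/\rho^2-(n-1)J'(\rho)Z/\rho$. Concavity of $q$ gives $J'\le0$, with $J'\not\equiv0$ on $[0,\max\rho_H]$ unless $q$ is affine there. Integrating the resulting radial ODE for $T_H$ from the center, as in Lemma~\ref{lem:auxiliary}, should give $T_H(0)<0$ strictly when $q$ is non-affine, and hence \eqref{eq:intro-strict} by the displayed formula for $R_{q,\kappa}$.
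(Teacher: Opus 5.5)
Your derivation of \eqref{eq:interior-scalar} and \eqref{eq:boundary-corrected}, the $o(\rho^2)$ case and the interior-equality rigidity coincide with the paper's argument. So does your overall plan for boundary equality, a boundary-point lemma for an operator with drift $\frac{n+2}{\rho}\nabla\rho$.

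The gap is in how you obtain the linear inequality for $D=T-T_H(\rho)$. You assert that the model satisfies \eqref{eq:T-bochner} with $\mathcal A=0$. This is false unless $q$ is affine. In the model the mixed entries of $\mathcal A_H$ vanish and its tangential entries are equal. Also, \eqref{eq:A-normal} with $W=0$ gives $\mathcal A_H(\eta,\eta)=\rho_HB$, where $B=T_H+\frac{s}{2}T_H'=(s^2T_H)'/(2s)$. Hence $2\rho_H^{-2}|\mathcal A_H|^2=CB^2$ with $C=2n/(n-1)$. If $\mathcal A_H\equiv0$, then $T_H\equiv0$ and in turn $J'\equiv0$. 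For nonaffine $q$ such as $b\tanh s$ one even has $B\to T_H(0)<0$ at $\partial M$.

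After you subtract the correct model equation, the right side of the equation for $D$ contains
\[
\frac{2}{\rho^2}|\mathcal A|^2-C\,B(\rho)^2,
\]
and this has no sign. The inequality $Z\le P_H(\rho)$ does not help. Replacing $P_H(\rho)$ by $Z=P_H(\rho)-\rho^2W$ in the model coefficients only produces terms linear in $W$. The missing idea is how to linearize this difference of squares. The paper combines two facts:
\begin{itemize}
\item the trace-free Cauchy inequality $|\mathcal A|^2\ge\frac{n}{n-1}\mathcal A(\eta,\eta)^2$, which is an equality in the model;
\item the identity $\mathcal A(\eta,\eta)/\rho=B-W-\frac{\rho}{2Z}\langle d\rho,dW\rangle$ from \eqref{eq:A-normal}.
\end{itemize}
Then $B^2-(B-X)^2=2BX-X^2$ turns the deficit into an extra bounded drift $-\frac{CB\rho}{Z}\nabla\rho$, an extra bounded zeroth-order term $-2CB$, and a nonpositive square. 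This gives \eqref{eq:W-identity}. Your aside about passing to $v=T(u)$ and $\mathcal L$ would avoid the subtraction, since \eqref{eq:model-bochner} gives $\mathcal LP\ge cP$ directly. That route, however, needs an analysis of $P/W$ and of the coefficients of $\rho^{-2}\mathcal L$ at $\partial M$, which you do not supply.

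The boundary-point step is also misstated. A bound $D\le-c\rho^{\epsilon}$ near $y$ is compatible with continuity and $D(y)=0$, because $\rho^{\epsilon}(y)=0$. By itself it contradicts only $D\in C^1(\overline M)$, which is true since $W$ is smooth on $\overline M$. It does so only when $\epsilon<1$, through $|D|\le C'\rho$ along the inward normal. A comparison on a half-ball also needs control of $D$ on the lateral boundary near $\partial M$, where $D$ may vanish. Lemma~\ref{lem:boundary-minimum} handles both points. First, multiplying the inequality by $\rho$ and letting $\rho\to0$ shows that the normal derivative vanishes at $y$. Then a Hopf barrier on an interior ball tangent to $\partial M$ only at $y$ forces a positive normal derivative unless $W\equiv0$ near $y$.
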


The correction in \eqref{eq:boundary-corrected} is necessary. For $q(s)=s$,
consider
\[
\overline M=[-\pi/2,\pi/2]\times\mathbb S^{n-1},\qquad
h=dt^2+g_{\mathbb S^{n-1}},\qquad \rho=\cos t.
\]
Then $-\Delta_h\rho=\rho$. In the coordinate
$\tau=\operatorname{arsinh}(\tan t)$,
\[
g=\rho^{-2}h=d\tau^2+\cosh^2\tau\,g_{\mathbb S^{n-1}},\qquad
\operatorname{Ric}_g+(n-1)g=2(n-2)g_{\mathbb S^{n-1}}\ge0.
\]
This metric is complete, and
\[
R_g+n(n-1)=2(n-1)(n-2)\rho^2,\qquad
R_\gamma=(n-1)(n-2)>R_{q,1}=\frac{(n-1)(n-2)}n,
\]
where the model value is computed in Corollary~\ref{cor:affine-model}.
The corrected quantity is
$R_\gamma-S|_{\partial M}=-(n-1)(n-2)$, in agreement with
\eqref{eq:boundary-corrected}. Thus the order-two scalar term cannot
be discarded under the big-$O$ hypothesis.

  The following argument adapts the classical Hopf barrier
\cite[Chapter~3]{Gilbarg:2001aa} to the unbounded normal drift
$(n+2)\rho^{-1}\nabla\rho$. We include the barrier estimates because
this coefficient is singular at the boundary.

\begin{lemma}\label{lem:boundary-minimum}
  Let $h$ be smooth on a boundary collar and let $\rho$ be a smooth defining function with $|d\rho|^2=\kappa>0$ on the boundary. Suppose $w$ is smooth and nonnegative on the closed collar and
  \begin{equation}\label{eq:singular-operator}
  \mathcal Lw:=\Delta w+\frac{n+2}\rho\langle d\rho,dw\rangle
  +\langle b,dw\rangle+\left(-\frac{2a}\rho+d\right)w\le0
  \end{equation}
  in its interior, where $a\ge0$ is constant and $b,d$ are bounded. If $w(p)=0$ at a boundary point, then $w$ vanishes on a neighborhood of $p$.
  \end{lemma}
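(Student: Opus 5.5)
We argue by contradiction with a Hopf-type barrier adapted to the singular drift. Suppose $w(p)=0$ at a boundary point $p$ but $w$ does not vanish identically near $p$. The first step is to propagate positivity. In the interior of the collar the operator $\mathcal L$ has locally bounded coefficients. After replacing $d$ by $d-\|d\|_\infty-1$ (which keeps $\mathcal Lw\le0$ since $w\ge0$) and using $a\ge0$, the zeroth-order coefficient becomes nonpositive. The interior strong maximum principle \cite[Theorem 3.5]{Gilbarg:2001aa} then shows that on each connected interior piece of a small half-ball around $p$, either $w\equiv0$ or $w>0$. Shrinking to a connected collar neighborhood $\{0<\rho<\rho_1\}\cap U$ of $p$, we may assume $w>0$ in the interior. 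The goal is to contradict the vanishing at $p$, in particular $w(p)=0$ together with smoothness of $w$ up to the boundary.

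The barrier will be a function of $\rho$ alone, localized tangentially. Using $|d\rho|^2\to\kappa$, we have near $p$ that $\Delta\rho$ is bounded and $|d\rho|^2\ge\kappa/2$. The natural radial part of $\mathcal L$ acting on $\phi(\rho)$ is
\[
\phi''|d\rho|^2+\phi'\Bigl(\Delta\rho+\tfrac{n+2}{\rho}|d\rho|^2+\langle b,d\rho\rangle\Bigr)+\Bigl(-\tfrac{2a}{\rho}+d\Bigr)\phi .
\]
For $\phi=\rho^\beta$ the leading term is $\kappa\beta(\beta+n+1)\rho^{\beta-2}$. This term dominates the $-2a\rho^{\beta-1}$ term as $\rho\to0$, and it is positive for any $\beta>0$.

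Take $\phi=\rho$, so the leading term is $\kappa(n+2)\rho^{-1}$. To localize, use a tangential cutoff built from the distance to $p$, in the Hopf annulus style: set
\[
v=\rho\,\bigl(1-|y-y_p|^2/r^2\bigr)_+-\epsilon\rho^2
\]
in collar coordinates $(\rho,y)$, or an exponential variant. The tangential derivatives contribute only $O(1)$ terms. They contribute no $\rho^{-1}$ terms, because the singular drift only sees $\partial_\rho$. Consequently $\mathcal Lv\ge c\rho^{-1}-C>0$ for small $\rho$ on the region where the cutoff is bounded away from zero. Near the lateral edge of the cutoff we will instead use a Hopf-style exponential $e^{-\alpha|y-y_p|^2}-e^{-\alpha r^2}$ times $\rho$, choosing $\alpha$ large to get positivity.

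Next we compare. On the inner face $\rho=\rho_1$ and on the lateral boundary of a small box $D$ around $p$, the function $w$ is bounded below by a positive constant where $v>0$. This holds after a slight shrinkage, using interior positivity on compact subsets away from $\partial M$. Choose $\eta>0$ with $\eta v\le w$ on $\partial D\cap M$. Then $z=w-\eta v$ satisfies $\mathcal Lz<0$ in $D$ and $z\ge0$ on $\partial D\cap M$. On the boundary face $\rho=0$ both functions vanish or $z\ge0$. The weak maximum principle applies to $z$ because the zeroth-order coefficient is nonpositive after the shift, and a negative interior minimum is impossible since there $\mathcal Lz\ge0$. Hence $w\ge\eta v\ge\eta c\rho$ near $p$.

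This only gives $\partial_\nu w(p)<0$, which is consistent with smoothness. So the argument must be strengthened using the equation itself. Since $w$ is smooth with $w(p)=0$ and $w\ge0$, write $w=w_0(y)+\rho w_1(y)+O(\rho^2)$. Multiply \eqref{eq:singular-operator} by $\rho$ and let $\rho\to0$. This yields
\[
(n+2)\kappa\,\partial_\rho w-2a\,w\le0
\]
on the boundary, i.e. $(n+2)\kappa w_1\le2aw_0$. At $p$, where $w_0=0$, we get $w_1(p)\le0$, contradicting $w_1(p)\ge\eta c>0$ from the barrier. I expect the main obstacle to be the barrier construction: getting $\mathcal Lv>0$ uniformly despite the singular $-2a/\rho$ term and the cutoff's second derivatives. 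The remedy is to rely on the $\rho^{-1}$ gain from the $(n+2)\rho^{-1}$ drift acting on the linear factor $\rho$. The gain coefficient $(n+2)\kappa$ must be compared with $2a$, and if it does not exceed $2a$ we would instead use $\phi=\rho^\beta$ with $\beta$ small. In that case the boundary-limit step will be run with $\rho^{1-\beta}$ weighting.
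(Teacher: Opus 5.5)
Your proof is correct, and its skeleton is the paper's. The limit of $\rho\,\mathcal Lw\le0$ at $p$ forces $\langle d\rho,dw\rangle(p)=0$. The interior strong minimum principle, applied after making the zeroth-order coefficient nonpositive, gives $w>0$ inside. A Hopf barrier then yields a positive inward normal derivative at $p$, which is the contradiction.

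\textbf{Where your route differs.} The difference is the barrier.
\begin{itemize}
\item The paper uses the classical function $e^{-\alpha r^2}-e^{-\alpha R^2}$ on an annulus of a ball internally tangent at $p$. It treats both singular coefficients as errors: $\langle d\rho,d(r^2)\rangle/\rho$ is bounded above because the numerator is negative near $p$, and $(R^2-r^2)/\rho$ is bounded because the ball stays inside $M$.
\item Your product barrier $\rho\psi(y)$, with $\psi=e^{-\alpha|y-y_p|^2}-e^{-\alpha r^2}$, makes the singular drift the source of positivity. It produces $(n+2)|d\rho|^2\psi/\rho$, while the potential contributes only the bounded term $-2a\psi$. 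On the annulus $r/2\le|y-y_p|\le r$ the tangential exponential gives $\rho(\Delta\psi+\langle b,d\psi\rangle)>0$ for large $\alpha$.
\end{itemize}
Your version avoids the tangent-ball geometry and shows why the $(n+2)/\rho$ drift helps. The paper's version needs no special coordinates.

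\textbf{Points to fix in the write-up.}
\begin{itemize}
\item You need the coordinates $y$ to be constant along the flow of $\nabla\rho$, so that $\langle d\rho,d\psi\rangle=0$. You use this implicitly ("the drift only sees $\partial_\rho$", and $\langle d\rho,dw\rangle=\kappa\,\partial_\rho w$), but state it. Otherwise $\mathcal L(\rho\psi)$ contains the cross term $(n+4)\langle d\rho,d\psi\rangle$. That term has no sign, carries no factor $\rho$, and is not dominated near the lateral face, where $\psi\to0$.
\item Use the exponential $\psi$ on the whole box as a single barrier. Drop the polynomial cutoff, whose Laplacian has the wrong sign near the edge, and drop the $-\varepsilon\rho^2$ term.
\item Choose the constants in this order: $r$, then $\alpha$, then $\rho_1$ (small enough that the $\rho^{-1}$ term dominates), then $\eta$.
\end{itemize}

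\textbf{Your closing worry is unfounded.} No comparison of $(n+2)\kappa$ with $2a$ is needed. Your own count, $\kappa\beta(\beta+n+1)\rho^{\beta-2}$ against $2a\rho^{\beta-1}$, already settles it at $\beta=1$. Moreover, with $0<\beta<1$ the barrier $\rho^\beta\psi$ works equally well. The resulting bound $w\ge\eta\rho^\beta\psi$ contradicts $w(\rho,y_p)\le C\rho$ directly, so the boundary-limit step would not be needed at all.
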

  \begin{proof}
  Multiplying \eqref{eq:singular-operator} by $\rho$ and taking the
  limit at $p$ gives
  \[
  (n+2)\langle d\rho,dw\rangle(p)\le0.
  \]
  Since $w\ge0$ and $w(p)=0$, its inward normal derivative is nonnegative. Thus this derivative is zero.
  
  Replace $d$ by $\min(d,0)$; the inequality remains valid and the zero-order coefficient is nonpositive. On a connected collar, the interior strong minimum principle gives either $w\equiv0$ or $w>0$ in the interior. In the latter case, extend $h$ smoothly across the boundary and choose a small geodesic ball $B_R(c)$ contained in the collar and internally tangent to its boundary at $p$, with no other boundary contact. The distance $r=d_h(c,\cdot)$ is smooth on $R/2\le r\le R$. Set
  \[
  \phi=e^{-\alpha r^2}-e^{-\alpha R^2}.
  \]
  At $p$, $\langle d\rho,d(r^2)\rangle=-2R\sqrt\kappa<0$. Therefore
  \[
  \frac{\langle d\rho,d(r^2)\rangle}{\rho}\le C
  \]
  on the open annulus: the numerator is negative close to $p$, and $\rho$ is bounded away from zero on the rest. Since the ball lies inside $M$, its distance to the sphere satisfies $R-r\le s$, where $s$ is distance to $\partial M$. Also $\rho\ge c_0s$ near $p$, so
  \[
  \frac{R^2-r^2}{\rho}\le C,
  \qquad
  \frac{1-e^{-\alpha(R^2-r^2)}}\rho\le C\alpha.
  \]
  Direct differentiation gives
  \begin{align*}
  e^{\alpha r^2}\mathcal L\phi
  ={}&\alpha^2|d(r^2)|^2
  -\alpha\left(\Delta(r^2)+\frac{n+2}{\rho}\langle d\rho,d(r^2)\rangle
  +\langle b,d(r^2)\rangle\right)\\
  &+\left(-\frac{2a}{\rho}+d\right)
  \left(1-e^{-\alpha(R^2-r^2)}\right).
  \end{align*}
  The preceding bounds and $|d(r^2)|^2=4r^2\ge R^2$ on the annulus give
  \[
  \mathcal L\phi\ge e^{-\alpha r^2}
  \bigl(\alpha^2R^2-C\alpha-C\bigr)>0
  \]
  for sufficiently large $\alpha$.
  
  Choose $\varepsilon>0$ such that $w\ge\varepsilon\phi$ on $\{r=R/2\}$. On $\{r=R\}$, $\phi=0$. At an interior negative minimum of $w-\varepsilon\phi$, its Laplacian and zero-order term would be nonnegative, contradicting $\mathcal L(w-\varepsilon\phi)<0$. Thus $w\ge\varepsilon\phi$ throughout the annulus. At $p$, this gives a positive inward normal derivative, contradicting the first paragraph. Therefore $w\equiv0$ near $p$.
  \end{proof}
  
  \begin{proof}[Proof of comparison and rigidity in Theorem~\ref{thm:boundary-general}]
  Theorem~\ref{thm:gradient-general} and \eqref{eq:exact-transform}
  give $\rho\le\rho_*$ and
  \begin{equation}\label{eq:W-def}
  W:=T_H(\rho)-T
  =\frac{P_H(\rho)-|d\rho|_h^2}{\rho^2}
  =\frac{\Psi_H(u)-|du|_g^2}{k^2\rho^{2k+2}}\ge0.
  \end{equation}
  By Lemma~\ref{lem:boundary-jets}, the first expression for $W$ is
  smooth on $\overline M$.
  The trace relation gives the exact interior identity
  \begin{equation}\label{eq:interior-deficit}
  \mathcal R_H(\rho)-(R_h-S)=n(n-1)W.
  \end{equation}
  Subtracting the boundary formula in Lemma~\ref{lem:boundary-jets} from its model version gives
  \begin{equation}\label{eq:boundary-deficit}
  R_{q,\kappa}-(R_\gamma-S|_{\partial M})
  =(n-2)(n+1)W|_{\partial M}.
  \end{equation}
  The inequalities follow from $W\ge0$. At an interior equality point, \eqref{eq:W-def} gives equality in Theorem~\ref{thm:gradient-general}, including when the point is critical.
  
  Suppose equality holds at a boundary point. By \eqref{eq:boundary-deficit}, $W$ vanishes there. Restrict to a collar where $Z=|d\rho|^2>0$, and write
  \[
  B=T_H(\rho)+\frac\rho2T_H'(\rho),\qquad
  C=\frac{2n}{n-1},\qquad
  A=T_H''(\rho)+\frac{n+2}\rho T_H'(\rho)+\frac{n-1}\rho J'(\rho).
  \]
  Here $B$ is smooth and $\rho^2A$ is bounded and smooth. For $\eta=\nabla\rho/\sqrt Z$, \eqref{eq:A-normal} gives
  \[
  \frac{\mathcal A(\eta,\eta)}\rho
  =B-W-\frac\rho{2Z}\langle d\rho,dW\rangle.
  \]
  To retain a sign on subtracting the model equation, choose an
  $h$-orthonormal frame $e_1=\eta,e_2,\ldots,e_n$. Since
  $\operatorname{tr}_h\mathcal A=0$, Cauchy's inequality gives
  \begin{align*}
  |\mathcal A|^2
  &=\mathcal A_{11}^2+2\sum_{\alpha=2}^n\mathcal A_{1\alpha}^2
    +\sum_{\alpha,\beta=2}^n\mathcal A_{\alpha\beta}^2\\
  &\ge\mathcal A_{11}^2+
    \frac1{n-1}\left(\sum_{\alpha=2}^n\mathcal A_{\alpha\alpha}\right)^2
  =\frac n{n-1}\mathcal A_{11}^2.
  \end{align*}
  In the radial model the mixed entries vanish and the tangential
  entries are equal, so equality holds. For a scalar function
  $F(\rho_H)$, the chain rule reads
  \[
  \Delta_{h_H}F(\rho_H)
  =P_H(\rho_H)F''(\rho_H)-q(\rho_H)F'(\rho_H).
  \]
  The model version of \eqref{eq:T-bochner} is consequently
  \[
  P_H(s)\left(T_H''+\frac{n+2}sT_H'+\frac{n-1}sJ'\right)
  -q(s)T_H'-\frac{2q(s)}sT_H
  =C\left(T_H+\frac s2T_H'\right)^2.
  \]
  By the chain rule,
  \begin{align*}
  \Delta(T_H(\rho))&+\frac{n+2}{\rho}\langle d\rho,d(T_H(\rho))\rangle
  -\frac{2q(\rho)}{\rho}T_H(\rho)\\
  &=AZ-q(\rho)T_H'
  -\frac{2q(\rho)}\rho T_H-\frac{n-1}\rho J'Z.
  \end{align*}
  Subtracting \eqref{eq:T-bochner}, inserting the model equation, and
  using $Z=P_H(\rho)-\rho^2W$ gives
  \begin{align*}
  \Delta W&+\frac{n+2}{\rho}\langle d\rho,dW\rangle
  -\frac{2q(\rho)}{\rho}W\\
  &=-\rho^2AW+CB^2-\frac2{\rho^2}|\mathcal A|^2
  -\frac2{\rho^2}\mathcal R(\nabla\rho,\nabla\rho).
  \end{align*}
  In the square term write
  \begin{align*}
  \frac2{\rho^2}|\mathcal A|^2
  ={}&\frac2{\rho^2}
  \left(|\mathcal A|^2-\frac n{n-1}\mathcal A(\eta,\eta)^2\right)\\
  &+C\left(B-W-\frac{\rho}{2Z}\langle d\rho,dW\rangle\right)^2.
  \end{align*}
  The identity
  \begin{align*}
  B^2-&\left(B-W-\frac{\rho}{2Z}\langle d\rho,dW\rangle\right)^2\\
  &=2B\left(W+\frac{\rho}{2Z}\langle d\rho,dW\rangle\right)
  -\left(W+\frac{\rho}{2Z}\langle d\rho,dW\rangle\right)^2
  \end{align*}
  then yields
  \begin{align}
  \Delta W&+\left(\frac{n+2}\rho-\frac{CB\rho}{Z}\right)\langle d\rho,dW\rangle
  +\left(\rho^2A-\frac{2q(\rho)}\rho-2CB\right)W\notag\\
  &=-\frac2{\rho^2}\left(|\mathcal A|^2-\frac n{n-1}\mathcal A(\eta,\eta)^2\right)\notag\\
  &\quad-C\left(W+\frac{\rho}{2Z}\langle d\rho,dW\rangle\right)^2
  -\frac2{\rho^2}\mathcal R(\nabla\rho,\nabla\rho)\le0.
  \label{eq:W-identity}
  \end{align}
  Lemma~\ref{lem:boundary-minimum} applies with
  \[
  a=q(0),\quad b=-\frac{CB\rho}{Z}\nabla\rho,\qquad
  d=\rho^2A-2CB-\frac{2(q(\rho)-q(0))}\rho.
  \]
  Indeed $Z\to\kappa>0$, while $B$, $T_H'$, $T_H''$ and $J'$ are
  smooth; explicitly,
  $\rho^2A=\rho^2T_H''+(n+2)\rho T_H'+(n-1)\rho J'$.
  Also $(q(\rho)-q(0))/\rho$ extends smoothly. Thus $b$ and $d$
  are bounded, as required by the lemma. Hence $W$ vanishes in an
  interior neighborhood of the boundary point. Equation~\eqref{eq:W-def} gives equality in the interior gradient comparison, and Theorem~\ref{thm:gradient-general} identifies $g$ and $u$ with the hyperbolic model.
  
The resulting interior isometry $\mathcal I$ preserves both $\rho$ and
$h=\rho^2g$. It therefore intertwines the smooth transverse fields
$X=\nabla_h\rho/|d\rho|_h^2$ and
$X_H=\nabla_{h_H}\rho_H/|d\rho_H|_{h_H}^2$ on small boundary collars.
Starting at a positive regular level, their flow coordinates extend
$\mathcal I$ and its inverse smoothly to height zero. The identities
$\mathcal I^*h_H=h$ and $\mathcal I^*\rho_H=\rho$ extend by continuity,
so the compactified triples are isometric.
  \end{proof}
  \begin{lemma}\label{lem:auxiliary}
  Under the hypotheses of Theorem~\ref{thm:boundary-general}, $T\le0$ on $\overline M$ and
  \begin{equation}\label{eq:auxiliary}
  R_\gamma-S|_{\partial M}
  \le(n-1)(n-2)\left(\frac{q'(0)}n+\frac{q(0)^2}{n^2\kappa}\right).
  \end{equation}
  If equality holds at one boundary point, the compactification is the hyperbolic model, $T\equiv0$, $\operatorname{Ric}_h^\circ=0$, and $q$ is affine on $[0,\max\rho]$.
  \end{lemma}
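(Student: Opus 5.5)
The plan is to obtain $T\le0$ from a maximum principle for the defect equation \eqref{eq:T-bochner}, and then read off \eqref{eq:auxiliary} from the boundary identity \eqref{eq:boundary-scalar}. The first observation is a sign. Since $q$ is concave, $q'$ is nonincreasing, so $J(s)=\frac2n\int_0^1\tau q'(s\tau)\,d\tau$ is nonincreasing and $J'\le0$. Together with $\mathcal R\ge0$, this makes the right side of \eqref{eq:T-bochner} nonnegative:
\[
\Delta T+\frac{n+2}{\rho}\langle d\rho,dT\rangle-\frac{2q(\rho)}{\rho}T\ge0\quad\text{in }M.
\]
By Lemma~\ref{lem:boundary-jets}, $T$ is smooth on the compact $\overline M$, so $m=\max_{\overline M}T$ is attained. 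Suppose $m>0$.

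\emph{Interior maximum.} If $m$ is attained at an interior point, there $\nabla T=0$ and $\Delta T\le0$. The left side is then at most $-2q(\rho)m/\rho<0$, because $q>0$ on $(0,\infty)$. This is a contradiction.

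\emph{Boundary maximum.} If $m$ is attained at a boundary point $p$, put $w=m-T\ge0$. Then
\[
\Delta w+\frac{n+2}{\rho}\langle d\rho,dw\rangle-\frac{2q(\rho)}{\rho}w
\le-\frac{2q(\rho)}{\rho}m\le0.
\]
This is \eqref{eq:singular-operator} with $a=q(0)$, $b=0$ and $d=-2(q(\rho)-q(0))/\rho$, which is smooth and bounded. The boundary normalization $|d\rho|^2=\kappa$ comes from Lemma~\ref{lem:boundary-jets}. Lemma~\ref{lem:boundary-minimum} then gives $T\equiv m$ on a neighbourhood of $p$, and that neighbourhood contains interior points. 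Those points are interior maxima, which the previous case excludes. Hence $T\le0$ on $\overline M$. Inserting $T|_{\partial M}\le0$ into \eqref{eq:boundary-scalar} gives \eqref{eq:auxiliary}, since the coefficient $(n-2)(n+1)$ is positive.

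\emph{Equality at one boundary point.} Suppose equality holds in \eqref{eq:auxiliary} at a boundary point. By \eqref{eq:boundary-scalar}, $T(p)=0=\max T$. Applying the same argument with $w=-T$ and $m=0$, Lemma~\ref{lem:boundary-minimum} gives $T\equiv0$ near $p$. The function $-T\ge0$ is a supersolution of an operator with nonpositive zero-order coefficient. The interior strong maximum principle, propagated along the connected interior, therefore gives $T\equiv0$ on $M$, and by continuity on $\overline M$. Then the right side of \eqref{eq:T-bochner} vanishes identically, and each of its three nonnegative terms vanishes:
\[
\mathcal A\equiv0,\qquad \mathcal R(\nabla\rho,\nabla\rho)\equiv0,\qquad J'(\rho)Z\equiv0.
\]
On the open dense set where $Z\ne0$, this gives $J'=0$ on the range of $\rho$, which is $(0,\max\rho]$ because $M$ is connected. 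Since $J'(s)=\frac2n\int_0^1\tau^2q''(s\tau)\,d\tau$ and $q''\le0$, this forces $q''=0$ there. Thus $q$ is affine on $[0,\max\rho]$.

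\emph{Identifying the model.} I expect the main obstacle to be identifying the hyperbolic model, because $T\equiv0$ does not by itself give $\mathcal R\equiv0$. The plan is to route through Theorem~\ref{thm:boundary-general}.
\begin{itemize}
\item The model $(\overline{\mathbb H^n},h_H,\rho_H)$ itself satisfies the hypotheses with $\mathcal R=0$. The bound just proved, applied to it, gives
\[
R_{q,\kappa}\le(n-1)(n-2)\left(\frac{q'(0)}n+\frac{q(0)^2}{n^2\kappa}\right).
\]
\item Combining this with \eqref{eq:boundary-corrected} and the assumed equality at $p$ gives
\[
R_\gamma-S\le R_{q,\kappa}\le(n-1)(n-2)\left(\frac{q'(0)}n+\frac{q(0)^2}{n^2\kappa}\right)=R_\gamma-S\quad\text{at }p.
\]
So equality holds in \eqref{eq:boundary-corrected} at $p$.
\item The rigidity part of Theorem~\ref{thm:boundary-general} now identifies $(\overline M,h,\rho)$ with $(\overline{\mathbb H^n},h_H,\rho_H)$ and gives $\mathcal R\equiv0$.
\end{itemize}
Finally, \eqref{eq:A-def} with $\mathcal A=0$ and $\mathcal R=0$ yields $E=0$, that is, $\operatorname{Ric}_h^\circ=0$.
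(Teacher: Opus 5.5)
Your proof is correct. The equality case is essentially the paper's: you squeeze
$R_\gamma-S\le R_{q,\kappa}\le(n-1)(n-2)\bigl(q'(0)/n+q(0)^2/(n^2\kappa)\bigr)$,
invoke the rigidity in Theorem~\ref{thm:boundary-general}, and apply Lemma~\ref{lem:boundary-minimum} and the interior strong minimum principle to $-T$. Only your order of steps differs.

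The real difference is how you obtain $T\le0$. The paper writes the left side of \eqref{eq:T-bochner} in divergence form, $\rho^{-n-2}\operatorname{div}_h(\rho^{n+2}\nabla T)-2q(\rho)T/\rho$. It multiplies by $\rho^{n+2}T_+$, integrates over $\{\rho>\varepsilon\}$, and lets $\varepsilon\downarrow0$, the boundary term being $O(\varepsilon^{n+2})$. The nonpositive quantity $-\int\rho^{n+2}|dT_+|^2-2\int q(\rho)\rho^{n+1}T_+^2$ then equals a nonnegative one, which forces $T_+\equiv0$ in one stroke, with no boundary maximum principle.

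You run a pointwise maximum principle instead. An interior positive maximum is excluded by the strictly negative zero-order term. A boundary positive maximum is excluded by applying Lemma~\ref{lem:boundary-minimum} to $m-T$, which moves the maximum to interior points. Your route avoids the weighted integration by parts and its integrability checks. It uses only the singular Hopf lemma that both proofs need anyway for the equality case. The paper's energy argument needs only the divergence structure and boundedness of $T,dT$ near $\partial M$, and no pointwise boundary analysis for the inequality.

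For the affine conclusion you use $J'(s)=\frac2n\int_0^1\tau^2q''(s\tau)\,d\tau$, whereas the paper integrates $sq-I=(n/2)J_0s^2$; both work. Your claim that $\{Z\ne0\}$ is dense deserves a line of justification. Either note that $\rho$ cannot be locally constant, since $\Delta_h\rho=-q(\rho)<0$ in $M$. Or, as in the paper, use $T\equiv0$ to get $Z=\kappa-2I(\rho)/n$, which is positive wherever $0<\rho<\max\rho$ because $I$ is strictly increasing and $Z$ vanishes at a maximum point of $\rho$.
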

  \begin{proof}
  Integration by parts in the definition of $J$ gives
  \[
  J'(s)=\frac{2}{ns^3}\int_0^s t^2q''(t)\,dt\le0.
  \]
  Every term on the right side of \eqref{eq:T-bochner} is therefore
  nonnegative. Its left side has divergence form
  \[
  \rho^{-n-2}\operatorname{div}_h(\rho^{n+2}\nabla T)
  -\frac{2q(\rho)}\rho T.
  \]
  Set $T_+=\max\{T,0\}$. Multiplying by $\rho^{n+2}T_+$ and
  integrating by parts gives
  \begin{align*}
  &-\int_M\rho^{n+2}|dT_+|^2\,dV_h
  -2\int_Mq(\rho)\rho^{n+1}T_+^2\,dV_h\\
  &\qquad=\int_M\rho^{n+2}T_+
  \left(\frac{2|\mathcal A|^2}{\rho^2}
  +\frac{2\mathcal R(\nabla\rho,\nabla\rho)}{\rho^2}
  -\frac{n-1}{\rho}J'Z\right)dV_h\ge0.
  \end{align*}
  To justify the identity, first integrate on $M_\varepsilon=
  \{\rho>\varepsilon\}$. Its outward normal is
  $-\nabla\rho/|d\rho|$, and the boundary term is
  \[
  \int_{\partial M_\varepsilon}
  \varepsilon^{n+2}T_+\partial_\nu T\,dA_h.
  \]
  The level areas are bounded near zero, and $T,dT$ are bounded,
  so this term is $O(\varepsilon^{n+2})$. The possibly singular
  integrands on the right are integrable after weighting:
  $\mathcal A=O(\rho)$, $\mathcal R$ and $J'$ are bounded, and
  the weight supplies at least $\rho^n$. Passing to the limit proves
  the displayed identity. Its left side is nonpositive and its right
  side is nonnegative, so both vanish. Since $q(\rho)>0$ in the
  interior, the second integral forces $T_+=0$. Substituting
  $T|_{\partial M}\le0$ into \eqref{eq:boundary-scalar} proves
  \eqref{eq:auxiliary}.
  
  Applying the bound to the hyperbolic model gives the same upper bound for $R_{q,\kappa}$. Equality in \eqref{eq:auxiliary} therefore forces equality in Theorem~\ref{thm:boundary-general} and identifies the compactification with the hyperbolic model. The boundary formula also gives $T=0$ at that point. The function $-T\ge0$ satisfies \eqref{eq:singular-operator}, with $a=q(0)$, $b=0$, and $d=-2(q(\rho)-q(0))/\rho$. Lemma~\ref{lem:boundary-minimum} and the interior strong minimum principle give $T\equiv0$. Equation~\eqref{eq:T-bochner} then yields $\mathcal A\equiv0$ and $J'(\rho)Z\equiv0$. Since $\mathcal R=0$, \eqref{eq:A-def} gives $\operatorname{Ric}_h^\circ=0$.
  
  Since $T=0$, $Z=\kappa-2I(\rho)/n$. At a maximum point of
  $\rho$, $Z=0$, and strict positivity of $q$ implies
  $Z>0$ whenever $0<\rho<\max_{\overline M}\rho$. Thus
  $J'=0$ on $(0,\max_{\overline M}\rho)$. Writing the constant value
  of $J$ as $J_0$, its definition gives
  $sq(s)-I(s)=(n/2)J_0s^2$. Differentiation yields
  $q'(s)=nJ_0$, proving the affine assertion.
  Applied to the model, this equality characterization gives
  \eqref{eq:intro-strict} for every $q$ that is not affine on
  $[0,\max\rho_H]$ and completes Theorem~\ref{thm:boundary-general}.
  \end{proof}
  
  \begin{proof}[Proof of Theorem~\ref{thm:intro-boundary}]
  Lemma~\ref{lem:model-compactification} gives the model
  $(h_H,\rho_H)$ used in the statement. The scalar decay gives
  $S|_{\partial M}=0$ by Lemma~\ref{lem:boundary-jets}.
  The comparison and rigidity follow from
  Theorem~\ref{thm:boundary-general}.
  \end{proof}
  
  For $q(s)=a+bs$, the model is computed directly below from the
  prescribed-source equation. The endpoint cases are the round
  hemisphere and the Euclidean ball, while $a,b>0$ gives the corresponding
  spherical-cap interpolation. The gradient estimate on general complete
  manifolds then follows from Theorem~\ref{thm:gradient-general}.
  
  \begin{samepage}
  \begin{corollary}\label{cor:affine-model}
  Let $q(s)=a+bs$, where $a,b\ge0$ and $a+b>0$. Put
  \[
  C_0=\frac{n\kappa}{\sqrt{a^2+n\kappa b}},\qquad
  D_0=\frac{a}{\sqrt{a^2+n\kappa b}}.
  \]
  Then
  \begin{equation}\label{eq:affine-model}
  \rho_H(r)=\frac{C_0}{\cosh(\sqrt\kappa r)+D_0},\qquad
  U_H=\rho_H^k,\qquad
  A_\kappa=\left(\frac{C_0}{1+D_0}\right)^k,
  \end{equation}
  and
  \begin{equation}\label{eq:affine-constant}
  R_{q,\kappa}=k(k-1)\left(\frac bn+\frac{a^2}{n^2\kappa}\right).
  \end{equation}
  For every positive entire solution of
  \[
  -\Delta u=k\bigl(a u^{1+1/k}+b u^{1+2/k}\bigr)
  \]
  on a complete connected manifold without boundary with $\operatorname{Ric}\ge-k\kappa g$, one has
  \begin{equation}\label{eq:affine-gradient}
  |d\log u|_g^2+\frac{k^2}{n}
  \bigl(2a u^{1/k}+b u^{2/k}\bigr)\le k^2\kappa.
  \end{equation}
  Equality at one point has the rigidity in Theorem~\ref{thm:gradient-general}.
  \end{corollary}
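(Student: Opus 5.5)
The plan is to verify that the explicit function $\rho_H$ in \eqref{eq:affine-model} solves the model problem, and then to read off the constant and the gradient bound from results already proved.

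\textbf{Step 1: the profile solves the source equation.} Put $y=\cosh(\sqrt\kappa r)$ and $\rho=C_0/(y+D_0)$. The model metric is $h=\rho^2g_{\mathbb H}$, and $-\Delta_h\rho=q(\rho)$ is equivalent, via \eqref{eq:exact-transform}, to $-\Delta_{g_{\mathbb H}}\rho^k=f_q(\rho^k)$. Rather than expand $\Delta_{g_{\mathbb H}}(\rho^k)$ directly, I will use the identity $\Delta_g(\rho^k)=k\rho^{k+1}\Delta_h\rho$. Thus it suffices to compute $\Delta_h\rho$ for the conformal metric with conformal factor $\rho^2$ on $g_{\mathbb H}$.

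Concretely, with $\nabla^2y=\kappa y\,g_{\mathbb H}$ and $|dy|^2=\kappa(y^2-1)$, the scalar $\Delta_{g_{\mathbb H}}\rho^k$ reduces to a rational function of $y$. Matching it against $k(a\rho^{k+1}+b\rho^{k+2})$ should give two polynomial identities in $y$. These identities should read $C_0D_0=a/\kappa$-type relations and $C_0^2(1-D_0^2)=\ldots$, which the stated $C_0,D_0$ satisfy; note $1-D_0^2=n\kappa b/(a^2+n\kappa b)$.

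\textbf{Step 2: this profile is the threshold.} The radial function is positive and entire, and it decays like $e^{-k\sqrt\kappa r}$. To show it is the threshold I would use the comparison itself. Theorem~\ref{thm:gradient-general}, applied with $M=\mathbb H^n_{-\kappa}$ and $u=\rho^k$ (a positive entire solution), gives $\rho(0)^k\le A_\kappa$.

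For the reverse, I would argue that any positive entire radial solution with central value $B$ satisfies $|\nabla u|^2\le\Psi_H(u)$. I would then apply the phase bound from this explicit solution to the threshold profile. This requires showing that the explicit profile is itself an extremal phase curve, which is the main obstacle.

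Alternatively I would use the Lemma~\ref{lem:auxiliary} structure. For the explicit profile, compute $T=(|d\rho|_h^2-\kappa+2I(\rho)/n)/\rho^2$ and check that $T\equiv0$. Its boundary value then gives $R_\gamma=(n-1)(n-2)(b/n+a^2/(n^2\kappa))$, which equals the upper bound \eqref{eq:auxiliary}.

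I would then run the rigidity argument of Theorem~\ref{thm:boundary-general} in the other direction. Take the explicit compactification as $(\overline M,h)$; it has $\mathcal R=0$. Theorem~\ref{thm:boundary-general} gives $R_\gamma\le R_{q,\kappa}\le$ the auxiliary bound, so equality holds at every point. The rigidity clause then identifies $(h,\rho)$ with $(h_H,\rho_H)$, which proves \eqref{eq:affine-model} and \eqref{eq:affine-constant} simultaneously. The evaluation $A_\kappa=\rho(0)^k=(C_0/(1+D_0))^k$ is immediate.

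\textbf{Step 3: the gradient bound.} Since $T_H\equiv0$, we have
\[
P_H(s)=\kappa-\tfrac2n I(s)=\kappa-\tfrac2n\bigl(as+\tfrac b2s^2\bigr).
\]
Using $|d\log u|_g^2=k^2|d\rho|_h^2$ and the bound $|d\rho|_h^2\le P_H(\rho)$ from Theorem~\ref{thm:gradient-general} with $\rho=u^{1/k}$, this gives \eqref{eq:affine-gradient}. Rigidity for equality is inherited from Theorem~\ref{thm:gradient-general}.
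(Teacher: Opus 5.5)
Your proposal is correct and follows the paper's proof. You verify that the explicit profile solves the source equation, then squeeze $R_{q,\kappa}$ between the sharp boundary comparison applied to the explicit compactification and Lemma~\ref{lem:auxiliary} applied to the threshold model. Rigidity in Theorem~\ref{thm:boundary-general} identifies the two, since both profiles have their unique maximum at the pole, and $P_H$ is then read off from $T_H=0$.

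The only differences are minor. You obtain the boundary scalar curvature from $T\equiv0$ and \eqref{eq:boundary-scalar}, while the paper reads it off from the boundary metric $\kappa^{-1}C_0^2g_{\mathbb S^k}$. That metric is visible from $\widehat\rho=C_0x/(1+D_0x+x^2/4)$ with $x=2e^{-\sqrt\kappa r}$, which also gives the boundary smoothness you need before invoking Theorem~\ref{thm:boundary-general}. Your guessed Step~1 relations are off: the coefficient identities are actually $aC_0=n\kappa D_0$ and $bC_0^2=n\kappa(1-D_0^2)$.
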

  \end{samepage}
  \begin{proof}
  Define $\widehat\rho$ by the expression in \eqref{eq:affine-model}. For $z=\cosh(\sqrt\kappa r)$, direct differentiation gives
  \[
  -\Delta_{g_{\mathbb H}}(\widehat\rho^k)
  =\frac{kn\kappa C_0^k(1+D_0z)}{(z+D_0)^{k+2}}
  =k\widehat\rho^{k+1}(a+b\widehat\rho).
  \]
  Thus $\widehat h=\widehat\rho^2g_{\mathbb H}$ has the prescribed source. For $x=2e^{-\sqrt\kappa r}$,
  $\widehat\rho=C_0x/(1+D_0x+x^2/4)$. Its boundary metric is $\kappa^{-1}C_0^2g_{\mathbb S^k}$. Applying the sharp comparison to this compactification and Lemma~\ref{lem:auxiliary} to the threshold model gives
  \[
  k(k-1)\left(\frac bn+\frac{a^2}{n^2\kappa}\right)
  \le R_{q,\kappa}
  \le k(k-1)\left(\frac bn+\frac{a^2}{n^2\kappa}\right).
  \]
  Equality rigidity identifies $\widehat\rho$ with $\rho_H$; their unique centers agree. This proves \eqref{eq:affine-model} and \eqref{eq:affine-constant}.
  
  A direct differentiation, or $T_H=0$, gives
  \[
  P_H(s)=\kappa-\frac{2a}{n}s-\frac bn s^2,
  \qquad
  \Psi_H(u)=k^2u^2\left(\kappa-\frac{2a}{n}u^{1/k}-\frac bn u^{2/k}\right).
  \]
  Apply Theorem~\ref{thm:gradient-general} to obtain \eqref{eq:affine-gradient} and its equality statement.
  
Equality in Lemma~\ref{lem:auxiliary} gives $T_H=0$ and
$\operatorname{Ric}_{h_H}^{\circ}=0$. As $h_H$ is conformally flat,
it has constant sectional curvature $b/n$; this is the standard
Einstein space-form characterization (see \cite{Besse:2008aa}).
Its outward boundary principal curvature is $a/(n\sqrt\kappa)$.
For $b>0$ it is therefore the spherical cap with angular radius
$\arccos D_0$, and for $a=0$ the round hemisphere of geodesic radius
$\pi\sqrt{n/b}/2$. For $b=0$, it is the Euclidean ball of radius
$R_0=n\sqrt\kappa/a$, with
\[
\rho_H=\frac{a}{2n}(R_0^2-R^2),\qquad 0\le R\le R_0.
\]
  \end{proof}
  
\section{Existence of prescribed-source compactifications}\label{sec:existence}
  Let $n\ge3$, $k=n-1$, and let $q$ satisfy \eqref{eq:q-class}.
  For a metric $dt^2+g_t$ on an interval times a closed cross-section,
  the coordinate formula for divergence is
  \[
  \Delta z=\frac{1}{\sqrt{\det g}}\partial_i
     \bigl(\sqrt{\det g}\,g^{ij}\partial_jz\bigr).
  \]
  If the volume form is $j(t)\,dt\,dV_N$ and $z=z(t)$, this reduces to
  \[
  \Delta z=j^{-1}(jz')',\qquad
  \int_M |dz|_g^2\,dV_g=\operatorname{Vol}(N)\int jz'^2\,dt.
  \]
  We minimize the corresponding semilinear energy on a radial
  Nehari set.

  For $f=f_q$ in \eqref{eq:exact-transform}, set
  \[
  p_1=1+1/k,\qquad p_2=1+2/k.
  \]
  Concavity of $q$ and \eqref{eq:q-slope} imply
  \begin{equation}\label{eq:variational-growth}
  f(s)+sf'(s)\le C(s^{p_1}+s^{p_2}),\qquad
  sf'(s)\ge p_1f(s),\qquad
  (p_1+1)F(s)\le sf(s).
  \end{equation}
  The lower bound in \eqref{eq:q-slope} integrates from $t$ to $s$ to give
  $f(t)\le f(s)(t/s)^{p_1}$ for $0<t\le s$, and hence
  $F(s)\le sf(s)/(p_1+1)$. The upper growth follows from
  $q(r)\le q(0)+q'(0)r$. In addition $f(s)/s\to0$ at zero and $f(s)/s\to\infty$ at infinity. The latter follows from $q(s^{1/k})\ge q(1)>0$ for $s\ge1$. We extend the quotient $f(s)/s$ continuously by zero at $s=0$.
  
  \begin{lemma}\label{lem:nehari}
  Let $X$ be one of the weighted radial Sobolev spaces used below, with density $j$ and equivalent Hilbert norm $\|v\|^2=\int jv'^2$. Suppose $X$ is closed under absolute value and embeds compactly in $L^{p_1+1}(j\,dt)$ and $L^{p_2+1}(j\,dt)$. Then
  \[
  \mathcal E(v)=\frac{1}{2}\|v\|^2-\int jF(|v|)\,dt
  \]
  attains its infimum on
  \[
  \mathcal N=\left\{v\in X\setminus\{0\}:\|v\|^2=\int j|v|f(|v|)\,dt\right\}
  \]
  at a nonnegative nonzero function $u$, and $\mathcal E'(u)=0$ on $X$.
  \end{lemma}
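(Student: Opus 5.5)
The plan is the standard Nehari-manifold minimization. It relies on the growth relations \eqref{eq:variational-growth} and on the compact embeddings assumed for $X$.

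\emph{Step 1: fibering.} Fix $v\in X\setminus\{0\}$ and consider $\phi(\tau)=\mathcal E(\tau v)$ for $\tau>0$. Then
\[
\phi'(\tau)=\tau\Bigl(\|v\|^2-\int j|v|\,\tfrac{f(\tau|v|)}{\tau}\,dt\Bigr).
\]
The inequality $sf'\ge p_1f$ with $p_1>1$ makes $f(\tau s)/\tau$ strictly increasing in $\tau$ wherever $s>0$. Since $f(s)/s\to0$ at zero and $f(s)/s\to\infty$ at infinity, dominated convergence (using the growth bound and the embeddings) shows that the bracket is positive for small $\tau$ and tends to $-\infty$. Hence there is a unique $\tau_v>0$ with $\tau_vv\in\mathcal N$, and it maximizes $\phi$. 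The growth bound $f(s)\le C(s^{p_1}+s^{p_2})$ together with the Sobolev inequalities gives, on $\mathcal N$,
\[
\|v\|^2\le C(\|v\|^{p_1+1}+\|v\|^{p_2+1}).
\]
Therefore $\|v\|\ge r_0>0$ on $\mathcal N$. The relation $(p_1+1)F\le sf$ gives
\[
\mathcal E(v)\ge\Bigl(\tfrac12-\tfrac1{p_1+1}\Bigr)\|v\|^2\ge c>0
\]
on $\mathcal N$, so $m=\inf_{\mathcal N}\mathcal E>0$.

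\emph{Step 2: minimizing sequence and compactness.} Take $v_j\in\mathcal N$ with $\mathcal E(v_j)\to m$. Replace $v_j$ by $|v_j|$; this changes neither the norm nor the integrals, so $|v_j|$ stays in $\mathcal N$. The coercivity estimate above bounds $\|v_j\|$. Pass to a subsequence with $v_j\rightharpoonup u$ weakly in $X$ and strongly in $L^{p_i+1}(j\,dt)$, so $u\ge0$. The bound $sf(s)+F(s)\le C(s^{p_1+1}+s^{p_2+1})$ and strong convergence give convergence of $\int j v_jf(v_j)$ and $\int jF(v_j)$. Since $\int jv_jf(v_j)=\|v_j\|^2\ge r_0^2$, the limit satisfies $\int juf(u)\ge r_0^2$, and in particular $u\neq0$. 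Weak lower semicontinuity gives
\[
\|u\|^2\le\int juf(u).
\]
So the fiber maximizer satisfies $\tau_u\le1$, because the bracket in $\phi'$ is nonpositive at $\tau=1$ and the bracket is strictly decreasing.

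Then set $\tilde u=\tau_uu\in\mathcal N$ and use the function $H(s)=\tfrac12sf(s)-F(s)$. It is nondecreasing, since $H'=\tfrac12(sf'-f)\ge0$ by $sf'\ge p_1f\ge f$. On $\mathcal N$ one has $\mathcal E=\int jH(|v|)$. This gives
\[
m\le\mathcal E(\tilde u)=\int jH(\tau_uu)\le\int jH(u)=\lim_j\int jH(v_j)=m.
\]
Hence $\tilde u$ is a nonnegative minimizer. Renaming $\tilde u$ as $u$ gives the minimizer in the statement.

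\emph{Step 3: natural constraint (the main obstacle).} Set $G(v)=\|v\|^2-\int j|v|f(|v|)$. This is a $C^1$ functional on $X$, which follows from $f\in C^1$ and the growth of $f+sf'$. For $u\ge0$ in $\mathcal N$,
\[
G'(u)u=2\|u\|^2-\int j\bigl(2uf(u)+u^2f'(u)\bigr)=\int j\bigl(uf(u)-u^2f'(u)\bigr)\le(1-p_1)\int juf(u)<0.
\]
So $\mathcal N$ is a $C^1$ manifold near $u$, and the Lagrange rule gives $\mathcal E'(u)=\lambda G'(u)$. Testing this with $u$ gives $0=\mathcal E'(u)u=\lambda G'(u)u$, hence $\lambda=0$ and $\mathcal E'(u)=0$.

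The delicate point is the $C^1$ regularity of $G$ and $\mathcal E$ involving $|v|$ on $X$. This follows because $s\mapsto F(|s|)$ and $s\mapsto|s|f(|s|)$ are $C^1$ even functions, vanishing to second order at $0$, with the stated polynomial growth. If that regularity proves awkward, the fallback is a deformation argument along the fiber map $v\mapsto\tau_vv$, which avoids differentiating $G$ in directions that change sign.
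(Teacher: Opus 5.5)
Your proposal is correct and follows essentially the same route as the paper: unique Nehari points on rays from strict monotonicity of $f(s)/s$, a nonnegative minimizing sequence with compactness in both Lebesgue spaces, rescaling the weak limit by some $\theta\le 1$ and using the nondecreasing function $\tfrac12 sf(s)-F(s)$, and then removing the Lagrange multiplier via $G'(u)[u]\le-(p_1-1)\|u\|^2<0$. One small slip: the intermediate expression for $G'(u)u$ should read $2\|u\|^2-\int j\bigl(uf(u)+u^2f'(u)\bigr)$ rather than having $2uf(u)$, though your final identity $\int j\bigl(uf(u)-u^2f'(u)\bigr)$ and the resulting sign are correct.
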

  \begin{proof}
We use the standard Nehari natural-constraint argument
(see \cite{Willem:1996aa}), in the weighted form needed for the two
radial spaces below. The growth bounds make the energy and constraint
$C^1$. Strict increase of $f(s)/s$, from zero to infinity, gives a unique
Nehari point on each nonzero ray. On $\mathcal N$,
\[
\|v\|\ge\delta_0>0,\qquad
\mathcal E(v)\ge\left(\frac12-\frac1{p_1+1}\right)\|v\|^2.
\]
Take a nonnegative minimizing sequence. Compactness in both stated
Lebesgue spaces gives a nonzero weak limit $v\ge0$, convergence of the
nonlinear integrals, and $\|v\|^2\le\int jvf(v)$.
Thus $\theta v\in\mathcal N$ for some $0<\theta\le1$.
The increasing function $B(s)=sf(s)/2-F(s)$ gives
\[
\mathcal E(\theta v)=\int jB(\theta v)\le\int jB(v)
=\inf_{\mathcal N}\mathcal E,
\]
so $u=\theta v$ is a minimizer. For
$K(v)=\|v\|^2-\int j|v|f(|v|)$,
\[
K'(u)[u]\le-(p_1-1)\|u\|^2<0,\qquad \mathcal E'(u)[u]=0.
\]
The constraint multiplier therefore vanishes, proving
$\mathcal E'(u)=0$.
\end{proof}

  Suppose that $j(t)$ is comparable to $e^{kt}$ for large $t$,
  that $u(t)>0$ tends to zero, and that
  \begin{equation}\label{eq:positive-flux}
  B_0(t):=-j(t)u'(t)>0,\qquad B_0'(t)=j(t)f(u(t)).
  \end{equation}
  If a Sobolev estimate gives $u(t)=O(e^{-kt/2})$, then $0<B_0(\infty)<\infty$. Indeed, for small $s$, $f(s)\le Cs^{p_1}$. A bound $u(t)=O(e^{-\sigma t})$ gives $B_0'(t)=O(e^{(k-p_1\sigma)t})$, while
  \[
  u(t)=\int_t^\infty\frac{B_0(s)}{j(s)}\,ds.
  \]
  Integration first bounds $B_0$, and then $u$:
  \[
  u(t)=
  \begin{cases}
  O(e^{-p_1\sigma t}),&p_1\sigma<k,\\
  O((1+t)e^{-kt}),&p_1\sigma=k,\\
  O(e^{-kt}),&p_1\sigma>k.
  \end{cases}
  \]
  Starting at $\sigma=k/2$ reaches the last case after finitely many iterations; in the equality case first use $\sigma=k-\varepsilon$ with $p_1(k-\varepsilon)>k$. Integrability of $B_0'$ and positivity of the flux prove the assertion.
  
  In particular, once the positive flux has a finite limit,
  $j(t)=j_\infty e^{kt}(1+o(1))$ gives
  \[
  u(t)=\frac{B_\infty}{kj_\infty}e^{-kt}(1+o(1)),
  \qquad B_\infty>0.
  \]

  \subsection{Two-ended compactifications}
  Let $t\in\mathbb R$ be the normal distance coordinate in the
  warped product $dt^2+a(t)^2h_N$.
  
  Let $(N^k,h_N)$ be connected, closed, and Einstein with $\operatorname{Ric}_{h_N}=-(k-1)h_N$. Fix an integer $\ell\ge k-1$ and choose
  \begin{equation}\label{eq:epsilon-family}
  0\le\varepsilon<\frac1{\ell+1},\qquad
  c_\varepsilon=\varepsilon(1-\varepsilon)^\ell,
  \end{equation}
  and define the even warping function $a_\varepsilon$ by
  \begin{equation}\label{eq:warping-ode}
  a''=a-\ell c_\varepsilon a^{-2\ell-1},\qquad
   a(0)=\sqrt{1-\varepsilon},\quad a'(0)=0.
  \end{equation}
  For $\varepsilon=0$ this is $a_0(t)=\cosh t$.
  
  \begin{theorem}\label{thm:two-ended}
  The metric
  \begin{equation}\label{eq:two-ended-metric}
  g_\varepsilon=dt^2+a_\varepsilon(t)^2h_N\quad\text{on }\mathbb R\times N
  \end{equation}
  is smooth and complete, and
  \begin{equation}\label{eq:two-ended-ricci}
  \operatorname{Ric}_{g_\varepsilon}+k g_\varepsilon
  =c_\varepsilon a_\varepsilon^{-2\ell-2}
  \left(k\ell\,dt^2+(\ell-k+1)a_\varepsilon^2h_N\right)\ge0.
  \end{equation}
  For every $q$ satisfying \eqref{eq:q-class}, there is a positive even function $\rho(t)$ such that $h=\rho^2g_\varepsilon$ extends smoothly to a compactification with two boundary components and $-\Delta_h\rho=q(\rho)$. In this compactification,
  \begin{equation}\label{eq:two-ended-decay}
  R_{g_\varepsilon}+n(n-1)=O(\rho^{2\ell+2})=o(\rho^2).
  \end{equation}
  Both boundary metrics are the same positive constant multiple of $h_N$ and have negative scalar curvature. For $\varepsilon>0$ the complete metric is not Einstein. For $\varepsilon>0$ and $\ell\ge k$, its Ricci excess is positive definite at every interior point. For $\varepsilon=0$ it is Einstein but not hyperbolic space. The sharp boundary comparison is strict throughout this family.
  \end{theorem}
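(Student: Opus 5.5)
The proof splits into four parts: the warped-product geometry of $g_\varepsilon$, a variational construction of $u=\rho^k$, boundary regularity and the scalar decay \eqref{eq:two-ended-decay}, and finally the strictness statements via Theorem~\ref{thm:boundary-general}.

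\emph{Geometry.} First analyse \eqref{eq:warping-ode}. It has the first integral
\[
a'^2=a^2-1+c_\varepsilon a^{-2\ell},
\]
with the constant fixed by $a(0)=\sqrt{1-\varepsilon}$, $a'(0)=0$. Indeed, $(1-\varepsilon)-1+\varepsilon(1-\varepsilon)^\ell(1-\varepsilon)^{-\ell}=0$. The condition $\varepsilon<1/(\ell+1)$ makes $\sqrt{1-\varepsilon}$ a simple root of $a^{2\ell+2}-a^{2\ell}+c_\varepsilon$ at which the right side increases. Hence $a$ is even, has its minimum at $t=0$, and grows like $a\sim a_\infty e^{|t|}$, with $a=a_\infty e^{|t|}(1+O(e^{-2|t|}))$. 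This gives smoothness and completeness.

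For \eqref{eq:two-ended-ricci}, use the standard warped-product formulas
\[
\operatorname{Ric}(\partial_t,\partial_t)=-k\,\frac{a''}{a},
\]
and, in the $h_N$ directions,
\[
\operatorname{Ric}_{h_N}-\bigl(aa''+(k-1)a'^2\bigr)h_N .
\]
Substitute $a''=a-\ell c_\varepsilon a^{-2\ell-1}$, $a'^2=a^2-1+c_\varepsilon a^{-2\ell}$ and $\operatorname{Ric}_{h_N}=-(k-1)h_N$. The constant terms cancel and leave exactly the displayed tensor. Its sign is $\ge0$ because $\ell\ge k-1$, and it is positive definite when $\ell\ge k$ and $\varepsilon>0$. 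Tracing gives $R_g+n(n-1)=O(a^{-2\ell-2})$. The non-Einstein claim for $\varepsilon>0$ follows because the two eigenvalues $k\ell$ and $\ell-k+1$ of the excess differ. For $\varepsilon=0$, the metric is Einstein but not hyperbolic space because it has two ends.

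\emph{Existence of $\rho$.} I would apply Lemma~\ref{lem:nehari} with density $j=a^k$ on the space of even $t$-dependent functions in $H^1(\mathbb R,j\,dt)$. The needed compact embeddings into $L^{p_i+1}(j\,dt)$ come from a one-dimensional weighted Hardy/Strauss bound:
\[
|v(t)|^2\le\|v\|^2\int_{|t|}^\infty j^{-1}\le C\|v\|^2e^{-k|t|}.
\]
Since $p_i+1>2$, this gives $|v|^{p_i+1}j\lesssim e^{-k(p_i-1)|t|/2}$, which is integrable uniformly. Combined with local compactness, this yields compactness of the embeddings. The lemma then produces a nonnegative minimizer $u$. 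It is a weak, hence classical, ODE solution by symmetric criticality. It is positive by ODE uniqueness, since a zero would be a double zero.

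The Sobolev bound gives $u=O(e^{-k|t|/2})$. The flux iteration after \eqref{eq:positive-flux} then gives $u=\frac{B_\infty}{kj_\infty}e^{-k|t|}(1+o(1))$. Set $\rho=u^{1/k}$, which is comparable to $e^{-|t|}$, and use $x=e^{-t}$ as the boundary coordinate. Then
\[
h=\rho^2g_\varepsilon=\frac{\rho^2}{x^2}\bigl(dx^2+x^2a^2h_N\bigr),
\]
and $x^2a^2\to a_\infty^2$. So $h$ extends smoothly provided $\rho/x$ is smooth up to $x=0$. Since $\rho\sim e^{-|t|}$, the decay \eqref{eq:two-ended-decay} is immediate from the trace computation above.

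\emph{Main obstacle: boundary regularity of $v=\rho/x$.} This is the step I expect to be hardest. I would imitate Lemma~\ref{lem:model-compactification}. Write $u=x^kv^k$ and transform the radial equation into
\[
(x^{k+1}D(x)v')'=\text{smooth}(x,v),
\]
where $D$ collects $(xa)^k$. Here $xa$ is a smooth function of $x^2$ and of $c_\varepsilon x^{2\ell+2}$, because $a$ solves the first integral; I would verify this by writing $y=(xa)^2$ as a smooth solution of a regular ODE in $x$. The source $q(xv)$ enters as a smooth function, and the odd extension of $q$ is needed so that $q(\rho)/\rho$-type terms, arising as $x^{-1}q(xv)$, are smooth in $x$. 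The zero-integration-constant argument and the $t=x\tau$ substitution then bootstrap $v\in C^j$ to $C^{j+1}$. I expect the bookkeeping of which terms are smooth in $x$ to be the delicate point.

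\emph{Boundary metrics and strictness.} Evenness of $u$ makes both boundary metrics equal to $\kappa^{-1}v(0)^2a_\infty^2h_N$, a positive constant multiple of $h_N$. These have negative scalar curvature since $R_{h_N}<0$. By Lemma~\ref{lem:model-compactification}, $R_{q,1}>0$, so $R_\gamma<0<R_{q,1}$, and the sharp comparison is strict throughout the family. This also matches the rigidity part of Theorem~\ref{thm:boundary-general}, since there are two boundary components.
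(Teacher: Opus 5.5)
Your outline follows the paper's proof step for step: the first integral, the warped-product Ricci formulas, Nehari minimization on even functions of $t$, flux iteration, a zero-flux integral representation at the boundary, and strictness from $R_\gamma<0<R_{q,1}$. The problem is one claim in the regularity step, which as written would restrict the theorem to a smaller class of sources.

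\textbf{The odd extension of $q$ is neither available nor needed.} You say the smooth odd extension of $q$ is needed to make $x^{-1}q(xv)$ smooth. The statement does not assume this, and it fails whenever $q(0)>0$, e.g.\ $q(s)=a+bs$, the case used in Corollary~\ref{cor:nonconstant-scalar}. It also fails for $q(s)=1-e^{-s}$. A proof relying on it proves less than the theorem.

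It is unnecessary because the term $x^{-1}q$ only appears after you expand into non-divergence form. In your coordinate $x=e^{-t}$, write $b=xa$ and $u=x^kw$ (so $w=v^k$ in your notation). In divergence form the equation is
\[
\bigl(x^{k+1}b^kw'\bigr)'=-kx^kG,\qquad G=b^kw^{1+1/k}q\bigl(xw^{1/k}\bigr)+(b^k)'w .
\]
The right side is $x^k$ times a smooth function of $(x,w)$ on $w>0$, using only smoothness of $q$ on $[0,\infty)$. It is this factor $x^k$, not mere smoothness of the right side as you wrote, that makes the bootstrap work. The flux $x^{k+1}b^kw'$ tends to zero at $x=0$, since a nonzero limit would make $w$ unbounded. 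The substitution $s=x\tau$ then gives
\[
w'(x)=-kb(x)^{-k}\int_0^1\tau^kG(x\tau)\,d\tau .
\]
Hence $w\in C^1$, and iterating gives $w\in C^\infty$ for every $q$ in \eqref{eq:q-class}. Since $b$ is even, $w'(0)=-kq(0)w(0)^{1+1/k}/(k+1)$. So when $q(0)>0$, $w$ is smooth but not even in $x$. Evenness is all that the odd-extension hypothesis buys, and the paper uses it only in the nonradial Lemma~\ref{lem:nonradial-end}, for the lift to $\mathbb R^{k+2}$.

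\textbf{Norm equivalence.} Lemma~\ref{lem:nehari} requires the gradient norm to be equivalent to the norm of $X$. If $X$ is the even part of $H^1(\mathbb R,j\,dt)$, this is a Poincar\'e inequality $\int jv^2\le C\int jv'^2$. Your pointwise bound does not give it, because $\int_{\mathbb R}je^{-k|t|}\,dt=\infty$. The paper obtains it from $z=j^{1/2}v$ and the potential bound $\frac k2\frac{a''}a+\frac{k(k-2)}4\frac{a'^2}{a^2}\ge k\eta_\varepsilon/2$. Alternatively, take $X$ to be the completion of even compactly supported functions in the gradient norm; there your bound suffices for compactness in $L^{p_i+1}(j\,dt)$.

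\textbf{Choice of boundary coordinate.} Your ODE argument for smoothness of $xa$ is correct: $\frac{d(xa)}{dx}=a-\partial_ta=(1-c_\varepsilon a^{-2\ell})/(a+\partial_ta)$, and in terms of $(x,xa)$ this is smooth and odd in $x$. The paper's choice $x=1/a$ avoids this step, since it makes $g=x^{-2}(dx^2/D+h_N)$ with the explicit even polynomial $D=1-x^2+c_\varepsilon x^{2\ell+2}$.
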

  \begin{proof}
  Multiplying \eqref{eq:warping-ode} by $2a'$ gives
  \[
  \bigl(a'^2-a^2-c_\varepsilon a^{-2\ell}\bigr)'=0.
  \]
  At $t=0$ its value is
  $-(1-\varepsilon)-\varepsilon=-1$. Thus the first integral is
  \begin{equation}\label{eq:warping-first-integral}
  a'^2=a^2-1+c_\varepsilon a^{-2\ell}.
  \end{equation}
  At the initial value $a_* =\sqrt{1-\varepsilon}$ the right side is zero, and
  \[
  \frac{a''}a=1-\ell c_\varepsilon a^{-2\ell-2}
  \ge\frac{1-(\ell+1)\varepsilon}{1-\varepsilon}=:\eta_\varepsilon>0
  \quad\text{when }a\ge a_*.
  \]
  Thus $a$ increases strictly for $t>0$, is strictly convex, and cannot approach a finite upper limit. Since $a''\le a$, it has no finite-time blow-up. The even solution is positive and global. The first integral gives $a'/a\to1$ and
  $(\log a)'=1+O(a^{-2})$. The lower exponential growth from $a''\ge\eta_\varepsilon a$ makes the error integrable. Therefore $a(t)e^{-t}$ has a finite positive limit as $t\to\infty$.
  
The warped-product Ricci formulas (see \cite{Besse:2008aa}) give
\[
\operatorname{Ric}(\partial_t,\partial_t)=-k a''/a,\qquad
\operatorname{Ric}|_{TN}
=\operatorname{Ric}_{h_N}-\bigl(aa''+(k-1)a'^2\bigr)h_N.
\]
Substituting \eqref{eq:warping-ode}--\eqref{eq:warping-first-integral}, we obtain
\[
\operatorname{Ric}+kg
=c_\varepsilon a^{-2\ell-2}
\left(k\ell\,dt^2+(\ell-k+1)a^2h_N\right).
\]
  This proves \eqref{eq:two-ended-ricci}. The two eigenvalues of the
  Ricci excess relative to $g$ are
  $k\ell c_\varepsilon a^{-2\ell-2}$ in the normal direction and
  $(\ell-k+1)c_\varepsilon a^{-2\ell-2}$ in each tangential direction.
  Thus $\ell\ge k-1$ gives the lower bound, while
  $\ell\ge k$ and $\varepsilon>0$ give a strict margin in every
  direction at each finite point. Completeness follows since the base line is complete, $N$ is compact, and $a\ge a_*>0$.
  
  To construct the defining function, put $j=a^k$ and take the even subspace $X$ of the weighted Sobolev space with squared norm $\int_{\mathbb R}j(v'^2+v^2)\,dt$. For compactly supported $v$, the substitution $z=j^{1/2}v$ gives
  \begin{equation}\label{eq:warped-coercivity}
  \int_{\mathbb R}jv'^2\,dt
  =\int_{\mathbb R}\left[z'^2+
  \left(\frac k2\frac{a''}a+\frac{k(k-2)}4\frac{a'^2}{a^2}\right)z^2\right]dt.
  \end{equation}
  Since $k\ge2$, the potential is bounded below by
  $k\eta_\varepsilon/2>0$ and bounded above. Thus the gradient norm is equivalent to the weighted Sobolev norm and controls $\|z\|_{H^1(\mathbb R)}$. The one-dimensional Sobolev inequality yields
  \[
  |v(t)|\le C\|v\|a(t)^{-k/2}.
  \]
  For every finite $r>2$,
  \[
  \int_{|t|>T}j|v|^r\,dt
  \le\sup_{|t|>T}|v|^{r-2}\int_{\mathbb R}jv^2\,dt
  \le C_r e^{-k(r-2)T/2}\|v\|^r.
  \]
  Local Rellich compactness and this tail bound prove the compact embeddings required by Lemma~\ref{lem:nehari}.
  
  That lemma gives a nonnegative nonzero even weak solution of
  \begin{equation}\label{eq:warped-u}
  u''+k(a'/a)u'+f(u)=0.
  \end{equation}
  The equation holds against arbitrary compactly supported tests on the line: the odd part of a test pairs to zero with the even solution and even density. Ordinary local regularity gives a classical solution, and uniqueness at a zero gives $u>0$. It is smooth, because $f$ is smooth at every positive argument. Its flux is
  \[
  B_0(t)=-j(t)u'(t)=\int_0^t j(s)f(u(s))\,ds>0\quad(t>0).
  \]
  Thus $u'<0$ for $t>0$. The pointwise Sobolev estimate gives $u=O(e^{-kt/2})$, so the decay argument above proves $0<B_\infty:=B_0(\infty)<\infty$.
  
  Near either end use
  $x=1/a(t)$, reversing $t$ on the negative end. Then
  \[
  \frac{dx}{dt}=-x\sqrt{D(x)},\qquad
  a'^2=x^{-2}D(x),\qquad
  D(x)=1-x^2+c_\varepsilon x^{2\ell+2}.
  \]
  The first integral gives
  \[
  g=x^{-2}\left(\frac{dx^2}{D(x)}+h_N\right).
  \]
  Since $u(t)=\int_t^\infty B_0(s)/a(s)^k\,ds$ and $a'/a\to1$, l'H\^opital's rule gives
  \[
  u=x^k v(x),\qquad v(x)\longrightarrow v_0=B_\infty/k>0.
  \]
  In this coordinate the radial operator is
  \[
  \Delta u=x^2Du_{xx}
    +\bigl((1-k)xD+\tfrac12x^2D'\bigr)u_x.
  \]
  Substitute $u=x^kv$, use
  $f(x^kv)=kx^{k+1}v^{1+1/k}q(xv^{1/k})$, and cancel the
  order-$x^k$ terms. After division by $x^{k+2}$ the equation is
  \[
  Dv''+\left(\frac{k+1}{x}D+\frac{D'}2\right)v'
   +\frac{kD'}{2x}v
   +\frac{k}{x}v^{1+1/k}q(xv^{1/k})=0.
  \]
  Multiplication by $x^{k+1}/\sqrt D$ gives
  \begin{equation}\label{eq:warped-v}
  (x^{k+1}\sqrt D\,v')'
  =-\frac{kx^k}{\sqrt D}
  \left(\frac{D'}2v+v^{1+1/k}q(xv^{1/k})\right).
  \end{equation}
  The right side is integrable at zero. A nonzero limit of $x^{k+1}\sqrt D\,v'$ would make $v$ unbounded, so that limit is zero. Therefore
  \begin{align*}
  v'(x)=-\frac{k}{\sqrt{D(x)}}\int_0^1\frac{\tau^k}{\sqrt{D(x\tau)}}
  \left[\frac{D'(x\tau)}2v(x\tau)
  +v(x\tau)^{1+1/k}q(x\tau v(x\tau)^{1/k})\right]d\tau.
  \end{align*}
  This gives $v\in C^1$ at zero and then, by differentiating under the integral, $v\in C^\infty$. Positivity of $v_0$ removes any fractional-power singularity; in particular $v'(0)=-kq(0)v_0^{1+1/k}/(k+1)$. Consequently
  \[
  \rho=xv^{1/k},\qquad
  h=v^{2/k}\left(\frac{dx^2}{D(x)}+h_N\right)
  \]
  is a smooth nondegenerate compactification. Evenness gives the same boundary metric $v_0^{2/k}h_N$ at both ends. In the finite part, including $t=0$, $u$ is smooth and positive, so $\rho=u^{1/k}$ is smooth. The exact transform gives the prescribed-source equation.
  
  Taking the trace of \eqref{eq:two-ended-ricci} gives
  \[
  R_g+n(n-1)=k(2\ell-k+1)c_\varepsilon x^{2\ell+2}.
  \]
  Since $\rho/x$ has a positive boundary limit, this proves \eqref{eq:two-ended-decay}. The boundary scalar curvature is $-k(k-1)v_0^{-2/k}<0$, so the comparison is strict because $R_{q,1}>0$. For $\varepsilon>0$, the radial and tangential eigenvalues of the Ricci excess differ, since $k\ell\ne\ell-k+1$; hence $g_\varepsilon$ is not Einstein. At $\varepsilon=0$, its two ends exclude an isometry with hyperbolic space.
  \end{proof}
  \begin{corollary}\label{cor:nonconstant-scalar}
  For $\varepsilon=0$ in Theorem~\ref{thm:two-ended}, if $q(0)>0$, the compactified scalar curvature $R_h$ is nonconstant. In particular this holds for $q(s)=a+bs$ and $q(s)=a+b(1-e^{-s})$, $a,b>0$.
  \end{corollary}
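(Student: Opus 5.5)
The plan is to argue by contradiction, reducing everything to one-dimensional ODE identities. For $\varepsilon=0$ one has $c_0=0$, so by \eqref{eq:two-ended-ricci} the excess $\mathcal R$ vanishes identically, and $S=0$. By Theorem~\ref{thm:two-ended}, $\rho=\rho(t)$ is even, with $\rho'<0$ for $t>0$; hence $\rho$ attains its maximum $\rho_0$ exactly on the neck $\{t=0\}$. Suppose $R_h\equiv c$. With $\kappa=1$ and $S=0$, the trace identity \eqref{eq:conformal-trace} becomes
\[
Z=|d\rho|_h^2=G(\rho),\qquad G(s)=1-\frac{2}{n}sq(s)+\frac{c}{n(n-1)}s^2 .
\]
So $Z$ is a function of $\rho$ alone, and all the remaining identities are scalar ODE relations in $\rho$ that I would try to make incompatible.

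Next I would write $h$ as a warped product adapted to the level sets. Since $g=\cosh^2t\,(ds^2+h_N)$ with $ds=dt/\cosh t$, we get
\[
h=\varphi^2(ds^2+h_N)=d\sigma^2+\varphi(\sigma)^2h_N,\qquad \varphi=\rho\cosh t .
\]
In these coordinates I would collect the following relations, all from earlier formulas:
\begin{itemize}
\item[(i)] the source equation $\rho_{\sigma\sigma}+k(\varphi_\sigma/\varphi)\rho_\sigma=-q(\rho)$;
\item[(ii)] $\rho_\sigma^2=G(\rho)$, so that differentiation gives $\rho_{\sigma\sigma}=G'(\rho)/2$;
\item[(iii)] the warped scalar-curvature formula with fibre curvature $R_{h_N}=-k(k-1)$,
\[
c=-2k\frac{\varphi_{\sigma\sigma}}{\varphi}-k(k-1)\frac{1+\varphi_\sigma^2}{\varphi^2},
\]
which has the first integral $\varphi^{k-1}(1+\varphi_\sigma^2)+\frac{c}{k(k+1)}\varphi^{k+1}=\mathrm{const}$.
\end{itemize}
From (i) and (ii), the mean curvature $k\varphi_\sigma/\varphi=-(q+G'/2)/\sqrt G$ is itself a function of $\rho$.

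I would then evaluate these relations at the two special places. At the neck, evenness gives $\rho_\sigma=\varphi_\sigma=0$, so $G(\rho_0)=0$ and, by (i), $G'(\rho_0)=-2q(\rho_0)$. At the boundary, Lemma~\ref{lem:boundary-jets} gives $\nabla^2\rho=-q(0)h/n$ and $|d\rho|^2=1$. Combined with (i), this yields $\varphi_\sigma/\varphi\sim q(0)/(n\rho)\to+\infty$. Hence $\varphi\to$ the finite boundary factor $v_0^{1/k}$ only if $\varphi_\sigma$ is controlled, which pins down the constant in the first integral of (iii). The boundary identity \eqref{eq:boundary-scalar}, with $R_\gamma=-k(k-1)v_0^{-2/k}$, supplies one more relation among $c$, $q(0)$, $q'(0)$ and $v_0$.

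The expected contradiction is as follows. Eliminating $\varphi$ between (iii) and the expression for $\varphi_\sigma/\varphi$ gives an identity in $\rho$ on $(0,\rho_0)$. Its leading behaviour as $\rho\to0$ should force $q(0)=0$, since the $1/\rho$ term in the mean curvature must be matched against the negatively curved fibre term. The case $q(0)=0$ is excluded by hypothesis, which also explains why that hypothesis appears. I expect this elimination, and extracting a clean contradiction from it, to be the main obstacle. If the small-$\rho$ asymptotics do not suffice, I would instead compare the neck relations $G(\rho_0)=0$, $G'(\rho_0)=-2q(\rho_0)$ with the value of $c$ forced at the boundary. Finally, the two listed examples satisfy \eqref{eq:q-class} and have $q(0)=a>0$, so they fall under the general statement.
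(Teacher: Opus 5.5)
Your setup is correct: the formula for $G$, the warped form $h=d\sigma^2+\varphi^2h_N$, relations (i)--(iii) and the first integral all check out. The gap is that the decisive step is missing, and the mechanism you propose for it does not exist.

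\textbf{The claimed $1/\rho$ term.} Since $\varphi=\rho\cosh t=\rho/x=v^{1/k}$, the function $\varphi$ is smooth and positive up to $\partial M$. At the boundary, (i) with $\rho_\sigma\to\pm1$ and $\rho_{\sigma\sigma}=G'(\rho)/2\to-q(0)/n$ gives $k\varphi_\sigma/\varphi\to\mp kq(0)/n$. This is just the finite boundary mean curvature $(n-1)q(0)/n$ of Corollary~\ref{cor:decay-geometry}. So $\varphi_\sigma/\varphi\sim q(0)/(n\rho)$ is false, and there is no $1/\rho$ term to balance against the fibre curvature.

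\textbf{What the elimination actually gives.} If you carry it out, the value of the resulting identity at $\rho=0$ is, for $n\ge4$,
$q''(0)=-n\,q(0)\,T|_{\partial M}$, where $T|_{\partial M}=c/(n(n-1))-q'(0)/n$. This is exactly the paper's relation $2q(0)T|_{\partial M}=-3J'(0)$, and it is not contradictory by itself. The contradiction needs two sign inputs that your outline never supplies:
\begin{itemize}
\item $T|_{\partial M}<0$. This follows from $R_\gamma<0$, \eqref{eq:boundary-scalar} with $S=0$, and $q'(0)\ge0$. You use \eqref{eq:boundary-scalar} only as a relation among constants, not for its sign.
\item Concavity, $q''(0)\le0$, i.e.\ $J'(0)\le0$. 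Without a sign on $q''(0)$, this relation is consistent.
\end{itemize}
So the role of $q(0)>0$ is not to be ``forced to vanish''. It is to turn $q(0)T|_{\partial M}<0$ into a contradiction with $J'(0)\le0$.

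\textbf{Dimension $n=3$.} The elimination you describe uses only $S=0$ (through $G$) and $R_h=c$, not the full Einstein condition. A direct expansion shows that the boundary value of the compatibility identity is
$(n-3)\bigl(q''(0)+n\,q(0)\,T|_{\partial M}\bigr)=0$, which is vacuous when $n=3$. The reason is that $S=0$ together with the contracted Bianchi identity, $(n-2)\mathcal R(\nabla\rho,\cdot)=\rho\operatorname{div}_h\mathcal R$, still allows a trace-free excess
$\mathcal R=\lambda\,(d\sigma^2-\varphi^2h_N/k)$ with $\lambda=C\rho^{n-2}\varphi^{-n}$. Its contribution $2\rho^{-1}\mathcal R(\nabla\rho,\nabla\rho)$ survives at $\partial M$ exactly when $n=3$. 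So you must use $\mathcal R=0$ itself, for instance through the relation $\varphi=\rho\cosh t$, which your elimination never uses.

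\textbf{The paper's route.} Constancy of $R_h$ and $S=0$ give $dT=-J'(\rho)\,d\rho$ by \eqref{eq:T-def}. Multiply \eqref{eq:T-bochner} by $\rho$ and restrict to $\partial M$, using $|d\rho|^2=1$, $\mathcal A=O(\rho)$ and $\mathcal R=0$. This gives $2q(0)T=-3J'(0)\ge0$, which contradicts $T|_{\partial M}<0$ and $q(0)>0$.
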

  \begin{proof}
  Here $\mathcal R=0$. The boundary scalar curvature is negative, so \eqref{eq:boundary-scalar} gives $T|_{\partial M}<0$. If $R_h$ were constant, \eqref{eq:T-def} would give $dT=-J'(\rho)d\rho$. Multiply \eqref{eq:T-bochner} by $\rho$ and restrict to the boundary. Since $|d\rho|^2=1$ and $\mathcal A=O(\rho)$,
  \[
  (n+2)\langle d\rho,dT\rangle-2q(0)T=-(n-1)J'(0).
  \]
  It follows that $2q(0)T=-3J'(0)\ge0$, contradicting $T<0$ and $q(0)>0$.
  \end{proof}

  The radial equation and its compactness depend on the base only through
  its volume. The same construction therefore applies to a connected
  closed base with $\operatorname{Ric}_{h_N}\ge-(k-1)h_N$. The base curvature
  then contributes the order-two term in the scalar error:
  \begin{align*}
  \operatorname{Ric}_{g_\varepsilon}+kg_\varepsilon
  &=\operatorname{Ric}_{h_N}+(k-1)h_N+c_\varepsilon a^{-2\ell-2}
  \left(k\ell\,dt^2+(\ell-k+1)a^2h_N\right),\\
  R_{g_\varepsilon}+nk
  &=a^{-2}\bigl(R_{h_N}+k(k-1)\bigr)
  +k(2\ell-k+1)c_\varepsilon a^{-2\ell-2}.
  \end{align*}
  Thus the scalar error is $O(\rho^2)$, and on either boundary component
  \[
  S|_{\partial M}=v_0^{-2/k}\bigl(R_{h_N}+k(k-1)\bigr),\qquad
  R_\gamma-S|_{\partial M}=-k(k-1)v_0^{-2/k}.
  \]
  These are the corrected boundary quantities in
  Theorem~\ref{thm:boundary-general}. Since the base Ricci excess is
  nonnegative, its trace vanishes exactly when it vanishes. Hence the
  Einstein condition on the base is equivalent to the disappearance of
  this order-two scalar coefficient.
  
  \begin{proposition}\label{prop:volume-deformations}
  Let $(N^k,h_N)$ be as in Theorem~\ref{thm:two-ended}. Fix $\ell\ge k$, $0<\varepsilon<1/(\ell+1)$, and a compact interval $J\subset\mathbb R$. Let $B(t,y)$ be a smooth $h_N$-self-adjoint trace-free endomorphism of $TN$, supported in $J\times N$. Set
  \[
  h_B(t)(X,Y)=h_N(e^{B(t)}X,Y),\qquad
  \widetilde g=dt^2+a_\varepsilon(t)^2h_B(t).
  \]
  There is $\delta>0$, depending only on the fixed geometry and $J$, such that $\|B\|_{C^2}<\delta$ implies
  \[
  \operatorname{Ric}_{\widetilde g}\ge-k\widetilde g.
  \]
  For every source $q$ in \eqref{eq:q-class}, the defining function $\rho_q$ constructed in Theorem~\ref{thm:two-ended} satisfies
  \[
  -\Delta_{\rho_q^2\widetilde g}\rho_q=q(\rho_q).
  \]
  The compactification agrees with the undeformed one near both boundary components, and its scalar error is $o(\rho_q^2)$. Whenever $h_B'(t)$ is nonzero at a regular level of $\rho_q$, that level is nonumbilic.
  \end{proposition}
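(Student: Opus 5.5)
The key observation is that the deformation changes only $h_B$, and $h_B$ has the same volume form as $h_N$. Indeed, $B$ is trace-free, so $\det e^{B}=e^{\operatorname{tr}B}=1$. Consequently the volume density of $\widetilde g=dt^2+a_\varepsilon^2h_B$ is still $j(t)\,dt\,dV_N$ with $j=a_\varepsilon^k$. By the divergence formula at the start of Section~\ref{sec:existence}, the Laplacian of a function of $t$ alone is $j^{-1}(jz')'$, exactly as for $g_\varepsilon$, because $\widetilde g^{tt}=1$ and $\widetilde g^{ti}=0$.

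We therefore take the function $u=\rho_q^k$ from Theorem~\ref{thm:two-ended}, which depends only on $t$. It satisfies $-\Delta_{\widetilde g}u=f_q(u)$. The exact transform \eqref{eq:exact-transform}, applied with the metric $\widetilde g$ and $h=\rho_q^2\widetilde g$, then gives $-\Delta_h\rho_q=q(\rho_q)$. Since $B$ is supported in $J\times N$ with $J$ compact, $\widetilde g=g_\varepsilon$ near both ends. Hence the compactification, the boundary metrics and the scalar error all agree with the undeformed ones near $\partial M$. In particular the error is $O(\rho_q^{2\ell+2})=o(\rho_q^2)$ by \eqref{eq:two-ended-decay}.

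For the Ricci bound we use the positive definite excess. By \eqref{eq:two-ended-ricci} with $\ell\ge k$ and $\varepsilon>0$,
\[
\operatorname{Ric}_{g_\varepsilon}+kg_\varepsilon\ge c_J\,g_\varepsilon
\quad\text{on } J\times N,
\]
for some $c_J>0$, since $a_\varepsilon$ is bounded above and below on the compact interval $J$. The Ricci tensor depends continuously on the metric in $C^2$. The map $B\mapsto\widetilde g$ is smooth, with $\|\widetilde g-g_\varepsilon\|_{C^2(J\times N)}\le C\|B\|_{C^2}$. It follows that
\[
\|\operatorname{Ric}_{\widetilde g}+k\widetilde g-(\operatorname{Ric}_{g_\varepsilon}+kg_\varepsilon)\|_{C^0}\le C\|B\|_{C^2}
\]
on the compact set $J\times N$. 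Choosing $\delta<c_J/(2C)$ keeps $\operatorname{Ric}_{\widetilde g}+k\widetilde g\ge0$ there. Outside $J\times N$ the metrics coincide, so the inequality holds on all of $M$. The constants depend only on $a_\varepsilon|_J$, $h_N$ and $J$.

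For nonumbilicity, a regular level $\{t=t_0\}$ of $\rho_q$ has unit normal parallel to $\partial_t$ in both $h$ and $\widetilde g$. Its second fundamental form is a conformal modification of that of $\widetilde g$, and conformal changes preserve umbilicity. For $\widetilde g$ the second fundamental form is
\[
\tfrac12\partial_t\bigl(a^2h_B\bigr)=aa'h_B+\tfrac12a^2h_B'.
\]
The first term is pure trace relative to the induced metric $a^2h_B$. The second, $\tfrac12a^2h_N(e^{B}\,\partial_tB\,\cdot,\cdot)$, is trace-free, since $\operatorname{tr}(e^{-B}\partial_te^{B})=\partial_t\operatorname{tr}B=0$. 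So its trace-free part is nonzero exactly when $h_B'\ne0$, and the level is nonumbilic.

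The main obstacle is the uniformity of $\delta$ in the Ricci step. I would handle it by the compactness of $J\times N$ and the strict margin $c_J$, which is where the hypotheses $\ell\ge k$ and $\varepsilon>0$ enter.
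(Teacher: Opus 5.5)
Your proof is correct and follows the paper's argument essentially step by step. The three steps match: $\operatorname{tr}B=0$ preserves the volume form, so the radial Laplacian and hence $u=\rho_q^k$ are unchanged; $C^2$-continuity of $\operatorname{Ric}+kg$ is played against the strict margin of \eqref{eq:two-ended-ricci} on the compact set $J\times N$; and the shape operator splits as $\frac{a_\varepsilon'}{a_\varepsilon}\operatorname{Id}$ plus the trace-free part $\tfrac12h_B^{-1}h_B'$, which survives the conformal change.

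One harmless slip: $\partial_te^{B}=e^{B}\partial_tB$ holds only when $B$ commutes with $\partial_tB$. However, your identity $\operatorname{tr}(e^{-B}\partial_te^{B})=\partial_t\log\det e^{B}=\partial_t\operatorname{tr}B=0$ is valid in general by Jacobi's formula, and it is all the argument needs.
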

  \begin{proof}
  Since $\operatorname{tr}_{h_N}B=0$, one has $\det(e^B)=1$ and $dV_{h_B(t)}=dV_{h_N}$. Hence, for every function $z=z(t)$,
  \begin{equation}\label{eq:preserved-radial-laplacian}
  \Delta_{\widetilde g}z
  =a_\varepsilon^{-k}\partial_t(a_\varepsilon^k z')
  =z''+k(a_\varepsilon'/a_\varepsilon)z'.
  \end{equation}
  Thus the positive solution $u_q=\rho_q^k$ of the undeformed equation solves exactly the same semilinear equation for $\widetilde g$. The conformal transform \eqref{eq:exact-transform} gives the claimed source equation.
  
  On $J\times N$, \eqref{eq:two-ended-ricci} has a positive lower eigenvalue. The map $g\mapsto\operatorname{Ric}+kg$ is continuous in $C^2$ on this compact set, so all sufficiently small deformations preserve its positive definiteness. Outside the support, the metric is unchanged. Smoothness, completeness, and the asymptotic scalar condition follow from equality with the old metric and defining function near infinity.
  
  For the normal $\partial_t$, the shape operator of a level in $\widetilde g$ is
  \[
  \frac{a_\varepsilon'}{a_\varepsilon}\operatorname{Id}
  +\frac{1}{2} h_B^{-1}h_B'.
  \]
  Its second term is trace-free, since $\partial_t\log\det h_B=0$. Thus $h_B'\ne0$ gives a nonumbilic level. The conformal change by $\rho_q^2$ multiplies differences of principal curvatures by $\rho_q^{-1}$ and does not remove the trace-free part.
  \end{proof}
  \subsection{AdS--Schwarzschild compactifications}
  The central bolt is the smooth
  codimension-two submanifold where the circle factor collapses;
  its normal two-plane is a smooth disk.
  
  \begin{theorem}\label{thm:schwarzschild}
  Let $n\ge4$, $k=n-1$, and $c>0$. Set
  \[
  V(r)=1+r^2-cr^{2-k}.
  \]
  Let $r_0$ be its unique positive zero and give $\theta$ period $4\pi/V'(r_0)$. The metric
  \begin{equation}\label{eq:schwarzschild}
  g=\frac{dr^2}{V(r)}+V(r)\,d\theta^2+r^2g_{\mathbb S^{n-2}},\qquad r\ge r_0,
  \end{equation}
  extends smoothly to a complete Einstein metric with $\operatorname{Ric}=-kg$ on $\mathbb R^2\times\mathbb S^{n-2}$. For every $q$ in \eqref{eq:q-class}, there is a radial defining function $\rho$ such that $h=\rho^2g$ is a smooth compactification on $D^2\times\mathbb S^{n-2}$ with $-\Delta_h\rho=q(\rho)$. Its connected boundary metric has the form
  \[
  \gamma=C^2(d\theta^2+g_{\mathbb S^{n-2}}),\qquad
  R_\gamma=\frac{(n-2)(n-3)}{C^2}>0.
  \]
  The complete metric is not locally symmetric and the regular levels of $\rho$ are not all umbilic. The sharp boundary comparison is strict.
  \end{theorem}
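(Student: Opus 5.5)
The plan follows the template of Theorem~\ref{thm:two-ended}. The geometry comes first. With $V(r)=1+r^2-cr^{2-k}$ one computes $V'>0$ on $(0,\infty)$ whenever $c>0$; hence the positive zero $r_0$ is unique and simple. The standard substitution $\varrho^2\sim 4(r-r_0)/V'(r_0)$ shows that $dr^2/V+V\,d\theta^2$ is a smooth flat disk at $r_0$ exactly when $\theta$ has period $4\pi/V'(r_0)$, so the metric lives smoothly on $\mathbb R^2\times\mathbb S^{n-2}$. Completeness follows because $\int^\infty V^{-1/2}dr=\infty$. The Einstein condition $\operatorname{Ric}=-kg$ is the classical AdS--Schwarzschild computation: the $r^{2-k}$ term is the harmonic mass term, and the $1$ in $V$ matches $\operatorname{Ric}_{\mathbb S^{n-2}}=(n-3)g_{\mathbb S^{n-2}}$. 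The same computation gives $|\mathrm{Rm}+g\wedge g/2|\sim c\,r^{-n}$, which is not identically zero; radial dependence of the Weyl norm then rules out local symmetry, since $\nabla\mathrm{Rm}\neq0$.

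For the defining function, the volume density is $j(r)=r^{n-2}$ in the coordinates $(r,\theta,\omega)$, since $\sqrt{V}\cdot V^{-1/2}=1$. For radial $z(r)$ we therefore get $\Delta z=r^{-(n-2)}\partial_r(r^{n-2}Vz')$. I would work in the arclength coordinate $t$ with $dt=dr/\sqrt V$. There the density is comparable to $e^{kt}$ at infinity and to $t$ near the bolt, which behaves like the origin of a disk. Take $X$ to be the radial subspace of $H^1(M,g)$ with norm $\int jz'^2$. Coercivity of the gradient norm requires a Poincar\'e/Hardy inequality. I would get it from the McKean bound $\lambda_1\ge k^2/4$, which holds for complete Einstein manifolds with $\operatorname{Ric}=-k$ admitting a function with $\Delta\phi\ge k$, $|\nabla\phi|\le1$; here $\phi=t$ works, at least outside a compact set. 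Alternatively, I would use the substitution $z=j^{1/2}v$ as in \eqref{eq:warped-coercivity} together with a compact-core argument. A pointwise decay $|v|\le C\|v\|e^{-kt/2}$ on the end, together with the tail estimate, gives the compact embeddings into $L^{p_1+1}$ and $L^{p_2+1}$. Lemma~\ref{lem:nehari} then produces a nonnegative radial critical point. Since the $\mathbb S^1\times\mathbb S^{n-2}$ group acts isometrically, the principle of symmetric criticality makes it a weak solution on $M$, and elliptic regularity plus the strong maximum principle make it smooth and positive, including across the bolt. The flux-decay argument before Theorem~\ref{thm:two-ended} gives $u\sim B_\infty e^{-kt}$.

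For the boundary regularity I would set $x=1/r$. Then
\[
g=x^{-2}\Bigl(\frac{dx^2}{x^2V}+x^2V\,d\theta^2+g_{\mathbb S^{n-2}}\Bigr),
\]
with $x^2V=1+x^2-cx^{n}$ smooth and positive at $x=0$. Writing $u=x^kv$, I expect an ODE of the form \eqref{eq:warped-v} with $D=x^2V$. The resulting integral representation of $v'$ gives smoothness of $v$ up to $x=0$ with $v(0)>0$, exactly as in the two-ended proof. The only difference is a term $x^{n}$ in $D$, which is smooth since $n$ is an integer. Hence $\rho=u^{1/k}=xv^{1/k}$ is a smooth defining function, $h=\rho^2g$ is smooth on $D^2\times\mathbb S^{n-2}$, and the transform \eqref{eq:exact-transform} gives $-\Delta_h\rho=q(\rho)$. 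The boundary metric is $\gamma=v_0^{2/k}(d\theta^2+g_{\mathbb S^{n-2}})$, so $C=v_0^{1/k}$. Because the circle factor is flat, $R_\gamma=(n-2)(n-3)/C^2$, which is positive for $n\ge4$.

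Non-umbilicity of the levels comes from their second fundamental form in $g$ along $\partial_r$. The shape operator has eigenvalue $\frac12\sqrt V\,V'/V$ in the $\theta$ direction and $\sqrt V/r$ on the sphere. These differ by a nonzero multiple of $c\,r^{1-k}$, so they are unequal for all $r$ when $c>0$. Conformal change preserves this, as noted in Proposition~\ref{prop:volume-deformations}.

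Strictness of the comparison follows from the rigidity in Theorem~\ref{thm:intro-boundary}. Since $\operatorname{Ric}_g=-kg$, the hypotheses hold with $\kappa=1$ and scalar error zero. Equality at any boundary point would force $(M,g)$ to be hyperbolic space, which is impossible because $g$ is not locally symmetric, or alternatively because $M$ is not simply connected at infinity. Hence $R_\gamma<R_{q,1}$ everywhere.

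I expect the main obstacle to be the variational step, namely coercivity and compactness on a manifold with a bolt rather than a second end. The potential in the $z$-substitution is only bounded below asymptotically, so I would lean on the spectral-gap bound rather than a pointwise potential.
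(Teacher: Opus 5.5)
Your route is the paper's: radial Nehari minimization in the distance $t$ from the bolt, passage to a full weak solution by symmetry (the paper averages test functions over $\mathbb S^1\times\mathrm{SO}(n-1)$, which amounts to your symmetric criticality), the flux limit, the integral formula for $v'$ in $x=1/r$, and strictness from rigidity plus non-hyperbolicity.

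The coercivity step you flagged is settled in the paper by the simplest form of your McKean idea. The function $\Delta t=j'/j=(k-1)\sqrt V/r+V'/(2\sqrt V)$ is positive, tends to $+\infty$ at the bolt and tends to $k$ at infinity. Hence $j'/j\ge\delta>0$, and
\[
\delta\int jv^2\,dt\le\int j'v^2\,dt=-2\int jvv'\,dt
\]
gives the Poincar\'e inequality; there is no boundary term since $j(0)=0$. Any positive $\delta$ suffices. So you need neither $\Delta t\ge k$ nor control of the negative $t^{-2}$ potential that the substitution $z=j^{1/2}v$ produces at the bolt.

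The one false claim is in the umbilicity step. The difference of the principal curvatures is
\[
\frac{V'}{2\sqrt V}-\frac{\sqrt V}{r}=\frac{rV'-2V}{2r\sqrt V}
=\frac{kc\,r^{2-k}-2}{2r\sqrt V},
\]
not a multiple of $c\,r^{1-k}$. The constant $1$ in $V$ contributes the $-2$, which is already present when $c=0$: writing $r=\sinh s$, the levels are then tubes about a geodesic in $\mathbb H^n$, with principal curvatures $\tanh s$ and $\coth s$.

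The numerator is strictly decreasing in $r$ and vanishes at $r_1=(kc/2)^{1/(k-2)}$. Moreover $r_1>r_0$, because $c=r_0^{k-2}(1+r_0^2)$ and $k\ge3$. So exactly one regular level of $\rho$ is umbilic, and ``unequal for all $r$'' is wrong. What is true, and what the theorem asserts, is that the difference is not identically zero.

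Two harmless slips:
\begin{itemize}
\item $x^2V(1/x)=1+x^2-cx^{k}$, not $1+x^2-cx^n$. Since $dx^2/D+D\,d\theta^2$ has unit volume factor, the $v$-equation is $(x^{k+1}Dv')'=-kx^k\bigl(D'v+v^{1+1/k}q(xv^{1/k})\bigr)$, without the $\sqrt D$ factors of the two-ended case.
\item The deviation of the curvature from $-1$ is of order $c\,r^{-k}$, consistent with $|W_g|_g^2=k(k-1)^2(k-2)c^2r^{-2k}$, not $c\,r^{-n}$.
\end{itemize}
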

  \begin{proof}
The smooth complete Einstein metric \eqref{eq:schwarzschild}, with circle
period $4\pi/V'(r_0)$, is the standard Euclidean AdS--Schwarzschild metric;
see \cite{Anderson:2006aa}. Here
$V'(r)=2r+c(k-2)r^{1-k}>0$, so its positive root is unique.
Let $t$ be distance from the bolt, with $t(r_0)=0$ and $dr/dt=\sqrt V$. For later use, its curvature gives
\begin{equation}\label{eq:schwarzschild-weyl}
|W_g|_g^2=k(k-1)^2(k-2)c^2r^{-2k}.
\end{equation}
This nonconstant norm excludes local symmetry.
  In $r$-coordinates the volume density is $r^{k-1}$, since the
  $V^{-1}$ and $V$ factors cancel in the determinant. Hence
  \[
  \Delta z=\frac1{r^{k-1}}\partial_r(r^{k-1}Vz_r)
  \quad\text{for }z=z(r).
  \]
  In the distance coordinate $dr/dt=\sqrt V$, the radial volume
  density, up to a constant, is
  \[
  j(t)=r(t)^{k-1}\sqrt{V(r(t))}.
  \]
  Near zero $j(t)=tA(t^2)$, with $A$ smooth and positive. At infinity $r(t)$ is comparable to $e^t$ and $j(t)$ to $e^{kt}$. Moreover
  \[
  \frac{j'}j=(k-1)\frac{\sqrt V}{r}+\frac{V'}{2\sqrt V}>0,
  \qquad \lim_{t\downarrow0}j'/j=\infty,
  \qquad\lim_{t\to\infty}j'/j=k.
  \]
  Thus $j'/j\ge\delta>0$. For smooth compactly supported radial $v$,
  \[
  \delta\int_0^\infty jv^2\,dt
  \le\int_0^\infty j'v^2\,dt
  =-2\int_0^\infty jvv'\,dt
  \le2\left(\int jv^2\right)^{1/2}\left(\int jv'^2\right)^{1/2}.
  \]
  The boundary term at zero vanishes because $j(0)=0$. Let $X$ be the completion, in the weighted $H^1$ norm, of smooth compactly supported functions even near zero. The displayed inequality shows that its gradient norm is equivalent.
  
  Near zero, identifying $v(t)$ with $v(|z|)$ on a disk in $\mathbb R^2$ identifies the weighted $H^1$ norm with a two-dimensional radial Sobolev norm. Rellich's theorem gives compactness in every finite $L^r$ on that disk. Away from zero it is the ordinary one-dimensional compactness. On $[1,\infty)$, $j'/j$ is bounded and $z=j^{1/2}v$ satisfies $\|z\|_{H^1(1,\infty)}\le C\|v\|$. Therefore
  \[
  |v(t)|\le C\|v\|e^{-kt/2},\qquad
  \int_T^\infty j|v|^r\,dt\le C_re^{-k(r-2)T/2}\|v\|^r\quad(r>2).
  \]
  This proves the compact embeddings required in Lemma~\ref{lem:nehari}. We obtain a nonnegative nonzero weak solution
  \begin{equation}\label{eq:schwarzschild-u}
  (ju')'+jf(u)=0.
  \end{equation}
  
Averaging a compactly supported smooth test over the isometric group
$\mathbb S^1\times\mathrm{SO}(n-1)$ gives an admissible radial test, smooth
and even at the bolt. Since $u$ is invariant under this group, the radial
weak equation is therefore the full weak equation on the smooth metric,
including the bolt. Subcritical elliptic regularity and the strong minimum
principle (see \cite[Chapters~8--9]{Gilbarg:2001aa}) give a positive smooth
solution. Its radial restriction is even in the normal disk coordinate $t$.
Thus $j(0)u'(0)=0$, and integration of \eqref{eq:schwarzschild-u} yields
\[
-j(t)u'(t)=\int_0^t j(s)f(u(s))\,ds=:B_0(t)>0\qquad(t>0).
\]
  The pointwise estimate gives $u=O(e^{-kt/2})$ at infinity. The common decay argument proves $0<B_\infty:=\lim B_0(t)<\infty$. In the $r$ coordinate,
  \[
  -r^{k-1}V(r)u_r=B_0(t(r)),\qquad
  u(r)=\int_r^\infty\frac{B_0(t(s))}{s^{k-1}V(s)}\,ds.
  \]
  Since $V(r)/r^2\to1$, $r^ku(r)\to v_0=B_\infty/k>0$. Set $x=1/r$, $D(x)=1+x^2-cx^k$, and $u=x^kv(x)$. Equation \eqref{eq:schwarzschild-u} becomes
  \[
  (x^{k+1}Dv')'=-kx^k\bigl(D'v+v^{1+1/k}q(xv^{1/k})\bigr).
  \]
  The flux at zero must vanish because $v\to v_0$. Thus
  \[
  v'(x)=-\frac{k}{D(x)}\int_0^1\tau^k
  \left[D'(x\tau)v(x\tau)
  +v(x\tau)^{1+1/k}q(x\tau v(x\tau)^{1/k})\right]d\tau.
  \]
  As in \eqref{eq:warped-v}, this gives $v\in C^\infty$ at zero, with $v_0>0$ and $v'(0)=-kq(0)v_0^{1+1/k}/(k+1)$. It follows that
  \[
  \rho=xv(x)^{1/k},\qquad
  h=v(x)^{2/k}\left(\frac{dx^2}{D(x)}+D(x)d\theta^2+g_{\mathbb S^{n-2}}\right)
  \]
  is the asserted smooth compactification. Here $C=v_0^{1/k}$, and the exact transform proves its source equation.
  
  The collapsing circle gives topology $D^2\times\mathbb S^{n-2}$
  and connected boundary $\mathbb S^1\times\mathbb S^{n-2}$.
  The circle has zero scalar curvature and
  the unit $(n-2)$-sphere has scalar curvature $(n-2)(n-3)$, giving
  the stated positive boundary scalar curvature after scaling by
  $C^2$.

  Since $u_r<0$, the regular $\rho$-levels are the $r$-levels. Their circle and sphere principal curvatures in $g$, for outward normal $\sqrt V\partial_r$, are $V'/(2\sqrt V)$ and $\sqrt V/r$. Their difference is
  \[
  \frac{rV'-2V}{2r\sqrt V}=\frac{-2+kcr^{2-k}}{2r\sqrt V},
  \]
  which is not identically zero. Under $h=\rho^2g$, a principal curvature transforms as $\lambda_i^h=\rho^{-1}(\lambda_i^g+\nu_g(\log\rho))$, so the difference is multiplied by $\rho^{-1}$. The levels are therefore not all umbilic. Finally equality in the sharp boundary comparison would make the interior hyperbolic, contrary to \eqref{eq:schwarzschild-weyl}.
  \end{proof}

\section{Normalization under compactly supported metric perturbations}\label{sec:nonsymmetric}
  Let $n\ge3$, $k=n-1$, and let $q$ satisfy \eqref{eq:q-class}.
  The deformations in Proposition~\ref{prop:volume-deformations}
  preserve the radial Laplacian and hence the previously constructed
  solution. For a general smooth metric $g$ on $\mathbb R\times N$
  agreeing with $g_\varepsilon$ outside a compact set, we seek a
  new positive solution in the full space $H^1(M,g)$.
  The compact embedding used for radial minimization in
  Section~\ref{sec:existence} is no longer available: minimizing
  sequences may move out along an expanding end.

  \subsection{Variational existence}
  The hyperbolic Nehari level provides the energy threshold for
  escape along the unchanged ends. For background on concentration and
  compactness, see \cite{Lions:1984aa,Skrzypczak:2020aa,Ganguly:2025aa}.

  For a complete metric $g$ define
  \begin{align*}
  \mathcal E_g(v)&=\frac{1}{2}\int_M|dv|_g^2\,dV_g-\int_M F(|v|)\,dV_g,\\
  \mathcal N_g&=\left\{v\in H^1(M)\setminus\{0\}:
  \int_M|dv|_g^2\,dV_g=\int_M|v|f(|v|)\,dV_g\right\}.
  \end{align*}
  Set
  \begin{equation}\label{eq:energy-levels}
  c_g=\inf_{\mathcal N_g}\mathcal E_g,
  \qquad c_{\mathbb H}=\inf_{\mathcal N_{g_{\mathbb H}}}\mathcal E_{g_{\mathbb H}},
  \end{equation}
  where $g_{\mathbb H}$ has sectional curvature $-1$. We write
  $f=f_q$ and $F(s)=\int_0^s f(\tau)\,d\tau$ in these definitions.
  By Lemma~\ref{lem:nehari}, the Nehari identity selects the unique
  maximum of $\mathcal E_g$ on each nonzero ray. A solution attaining
  $c_g$ is called a ground state.
  Write $\|v\|_g^2=\int_M|dv|_g^2\,dV_g$; on the metrics below
  this norm is equivalent to the full $H^1$ norm.
  
  In the next proposition, $g_\varepsilon$ is any fixed metric of Theorem~\ref{thm:two-ended}, with integer $\ell\ge k-1$ and $0\le\varepsilon<1/(\ell+1)$.
  
  \begin{proposition}\label{prop:energy-gap}
  Let $g$ be a smooth metric on $\mathbb R\times N$ agreeing with $g_\varepsilon$ outside a compact set, and let $q$ satisfy \eqref{eq:q-class}. Then $0<c_g<\infty$ and $0<c_{\mathbb H}<\infty$. If
  \begin{equation}\label{eq:strict-energy-gap}
  c_g<c_{\mathbb H},
  \end{equation}
  there is a positive smooth solution $u\in H^1(M)$ of $-\Delta u=f_q(u)$ with $\mathcal E_g(u)=c_g$.
  \end{proposition}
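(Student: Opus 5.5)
The plan is the Nehari concentration--compactness argument, with the hyperbolic level as the escape threshold. The ends of $g_\varepsilon$ are asymptotically hyperbolic but not exactly hyperbolic, so the threshold comparison at infinity has to be checked.

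\emph{Step 1: finiteness and positivity.} On any complete metric $g$ agreeing with $g_\varepsilon$ outside a compact set, the bottom of the spectrum is positive. For $g_\varepsilon$ this follows from the coercivity \eqref{eq:warped-coercivity} applied on each slice after separating $N$, together with the nonnegative $N$-gradient term. A compact perturbation changes $\|\cdot\|_g$ only up to equivalence provided there is no $L^2$ eigenvalue at zero, and zero is excluded because $\Delta_g$ has no nonzero $L^2$ harmonic functions with finite Dirichlet energy here. Alternatively, one can argue directly that $\int|dv|^2\ge c\int v^2$, using a cutoff together with a local Poincaré estimate on the compact region.

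Next we need Sobolev embeddings into $L^{p_1+1}$ and $L^{p_2+1}$. Both exponents are at most $2n/(n-2)$, since $p_2+1=2+2/k<2^*$; bounded geometry gives these embeddings. With them, the growth bounds \eqref{eq:variational-growth} yield $\|v\|_g\ge\delta_0>0$ on $\mathcal N_g$ and $\mathcal E_g\ge(\tfrac12-\tfrac1{p_1+1})\|v\|_g^2$ there, so $c_g>0$. Any compactly supported test function projected to its unique Nehari point shows $c_g<\infty$. The same reasoning gives $0<c_{\mathbb H}<\infty$.

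\emph{Step 2: a bounded Palais--Smale sequence.} Since the minimum over $\mathcal N_g$ is a natural constraint, Ekeland's principle on $\mathcal N_g$ provides a minimizing sequence $v_j\ge0$ with $\mathcal E_g'(v_j)\to0$. This sequence is bounded in $H^1$. Pass to a weak limit $v$. If $v\ne0$, then weak lower semicontinuity, the argument with the increasing function $B(s)=sf(s)/2-F(s)$ from Lemma~\ref{lem:nehari}, and projection to $\mathcal N_g$ give a minimizer.

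\emph{Step 3: excluding vanishing and escape (main obstacle).} Suppose $v=0$. Local Rellich compactness gives $v_j\to0$ in $L^r_{\mathrm{loc}}$. Lions' lemma, valid on bounded geometry, excludes total vanishing, because vanishing would give $\int v_jf(v_j)\to0$, contradicting $\|v_j\|\ge\delta_0$. So mass concentrates on unit balls $B_1(y_j)$ with $y_j\to\infty$ along one end.

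The hard point is to show that $\liminf\mathcal E_g(v_j)\ge c_{\mathbb H}$ for such escaping sequences. Each end of $g_\varepsilon$ is $x^{-2}(dx^2/D+h_N)$ with $D=1+O(x^2)$. Hence on balls $B_R(y_j)$ the metric converges in $C^2$ to a hyperbolic metric: a quotient of $\mathbb H^n$, namely the cusp-free hyperbolic cylinder $dt^2+e^{2t}h_N$, after lifting to the universal cover of $N$ locally. The plan is as follows. Cut $v_j$ off to $\{t>T\}$; the error is controlled by an interior splitting estimate. Then compare with the model end metric $dt^2+e^{2t}h_N$. I would show that the Nehari level of this model end, or of its full hyperbolic manifold $\mathbb R\times N$ with warping $\cosh t$, is at least $c_{\mathbb H}$. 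To do so, lift a near-minimizer supported in a bounded region to $\mathbb H^n$ via the covering and use an injectivity-radius argument: far out the injectivity radius of $e^{2t}h_N$ tends to infinity, so the functions localize in balls that embed isometrically, up to vanishing error, into $\mathbb H^n$. The metric perturbation $O(e^{-2t})$ contributes $o(1)$.

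This gives $\liminf\mathcal E_g(v_j)\ge c_{\mathbb H}$, contradicting \eqref{eq:strict-energy-gap}. Hence $v\ne0$, and Step 2 concludes. Finally, $u=|v|$ is a weak solution. Elliptic regularity (subcritical growth) makes it smooth, and the strong maximum principle makes it positive.
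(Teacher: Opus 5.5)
Your Steps 1--2 and the vanishing alternative in Step 3 follow the paper, with two small corrections. In Step 1, $\int v^2\,dV_g\le C\|v\|_g^2$ follows at once from the uniform equivalence of $g$ and $g_\varepsilon$ together with the coercivity for $g_\varepsilon$, so the spectral detour is unnecessary. In Step 2, weak lower semicontinuity does not give $K_g(v)\le0$ without compact embeddings, since Fatou goes the wrong way for $\int vf(v)$. Instead, $\mathcal E_g'(v_j)\to0$ makes $v$ a weak solution, so $v\in\mathcal N_g$ once $v\ne0$. Fatou applied to $B(|v_j|)\ge0$, with $\int_MB(|v_j|)\,dV_g\to c_g$, then gives $\mathcal E_g(v)\le c_g$.

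The genuine gap is the escape case. You flag it as the main obstacle, but you do not prove it, and the route you outline fails as stated. Your observation that the geometry on balls $B_R(y_j)$ becomes hyperbolic is right. The identification of the limit is not.

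\begin{itemize}
\item The ends are not quotients of $\mathbb H^n$. The metric $dt^2+e^{2t}h_N$ has tangential sectional curvatures $e^{-2t}K_N-1$. The metric $dt^2+\cosh^2t\,h_N$ is hyperbolic only if $h_N$ has constant curvature $-1$, which is not assumed for $k\ge3$. So there is no covering to lift along.
\item Even where a covering exists, lifting a compactly supported function gives a periodic function of infinite energy. It is an $H^1(\mathbb H^n)$ competitor only if its support already lies in an injectivity ball, and near-minimizers are not known to localize that way. The obvious partition-of-unity localization goes the wrong way: with $\sum_\alpha\psi_\alpha^2=1$ and $f(s)/s$ increasing, one gets $\sum_\alpha K_g(\psi_\alpha v)\ge K_g(v)=0$, so no piece is forced into $\{K\le0\}$.
\item The Nehari level of the whole manifold $dt^2+\cosh^2t\,h_N$ is the wrong quantity. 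The far end of $g_\varepsilon$ is, up to relative errors $O(x^{2\ell+2})$, also the far end of every $g_{\varepsilon'}$. By Proposition~\ref{prop:long-neck-gap}, $c_{g_{\varepsilon'}}\to0$ as $\varepsilon'\uparrow\varepsilon_c$. So transplanting into a complete filling of this end gives no useful lower bound, and nothing provides $c_{g_0}\ge c_{\mathbb H}$.
\item What you actually need is that the Nehari level of $\{t>T\}$ is asymptotically at least $c_{\mathbb H}$. That is the escape statement itself, so deferring to it is circular.
\end{itemize}

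The missing idea is that no static lower bound for an end level is needed: the Palais--Smale sequence produces the hyperbolic profile directly. With concentration points $z_j=(t_j,y_j)$ and $a_j=a_\varepsilon(t_j)$, the paper uses the charts $\Phi_j(s,z)=(t_j+s,\exp^{h_N}_{y_j}(z/a_j))$. Since $a_\varepsilon(t_j+s)/a_j\to e^s$ and $a_j\operatorname{inj}(N,h_N)\to\infty$, one has $\Phi_j^*g\to ds^2+e^{2s}|dz|^2$ smoothly on domains exhausting $\mathbb R\times\mathbb R^k$, which is all of $\mathbb H^n$. The rescaling unwraps $N$, so neither a covering nor any curvature assumption on $h_N$ is needed.

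The pulled-back $v_j$ converge weakly in $H^1_{\mathrm{loc}}$ to some $w\ne0$, by the mass bound on $B_1(z_j)$. Passing $\mathcal E_g'(v_j)\to0$ to the limit shows that $w\in H^1(\mathbb H^n)$ solves the hyperbolic equation, so $w\in\mathcal N_{g_{\mathbb H}}$. Because $B\ge0$ and $\int_MB(|v_j|)\,dV_g\to c_g$, apply Fatou on each fixed chart domain, where $\Phi_j$ is injective for large $j$, and then exhaust. This gives $c_{\mathbb H}\le\mathcal E_{g_{\mathbb H}}(w)=\int_{\mathbb H^n}B(|w|)\,dV_{g_{\mathbb H}}\le c_g$, contradicting $c_g<c_{\mathbb H}$. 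No cutoff, splitting estimate or covering argument enters.
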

  \begin{proof}
  Formula \eqref{eq:warped-coercivity}, applied at each $y\in N$ and integrated in $y$, gives
  \[
  \int_M|dv|_{g_\varepsilon}^2\,dV_{g_\varepsilon}
  \ge\frac{k\eta_\varepsilon}{2}\int_M v^2\,dV_{g_\varepsilon}
  \quad(v\in C_c^\infty(M)).
  \]
  In fact, with $j(t)=a_\varepsilon(t)^k$,
  \[
  |dv|_{g_\varepsilon}^2=|\partial_t v|^2
  +a_\varepsilon^{-2}|d_Nv|_{h_N}^2,
  \qquad dV_{g_\varepsilon}=j(t)\,dt\,dV_{h_N},
  \]
  so the one-dimensional inequality controls the full $L^2$ norm
  after integration over $N$, and the angular energy is nonnegative.
  The metrics $g$ and $g_\varepsilon$ agree outside a compact set;
  on that compact set the eigenvalues of one relative to the other
  have positive lower and finite upper bounds. Their volume forms
  and gradient integrals are therefore uniformly comparable.
  Consequently $\int v^2\,dV_g\le C\|v\|_g^2$, and the gradient
  norm is equivalent to the full $H^1(g)$ norm. The compact part and the explicit ends have coordinate balls with uniformly controlled ellipticity and derivatives, and a covering with uniformly bounded overlap. Local Sobolev inequalities therefore give
  \[
  H^1(M,g)\longrightarrow L^r(M,g),\qquad 2\le r\le 2^*:=\frac{2n}{n-2}.
  \]
On hyperbolic space the usual Poincar\'e and Sobolev inequalities give
the same norm equivalence and embeddings; in particular,
$\int|dv|^2\ge k^2\int v^2/4$ (see \cite{Mancini:2008aa}).
  Since $n\ge3$,
  \[
  2<p_1+1=2+\frac1{n-1}<p_2+1=\frac{2n}{n-1}
      <\frac{2n}{n-2}=2^*.
  \]
  For every $v\in H^1(M)\setminus\{0\}$, the ray argument in Lemma~\ref{lem:nehari} gives a unique $\theta>0$ such that $\theta v\in\mathcal N_g$. The lower norm bound on $\mathcal N_g$ and
  \[
  \mathcal E_g(v)\ge\left(\frac{1}{2}-\frac1{p_1+1}\right)
  \int_M|dv|_g^2\,dV_g\quad(v\in\mathcal N_g)
  \]
  prove positivity of $c_g$; a compactly supported test proves finiteness. The hyperbolic argument is identical.
  
For $K_g(v)=\|v\|_g^2-\int_M|v|f(|v|)\,dV_g$, the Nehari set is a
closed $C^1$ hypersurface separated from zero.
The natural-constraint estimate
$K_g'(v)[v]\le-(p_1-1)\|v\|_g^2$ and the positive lower norm bound
are the same as in Lemma~\ref{lem:nehari}.
Ekeland's principle \cite[Theorem~1.1]{Ekeland:1974aa} on the complete
Nehari set gives a bounded minimizing sequence with vanishing constrained
derivative. Testing its multiplier relation by $v_j$ forces the multipliers
to zero, and the growth bound keeps $K_g'(v_j)$ bounded in $H^{-1}$. Hence
\begin{equation}\label{eq:full-ps}
\mathcal E_g(v_j)\longrightarrow c_g,
  \qquad
\mathcal E_g'(v_j)\longrightarrow0\quad\text{in }H^{-1}(M).
\end{equation}
  Pass to a weak $H^1$ limit $v$ and to strong local $L^r$ limits for
  $r<2^*$. The growth of $f$ permits passage to the weak equation on
  every compact set, giving
  $-\Delta v=\operatorname{sgn}(v)f(|v|)$. By
  \eqref{eq:variational-growth},
  \[
  B(s)=\frac12s f(s)-F(s)
  \ge\left(\frac12-\frac1{p_1+1}\right)sf(s)\ge0\quad(s\ge0).
  \]
  Equation \eqref{eq:full-ps} gives
  \begin{equation}\label{eq:nonnegative-energy-density}
  \int_M B(|v_j|)\,dV_g
  =\mathcal E_g(v_j)-\tfrac{1}{2}\mathcal E_g'(v_j)[v_j]
  \longrightarrow c_g.
  \end{equation}
  
Suppose $v=0$. The compact part and the explicit ends give bounded geometry:
positive injectivity radius and uniform bounds on all covariant derivatives
of curvature. By the bounded-geometry vanishing lemma
\cite{Skrzypczak:2020aa},
\[
\sup_{z\in M}\int_{B_1(z)}|v_j|^2\,dV_g\longrightarrow0
\quad\Longrightarrow\quad
v_j\longrightarrow0\text{ in }L^r(M),\quad2<r<2^*.
\]
Indeed, the bounded $H^1$ pullbacks to moving normal charts tend to zero
in $L^2$ and hence weakly in $H^1$, as required by that lemma.
Applied to $v_j$, this would give
  \[
  \|v_j\|_g^2=\int_M|v_j|f(|v_j|)\,dV_g
  \le C\sum_{i=1}^2\|v_j\|_{L^{p_i+1}}^{p_i+1}\longrightarrow0,
  \]
  contrary to the lower norm bound on $\mathcal N_g$. Thus, after a subsequence, there are $z_j$ and $\beta>0$ with
  \[
  \int_{B_1(z_j)}|v_j|^2\,dV_g\ge\beta.
  \]
  Since the local weak limit is zero, $z_j$ leaves every compact set.
  
  On the positive end write $z_j=(t_j,y_j)$,
  $t_j\to\infty$, set $a_j=a_\varepsilon(t_j)$, and choose
  orthonormal coordinates in $T_{y_j}N$. Use the maps
  \[
  \Phi_j(s,z)=\bigl(t_j+s,\exp_{y_j}^{h_N}(z/a_j)\bigr).
  \]
  The factor $a_j^{-1}$ compensates for the tangential metric
  $a_\varepsilon(t)^2h_N$. If $h_{N,ab}^{(j)}$ are the normal-coordinate
  coefficients at $y_j$, then
  \[
  \Phi_j^*g
  =ds^2+\frac{a_\varepsilon(t_j+s)^2}{a_j^2}
  h_{N,ab}^{(j)}(z/a_j)\,dz^a dz^b.
  \]
  Since $a_\varepsilon(t)e^{-t}$ has a positive limit,
  $a_\varepsilon(t_j+s)/a_j\to e^s$. The warping ODE gives
  the same convergence after differentiation on bounded $s$-intervals.
  As $N$ is closed, its normal-coordinate estimates are uniform in
  $y_j$, and $h_{N,ab}^{(j)}(z/a_j)\to\delta_{ab}$ with all
  derivatives on bounded $z$-sets. Moreover
  $a_j\operatorname{inj}(N,h_N)\to\infty$, so the coordinate domains
  exhaust $\mathbb R\times\mathbb R^k$. Thus
  \[
  \Phi_j^*g\longrightarrow ds^2+e^{2s}|dz|^2
  \quad\text{smoothly on every fixed compact set}.
  \]
  With $\tau=e^{-s}$ the
  limit is $\tau^{-2}(d\tau^2+|dz|^2)$ on the whole
  $\mathbb H^n$. On the negative end use $-t$ in place of $t$.
  
  Pull back $v_j$ to these expanding domains. On each fixed compact set the pulled-back $H^1$ norms are uniformly bounded. A diagonal subsequence converges weakly in $H^1_{\mathrm{loc}}$, strongly in $L^r_{\mathrm{loc}}$ for $r<2^*$, and almost everywhere to a function $w$. A fixed coordinate box contains $B_1(z_j)$ for all large $j$: any path of length at most one starting at $z_j$ stays in $|t-t_j|\le1$, where $a_\varepsilon(t)$ is comparable to $a_\varepsilon(t_j)$. Its base displacement is therefore $O(a_\varepsilon(t_j)^{-1})$. The mass bound and strong local $L^2$ convergence imply $w\ne0$.
  
  For a compactly supported smooth test on the limiting hyperbolic space, its pushforward has uniformly bounded $H^1(g)$ norm. Equation \eqref{eq:full-ps}, smooth metric convergence, and strong local subcritical convergence show that
  \[
  -\Delta_{g_{\mathbb H}}w=\operatorname{sgn}(w)f(|w|).
  \]
  The uniform global $H^1$ bound on $v_j$, lower
  semicontinuity on each fixed limit region, and exhaustion give
  $w\in H^1(\mathbb H^n)$. The subcritical growth makes the weak equation continuous on
$H^1(\mathbb H^n)$. Density of compactly supported smooth functions
therefore allows testing by $w$, so $w\in\mathcal N_{g_{\mathbb H}}$.
Fatou's lemma on each fixed coordinate box, followed by exhaustion, now gives
  \[
  c_{\mathbb H}\le\mathcal E_{g_{\mathbb H}}(w)
  =\int_{\mathbb H^n}B(|w|)\,dV_{g_{\mathbb H}}
  \le c_g.
  \]
  This contradicts \eqref{eq:strict-energy-gap}.
  
  It follows that $v\ne0$. The same density argument gives $v\in\mathcal N_g$, and Fatou's lemma applied to \eqref{eq:nonnegative-energy-density} gives
  \[
  c_g\le\mathcal E_g(v)=\int_M B(|v|)\,dV_g\le c_g.
  \]
  Thus its absolute value also minimizes on $\mathcal N_g$, so the multiplier calculation makes $u=|v|$ a weak solution. The subcritical growth in \eqref{eq:variational-growth} allows iteration of local $W^{2,p}$ estimates and Sobolev embedding from $u\in H^1_{\mathrm{loc}}$ to $u\in L^\infty_{\mathrm{loc}}$. Local elliptic regularity then gives a classical nonnegative solution, and the strong minimum principle makes it positive. Because $f$ is smooth at positive arguments, it is smooth in the interior.
  \end{proof}
  
  Fix $\ell\ge k$ and set
  \[
  \varepsilon_c=\frac1{\ell+1},\qquad
   a_c=\sqrt{\frac\ell{\ell+1}}.
  \]
  At $\varepsilon=\varepsilon_c$, the initial data in
  \eqref{eq:warping-ode} give the constant solution $a\equiv a_c$.
  
  \begin{proposition}\label{prop:long-neck-gap}
  For every fixed source $q$ in \eqref{eq:q-class},
  \begin{equation}\label{eq:level-to-zero}
  \lim_{\varepsilon\uparrow\varepsilon_c}c_{g_\varepsilon}=0.
  \end{equation}
  In particular, there is $\varepsilon_0<\varepsilon_c$ such that \eqref{eq:strict-energy-gap} holds whenever $\varepsilon_0<\varepsilon<\varepsilon_c$. For every fixed $\varepsilon$ in this interval and compact $K\subset M$, there is $\delta>0$ such that the gap also holds for each smooth metric $\widetilde g$ satisfying $\widetilde g=g_\varepsilon$ outside $K$ and $\|\widetilde g-g_\varepsilon\|_{C^0(K,g_\varepsilon)}<\delta$.
  \end{proposition}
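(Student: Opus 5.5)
The plan is to show that $c_{g_\varepsilon}\to0$ with an explicit test function concentrated on the long neck. Then I use that $c_{\mathbb H}>0$ is fixed (Proposition~\ref{prop:energy-gap}), and finish with a perturbation argument in $C^0$.

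\emph{Step 1: the neck.} As $\varepsilon\uparrow\varepsilon_c$, the solution $a_\varepsilon$ of \eqref{eq:warping-ode} stays close to the constant $a_c$ on an interval $[-L_\varepsilon,L_\varepsilon]$ with $L_\varepsilon\to\infty$. To see this, linearize around $a_c$. The potential $V(a)=a^2+c_\varepsilon a^{-2\ell}$ from \eqref{eq:warping-first-integral} has a critical point near $a_c$, and it becomes degenerate as $\varepsilon\to\varepsilon_c$. Indeed, at $\varepsilon_c$ one has $a''/a=1-\ell c_\varepsilon a^{-2\ell-2}=0$ at $a_c$, and $\eta_\varepsilon\to0$. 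Continuous dependence of the ODE on $(\varepsilon,a(0))$ near the equilibrium $a\equiv a_c$ then gives: for each fixed $L$, $a_\varepsilon\to a_c$ in $C^1[-L,L]$ as $\varepsilon\uparrow\varepsilon_c$.

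\emph{Step 2: test functions.} Fix $L$ and take $v_L(t)=\phi(t/L)$, with $\phi\in C_c^\infty(-1,1)$ even, $0\le\phi\le1$, and $\phi=1$ on $[-1/2,1/2]$. Since $j=a_\varepsilon^k$ is close to $a_c^k$ on the support, we get
\[
\|v_L\|^2\approx a_c^k\operatorname{Vol}(N)L^{-1}\int\phi'^2,\qquad
\int jv_Lf(v_L)\approx a_c^k\operatorname{Vol}(N)L\int\phi f(\phi).
\]
Project $v_L$ to the Nehari set along its ray. Because $f(s)/s$ increases from $0$ to $\infty$, there is a unique $\theta_L$ with
\[
\theta_L^2\|v_L\|^2=\int j\,\theta_Lv_Lf(\theta_Lv_L).
\]
The ratio of the kinetic term to the nonlinear term is $O(L^{-2})$, and $f(s)/s\ge cs^{1/k}$ for small $s$, by the lower bound in \eqref{eq:q-slope} ($sf'\ge p_1f$). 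Together these force $\theta_L\to0$. Quantitatively, $\theta_L^{1/k}\lesssim L^{-2}$ via $f(\theta s)\ge\theta^{p_1}f(s)$ for $\theta\le1$. The Nehari energy is then bounded by
\[
\mathcal E(\theta_Lv_L)\le\tfrac12\theta_L^2\|v_L\|^2\lesssim L^{-4k}L^{-1}.
\]
Given $\sigma>0$, choose $L$ so that this bound is $<\sigma/2$, and then choose $\varepsilon$ close to $\varepsilon_c$ so that the approximations in Step 2 hold within a factor of two. This gives $c_{g_\varepsilon}<\sigma$, which proves \eqref{eq:level-to-zero}. The existence of $\varepsilon_0$ follows because $c_{\mathbb H}>0$ does not depend on $\varepsilon$.

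\emph{Step 3: $C^0$ perturbations.} Fix $\varepsilon$ with $c_{g_\varepsilon}<c_{\mathbb H}$, and take $w\in\mathcal N_{g_\varepsilon}$, compactly supported, with $\mathcal E_{g_\varepsilon}(w)<c_{\mathbb H}$. Such a $w$ exists by density and continuity of the ray projection. If $\|\widetilde g-g_\varepsilon\|_{C^0(K)}<\delta$, then $|dw|^2_{\widetilde g}$ and $dV_{\widetilde g}$ differ from the $g_\varepsilon$ quantities by a factor $1+O(\delta)$ on $K$, and agree outside $K$. Hence the Nehari scaling $\widetilde\theta$ of $w$ for $\widetilde g$ is $1+o(1)$, and $\mathcal E_{\widetilde g}(\widetilde\theta w)\to\mathcal E_{g_\varepsilon}(w)$ as $\delta\to0$. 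This gives $c_{\widetilde g}<c_{\mathbb H}$.

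The main obstacle is Step 1: justifying uniform closeness of $a_\varepsilon$ to $a_c$ on long intervals. Continuous dependence of the ODE at the equilibrium $\varepsilon=\varepsilon_c$ suffices for each fixed $L$, which is all the argument above requires, so I expect this to be manageable.
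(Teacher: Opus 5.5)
Your proof is correct and follows the paper's argument. You use a neck test function $\chi(t/L)$ where $a_\varepsilon^k$ has fixed two-sided bounds; fixing $L$ and then taking $\varepsilon$ close to $\varepsilon_c$ by continuous dependence is equivalent to the paper's exit time $L_\varepsilon\to\infty$. The Nehari ray equation then gives $f(\theta_L)/\theta_L\lesssim L^{-2}$, and you bound $\mathcal E\le\frac12\theta_L^2\|v_L\|^2$. Finally, the ray root is stable under $C^0$ perturbations supported in $K$. In Step 3 you could reuse the explicit neck test, as the paper does, instead of appealing to density and continuity of the ray projection.

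One correction is needed. The inequality $sf'\ge p_1f$ integrates to the \emph{upper} bound $f(\theta s)\le\theta^{p_1}f(s)$ for $\theta\le1$. So your claims $f(\theta s)\ge\theta^{p_1}f(s)$ and $f(s)/s\ge cs^{1/k}$ are reversed, and they are false when $q(0)=0$: for $q(s)=bs$ one has $f(s)/s=kbs^{2/k}$. The valid lower bound comes from $sf'\le p_2f$, or from concavity of $q$ as in the paper, and is $f(s)/s\ge cs^{2/k}$. This gives $\theta_L\lesssim L^{-k}$ and the rate $\mathcal E\lesssim L^{-(2k+1)}$, not $L^{-4k-1}$.

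The conclusion is unaffected. Strict increase of $f(s)/s$ on $(0,\infty)$ alone already forces $\theta_L\to0$, and hence $c_{g_\varepsilon}\to0$.
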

  \begin{proof}
  Fix $d>0$ and, for $\varepsilon$ close to $\varepsilon_c$, let $L_\varepsilon$ be the first positive time at which $a_\varepsilon=a_c+d$. Ordinary smooth dependence in \eqref{eq:warping-ode} gives convergence to $a_c$ on every fixed finite interval. Since each subcritical $a_\varepsilon$ tends to infinity, $L_\varepsilon$ is finite and tends to infinity. On $[-L_\varepsilon,L_\varepsilon]$, the volume density $a_\varepsilon^k$ has fixed positive upper and lower bounds.
  
  Choose $\chi\in C_c^\infty((-1,1))$, $0\le\chi\le1$,
  with $\chi=1$ on $[-1/2,1/2]$, and put
  $\phi_\varepsilon(t,y)=\chi(t/L_\varepsilon)$.
  This function is constant along $N$, so
  \[
  \int_M|d\phi_\varepsilon|^2\,dV_{g_\varepsilon}
  =\frac{\operatorname{Vol}(N,h_N)}{L_\varepsilon^2}
  \int_{-L_\varepsilon}^{L_\varepsilon}
  a_\varepsilon(t)^k|\chi'(t/L_\varepsilon)|^2\,dt.
  \]
  The fixed upper and lower bounds for $a_\varepsilon^k$ give
  \begin{equation}\label{eq:neck-test}
  A_\varepsilon:=\int_M|d\phi_\varepsilon|^2\,dV_{g_\varepsilon}
  \le C L_\varepsilon^{-1},
  \qquad
  \operatorname{Vol}_{g_\varepsilon}\{\phi_\varepsilon=1\}\ge c L_\varepsilon.
  \end{equation}
  Let $\theta_\varepsilon>0$ be the unique ray maximum, equivalently $\theta_\varepsilon\phi_\varepsilon\in\mathcal N_{g_\varepsilon}$. Its equation gives
  \[
  A_\varepsilon
  =\int_M\phi_\varepsilon^2
  \frac{f(\theta_\varepsilon\phi_\varepsilon)}{\theta_\varepsilon\phi_\varepsilon}
  \,dV_{g_\varepsilon}
  \ge cL_\varepsilon\frac{f(\theta_\varepsilon)}{\theta_\varepsilon}.
  \]
  Combining this identity with \eqref{eq:neck-test} gives
  $f(\theta_\varepsilon)/\theta_\varepsilon\le C L_\varepsilon^{-2}$.
  Since $f(s)/s$ is strictly increasing and positive for $s>0$,
  $\theta_\varepsilon\to0$. Fix a small $r_0>0$. Concavity and
  $q(0)\ge0$ give, for $0<r\le r_0$,
  \[
  q(r)\ge\left(1-\frac r{r_0}\right)q(0)
  +\frac r{r_0}q(r_0)\ge\frac{q(r_0)}{r_0}r.
  \]
  Therefore, using $f(s)=k s^{1+1/k}q(s^{1/k})$,
  \[
  c_2\theta_\varepsilon^{2/k}
  \le\frac{f(\theta_\varepsilon)}{\theta_\varepsilon}
  \le C L_\varepsilon^{-2},\qquad
  \theta_\varepsilon\le C L_\varepsilon^{-k}.
  \]
  Combining this with \eqref{eq:neck-test} yields the explicit estimate
  \begin{equation}\label{eq:neck-energy-rate}
  0<c_{g_\varepsilon}
  \le\max_{\theta>0}\mathcal E_{g_\varepsilon}(\theta\phi_\varepsilon)
  \le\frac{1}{2}\theta_\varepsilon^2A_\varepsilon
  \le C L_\varepsilon^{-(2k+1)}\longrightarrow0.
  \end{equation}
  Since $c_{\mathbb H}$ is fixed and positive, choose
  $\varepsilon_0<\varepsilon_c$ so that the maximum along this same
  test ray is below $c_{\mathbb H}$ for every
  $\varepsilon_0<\varepsilon<\varepsilon_c$.

Fix such an $\varepsilon$, a compact $K$, and the same nonzero test
$\phi=\phi_\varepsilon$. Its ray equation is
\[
\int_M|d\phi|_g^2\,dV_g
=\int_M\phi^2\frac{f(\theta\phi)}{\theta\phi}\,dV_g.
\]
Under $C^0(K)$ convergence of metrics fixed off $K$, both sides converge
uniformly for $\theta$ in compact intervals. Strict increase of $f(s)/s$
and its limits at zero and infinity keep the unique positive root away
from zero and infinity and make it continuous. The selected ray maximum
therefore remains below $c_{\mathbb H}$ for all sufficiently small changes,
which proves the last assertion.
  \end{proof}
  \subsection{Smooth compactification}
  The two-sided decay estimate below holds for every source in
  \eqref{eq:q-class}. For the additional smooth extension of a
  nonradial solution, we use the local condition
  \begin{equation}\label{eq:source-reflection}
  q(s)=sQ(s^2)\quad\text{for }0\le s<s_0,
  \qquad Q\text{ smooth near }0.
  \end{equation}
  Equivalently, $q$ has a smooth odd extension across zero. Examples
  include $q(s)=bs$ with $b>0$, $q(s)=\tanh s$, and
  $q(s)=s/\sqrt{1+s^2}$.

  \begin{lemma}\label{lem:nonradial-end}
  Let $g$ agree outside a compact set with $g_\varepsilon$ as above, and let $u\in H^1(M)$ be a positive smooth solution of $-\Delta u=f_q(u)$. On either end use $x=1/a_\varepsilon(t)$, with the sign of $t$ reversed on the negative end. Then
  \begin{equation}\label{eq:sharp-end-order}
  cx^k\le u(x,y)\le Cx^k
  \end{equation}
  for sufficiently small $x$. If \eqref{eq:source-reflection} also holds, there is a smooth positive function $v(x,y)$, even in $x$ at zero, such that
  \begin{equation}\label{eq:nonradial-expansion}
  u=x^kv(x,y).
  \end{equation}
  Consequently $\rho=u^{1/k}$ is a smooth defining function and $h=\rho^2g$ is a smooth nondegenerate compactification. Its boundary metric is $v(0,y)^{2/k}h_N$.
  \end{lemma}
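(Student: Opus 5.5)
Near either end the metric $g$ coincides with $g_\varepsilon$, so in the coordinates $(x,y)$ with $x=1/a_\varepsilon(t)$ we have $g=x^{-2}\bigl(dx^2/D(x)+h_N\bigr)$, where $D(x)=1-x^2+c_\varepsilon x^{2\ell+2}$. The plan is to prove the two-sided bound \eqref{eq:sharp-end-order} by barriers, then obtain the expansion \eqref{eq:nonradial-expansion} by treating $\rho^{-2}$-conformal equation as a degenerate elliptic (edge) equation and using the odd-extension condition to reflect.

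\textbf{Upper bound.} First, $u\in H^1$ together with local elliptic estimates on unit $g$-balls (bounded geometry) gives $u\to0$ at infinity, and the local Sobolev/De~Giorgi bound gives $u=O(e^{-kt/2})=O(x^{k/2})$. Since $f_q(u)\le Cu^{p_1}$ with $p_1>1$, the equation reads $-\Delta_g u=c(x,y)u$ with $c=O(x^{(p_1-1)k/2})\to0$. The radial function $x^{k}$ satisfies $\Delta_g x^{k}=O(x^{k+2})$ (the exponent $k$ is the indicial root of $\Delta_g$ on these ends, since $x^k$ is asymptotically harmonic). For $\sigma\in(0,k)$, the function $x^\sigma$ satisfies $-\Delta_g x^\sigma=\sigma(k-\sigma)x^\sigma(1+o(1))$. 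Comparing $u$ with $Cx^{\sigma}$ on $\{x<x_0\}$ using the maximum principle — valid because $u\to0$ — bootstraps the exponent upward exactly as in the radial decay iteration preceding the two-ended theorem. At the endpoint $\sigma=k$, use the supersolution $x^k-x^{k+\beta}$ with small $\beta>0$, together with the smallness of $c$, to get $u\le Cx^k$.

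\textbf{Lower bound.} For the lower bound, use the subsolution $\epsilon(x^k-x^{k+\beta})$: on small $x$ it satisfies $-\Delta_g w\le0\le f_q(u)=-\Delta_g u$. Match it on the level $x=x_0$ using $\min u>0$ there, and again apply the maximum principle at infinity. Both functions tend to zero, and the comparison is justified in $H^1$ on the end.

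\textbf{Expansion.} Write $u=x^kv$. The transformed equation for $v$ is the nonradial analogue of \eqref{eq:warped-v}:
\[
x^{-k-1}\partial_x\bigl(x^{k+1}\sqrt D\,\partial_xv\bigr)+D^{-1/2}\Delta_{h_N}v=-\tfrac{k}{x}D^{-1/2}\Bigl(\tfrac{D'}2v+v^{1+1/k}q(xv^{1/k})\Bigr).
\]
Under \eqref{eq:source-reflection}, $\tfrac1x q(xv^{1/k})=v^{1/k}Q(x^2v^{2/k})$ is smooth and even in $x$. Because $D$ is also even, the operator $x^{-k-1}\partial_x(x^{k+1}\partial_x)$ is the radial Laplacian in $\mathbb R^{k+2}$. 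We therefore view $v$ as a function of $(X,y)$, with $X\in\mathbb R^{k+2}$ and $|X|=x$. On this space the equation becomes a uniformly elliptic semilinear equation with smooth coefficients in $X$, namely $\Delta_{\mathbb R^{k+2}}v+\Delta_{h_N}v+\dots=G(|X|^2,v)$. The bounds $c\le v\le C$ give $v\in L^\infty$. We check that $v$ is a weak solution across $X=0$; this holds because the set $\{0\}\times N$ has codimension $k+2\ge2$ and $v$ is bounded, so it is removable for $H^1$. Standard elliptic regularity (De~Giorgi–Nash, then Schauder bootstrapping) then gives $v$ smooth in $X$. A smooth function that is radial in $X$ is smooth and even in $x$. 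The positivity of $v(0,y)$ follows from the lower bound, so $\rho=xv^{1/k}$ is a smooth defining function and $h=v^{2/k}(dx^2/D+h_N)$ is a smooth nondegenerate metric with boundary metric $v(0,y)^{2/k}h_N$.

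\textbf{Main obstacle.} The main obstacle is this lifting step: we must verify that $v$ has finite energy $\int|\nabla_X v|^2$ near $X=0$. I would obtain this from $u\in H^1(g)$ and a Caccioppoli estimate for the $v$-equation, using the weight $x^{k+1}$, which is exactly the Euclidean volume factor.
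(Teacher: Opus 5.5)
Your overall route matches the paper's: two-sided barriers for $u/x^k$, then lifting $v=x^{-k}u$ radially to $\mathbb R^{k+2}\times N$ via the odd extension of $q$. The barrier step, however, has two concrete defects.

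\textbf{Upper comparison.} Put $\mathfrak a=f_q(u)/u>0$. Then $u-Cx^\sigma$ is a subsolution of $-\Delta-\mathfrak a$, whose zeroth-order coefficient has the wrong sign. On an expanding end, decay to zero does not give a maximum principle for such an operator. For a constant $c\in(0,k^2/4)$, the radial equation $(-\Delta-c)w=0$ has solutions asymptotic to $x^{s_\pm}$, where $s_\pm(k-s_\pm)=c$ and $0<s_-<k/2<s_+$. For small $x_0$, the combination vanishing on $\{x=x_0\}$ is positive for $0<x<x_0$ and tends to zero; it is not in $H^1$, since $2s_-<k$.

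A quotient maximum principle would need an a priori rate, and the rate $O(x^{k/2})$ you quote is not available for nonradial $u$. The local $L^2\to L^\infty$ estimate gives only $\sup_{B_1(z)}u\le C\|u\|_{L^2(B_2(z))}\to0$, with no rate. An $L^2$ function need not spread its mass over the cross-sections, whose volume grows like $x^{-k}$. The radial bound came from the one-dimensional Sobolev inequality for $j^{1/2}u$, which uses the symmetry.

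The missing ingredient is the spectral gap of the end. Far out, the outward radial field $X$ has $\operatorname{div}_gX\ge k/2$, which gives $\int|dw|_g^2\,dV_g\ge\frac{k^2}{16}\int w^2\,dV_g$ for $w$ vanishing on the inner boundary. Shrink the end so that $\mathfrak a<\min\{k^2/32,\sigma(k-\sigma)/4\}$. Take $\sigma\in(\max\{k/2,k/p_1\},k)$, so that $x^\sigma$ lies in $H^1$ of the end, and test by $(u-Cx^\sigma)_+\in H^1_0$. This gives $u\le Cx^\sigma$ in one step, hence $f_q(u)\le Cx^\beta$ with $\beta=p_1\sigma>k$; no iteration is needed.

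\textbf{Lower barrier.} There is a sign error. From $-\Delta x^s=s(k-s)Dx^s-\frac s2D'x^{s+1}$ and $D'(x)=-2x+O(x^{2\ell+1})$, one gets $-\Delta(x^k-x^{k+\beta})=\beta(k+\beta)x^{k+\beta}+kx^{k+2}+O(x^{k+\beta+2})>0$. This is exactly the supersolution you use at the upper endpoint, so it cannot also satisfy $-\Delta w\le0$. Use $\epsilon(x^k+x^{k+\beta})$ with $0<\beta<2$, or, as the paper does, the exact radial harmonic function $H_0(x)=\int_0^xs^{k-1}D(s)^{-1/2}\,ds\sim x^k/k$. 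Then $u-\epsilon H_0$ is superharmonic, and the classical minimum principle with decay at infinity applies because no zeroth-order term is present. For the same reason, at the upper endpoint it is cleaner to treat $f_q(u)\le Cx^\beta$ as a known source and compare $u$ with $AH_0-Bx^\beta$ for the Laplacian alone.

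\textbf{Lifting step.} Once the two-sided bound is repaired, your lifting step is sound and shorter than the paper's. Since $u=x^kv$ with $v$ bounded, on the end
$\int|du|_g^2\,dV_g=\int\bigl(x^{k-1}D^{1/2}(kv+xv_x)^2+x^{k+1}D^{-1/2}|d_Nv|_{h_N}^2\bigr)\,dx\,dV_{h_N}$.
So $u\in H^1(g)$ already gives $\int x^{k+1}(v_x^2+|d_Nv|_{h_N}^2)\,dx\,dV_{h_N}<\infty$, i.e.\ $\widehat v\in H^1$ across the axis. The codimension $k+2$ of the axis then lets the weak equation extend by a cutoff argument. The paper instead proves the pointwise bound $|v_x|+|d_Nv|_{h_N}\le C/x$ by rescaled $W^{2,p}$ estimates. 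It uses this both for the energy bound and for the vanishing of the flux across the axis.
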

  \begin{proof}
  The local coordinate balls used above have uniform Sobolev constants and bounded volumes. Since
  \[
  \frac{f(u)}u\le C\bigl(u^{1/k}+u^{2/k}\bigr)
  \]
  the coefficient $a=f(u)/u$ has uniformly bounded local $L^n$
  norm: $n/k<2$ controls the first term using the uniform upper
  bound for the volumes of $B_2(z)$ and the global $L^2$ norm, while
  $2<2n/k<2^*$ controls the second term by the global Sobolev
  embeddings. Write $A=\sup_z\|a\|_{L^n(B_2(z))}<\infty$.
The local $L^2$-to-$L^\infty$ estimate for $-\Delta u=au$,
with $a\in L^n$ (see \cite[Chapter~8]{Gilbarg:2001aa}), gives
\[
\sup_{B_1(z)}u\le C\|u\|_{L^2(B_2(z))}.
\]
The constant depends only on $A$ and the uniform coordinate bounds,
and is therefore independent of $z$.
  The right side tends to zero as $z$ leaves compact sets, since $u\in L^2(M)$. Thus $u\to0$ uniformly on the ends, and both $u$ and $f(u)/u$ are bounded globally.
  
  On the positive end, $dx/dt=-x\sqrt{D(x)}$, where
  $D(x)=1-x^2+c_\varepsilon x^{2\ell+2}$; reverse $t$ on the
  negative end. The volume density in $(x,y)$ coordinates is
  $x^{-k-1}D^{-1/2}\,dx\,dV_{h_N}$. Substitution into the
  divergence formula for the Laplacian gives
  \[
  \Delta u=x^{k+1}\sqrt D\,
  \partial_x\bigl(x^{1-k}\sqrt D\,u_x\bigr)
  +x^2\Delta_{h_N}u.
  \]
  Expanding this expression yields
  \begin{equation}\label{eq:full-end-laplacian}
  g=x^{-2}\left(\frac{dx^2}{D(x)}+h_N\right),\qquad
  \Delta=x^2D\partial_x^2+\bigl((1-k)xD+\tfrac12x^2D'\bigr)\partial_x
  +x^2\Delta_{h_N}.
  \end{equation}
  Choose $\sigma$ with
  \[
  \max\left\{\frac k2,\frac k{p_1}\right\}<\sigma<k,
  \qquad \beta:=p_1\sigma>k.
  \]
  Such a choice is possible because $p_1>1$. The radial power
  calculation from \eqref{eq:full-end-laplacian} is
  \[
  -\Delta x^\sigma
  =\sigma(k-\sigma)D x^\sigma
  -\frac\sigma2 D'x^{\sigma+1}
  =\sigma(k-\sigma)x^\sigma+O(x^{\sigma+2}).
  \]
  Put $\mathfrak a=f(u)/u$. On a sufficiently far end the outward unit radial field $X$ satisfies $\operatorname{div}_gX\ge k/2$. For a compactly supported $w$ with zero trace at the finite boundary,
  \[
  \frac k2\int w^2\,dV_g
  \le-2\int wXw\,dV_g
  \le2\|w\|_2\|dw\|_2,
  \qquad
  \int|dw|_g^2\,dV_g\ge\frac{k^2}{16}\int w^2\,dV_g.
  \]
  Here $X=\partial_t$ on the positive end has divergence
  $ka_\varepsilon'/a_\varepsilon\to k$, which justifies the
  lower bound $k/2$; on the negative end use $-\partial_t$.
  Since $\mathfrak a\to0$, shrink the end so that
  \[
  \mathfrak a<\min\{k^2/32,\sigma(k-\sigma)/4\},\qquad
  -\Delta x^\sigma\ge\tfrac12\sigma(k-\sigma)x^\sigma.
  \]
  Choose $C$ to dominate the finite-boundary trace of $u$. Then $Cx^\sigma$ is a supersolution of $-\Delta-\mathfrak a$. Moreover
  \[
  \int_0^{x_0}\bigl(x^{2\sigma}+|d(x^\sigma)|_g^2\bigr)
  x^{-k-1}\,dx<\infty
  \quad\text{because }2\sigma>k.
  \]
  Both $u$ and $x^\sigma$ belong to $H^1$ on this end, and
  the choice of $C$ makes the positive part
  $w=(u-Cx^\sigma)_+$ have zero trace at its finite boundary.
  Multiplying by compactly supported cutoffs tending to one at
  infinity approximates $w$ in $H^1$, so $w\in H_0^1$ there and
  is an admissible test for $(-\Delta-\mathfrak a)(u-Cx^\sigma)\le0$. We obtain
  \[
  0\ge\int\bigl(|dw|_g^2-\mathfrak a w^2\bigr)\,dV_g
  \ge\frac{k^2}{32}\int w^2\,dV_g.
  \]
  Thus $u\le Cx^\sigma$.
  
  Since $u\to0$ and $f(s)\le Cs^{p_1}$ for small $s$,
  the first bound gives $f(u)\le Cx^\beta$ with $\beta>k$.
  We can now improve the order from $x^\sigma$ to $x^k$.
  The divergence expression above shows that a radial harmonic
  function satisfies $(x^{1-k}\sqrt D\,H')'=0$. The positive
  solution vanishing at infinity is therefore
  \[
  H_0(x)=\int_0^x\frac{s^{k-1}}{\sqrt{D(s)}}\,ds\sim\frac{x^k}{k}.
  \]
  The function $H_0$ belongs to $H^1$ on this collar. Its finite-boundary value is positive, as is the minimum of $u$ there. Choose $c>0$ with $cH_0\le u$ on that boundary and test the superharmonic inequality by $(cH_0-u)_+\in H_0^1$. This gives $u\ge cH_0$. For an upper bound choose $B>0$ so large that
  \[
  -\Delta(-Bx^\beta)\ge Cx^\beta
  \]
  on a sufficiently small collar; here
  $-\Delta(-Bx^\beta)=B\beta(\beta-k)x^\beta+O(x^{\beta+2})$.
  Choose $A$ so that $AH_0-Bx^\beta$ is positive and dominates $u$ at the finite boundary. Testing the difference by $(u-AH_0+Bx^\beta)_+\in H_0^1$ gives
  $u\le AH_0-Bx^\beta\le AH_0$. This proves \eqref{eq:sharp-end-order}.
  
  Put $v=x^{-k}u$. Then
  \[
  u_x=kx^{k-1}v+x^kv_x,\qquad
  u_{xx}=k(k-1)x^{k-2}v+2kx^{k-1}v_x+x^kv_{xx}.
  \]
  On substituting into \eqref{eq:full-end-laplacian}, the two
  terms of order $x^kv$ cancel. Dividing
  $\Delta u+f(u)=0$ by $x^{k+2}$ gives
  \begin{equation}\label{eq:v-full-end}
  D\left(v_{xx}+\frac{k+1}{x}v_x\right)+\frac{D'}2v_x
  +\Delta_{h_N}v+\frac{kD'}{2x}v
  +\frac{k}{x}v^{1+1/k}q(xv^{1/k})=0.
  \end{equation}
  The two-sided bounds give $0<c\le v\le C$. Interior $W^{2,p}$ estimates for $u$ in uniformly controlled rescaled coordinates, followed by Sobolev embedding with $p>n$, give
  \begin{equation}\label{eq:rough-v-derivative}
  |v_x|+|d_Nv|_{h_N}\le C/x.
  \end{equation}
  Indeed, $|d\log x|_g=\sqrt D$ is bounded on the end, so
  $x$ is comparable to its center value on every fixed-radius
  $g$-ball. On a ball centered at $z$ with $x(z)=x_0$,
  \[
  \sup_{B_2(z)}\bigl(u+f(u)\bigr)\le Cx_0^k,
  \]
  because $u\le Cx^k$ and $f(u)/u$ is bounded.
  The uniform interior gradient estimate in the controlled
  $g$-coordinates gives $|du|_g(z)\le Cx_0^k$.
  Since $g^{xx}=x^2D$ and its tangential inverse is
  $x^2h_N^{-1}$, it follows that
  \[
  |u_x|+|d_Nu|_{h_N}\le Cx^{k-1},\qquad
  |v_x|+|d_Nv|_{h_N}
   \le x^{-k}\bigl(|u_x|+|d_Nu|_{h_N}\bigr)
        +kx^{-k-1}u\le C/x.
  \]
  
  Under \eqref{eq:source-reflection}, the last term in
  \eqref{eq:v-full-end} equals
  \[
  \frac{k}{x}v^{1+1/k}q(xv^{1/k})
  =k v^{1+2/k}Q(x^2v^{2/k}),
  \]
  after shrinking the collar so that $x_0C^{1/k}<s_0$, where
  $v\le C$ and $s_0$ is from \eqref{eq:source-reflection}.
  This is smooth in $x^2$ and $v>0$. For a radial function in
  $\mathbb R^{k+2}$,
  \[
  \Delta_z V(r)=V''(r)+\frac{k+1}{r}V'(r).
  \]
  Let $z\in\mathbb R^{k+2}$ and define
  $\widehat v(y,z)=v(|z|,y)$ for $z\ne0$. Since $D$ is an even polynomial,
  equation \eqref{eq:v-full-end} becomes
  \begin{equation}\label{eq:lifted-end}
  \begin{split}
  D(|z|)\Delta_z\widehat v+\Delta_{h_N}\widehat v
  &+\frac{D'(|z|)}{2|z|}z\cdot\nabla_z\widehat v\\
  &+\frac{kD'(|z|)}{2|z|}\widehat v
  +k\widehat v^{1+2/k}Q\bigl(|z|^2\widehat v^{2/k}\bigr)=0.
  \end{split}
  \end{equation}
  All quotients $D'(|z|)/|z|$ are smooth functions of $|z|^2$.
  Since $D(0)=1$ and $h_N$ is positive definite, the principal symbol
  in a coordinate patch of $N$ is
  \[
  D(|z|)|\zeta|^2+h_N^{ab}(y)\eta_a\eta_b,
  \]
  uniformly bounded above and below by positive multiples of
  $|\zeta|^2+|\eta|^2$ on a small product neighborhood. Thus the
  lifted equation is uniformly elliptic with smooth coefficients;
  its nonlinearity is smooth on the positive range of $\widehat v$.
  
  Estimate \eqref{eq:rough-v-derivative} gives
  $|\nabla_{y,z}\widehat v|\le C/|z|$. The radial density in the
  added $k+2$ variables is $r^{k+1}\,dr$, so
  \[
  \int_{|z|<x_0}|\nabla_{y,z}\widehat v|^2\,dz
  \le C\int_0^{x_0}r^{k-1}\,dr<\infty.
  \]
  Choose any bounded values on the measure-zero set $z=0$.
  For a compactly supported test function, the boundary term in
  integration by parts for a first derivative on $\{|z|>r\}$ is
  $O(r^{k+1})$, because $\widehat v$ is bounded and the normal sphere
  has area $O(r^{k+1})$. Letting $r\downarrow0$ shows that its
  weak first derivatives across the axis are the locally square
  integrable derivatives computed off it. Thus the extension belongs
  to $H^1_{\mathrm{loc}}$.
  To check that the equation also extends, integrate
  it first over $\{|z|>r\}$. Writing $r=|z|$, the $z$ part of \eqref{eq:lifted-end} is
  \[
  \operatorname{div}_z(D\nabla_z\widehat v)
  -\frac{D'(r)}{2r}z\cdot\nabla_z\widehat v.
  \]
  The boundary flux against a fixed smooth test function is bounded by
  \[
  C r^{k+1}\sup_{|z|=r}|\partial_r\widehat v|
  \le C r^k\longrightarrow0.
  \]
  The other integrals converge by the local $H^1$ bound and
  boundedness of the nonlinear term. Thus \eqref{eq:lifted-end}
  holds weakly across the axis.
  
For this $H^1_{\mathrm{loc}}$ weak solution of
\eqref{eq:lifted-end}, the nonlinear term is smooth on the range $c\le\widehat v\le C$.
Interior $L^p$ regularity, Sobolev bootstrap and Schauder estimates
\cite[Chapters~8--9]{Gilbarg:2001aa} therefore give
$\widehat v\in C^\infty$.
Rotational invariance off the axis extends across it by
  continuity. Restricting the smooth function $\widehat v$ to the
  line $z=(x,0,\ldots,0)$ now gives a smooth extension of $v$ to
  negative $x$. The rotation sending this line to its negative shows
  that the extension is even. Its boundary value remains positive
  by \eqref{eq:sharp-end-order}. Finally
  \[
  \rho=xv^{1/k},\qquad
  h=v^{2/k}\left(\frac{dx^2}{D(x)}+h_N\right)
  \]
  are smooth, $d\rho\ne0$ on the boundary,  and $h$ is nondegenerate. In particular,
  \[
  \rho|_{x=0}=0,\qquad
  \partial_x\rho|_{x=0}=v(0,y)^{1/k}>0,\qquad
  h|_{T\partial M}=v(0,y)^{2/k}h_N.
  \]
  Since $v$ and $D$ are even, the boundary is totally geodesic and
  $h$ extends smoothly to the double.
  \end{proof}

  \begin{samepage}
  \begin{theorem}\label{thm:nonsymmetric}
  Fix $n\ge3$, $k=n-1$, an integer $\ell\ge k$, and a connected closed Einstein manifold $(N^k,h_N)$ with $\operatorname{Ric}_{h_N}=-(k-1)h_N$. Let $q$ satisfy \eqref{eq:q-class} and \eqref{eq:source-reflection}. There is $\varepsilon_0\in(0,1/(\ell+1))$ such that, for every $\varepsilon\in(\varepsilon_0,1/(\ell+1))$ and every compact $K\subset\mathbb R\times N$, there is $\delta>0$ for which each smooth metric $\widetilde g$ satisfying
  \[
  \widetilde g=g_\varepsilon\quad\text{outside }K,
  \qquad \|\widetilde g-g_\varepsilon\|_{C^2(K,g_\varepsilon)}<\delta
  \]
  admits a smooth positive defining function $\rho$ on a compactification with two boundary components such that
  \begin{equation}\label{eq:nonsymmetric-conclusion}
  \begin{gathered}
  h=\rho^2\widetilde g,\qquad -\Delta_h\rho=q(\rho),
  \qquad \operatorname{Ric}_{\widetilde g}\ge-k\widetilde g,\\
  R_{\widetilde g}+n(n-1)=O(\rho^{2\ell+2})=o(\rho^2),
  \qquad R_{h|_{T\partial M}}<R_{q,1}\quad\text{everywhere on }\partial M.
  \end{gathered}
  \end{equation}
  The positive function $u=\rho^k$ minimizes the energy on the full Nehari set. Its boundary metrics are $v_\pm(y)^{2/k}h_N$ for smooth positive functions $v_\pm$, not necessarily constant or equal.
  \end{theorem}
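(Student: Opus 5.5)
The proof assembles the results of Sections~\ref{sec:existence} and~\ref{sec:nonsymmetric}.

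\emph{Choice of parameters.} Take $\varepsilon_0$ from Proposition~\ref{prop:long-neck-gap}, so that $c_{g_\varepsilon}<c_{\mathbb H}$ for $\varepsilon\in(\varepsilon_0,\varepsilon_c)$. Fix such an $\varepsilon$ and a compact $K$. Since $\ell\ge k$ and $\varepsilon>0$, \eqref{eq:two-ended-ricci} gives a positive definite Ricci excess $\operatorname{Ric}_{g_\varepsilon}+kg_\varepsilon$, with a positive lower eigenvalue on $K$. The map $\widetilde g\mapsto\operatorname{Ric}_{\widetilde g}+k\widetilde g$ is continuous in $C^2(K)$. Hence a small $\delta$ keeps this tensor positive on $K$. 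Off $K$ nothing changes, so $\operatorname{Ric}_{\widetilde g}\ge-k\widetilde g$ everywhere. Shrinking $\delta$ further, the $C^0$ part of Proposition~\ref{prop:long-neck-gap} gives $c_{\widetilde g}<c_{\mathbb H}$.

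\emph{Construction of $\rho$.} Proposition~\ref{prop:energy-gap} now yields a positive smooth ground state $u\in H^1(M,\widetilde g)$ with $\mathcal E_{\widetilde g}(u)=c_{\widetilde g}$. This is the minimization claim on the full Nehari set. Since $\widetilde g=g_\varepsilon$ on both ends and $q$ satisfies \eqref{eq:source-reflection}, Lemma~\ref{lem:nonradial-end} applies on each end. It gives $u=x^kv_\pm(x,y)$ with $v_\pm$ smooth and positive. Therefore $\rho=u^{1/k}$ is a smooth defining function, and $h=\rho^2\widetilde g$ compactifies to $[-1,1]\times N$ with boundary metrics $v_\pm(0,y)^{2/k}h_N$. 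The exact transform \eqref{eq:exact-transform} converts $-\Delta_{\widetilde g}u=f_q(u)$ into $-\Delta_h\rho=q(\rho)$.

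\emph{Scalar decay and the boundary bound.} Near the boundary $\widetilde g=g_\varepsilon$, so the trace computation gives $R_{\widetilde g}+n(n-1)=k(2\ell-k+1)c_\varepsilon x^{2\ell+2}$. By \eqref{eq:sharp-end-order}, $\rho/x$ is bounded above and below. Hence this quantity is $O(\rho^{2\ell+2})=o(\rho^2)$. For the strict inequality, use that $h_N$ has scalar curvature $-k(k-1)$ and that the boundary metric is conformal to $h_N$ on a $k$-dimensional base. Theorem~\ref{thm:intro-boundary} then already gives $R_\gamma\le R_{q,1}$.

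\emph{Strictness.} Suppose equality held at one boundary point. The rigidity in Theorem~\ref{thm:intro-boundary} would make $(\overline M,h)$ isometric to the model ball, whose boundary is connected. This contradicts the two boundary components. So $R_\gamma<R_{q,1}$ at every boundary point.

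\emph{Main obstacle.} The delicate step is the uniformity of $\delta$: the Ricci lower bound and the energy gap must both survive the perturbation. The Ricci bound uses the strict margin from $\ell\ge k$ on the compact set $K$. The energy gap is stable under $C^0$ perturbations by Proposition~\ref{prop:long-neck-gap}. Everything else reduces to the cited results.
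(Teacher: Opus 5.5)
Your proposal is correct and follows the paper's proof essentially step for step: $\varepsilon_0$ comes from Proposition~\ref{prop:long-neck-gap}, the strict Ricci margin from \eqref{eq:two-ended-ricci} on $K$ survives small $C^2$ perturbations, the $C^0$-stable energy gap feeds Proposition~\ref{prop:energy-gap}, Lemma~\ref{lem:nonradial-end} is applied on the unchanged ends, and strictness comes from rigidity contradicting the two boundary components. The only blemish is your remark that $h_N$ has scalar curvature $-k(k-1)$; it is never used, and it could not give $R_\gamma<0$ here because the conformal factors $v_\pm(0,y)$ need not be constant, but your rigidity argument for strictness does not rely on it.
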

  \end{samepage}
  \begin{proof}
  Choose $\varepsilon_0$ by Proposition~\ref{prop:long-neck-gap},
  increasing it to a positive number if needed. Fix $\varepsilon$ and
  $K$. By \eqref{eq:two-ended-ricci},
  $\operatorname{Ric}_{g_\varepsilon}+kg_\varepsilon$ is positive
  definite. Compactness gives $m_K>0$ with
  $\operatorname{Ric}_{g_\varepsilon}+kg_\varepsilon
  \ge m_Kg_\varepsilon$ on $K$. The continuous dependence of Ricci
  curvature on the metric, its inverse and its first two derivatives
  gives, for sufficiently small $\delta$,
  \[
  \left|\operatorname{Ric}_{\widetilde g}+k\widetilde g
  -\operatorname{Ric}_{g_\varepsilon}-kg_\varepsilon\right|_{g_\varepsilon}
  \le C_K\delta<\frac{m_K}{2}
  \quad\text{on }K.
  \]
  Thus $\operatorname{Ric}_{\widetilde g}+k\widetilde g\ge0$
  there. Outside $K$ the metric is unchanged, so the same inequality
  already holds. Small $C^0$ size also makes $\widetilde g$ uniformly
  equivalent to $g_\varepsilon$, which preserves completeness.
  
  Decrease $\delta$ further so that the test function in Proposition~\ref{prop:long-neck-gap} still has ray maximum below $c_{\mathbb H}$. Proposition~\ref{prop:energy-gap} gives a positive full-space ground state $u\in H^1(M)$. Lemma~\ref{lem:nonradial-end} gives $u=x^kv(x,y)$ with $v$ smooth and positive at both ends. In the compact part $u>0$ is smooth. Thus $\rho=u^{1/k}$ and $h=\rho^2\widetilde g$ give a smooth compactification, and \eqref{eq:exact-transform} gives its source equation.
  
  On the unchanged ends,
  \[
  R_{\widetilde g}+n(n-1)=k(2\ell-k+1)c_\varepsilon x^{2\ell+2}.
  \]
  Because $\rho/x=v^{1/k}$ extends smoothly with a positive
  boundary value, $x$ and $\rho$ are bounded above and below by fixed
  positive multiples of one another on each end. The displayed
  scalar excess is therefore $O(\rho^{2\ell+2})$, which is
  $o(\rho^2)$ since $\ell\ge k\ge2$. Theorem~\ref{thm:boundary-general} now applies. Equality at one boundary point would give a hyperbolic ball and a connected boundary, whereas this compactification has two boundary components. The inequality is therefore strict at every boundary point.  The boundary metrics follow directly from the last formula in
  Lemma~\ref{lem:nonradial-end}.
  \end{proof}
  \begin{proof}[Proof of Theorem~\ref{thm:intro-nonsymmetric}]
  Write $D(r)=r^2-1+c_\varepsilon r^{-2\ell}$, with
  $c_\varepsilon=\varepsilon(1-\varepsilon)^\ell$ and
  $r_*=\sqrt{1-\varepsilon}$.
  Then $D(r_*)=0$ and
  $D'(r_*)=2[1-(\ell+1)\varepsilon]/\sqrt{1-\varepsilon}>0$.
  In signed distance $t$ from the doubling hypersurface,
  $r=a(t)$ is smooth and even, and satisfies
  $a'^2=D(a)$ and $a''=a-\ell c_\varepsilon a^{-2\ell-1}$.
  Thus the smooth double in the statement is exactly
  \eqref{eq:two-ended-metric}.
  Apply Theorem~\ref{thm:nonsymmetric} to the given $n,\ell,N,q$.
  The maximal-center model agrees with the threshold compactification
  by the argument in the proof of Theorem~\ref{thm:intro-boundary},
  with $\kappa=1$.
  \end{proof}

  \vspace{2ex}
  
  \noindent\textbf{Acknowledgments.} The author acknowledges support from the National Natural Science Foundation of China (Grant Nos.~12571055 and 12671062), the Natural Science Foundation of Hunan Province (Grant No.~2026JJ20014), the 111 Project (Grant No.~D23017), the Program for Science and Technology Innovative Research Team in Higher Educational Institutions of Hunan Province, China, the Hunan Research Center of the Basic Discipline Fundamental Algorithmic Theory and Novel Computational Methods, and the Key Laboratory of Mathematical Theory of Fluid Mechanics and Its Applications for Higher Education Institutions in Hunan Province.
  
  \vspace{2ex}
  
  \noindent\textbf{AI assistance.} The research direction, initial mathematical framework and main ideas were developed by the author. During revision, OpenAI Codex assisted with mathematical exposition and manuscript organization, expansion and checking of proof calculations, and selected reference checks. The author is responsible for the final manuscript and its mathematical content.
  
  %\bibliographystyle{abbrvnat}
  %\bibliography{ref}

  \end{document}